\theoremstyle{plain}
\newtheorem{theorem}{Theorem}[section]
\newtheorem{lemma}[theorem]{Lemma}
\newtheorem{corollary}[theorem]{Corollary}
\newtheorem{proposition}[theorem]{Proposition}
\newtheorem{definition}[theorem]{Definition}
\newtheorem{remark}[theorem]{Remark}
\newtheorem{example}[theorem]{Example}
\newtheorem{question}{Question}[section]
\def\A{\operatorname{A}}
\def\B{\operatorname{B}}
\def\C{\operatorname{C}}
\def\BC{\operatorname{BC}}
\def\D{\operatorname{D}}
\def\E{\operatorname{E}}
\def\F{\operatorname{F}}
\def\G{\operatorname{G}}
\def\PSU{\operatorname{PSU}}
\def\SO{\operatorname{SO}}
\def\Sp{\operatorname{Sp}}
\def\SU{\operatorname{SU}}
\def\U{\operatorname{U}}
\def\Ad{\operatorname{Ad}}
\def\Aut{\operatorname{Aut}}
\def\det{\operatorname{det}}
\def\diag{\operatorname{diag}}
\def\Eva{\operatorname{Eva}}
\def\Hom{\operatorname{Hom}}
\def\Int{\operatorname{Int}}
\def\ker{\operatorname{ker}}
\def\Lie{\operatorname{Lie}}
\def\max{\operatorname{max}}
\def\Out{\operatorname{Out}}
\def\R{\operatorname{R}}
\def\rank{\operatorname{rank}}
\def\RD{\operatorname{RD}}
\def\Rep{\operatorname{Rep}}
\def\sgn{\operatorname{sgn}}
\def\sign{\operatorname{sign}}
\def\span{\operatorname{span}}
\def\Spin{\operatorname{Spin}}
\def\st{\operatorname{st}}
\def\Stab{\operatorname{Stab}}
\def\supp{\operatorname{supp}}
\newcommand{\bbC}{\mathbb{C}}
\newcommand{\bbR}{\mathbb{R}}
\newcommand{\bbZ}{\mathbb{Z}}
\newcommand{\bbQ}{\mathbb{Q}}
\newcommand{\fre}{\mathfrak{e}}
\newcommand{\frg}{\mathfrak{g}}
\newcommand{\frh}{\mathfrak{h}}
\newcommand{\frp}{\mathfrak{p}}
\newcommand{\frt}{\mathfrak{t}}
\newcommand{\fru}{\mathfrak{u}}
\begin{document}

\title{On the dimension datum problem and the linear dependence problem}

\author{Jun Yu}

\address{School of Mathematics,
Institute for Advanced Study,
Einstein Drive,
Fuld Hall,
Princeton, NJ 08540}
\email{junyu@math.ias.edu}

\abstract{The dimension datum of a closed subgroup of a compact Lie group is the sequence of
invariant dimensions of irreducible representations by restriction. In this article we classify
closed connected subgroups with equal dimension data or linearly dependent dimension
data. This classification should have applications to the isospectral geometry and automorphic
form theory. We also study the equality/linear dependence of not necessarily connected subgroups
of unitary group acting irreducibly on the natural representation.}
\endabstract



\keywords{Dimension datum, Sato-Tate measure, root system, automorphism group, Weyl group,
generating function}
\maketitle

\tableofcontents

\section{Introduction}

Given a compact Lie group $G$, the dimension datum $\mathscr{D}_{H}$ of a closed subgroup $H$ of $G$ is
the map from $\widehat{G}$ to $\bbZ$, $$\mathscr{D}_{H}: V\mapsto\dim V^{H},$$ where $\widehat{G}$ is
the set of equivalence classes of irreducible finite-dimensional complex linear representations of $G$
and $V^{H}$ is the subspace of $H$-invariant vectors in a complex linear representation $V$ of $G$.

In number theory, dimension data arose from the determination of some monodromy groups
(cf. \cite{Katz}). In the theory of automorphic forms, Langlands (\cite{Langlands}) has suggested
to use dimension data as a key ingredient in his program ``Beyond Endoscopy''. The idea is to
use the dimension datum to identify the conjectural subgroup ${}^\lambda H_\pi\subset {}^L \mathscr{G}$
associated to an automorphic representation $\pi$ of $\mathscr{G}(\mathbb{A})$, where ${}^L\mathscr{G}$
is the $L$-group of $\mathscr{G}$ in the form of \cite[2.4(2)]{Borel}. In differential geometry, the
dimension datum $\mathscr{D}_H$ is related to the spectrum of the Laplace operator on the homogeneous
Riemannian manifold $G/H$.

Let $H\subset G$ be compact Lie groups. The {\it dimension datum problem} asks the following question.
\begin{question}\label{Q:dimension data}
To what extent is \(H\) (up to \(G\)-conjugacy) determined by its dimension datum \(\mathscr{D}_H\) ?
\end{question}

In \cite{Larsen-Pink}, Larsen and Pink considered the dimension data of connected semisimple subgroups.
They showed that the subgroups are determined up to isomorphism by their dimension data (cf. \cite{Larsen-Pink},
Theorem 1), and not up to conjugacy in general (cf. \cite{Larsen-Pink}, Theorem 3). Moreover, they showed
that the subgroups $H$ are determined up to conjugacy if $G=\SU(n)$ and the inclusions
$H\hookrightarrow\SU(n)$ give irreducible representations of $H$ on $\bbC^{n}$ (cf. \cite{Larsen-Pink},
Theorem 2).

In \cite{Langlands}, Subsections 1.1 and 1.6, Langlands raised another question (cf. \cite{An-Yu-Yu},
Question 5.3) about dimension data. If this question has an affirmative answer, then it will facilitate
a way of dealing with the dimension data of ${}^\lambda H_\pi$ using trace formulas. However, Langlands
suspected (cf. \cite{Langlands}, discussions following Equation (14)) that in general this question has an
affirmative answer. As observed in \cite{An-Yu-Yu}, this question proposed by Langlands is equivalent
to the following question that we shall call the {\it linear dependence problem}.

\begin{question}\label{Q:linear dependence}
Given a list of finitely many closed subgroups $H_1,\dots,H_{n}$ of $G$ with $\mathscr{D}_{H_{i}}\neq
\mathscr{D}_{H_{j}}$ for any $i\neq j$, are $\mathscr{D}_{H_1},\dots,\mathscr{D}_{H_{n}}$ linearly
independent?
\end{question}

In \cite{An-Yu-Yu}, jointly with Jinpeng An and Jiu-Kang Yu, we have given counter-examples to show
that the affirmative answer to Question \ref{Q:dimension data} (or \ref{Q:linear dependence}) is not
always positive. In this paper, we classify closed connected subgroups with equal dimension data
or linearly dependent dimension data.The method of this classification is as follows.

Given a compact Lie group $G$ we choose a bi-invariant Riemannian metric $m$ on it. For a closed
connected torus $T$ in $G$, we introduce root systems on $T$. Among the root systems on $T$, there is a maximal
one, denoted by $\Psi_{T}$, which contains all other root systems on $T$. For each reduced root system $\Phi$
on $T$ and a finite group $W$ acting on $\frt_0=\Lie T$ satisfying some condition (cf. Definition
\ref{D:characters}), we define a character $F_{\Phi,W}$. In particular, this definition applies to the finite
group $\Gamma^{\circ}:=N_{G}(T)/C_{G}(T)$. We define another root system $\Psi'_{T}$ on $T$ as the sub-root
system of $\Psi'_{T}$ generated by the root systems of closed connected subgroups $H$ of $G$ with $T$ a maximal
torus of $H$. Moreover we show that the Weyl group of $\Psi'_{T}$ is a sub-group of $\Gamma^{\circ}$. By
Propositions \ref{P: ST support} and \ref{P:group-root system}, we show that the dimension datum problem (or
the linear dependence problem) reduces to comparing the characters
$\{F_{\Phi,\Gamma^{\circ}}|\ \Phi\subset\Psi_{T}\}$ (or finding linear relations among them). We propose
Questions \ref{Q:equal-character} and \ref{Q:dependent-character}. They are concerned with the characters
associated to reduced sub-root systems of a given root system $\Psi$. In the next two sections we solve these
two questions completely. We reduce both questions to the case where $\Psi$ is an irreducible root system. In
the case that $\Psi$ is a classical irreducible root system, the classification of sub-root systems of $\Psi$
is well-known. In \cite{Larsen-Pink}, Larsen and Pink defined an algebra isomorphism $E$ taking characters of
sub-root systems to polynomials (cf. Lemma \ref{Isomorphism E} and Definition \ref{D:abcd}). In this case we
solve Questions \ref{Q:equal-character} and \ref{Q:dependent-character} by getting all algebraic relations among
polynomials in the image of $E$. The relations of some polynomials given in Propositions \ref{P:A=BB and A=CD}
and \ref{P:A-BB'CD} are most important. In the case that $\Psi$ is an exceptional irreducible root system,
Oshima classified sub-root systems of $\Psi$ (cf. \cite{Oshima}). In \cite{Larsen-Pink}, the authors defined a
weight $2\delta'_{\Phi}$ for each reduced sub-root system $\Phi$ of $\Psi$. We give the formulas of these weights
in the Tables 2-9. We also define and calculate a generating function for each reduced sub-root system of $\Psi$
(cf. Definition \ref{D:f}). With these, we are able to find and prove all linear relations among the characters
of reduced sub-root systems of $\Psi$. Therefore we solve Questions \ref{Q:equal-character} and
\ref{Q:dependent-character} completely.

Given a connected closed torus $T$ in $G$, the finite group $\Gamma^{\circ}$ palys an important role for
the dimension data of closed connected subgroups $H$ of $G$ with $T$ a maximal torus of $H$. In Proposition
\ref{P:Gamma0-Psi'} we show that $\Gamma^{\circ}\sup W_{\Psi'_{T}}$, where $\Psi'_{T}$ is the sub-root system
of $\Psi_{T}$ generated by root systems of closed connected subgroups $H$ of $G$ with $T$ a maximal torus of $H$.
On the other hand, in Proposition \ref{P:Gamma0-2} we give a construction showing that the finite group
$\Gamma^{\circ}$ could be arbitrary providing that it containing $W_{\Psi'_{T}}$. These supplements show that
our solution to Questions \ref{Q:equal-character} and \ref{Q:dependent-character} actually give all examples
of closed connected subgroups with equal dimension data or linearly dependent dimension data.

In the last section, we study the equalities and linear relations among dimension data of closed subgroups
of $\U(n)$ acting irreducibly on $\bbC^{n}$. For nonconnected irreducible subgroups, we give ineteresting
examples with equal or linearly dependent dimension data for $n=16$ and $n=12$ respectively. We also show
that the dimension data of closed connected irredicible subgroups are actually linearly independent. This
strengthens a theorem of Larsen and Pink. The proof is identically theirs. The writing of this section is
inspired by questions of Peter Sarnak.

The following theorem is a simple consequence of our classification of subgroups with equal dimension data. It
follows from Proposition \ref{P:group-root system}, Theorem \ref{T:equal character-nonsimple}, Theorem
\ref{T:character-classical} and Theorem \ref{character-exceptional}.

\begin{theorem}\label{T:dimension-algebra}
If two compact Lie groups $H,H'$ have inclusions to a compact Lie group $G$ with the same dimension data,
then after replacing each ideal of $\Lie H$ and $\Lie H'$ isomorphic to $\mathfrak{u}(2n+1)$ ($n\geq 1$)
by an ideal isomorphic to $\mathfrak{sp}(n)\oplus\mathfrak{so}(2n+2)$, the resulting Lie algebras are
isomorphic.
\end{theorem}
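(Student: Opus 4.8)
The plan is to push the hypothesis $\mathscr{D}_{H}=\mathscr{D}_{H'}$ through the reduction to characters of root systems and then read the conclusion off the classification of character coincidences; granting the four cited results, what is left is essentially bookkeeping with ranks and centres. First I would set up a common root-theoretic frame. A dimension datum determines the associated Sato--Tate measure and, by Proposition~\ref{P: ST support}, its support; this forces $\rank H=\rank H'$ and lets us conjugate in $G$ so that $H$ and $H'$ have a common maximal torus $T\subseteq G$, with $r:=\rank T$, maximal root system $\Psi_{T}$, and finite group $\Gamma^{\circ}=N_{G}(T)/C_{G}(T)$. Let $\Phi\subseteq\Psi_{T}$ and $\Phi'\subseteq\Psi_{T}$ be the root systems of the identity components of $H$ and $H'$. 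By Proposition~\ref{P:group-root system} the hypothesis becomes the single equality $F_{\Phi,\Gamma^{\circ}}=F_{\Phi',\Gamma^{\circ}}$. I would also record the elementary dictionary: a compact Lie group $K$ with maximal torus of rank $r$ and root system $\Phi_{K}$ has $\Lie K\cong\frz\oplus\frs(\Phi_{K})$, with $\frs(\Phi_{K})$ the compact semisimple Lie algebra of $\Phi_{K}$ and $\frz$ abelian of dimension $r-\rank\Phi_{K}$. So $\Lie H$ and $\Lie H'$ are determined up to isomorphism by $(\Phi,r)$ and $(\Phi',r)$, and it suffices to show that $\frz(\Phi,r)\oplus\frs(\Phi)$ and $\frz(\Phi',r)\oplus\frs(\Phi')$ become isomorphic once every ideal isomorphic to $\mathfrak{u}(2n+1)$, $n\geq1$, is replaced by $\mathfrak{sp}(n)\oplus\mathfrak{so}(2n+2)$.

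Next I would decompose and invoke the classification. By Theorem~\ref{T:equal character-nonsimple} the equality $F_{\Phi,\Gamma^{\circ}}=F_{\Phi',\Gamma^{\circ}}$ reduces to the irreducible case: decomposing $\Psi_{T}$ into irreducible components $\Psi_{i}$ gives $\Phi=\bigsqcup_{i}\Phi_{i}$, $\Phi'=\bigsqcup_{i}\Phi'_{i}$ with $\Phi_{i},\Phi'_{i}\subseteq\Psi_{i}$, and the theorem matches the components so that $F_{\Phi_{i},W_{i}}=F_{\Phi'_{i},W_{i}}$ for suitable groups $W_{i}$ admissible in the sense of Definition~\ref{D:characters}, keeping track also of the part of $\frt_{0}$ not spanned by $\Psi_{T}$. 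For $\Psi_{i}$ classical, Theorem~\ref{T:character-classical} --- through the Larsen--Pink isomorphism $E$ (Lemma~\ref{Isomorphism E}, Definition~\ref{D:abcd}) and the polynomial identities of Propositions~\ref{P:A=BB and A=CD} and~\ref{P:A-BB'CD} --- lists all pairs $(\Phi_{i},\Phi'_{i})$ with equal characters; for $\Psi_{i}$ exceptional, Theorem~\ref{character-exceptional} does the same using the weights $2\delta'_{\Phi}$ of Tables~2--9 and the generating functions of Definition~\ref{D:f}. Inspecting these lists, I would observe that every coincidence $F_{\Phi_{i},W_{i}}=F_{\Phi'_{i},W_{i}}$ satisfies $\frs(\Phi_{i})\cong\frs(\Phi'_{i})$ with the sole exception of the ``$A=CD$'' family, where $\{\frs(\Phi_{i}),\frs(\Phi'_{i})\}=\{\mathfrak{su}(2n+1),\ \mathfrak{sp}(n)\oplus\mathfrak{so}(2n+2)\}$, i.e.\ $\{A_{2n},\,C_{n}\times D_{n+1}\}$, for some $n\geq1$; the remaining equal-character pairs come either from a low-rank isomorphism of root systems or from pairing a root system with itself under two different admissible groups.

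Finally I would reassemble. Combining the component matching with the rank identity $r=\rank\Phi+\dim\frz(\Phi,r)=\rank\Phi'+\dim\frz(\Phi',r)$, the algebras $\Lie H$ and $\Lie H'$ break into direct sums of matched ideals, a matched pair being either two isomorphic ideals or --- exactly on an ``$A=CD$'' component --- the pair $\{\mathfrak{u}(2n+1),\ \mathfrak{sp}(n)\oplus\mathfrak{so}(2n+2)\}$, where the $A_{2n}$ side absorbs, besides its simple ideal $\mathfrak{su}(2n+1)$, the one extra central dimension forced by $\rank(C_{n}\times D_{n+1})-\rank A_{2n}=1$; the remaining central directions match because $\rank H=\rank H'$ and every matched pair of ideals has equal rank (and, as a consistency check, equal dimension). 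Replacing every ideal isomorphic to $\mathfrak{u}(2n+1)$, $n\geq1$, on both sides by $\mathfrak{sp}(n)\oplus\mathfrak{so}(2n+2)$ then identifies the two direct-sum decompositions, so the resulting Lie algebras are isomorphic, as asserted.

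All the weight of the argument sits in the cited results, so the genuine obstacle --- were one proving this from scratch --- is the complete classification of character coincidences among sub-root systems of an irreducible $\Psi$: in the classical case the determination of every algebraic relation among the polynomials in the image of $E$ (Propositions~\ref{P:A=BB and A=CD} and~\ref{P:A-BB'CD}), and in the exceptional case a large but finite case analysis organised by the weights $2\delta'_{\Phi}$ and the generating functions of Definition~\ref{D:f}. Taking those as given, the only delicate point here is the rank-and-centre bookkeeping --- putting the two a priori unrelated subgroups into a common torus and a common admissible group via Propositions~\ref{P: ST support} and~\ref{P:group-root system}, recovering the central tori from ranks, and verifying that the one family of coincidences that changes the isomorphism type of the Lie algebra is precisely the one neutralised by the substitution $\mathfrak{u}(2n+1)\mapsto\mathfrak{sp}(n)\oplus\mathfrak{so}(2n+2)$.
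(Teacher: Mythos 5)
Your proposal is correct and follows the same route the paper intends: the paper offers no argument beyond citing Propositions \ref{P: ST support} and \ref{P:group-root system} and Theorems \ref{T:equal character-nonsimple}, \ref{T:character-classical} and \ref{character-exceptional}, and your reduction to a common maximal torus, the component-wise matching, and the observation that the only character coincidence altering the isomorphism type of the Lie algebra is $\A_{2n}$ versus $\C_{n}\sqcup\D_{n+1}$ (the $\F_4$ pairs being the $n=1$ instance, since $3\A_1\cong\C_1\sqcup\D_2$ corresponds to $\mathfrak{sp}(1)\oplus\mathfrak{so}(4)$) is exactly the intended bookkeeping. The one point you share with the paper's own omission is the passage from possibly disconnected $H$, $H'$ to their identity components, which requires the standard remark that near the identity class $\st_{H}$ agrees with $|H/H_0|^{-1}\st_{H_0}$, so that $\mathscr{D}_{H}=\mathscr{D}_{H'}$ forces $|H/H_0|=|H'/H'_0|$ and then the equality of the connected characters.
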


The classification of subgroups with linearly dependent dimension data is more complicated. The
following theorem is a simple consequence of this classification. It follows from Propositions
\ref{P:linear-A}, \ref{P:linear-BCD} and \ref{P:linear-exceptional}. In the case of type $\B_2$, $\B_3$
or $\G_2$, we also have a linear independence result. The proof requires some consideration of possible
connected full rank subgroups.

\begin{theorem}\label{T:linear-type A}
Given $G=\SU(n)$ or a compact connected Lie group isogeneous to it, for any list $\{H_1,H_2,\dots,H_{s}\}$
of non-conjugate connected closed full rank subgroups of $G$, the dimension data
$\mathscr{D}_{H_1},\mathscr{D}_{H_2},\cdots,\mathscr{D}_{H_s}$ are linearly independent. Given a comapct
connected Lie group $G$ with a simple Lie algebra $\frg_0$, if $\frg_0$ is not of type $\A_{n}$, $\B_2$,
$\B_3$ or $\G_2$, then there exist non-conjugate connected closed full rank subgroups $H_1$, $H_2$,...,$H_s$
of $G$ with linearly dependent dimension data.
\end{theorem}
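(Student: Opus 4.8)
The plan is to pass from subgroups to root systems, reducing the theorem to a statement about linear (in)dependence among the characters $F_{\Phi,W}$, and then to invoke the classification of relations among such characters. Let $H\subset G$ be a closed connected subgroup of full rank. A maximal torus $T$ of $G$ is then also a maximal torus of $H$, so $C_G(T)=T$, $\Gamma^{\circ}=N_G(T)/C_G(T)=W_{\Psi}$ with $\Psi=\Psi_T$ the root system of $G$ at $T$, and the root system $\Phi_H\subset\Psi$ of $H$ is a \emph{closed} reduced sub-root system; $H$ is recovered from $\Phi_H$ by Borel--de Siebenthal, and $H,H'$ are $G$-conjugate if and only if $\Phi_H,\Phi_{H'}$ are $W_{\Psi}$-conjugate. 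By Proposition~\ref{P:group-root system}, $\mathscr{D}_H$ corresponds to $F_{\Phi_H,W_{\Psi}}$ and a relation $\sum_i c_i\mathscr{D}_{H_i}=0$ is equivalent to $\sum_i c_i F_{\Phi_{H_i},W_{\Psi}}=0$; in particular the isogeny type of $G$ plays no role. Since $\frg_0$ is simple (or $\frg_0=\mathfrak{su}(n)$), $\Psi$ is irreducible, so no reduction to the non-simple case is needed. Thus the theorem is equivalent to: for $\Psi$ of type $\A_{n-1}$, the characters $F_{\Phi,W_{\Psi}}$ over $W_{\Psi}$-classes of closed reduced sub-root systems $\Phi\subset\Psi$ are linearly independent; and for $\Psi$ irreducible but not of type $\A_n,\B_2,\B_3,\G_2$, there is a nontrivial linear relation among finitely many such characters, with all sub-root systems involved closed in $\Psi$.

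For $\Psi$ of type $\A_{n-1}$ I would use that, up to $W_{\Psi}=S_n$, the closed reduced sub-root systems are the products $\prod_j\A_{a_j-1}$ indexed by the partitions $(a_1\geq a_2\geq\cdots)$ of $n$. Larsen--Pink's weight $2\delta'_{\Phi}$ is the dominant top weight of $F_{\Phi,W_{\Psi}}$ and occurs there with nonzero coefficient, and in type $A$ it depends injectively on the partition; a dominance-order argument then rules out a nontrivial relation, proving Proposition~\ref{P:linear-A}. (The paper argues this more structurally, via Larsen--Pink's isomorphism $E$ of Lemma~\ref{Isomorphism E}, which is multiplicative on disjoint unions of sub-root systems, reducing the claim to the linear independence of the products of the polynomials $a_k=E(F_{\A_{k-1}})$ of Definition~\ref{D:abcd} over distinct partitions.) The same leading-weight argument, applied after enumerating the finitely many closed reduced sub-root systems of $\B_2$, $\B_3$ and $\G_2$ and checking that the weights $2\delta'_{\Phi}$ there are pairwise distinct, proves the linear independence stated for those three types; this is the step that ``requires some consideration of possible connected full rank subgroups'', since one must know exactly which sub-root systems occur as closed ones.

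For the remaining simple types I would produce explicit relations. For $G$ of type $\C_m$ ($m\geq3$), $\B_m$ ($m\geq4$) or $\D_m$ ($m\geq4$) this is Proposition~\ref{P:linear-BCD}: the polynomial identities among $a_k,b_k,c_k,d_k$ of Propositions~\ref{P:A=BB and A=CD} and~\ref{P:A-BB'CD} --- of the same nature as the coincidence $\mathfrak{u}(2n+1)\leftrightarrow\mathfrak{sp}(n)\oplus\mathfrak{so}(2n+2)$ underlying Theorem~\ref{T:dimension-algebra} --- pull back under $E^{-1}$ to nontrivial linear relations among the characters $F_{\Phi,W_{\Psi}}$ of suitable closed reduced sub-root systems, so the corresponding non-conjugate connected full-rank subgroups have linearly dependent dimension data; the lower bounds on $m$ are exactly what make these closed sub-root systems available inside $\Psi$, which is why $\B_2=\C_2$ and $\B_3$ are excluded. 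For $G$ of type $\E_6$, $\E_7$, $\E_8$ or $\F_4$ this is Proposition~\ref{P:linear-exceptional}, whose proof combines Oshima's classification of sub-root systems with the weights $2\delta'_{\Phi}$ tabulated in Tables~2--9 and the generating functions $f_{\Phi}$ of Definition~\ref{D:f}: the weight data restrict which characters can enter a relation, and the generating functions are then used to exhibit and verify the relation in each case (and to confirm, for $\G_2$, that none exists).

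I expect the relation-producing half --- for types $\B$, $\D$ and the exceptional types --- to be the main obstacle. In the classical cases one must check not merely the polynomial identity but that every sub-root system appearing in it is closed in $\Psi$, hence realized by a connected full-rank subgroup, for \emph{all} $m$ in the asserted range, that the resulting subgroups are pairwise non-conjugate, and that the identity survives enlarging the Weyl group from $W_{\Phi}$ to $W_{\Psi}$ --- a point that is delicate precisely because $\B$ and $\D$ contain no sub-root system of type $\C$, so the $A=CD$-type identities must be replaced by others. In the exceptional cases the difficulty is computational: working through Oshima's long list, trimming it with the weight tables, and then confirming the relation through the generating functions. By contrast the $\SU(n)$ independence and the $\B_2,\B_3,\G_2$ independence are soft, reducing to the disjointness of the leading weights $2\delta'_{\Phi}$, where no coincidences arise.
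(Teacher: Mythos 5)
Your proposal is correct and follows the paper's overall strategy: reduce to the characters $F_{\Phi,W_{\Psi}}$ via Proposition \ref{P:group-root system} (with $\Gamma^{\circ}=W_{\Psi}$ for a maximal torus), prove the type-$\A$ independence by the distinctness of the leading weights $2\delta'_{\Phi}$ over partitions (this is exactly Proposition \ref{P:linear-A} --- your parenthetical guess that the paper instead routes this through the isomorphism $E$ is not what the paper does), handle $\B_2$, $\B_3$, $\G_2$ by restricting to \emph{closed} sub-root systems, and produce explicit relations elsewhere. The one place you genuinely diverge is the exceptional types: the paper's Proposition \ref{P:linear-exceptional} does not invoke Oshima's lists, the weight tables, or the generating functions at all --- it simply observes that $\E_6,\E_7,\E_8$ contain a Levi subgroup of type $\D_4$ and $\F_4$ contains $\Spin(8)$ (the long-root subsystem $\D_4^{L}$), and then cites the degree-$4$ relation $a_3d_1+d_2^{2}-2a_2d_2=0$ of Proposition \ref{P:linear-BCD}, whose three sub-root systems $\A_2$, $2\D_2$, $\A_1\sqcup\D_2$ are all closed in $\D_4$; by the ``second observation'' of Section \ref{SS:linear-exceptional} the relation persists after averaging over the larger Weyl group. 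Your direct route through the tables would also work (the relation $F_{A_2}+F_{4A_1}-2F_{3A_1}=0$ you would find in $\E_6$ is the same relation, pushed forward), but it is far heavier and, for $\F_4$, you would still need to select a relation supported on closed sub-root systems --- e.g.\ the long-root one --- since several of the listed $\F_4$ relations involve subsystems such as $2A_1^{S}$ that are not root systems of connected full-rank subgroups. You correctly flag the two genuinely delicate points: closedness of all subsystems entering a relation, and (for type $\D$) that the relation survives the passage from $W_{n}$-averages to $W_{\D_n}$-averages, which the paper settles with Proposition \ref{P:Character-D}.
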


The organization of this paper is as follows. In Section \ref{S:root systems}, after recalling
the definitions of root datum and root system, we define root system in a lattice. In Proposition
\ref{P:maximal root system} we show that a given lattice contains a unique maximal root system in it.
In Section \ref{S:characters}, given a compact Lie group $G$ with a biinvariant Riemannian metric
$m$, we define root systems on $T$ and a maximal one $\Psi_{T}$ among them. For each reduced root system
$\Phi$ on $T$ and a finite group $W$ acting on $T$ satisfying some condition, we define a character
$F_{\Phi,W}$. Moreover, we discuss properties of these characters $\{F_{\Phi,W}\}$. Most importantly,
in Propositions \ref{P: ST support} and \ref{P:group-root system}, we reduce the dimension datum problem
and the linear dependence problem to compare these characters and getting linear relations among them.
In Section \ref{S:conjugacy}, we discuss the relations between several finite groups $\Gamma^{\circ}$,
$\Gamma$, $W_{\Psi'_{T}}$, $W_{\Psi_{T}}$ and $\Aut(\Psi_{T})$. In particualr we prove that
$W_{\Psi'_{T}}\subset\Gamma^{\circ}$. Moreover, we discuss the connection between conjugacy
relations of root systems of subgroups with regard to some of these finite groups and relations of the
subgroups. In Section \ref{S:formulation of questions}, we formulate two questions in terms of characters of
reduced sub-root systems of a given root system. If the finite group $\Gamma^{\circ}$ contains $W_{\Psi_{T}}$,
then these two questions are equivalent to the dimension datum problem and the linear dependence problem,
respectively. In Section \ref{S:sub-root systems}, we discuss the classification of reduced sub-root systems of a
given irreducible root system. In Section \ref{S:leading terms}, given an exceptional irreducible root system
$\Psi_0$, we give the formulas of the weights $\{2\delta'_{\Phi}|\ \Phi\subset\Psi_0\}$.
In Section \ref{S:dimension-equal}, given a root system $\Psi$, we classify reduced sub-root systems
of $\Psi$ with equal characters $F_{\Phi,\Aut(\Psi)}$. This solves Question \ref{Q:equal-character}.
In Section \ref{S:dimension-dependent}, given a root system $\Psi$, we classify reduced sub-root
systems of $\Psi$ with linearly dependent characters $F_{\Phi,W(\Psi)}$. This solves Question
\ref{Q:dependent-character}. In Section \ref{S:Gamma0}, given a root system $\Psi'$ and a finite group $W$
containing $W_{\Psi}$, we give examples of a compact connected simple Lie group $G$ and a connected closed
torus $T$ in  $G$ satisfying that: $\dim T=\rank\Psi'$, $\Psi'\subset\Psi_{T}$ and $\Psi'$ is stable under
$\Gamma^{\circ}$, $\Gamma^{\circ}=W$ as groups acting on $\Psi'$ and each reduced sub-root system $\Phi$ of
$\Psi'$ equals the root system of a closed connected subgprup $H$ with $T$ a maximal torus of $H$.
In Section \ref{S:Irreducible}, we give interesting examples of irreducible subgroups of $\U(n)$ with
equal dimension data or linearly dependent dimension data, and show that connected irreducible subgroups of
$\U(n)$ actually have linearly independent dimension data.

\smallskip

\noindent{\it Notation and conventions.} Given a compact Lie group $G$,
\begin{enumerate}
\item denote by $G_0$ the subgroup of connected component of $G$ containing the identity element and
$[G,G]$ the commutator subgroup.
\item Let $\frg_0=\Lie G$ be the Lie algebra of $G$.
\item Write $G^{\natural}$ for the set of conjugacy classes in $G$;
\item Denote by $\widehat{G}$ the set of equivalence classes of irreducible finite-dimensional complex
linear representations of $G$.
\item Let $\mu_{G}$ be the unique Haar measure on $G$ with $\int_{G} 1\mu_{G}=1$.
\item Write $V^{G}$ for the subspace of $G$-invariant vectors in a complex linear representation
$V$ of $G$.
\item Given a maximal torus $S$ of $G$, let $W_{G}=N_{G}(S)/C_{G}(S)$ be the Weyl group of $G$. Then the
quotient space $S/W_{G}$ is a connected component of $G^{\natural}$. Moreover, in the case that $G$ is
connected, they are identical.
\item Given a closed subgroup $H$ of $G$, denote by $\st_{H}$ the push-measure on $G^{\natural}$ of
$\mu_{H}$ under the composition map $H\hookrightarrow G\longrightarrow G^{\natural}$. It is called the
{\it Sato-Tate measure} of the subgroup $H$.
\item For any complex linear representation $\rho$ of $G$, $\mathscr{D}_{H}(\rho)=\int_{H}\chi_{\rho}(x)
\mu_{H}=\st_{H}(\chi_{\rho}^{\natural})$, where $\chi_{\rho}^{\natural}$ is a continuous function on
$G^{\natural}$ induced from the character function $\chi_{\rho}$ of $\rho$. In this way,
the dimension datum $\mathscr{D}_{H}$ and the Sato-Tate measure $\st_{H}$ determine each other.
\end{enumerate}

Given a compact Lie group $G$ and a closed connected torus $T$, let $\Phi(G,T)$ be the set of
non-zero weights of $\frg=\frg_0\otimes_{\bbR}\bbC$ as a complex linear representation of $T$.
In Section \ref{S:root systems}, we explain that in some cases $\Phi(G,T)$ is a root system.

Given a lattice $L$,
\begin{enumerate}
\item write $L_{\mathbb{Q}}=L\otimes_{\mathbb{Z}}\mathbb{Q}$ for the rational vector space
generated by $L$.
\item Let $\bbQ[L]$ be the group ring of $L$ over the field $\bbQ$.
\item For any element $\lambda\in L$, denote by $[\lambda]$ the corresponding element in
$\bbQ[L]$. Then, for any $\lambda,\mu\in L$ and $n\in\bbZ$, $[\lambda][\mu]=[\lambda+\mu]$
and $[\lambda]^{n}=[n\lambda]$.
\end{enumerate}

Given an abstract root system $\Phi$ (cf. Definition \ref{D:abstract root system}),
\begin{enumerate}
\item denote by $\bbZ\Phi$ the root lattice spanned by $\Phi$ and by $\bbQ\Phi=\bbZ\Phi\otimes_{\bbZ}\bbQ$
the rational vector space spanned by $\Phi$.
\item Choosing a positive definite inner product $(\cdot,\cdot)_{m}$ on $\bbQ\Phi$ inducing the cusp product
on $\Phi$ (which always exists and is unique if and only if $\Phi$ is irreducible), write
\[\Lambda_{\Phi}=\big\{\lambda\in\bbQ\Phi|\ \frac{2(\lambda,\alpha)_{m}}{(\alpha,\alpha)_{m}}\in\bbZ,\
\forall\alpha\in\Phi\big\}\] for the lattice of integral weights. One can show that $\Lambda_{\Phi}$ does not
depend on the choice of the inner product $m$ on $\bbQ\Phi$ if it induces the cusp product on $\Phi$.
\item Let $\Phi^{\circ}$ be the subset of short vectors in $\Phi$: for any $\alpha\in\Phi$,
$\alpha\in\Phi^{\circ}$ if and only if for any other $\beta\in\Phi$, $|\beta|\geq|\alpha|$ or
$\langle\alpha,\beta\rangle=0$. One can show that $\Phi^{\circ}$ is a sub-root system of $\Phi$.
\item If $V$ is a rational (or real) vector space with a positive definite inner product $(\cdot,\cdot)_{m}$
containing $\Phi$, let \[\Lambda_{\Phi}(V)=\big\{\lambda\in V|\ \frac{2(\lambda,\alpha)_{m}}{(\alpha,\alpha)_{m}}
\in\bbZ,\ \forall\alpha\in\Phi\big\}.\] Then $\Lambda_{\Phi}(V)$ is the direct sum of $\Lambda_{\Phi}$ and
the linear subspace of vectors in $V$ orthogonal to all vectors in $\Phi$.
\item Given a subset $X$ of $\Phi$, we call the minimal sub-root system of $\Phi$ containing $X$ (which exists and
is unique) the sub-root system generated by $X$, and denote it by $\langle X\rangle$.
\end{enumerate}

Write \[T_{k}=\{\diag\{z_1,z_2,\dots,z_{k}\}|\ |z_1|=|z_2|=\cdots=|z_{k}|=1\}.\]

We follow Bourbaki numbering to order the simple roots (cf. \cite{Bourbaki}, Pages 265-300). Write
$\omega_{i}$ for the $i$-th fundamental weight.

\noindent{\bf Acknowledgements.} This article is a sequel of \cite{An-Yu-Yu}. The author is grateful to Jinpeng
An and Jiu-Kang Yu for the collaboration. He is also grateful to Brent Doran, Richard Pink, Gopal Prasad,
Peter Sarnak and Jiu-Kang for helpful discussions and suggestions.

\section{Root system in a lattice}\label{S:root systems}

In this section, after recalling the definitions of root datum and root system in a Euclidean vector space,
we define root system in a lattice. Moreover, given a lattice, we show that it possesses a unique root
system in it which contains all other root systems in it. Here lattice means a finite rank free abelian
group with a positive definite inner product. Our discussion mostly follows \cite{Bourbaki}, \cite{Knapp}
and \cite{Springer}. However there exist minor differences between our definitions and definitions in each
of them. For example, our definition of abstract root system is different from that in \cite{Knapp}, and
we allow our root systems to be neither reduced nor semisimple, in contrast to all of the above references.


Defined as in \cite{Springer}, a root datum is a quadruple $(X,R,X^{\ast},R^{\ast})$ together
with some additional structure (duality and the root-coroot correspondence).
\begin{definition}\label{D:root datum}
A root datum consists of a quadruple $(X,R,X^{\ast},R^{\ast})$, where
\begin{enumerate}
\item  $X$ and $X^{\ast}$ are free abelian groups of finite rank together with a perfect pairing between
them with values in $\bbZ$ which we denote by $(,)$ (in other words, each is identified with the dual
lattice of the other).
\item $R$ is a finite subset of $X$ and $R^{\ast}$ is a finite subset of $X^{\ast}$ and there is a
bijection from $R$ onto $R^{\ast}$, denoted by $\alpha\mapsto\alpha^{\ast}$.
\item For each $\alpha$, $(\alpha,\alpha^{\ast})=2$.
\item For each $\alpha$, the map taking $x$ to $x-(x,\alpha^{\ast})\alpha$ induces an automorphism of the root
datum (in other words it maps $R$ to $R$ and the induced action on $X^{\ast}$ maps $R^{\ast}$ to $R^{\ast}$).
\end{enumerate}

Two root data $(X_1,R_1,X_1^{\ast},R_1^{\ast})$ and $(X_2,R_2,X_2^{\ast},R_2^{\ast})$ are called isomorphic
if there exists a linear isomorphism $f: X_1\rightarrow X_2$ such that $f(R_1)=f(R_2)$,
$(f^{\ast})^{-1}(R_1^{\ast})=R_2^{\ast}$ and $f(\alpha)^{\ast}=((f^{\ast})^{-1})(\alpha^{\ast})$ for any
$\alpha\in R_1$. Here, $f^{\ast}: X_2^{\ast}\rightarrow X_1^{\ast}$ is the dual linear map of $f$ and
$(f^{\ast})^{-1}$ is its inverse.
\end{definition}

The elements of $R$ are called {\it roots}, and the elements of $R^{\ast}$ are called {\it coroots}. If $R$
does not contain $2\alpha$ for any $\alpha$ in $R$, then the root datum is called {\it reduced}. To a connected
compact Lie group $G$ with a maximal torus $S$, we can associate a reduced root datum $\RD(G,S)$, whose
isomorphism class $\RD(G)$ depends only on $G$. Two connected compact Lie groups $G,G'$ are isomorphic if and
only if $\RD(G)$ and $\RD(G')$ are isomorphic.

\smallskip

A root system in a Euclidean vector space is a finite set with some additional structure.
\begin{definition}\label{D:root system}
Let $V$ be a finite-dimensional Euclidean vector space, with an inner product $m$ denoted by
$(\cdot,\cdot)_{m}$. A root system in $V$ is a finite set $\Phi$ of non-zero vectors (called roots) in $V$ that
satisfy the following conditions:
\begin{enumerate}
\item For any two roots $\alpha$ and $\beta$, the element $\beta-\frac{2(\beta,\alpha)_{m}}
{(\alpha,\alpha)_{m}}\alpha\in\Phi$.
\item (\textbf{Integrality}) For any two roots $\alpha$ and $\beta$, the number $\frac{2(\beta,\alpha)_{m}}
{(\alpha,\alpha)_{m}}$ is an integer.
\end{enumerate}

Given a root system $\Phi$ in a Euclidean vector space $V$ with an inner product $m$, we call
$s_{\alpha}: V\rightarrow V$ defined by
\[s_{\alpha}(\lambda)=\lambda-\frac{2(\lambda,\alpha)_{m}}{(\alpha,\alpha)_{m}}\alpha,\ \forall\lambda\in V\]
the reflection corresponding to the root $\alpha$.
\end{definition}

Moreover, a root system $\Phi$ in a Euclidean vector space $V$ is called {\it semisimple} if the roots
span $V$. It is called {\it reduced} if the only scalar multiples of a root $x\in\Phi$ that belong to $\Phi$
are $x$  and $-x$.

To a connected compact semisimple Lie group $G$ with a maximal torus $S$, we can associate a reduced
semisimple root system $\R(G,S)$, whose isomorphism class $\RD(G)$ depends only on $G$. Two connected
compact semisimple Lie groups $G,G'$ having isomorphic universal covers if and only if $\R(G)$ and $\R(G')$
are isomorphic.

\smallskip

There are several different notions of lattice in the literature.

\begin{definition}\label{D:lattice}
A lattice is a finite rank free abelian group with a positive definite inner product.
\end{definition}

Now we define root systems in a lattice.
\begin{definition}\label{D:root system in a lattice}
Let $L$ be a lattice with a positive definite inner product $m$ denoted by $(\cdot,\cdot)_{m}$. A root
system in $L$ is a finite set $\Phi$ of non-zero vectors (called roots) in $L$ that satisfy the following
conditions:
\begin{enumerate}
\item For any two roots $\alpha$ and $\beta$, the element $\beta-\frac{2(\beta,\alpha)_{m}}{(\alpha,\alpha)_{m}}
\alpha\in\Phi$.
\item (\textbf{Strong integrality}) For any root $\alpha$ and any vector $\lambda\in L$, the number
$\frac{2(\lambda,\alpha)_{m}}{(\alpha,\alpha)_{m}}$ is an integer.
\end{enumerate}


Given a root system $\Phi$ in a lattice $L$ with an inner product $m$, we call $s_{\alpha}: L\rightarrow L$
defined by \[s_{\alpha}(\lambda)=\lambda-\frac{2(\lambda,\alpha)_{m}}{(\alpha,\alpha)_{m}}\alpha,\
\forall\lambda\in L\] the reflection corresponding to the root $\alpha$.
\end{definition}

Moreover, a root system $\Phi$ in a lattice $L$ is called {\it semisimple} if the roots span
$V=L\otimes_{\bbZ}\bbR$. It is called {\it reduced} if the only scalar multiples of a root $x\in\Phi$
that belong to $\Phi$ are $x$  and $-x$.

If $\Phi\subset L$ is a root system in $L$, then it is a root system in $V=L\otimes_{\bbZ}\bbR$.
Conversely, if $\Phi\subset L$ is a root system in $V$, it is not necessarily a root system in
$L$. For example, let $L=\bbZ e_1$ be a rank $1$ lattice with $(e_1,e_1)>0$ and $V=\bbR e_1$. Then for
any $k\in\bbZ_{>0}$, $\{\pm{k}e_1\}$ is a root system in $V$. However $\{\pm{k}e_1\}$ is a root system
in $L$ only if $k=1$ or $2$. This example indicates that strong integrality is strictly stronger than
integrality.

\begin{definition}\label{D:abstract root system}
Given a root system $\Phi$ in a Euclidean vector space or a lattice with a positive definite inner
product $m$, we call \[\langle y,x\rangle=\frac{2(x,y)_{m}}{(x,x)_{m}}\] the cusp product on $\Phi$.
Overlooking the Eulidean space or the lattice containg $\Phi$, we call $\Phi$ with the cusp product on
it an abstract root system.
\end{definition}

Different with the above definiton, an abstract root system as defined in the book \cite{Knapp} is simply
a semisimple root system in a Euclidean vector space in our sense.

\begin{proposition}\label{P:cusp product}
Given an abstract root system $\Phi$, for two roots $\alpha\in\Phi$ and $\beta\in\Phi$, let
$k_1=\max\{k\in\bbZ|\ \beta+k\alpha\in\Phi\}$ and $k_2=\max\{k\in\bbZ|\ \beta-k\alpha\in\Phi\}$.
Then $\langle\beta,\alpha\rangle=k_2-k_1$.
\end{proposition}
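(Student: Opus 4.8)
The plan is to exploit the reflection $s_\alpha$, which maps $\Phi$ into itself by the first root-system axiom, acting on the \emph{$\alpha$-string through $\beta$},
\[
I := \{k \in \bbZ \mid \beta + k\alpha \in \Phi\}.
\]
The one thing I want to avoid is the classical lemma that this string is an unbroken interval: since $\Phi$ here is allowed to be non-reduced, the string can genuinely have gaps (already in type $\mathrm{BC}_1$), so that lemma is false in the present generality and, more to the point, is not needed. I would first record that $I$ is a finite nonempty subset of $\bbZ$ --- nonempty because $\beta \in \Phi$ gives $0 \in I$, finite because $\Phi$ is finite and $\alpha \ne 0$ so the vectors $\beta + k\alpha$ are pairwise distinct. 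Since $\beta - k\alpha \in \Phi$ if and only if $-k \in I$, the two quantities in the statement are $k_1 = \max I$ and $k_2 = -\min I$, so the assertion $\langle\beta,\alpha\rangle = k_2 - k_1$ is the same as $\langle\beta,\alpha\rangle = -\min I - \max I$.

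The heart of the argument is a short computation. Writing $n = \langle\beta,\alpha\rangle$ and using that the cusp product is linear in its first slot with $\langle\alpha,\alpha\rangle = 2$, one gets $\langle\beta + k\alpha,\alpha\rangle = n + 2k$, hence
\[
(\beta + k\alpha) - \langle\beta + k\alpha,\alpha\rangle\,\alpha = \beta + (-n-k)\alpha .
\]
If $k \in I$, so that $\beta + k\alpha$ is a root, then applying the first root-system axiom to the pair of roots $\beta + k\alpha$ and $\alpha$ shows $\beta + (-n-k)\alpha \in \Phi$, i.e.\ $-n-k \in I$. Thus the affine map $\sigma \colon k \mapsto -n-k$ carries $I$ into itself; being injective it is a bijection of the finite set $I$, and it is strictly order-reversing.

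To finish, I would invoke the elementary fact that an order-reversing bijection of a finite totally ordered set interchanges the minimum and the maximum. Applied to $\sigma$ this gives $-n - \max I = \sigma(\max I) = \min I$, i.e.\ $n = -\min I - \max I = k_2 - k_1$, which is the claim. The only genuinely delicate point --- and the reason this deserves to be spelled out rather than cited --- is the one flagged at the outset: the temptation to reduce to a rank-$2$ (or reduced) situation and quote the unbroken-string lemma must be resisted, since for a non-reduced abstract root system the string really can be broken; the conclusion must instead be read off solely from the reflection symmetry of $I$, which remains valid verbatim.
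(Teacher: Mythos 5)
Your proof is correct and uses the same idea as the paper: the paper's one-line argument "applying the reflection $s_{\alpha}$" to obtain $\langle\beta+k_1\alpha,\alpha\rangle+\langle\beta-k_2\alpha,\alpha\rangle=0$ is precisely your observation that $s_\alpha$ induces an order-reversing bijection of the $\alpha$-string through $\beta$ and hence swaps its endpoints. Your write-up merely makes explicit the step the paper leaves implicit (and correctly notes that no unbroken-string lemma is needed, which matters since $\Phi$ may be non-reduced).
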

\begin{proof}
Applying the reflection $s_{\alpha}$, we get
$\langle\beta+k_1\alpha,\alpha\rangle+\langle\beta-k_2\alpha,\alpha\rangle=0$. Thus
$\langle\beta,\alpha\rangle=k_2-k_1$ follows from $\langle\alpha,\alpha\rangle=2$.
\end{proof}


\begin{definition}\label{D:irreducible root system}
An abstract root system $\Phi$ with a cusp product $\langle\cdot,\cdot\rangle$ is called irreducible if
there exist no non-trivial disjoint unions $\Phi=\Phi_1\bigcup\Phi_2$ such that
$\langle\alpha_1,\alpha_2\rangle=0$ for any $\alpha_1\in\Phi_1$ and $\alpha_2\in\Phi_2$. A root system
$\Phi$ in a Euclidean vector space or in a lattice is called irreducible if it is semisimple and irreducible
as an abstract root system.
\end{definition}

\begin{definition}\label{D:Automorphism group}
Given a root system $\Phi$ in a Euclidean vector space $V$ with a positive definite inner
product $m$, denote by $\Aut(\Phi,m)$ the group of linear isomorphisms of $V$ stabilizing $\Phi$ and
preserving the inner product $m$, and by $W_{\Phi}$ the group of linear isomorphisms of $V$ generated
by $\{s_{\alpha}|\ \alpha\in\Phi\}$.

Given a root system $\Phi$ in a lattice $L$ with a positive definite
inner product $m$, write $\Aut(\Phi,m)$ for the group of linear isomorphisms of $L$ stabilizing $\Phi$
and preserving the inner product $m$, and $W_{\Phi}$ for the group of linear isomorphisms of $L$ generated
by $\{s_{\alpha}|\ \alpha\in\Phi\}$.

Given an abstract root system $\Phi$ with a cusp product
$\langle\cdot,\cdot\rangle$, let $\Aut(\Phi)$ be the group of permutations on $\Phi$ preserving its cusp
product, and by $W_{\Phi}$ the group of permutations on $\Phi$ generated by $\{s_{\alpha}|\ \alpha\in\Phi\}$.
\end{definition}

By definitions we have \[W_{\Phi}\subset\Aut(\Phi,m),\] and the restriction on $\Phi$ gives a homomorphism
\[\pi: \Aut(\Phi,m)\rightarrow\Aut(\Phi).\]

Given a root system $\Phi$ in a Euclidean vector space $V$, we have the following
characterization of lattices $L$ in $V$ such that $\Phi$ is also a root system in $L$.

\begin{proposition}\label{P:root system-lattice}
Given a Euclidean vector space $V$ with a positive definite inner product $m$ denoted by $(\cdot,\cdot)$,
let $\Phi$ be a root system in $V$. Then for any lattice $L\subset V$, $\Phi$ is a root system in $L$
if and only if $\bbZ\Phi\subset L\subset\Lambda_{\Phi}(V)$.
\end{proposition}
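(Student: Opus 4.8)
The plan is to unwind the two relevant definitions; the statement then falls out immediately. Recall that, by Definition~\ref{D:root system in a lattice}, saying ``$\Phi$ is a root system in $L$'' means: (a) $\Phi$ is a finite set of non-zero vectors \emph{lying in $L$}; (b) for any $\alpha,\beta\in\Phi$ one has $\beta-\frac{2(\beta,\alpha)}{(\alpha,\alpha)}\alpha\in\Phi$; and (c) the strong integrality condition $\frac{2(\lambda,\alpha)}{(\alpha,\alpha)}\in\bbZ$ holds for every $\alpha\in\Phi$ and every $\lambda\in L$. Since we are given that $\Phi$ is already a root system in the Euclidean space $V$ (Definition~\ref{D:root system}), condition (b) and the ``finite set of non-zero vectors'' part of (a) are automatic, because they refer only to $\Phi$ and the reflections $s_{\alpha}$, not to $L$. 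So the whole content of ``$\Phi$ is a root system in $L$'' is the conjunction of the inclusion $\Phi\subset L$ and strong integrality.

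For the forward implication I would argue: if $\Phi\subset L$, then since $L$ is a subgroup of $(V,+)$ the root lattice $\bbZ\Phi$ (the subgroup generated by $\Phi$) is contained in $L$; and strong integrality says exactly that every $\lambda\in L$ satisfies $\frac{2(\lambda,\alpha)}{(\alpha,\alpha)}\in\bbZ$ for all $\alpha\in\Phi$, which is precisely the membership condition defining $\Lambda_{\Phi}(V)$. Hence $\bbZ\Phi\subset L\subset\Lambda_{\Phi}(V)$.

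Conversely, assuming $\bbZ\Phi\subset L\subset\Lambda_{\Phi}(V)$: from $\Phi\subset\bbZ\Phi\subset L$ the roots lie in $L$; condition (b) is inherited from $V$ as noted; and for $\alpha\in\Phi$ and $\lambda\in L\subset\Lambda_{\Phi}(V)$ the definition of $\Lambda_{\Phi}(V)$ gives $\frac{2(\lambda,\alpha)}{(\alpha,\alpha)}\in\bbZ$, i.e.\ strong integrality. Therefore $\Phi$ is a root system in $L$; note that the same computation also shows $s_{\alpha}(L)\subset L$ for every $\alpha\in\Phi$, since $s_{\alpha}(\lambda)=\lambda-\frac{2(\lambda,\alpha)}{(\alpha,\alpha)}\alpha$ with integer coefficient and $\alpha\in L$, so the reflections genuinely act on $L$. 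There is no real obstacle here: the only point worth isolating is that strong integrality is, by design, the defining property of $\Lambda_{\Phi}(V)$, and that the combinatorial axiom (b) transfers for free from $V$ to $L$.
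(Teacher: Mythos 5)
Your proof is correct and follows essentially the same route as the paper's: both directions are just the observation that "$\Phi$ is a root system in $L$" unpacks to "$\Phi\subset L$ plus strong integrality," which is exactly the sandwich $\bbZ\Phi\subset L\subset\Lambda_{\Phi}(V)$, with the reflection axiom inherited for free from $V$. Your added remark that $s_{\alpha}(L)\subset L$ is a harmless bonus not needed for the statement.
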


\begin{proof}
Suppose $\Phi$ is a root system in $L$. Since $\Phi\subset L$, $\bbZ\Phi\subset L$. On the other hand,
by the condition strong integrality in Definition \ref{D:root system in a lattice},
$L\subset\Lambda_{\Phi}(V)$. Hence $\bbZ\Phi\subset L\subset\Lambda_{\Phi}(V)$.

Suppose $\bbZ\Phi\subset L\subset\Lambda_{\Phi}(V)$. Then $\Phi\subset L$ and strong integrality holds.
On the other hand, since $\Phi$ is a root system in $V$, it is stable under the reflections
$\{s_{x}|\ x\in\Phi\}$. Hence $\Phi$ is a root system in $L$.
\end{proof}

Recall that a root system $\Phi$ in a Euclidean space (or in a lattice) with an inner product $m$ is called
{\it simply laced} if it is irreducible and $(\alpha,\alpha)_{m}=(\beta,\beta)_{m}$ for any $\alpha,\beta\in\Phi$.
Similarly, an abstrct root system $\Phi$ is called simply laced if it is irreducible and its cusp product takes
values in the set $\{0,1,-1\}$.



An embedding (resp. isomorphism) of root systems in lattices
\[f: (L,\Phi,m)\longrightarrow (L',\Phi',m')\] is a $\bbZ$-linear bijection $f: L\rightarrow L'$ which is
an isometry with respect to $(m,m')$ and satisfying that $f(\Phi)\subset\Phi'$ (resp. $f(\Phi)=\Phi'$). If
$L = L'$, $m=m'$ and $f$ is the identity, we simply say that $\Phi$ is a sub-root system of $\Phi'$.
Similarly, we define embeddings and sub-root systems for root systems in Euclidean vector spaces.

We remark that in the literature, root systems in a Euclidean vector space are often required to be semisimple
and/or reduced (e.g. \cite{Humphreys} and \cite{Knapp}). We require neither. And it is essential for us to
include the inner product in our definition of root systems in a Euclidean vector space or in a lattice.

For simplicity, we abbreviate as root system for a root system in a Euclidean vector space, a root system
in a lattice or an abstract root system simply when it is clear from context which is intended. A root system
in a lattice $(L,\Phi,m)$ (or a root system in a Euclidean vector space $(V,\Phi,m)$) will be denoted simply
by $\Phi$ in the case that the lattice $L$ and the inner product $m$ (or the Euclidean vector space $V$ and
the inner product $m$) are clear from the context.

\begin{proposition}\label{P:maximal root system}
Given a lattice $L$ with a pisitve definite inner product $m$ denoted by $(\cdot,\cdot)_{m}$, there exists a root system
$\Psi_{L}$ in $L$ containing all other root systems in $L$. The root system $\Psi_{L}$ is uniquely determined
by this characterization.
\end{proposition}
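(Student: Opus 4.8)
The plan is to produce $\Psi_{L}$ by brute force: take $A$ to be the set of \emph{all} vectors that the strong integrality axiom could possibly allow as a root, namely all nonzero $\alpha\in L$ with $\tfrac{2(\lambda,\alpha)_{m}}{(\alpha,\alpha)_{m}}\in\bbZ$ for every $\lambda\in L$, and show that $A$ is itself a root system in $L$. Note that $\alpha\in A$ is equivalent to $\alpha^{\vee}:=\tfrac{2\alpha}{(\alpha,\alpha)_{m}}$ lying in the dual lattice $L^{\vee}=\{v\in L\otimes_{\bbZ}\bbR\mid(v,\lambda)_{m}\in\bbZ\ \forall\lambda\in L\}$, which is again a lattice because $m$ is positive definite. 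By the very definition of strong integrality, every root of every root system in $L$ lies in $A$; so once we know $A$ is a root system in $L$, setting $\Psi_{L}:=A$ gives a root system containing all others, and uniqueness follows at once (a second maximal root system $\Psi'$ would satisfy $\Psi'\subset A$ because $\Psi'$ is a root system in $L$, and $A\subset\Psi'$ because $\Psi'$ contains all root systems in $L$).

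There are two things to check about $A$. \textbf{Finiteness:} for $\alpha\in A$ one has $|\alpha|\,|\alpha^{\vee}|=2$ from the definition of $\alpha^{\vee}$; since $\alpha^{\vee}$ is a nonzero vector of the lattice $L^{\vee}$ its length is bounded below by a positive constant, so $|\alpha|$ is bounded above, and a bounded subset of the discrete set $L$ is finite. \textbf{Closure under reflections:} let $\alpha,\beta\in A$. Applying the defining condition for $\alpha$ to $\lambda=\beta\in L$ shows the coefficient $c=\tfrac{2(\beta,\alpha)_{m}}{(\alpha,\alpha)_{m}}$ is an integer, so $s_{\alpha}(\beta)=\beta-c\alpha\in L$, and $s_{\alpha}(\beta)\neq 0$ since $s_{\alpha}$ is an isometry. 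To see $s_{\alpha}(\beta)\in A$, fix $\lambda\in L$; because $s_{\alpha}$ preserves $m$ and is self-adjoint, $\tfrac{2(\lambda,s_{\alpha}\beta)_{m}}{(s_{\alpha}\beta,s_{\alpha}\beta)_{m}}=\tfrac{2(s_{\alpha}\lambda,\beta)_{m}}{(\beta,\beta)_{m}}$, and $s_{\alpha}\lambda=\lambda-\tfrac{2(\lambda,\alpha)_{m}}{(\alpha,\alpha)_{m}}\alpha$ again lies in $L$ since $\alpha\in A$; the right-hand side is then an integer because $\beta\in A$. Hence $s_{\alpha}(\beta)\in A$.

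Putting these together: $A$ is a finite set of nonzero vectors of $L$ which by construction satisfies strong integrality, and for which condition (1) of Definition \ref{D:root system in a lattice} is precisely the closure statement just proved (with $s_{\alpha}(\beta)=\beta-\tfrac{2(\beta,\alpha)_{m}}{(\alpha,\alpha)_{m}}\alpha$). Thus $A$ is a root system in $L$ and $\Psi_{L}:=A$ works. I expect the only point requiring a moment's thought to be finiteness — namely the observation that membership in $A$ pins $\alpha^{\vee}$ inside the dual lattice $L^{\vee}$, which forces $|\alpha|$ to be bounded; the rest is routine bookkeeping with the facts that reflections are $m$-isometries and are self-adjoint.
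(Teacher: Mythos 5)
Your proof is correct and follows essentially the same route as the paper: define $\Psi_{L}$ as the set of all nonzero $\alpha\in L$ satisfying the strong integrality condition, verify closure under reflections via the self-adjoint isometry computation, and deduce maximality and uniqueness directly from strong integrality. In fact your argument is slightly more complete, since you also verify finiteness of this set (via the dual-lattice bound $|\alpha|\,|\alpha^{\vee}|=2$), a point required by Definition \ref{D:root system in a lattice} that the paper's proof passes over in silence.
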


\begin{proof}
Define \[\Psi_{L}=\big\{0\neq\alpha\in L|\ \frac{2(\lambda,\alpha)_{m}}{(\alpha,\alpha)_{m}}\in\bbZ,
\forall\lambda\in\Lambda\big\}.\] We show that $\Psi_{L}$ satisfies the desired conclusion.

First we show that $\Psi_{L}$ is a root system in $L$. Obviously $\Psi_{L}$ satisfies the condition
$(2)$ in Definition \ref{D:root system in a lattice}. For any $\alpha,\beta\in\Psi$ and any
$\lambda\in L$, \begin{eqnarray*}&&\frac{2(\lambda,s_{\alpha}(\beta))_{m}}
{(s_{\alpha}(\beta),s_{\alpha}(\beta))_{m}}
\\&=&\frac{2(s_{\alpha}(\lambda),\beta)_{m}}{(\beta,\beta)_{m}}
\\&=&\frac{2(\lambda-\frac{2(\lambda,\alpha)_{m}}{(\alpha,\alpha)_{m}}\alpha,\beta)_{m}}{(\beta,\beta)_{m}}
\\&=&\frac{2(\lambda,\beta)_{m}}{(\beta,\beta)_{m}}-
\frac{2(\lambda,\alpha)_{m}}{(\alpha,\alpha)_{m}}\frac{2(\alpha,\beta)_{m}}{(\beta,\beta)_{m}}.
\end{eqnarray*}
Since $\alpha,\beta\in\Psi_{L}$ and $\alpha,\lambda\in L$, the number in the last line is an integer.
Hence the vector $s_{\alpha}(\beta)$ is contained in $\Psi_{L}$. This proves the condition $(1)$.
Therefore $\Psi_{L}$ is a root system in $L$.

By the condition strong integrality in Definition \ref{D:root system in a lattice}, $\Psi_{L}$ contains all other
root systems in $L$. This indicates that $\Psi_{L}$ is unique with this characterization.
\end{proof}

From Proposition \ref{P:maximal root system} and its proof, we see that the  condition strong integrality
really matters. In the following proposition, we give some examples of root systems in lattices. These
root systems will be used in the proof of Proposition \ref{P:construction-simple}.





\begin{proposition}\label{P:root system}
Let $G$ be a connected compact Lie group with a biinvariant Riemannian metric $m$, $T$ a connected closed
torus in $G$, $L=\Hom(T,\U(1))$ the weight lattice, and $m$ the induced positive definite inner product
on $L$. Then the set $\Phi(G,T)$ of non-zero $T$-weights in $\frg=\frg_0\otimes_{\bbR}\bbC$ is a root
system in $L$ in the case that
\begin{enumerate}
\item $T$ is a maximal torus of $G$,
\item there exists an involutive automorphism $\theta$ of $\frg_0$ and $\Lie T$ is a maximal abelian
subspace of $\frg_0^{-\theta}=\{X\in\frg_0|\theta(X)=-X\}$, or
\item $\frg_0=\mathfrak{so(8)}$ and $\Lie T$ is conjugate to a Cartan subalgebra of a (unique up to conjugacy)
subalgebra of $\frg_0$ isomorphic to $\frg_2$.
\end{enumerate}
\end{proposition}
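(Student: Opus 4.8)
The plan is to establish the root-system axioms for $\Phi(G,T)$ directly in the lattice $L=\Hom(T,\U(1))$, using Proposition \ref{P:root system-lattice}: it suffices to exhibit, in each of the three cases, that $\Phi(G,T)$ is a root system in $V=L\otimes_\bbR\bbR$ and that $\bbZ\Phi(G,T)\subset L\subset\Lambda_{\Phi(G,T)}(V)$. The first inclusion is automatic since the weights lie in $L$; the real content is the strong-integrality inclusion $L\subset\Lambda_{\Phi}(V)$, i.e.\ that every character of $T$ pairs integrally with every coroot of $\Phi(G,T)$. This is where the biinvariance of the metric $m$ enters: the inner product on $L$ is the one dual to the restriction of $m$ to $\Lie T$, so the coroot of a root $\alpha$ computed via $m$ must be shown to be the genuine coroot $\alpha^\vee\in\Hom(\U(1),T)$ coming from an $\mathfrak{sl}_2$-triple, and then integrality of $\langle\lambda,\alpha^\vee\rangle$ for $\lambda\in L$ is the standard statement that coroots are integral on the character lattice of $T$.

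**Case (1)** is classical: when $T$ is a maximal torus, $\Phi(G,T)$ is the usual root system $\R(G,S)$, it is a root system in $V$ by the structure theory of compact Lie groups, and $L$ sits between the root lattice and the weight lattice, so Proposition \ref{P:root system-lattice} applies. **Case (2)** is the restricted-root-system picture: with $\theta$ an involution and $\Lie T\subset\frg_0^{-\theta}$ maximal abelian, the nonzero $T$-weights on $\frg$ form the restricted root system of the symmetric pair $(\frg_0,\theta)$. **First I would** recall that these restricted roots, together with the reflections generated by the $\mathfrak{sl}_2$-triples attached to each restricted root space, satisfy axioms (1)–(2) of Definition \ref{D:root system} — this is the Cartan–Iwasawa theory of symmetric spaces. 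For strong integrality I would extend $\Lie T$ to a $\theta$-stable maximal torus $S\supset T$ of $G$, restrict the ambient root system $\Phi(G,S)$ to $\Lie T$, observe that $\Phi(G,T)$ consists precisely of the nonzero restrictions, and deduce integrality of the restricted coroots on $L$ from integrality of the genuine coroots on $\Hom(S,\U(1))$ together with the compatibility of the metric under restriction. **Case (3)** is the exceptional embedding $\frg_2\hookrightarrow\mathfrak{so}(8)$ fixed by triality; here $\Lie T$ is a Cartan subalgebra of the $\frg_2$-subalgebra, $\Phi(G,T)$ turns out to be a root system of type $\G_2$ (sitting inside the weight lattice of the $\G_2$ on which $\mathfrak{so}(8)$ decomposes as $\frg_2\oplus 7\oplus 7$), and one checks strong integrality by the same restriction argument, now restricting the $\D_4$ root system along the inclusion of the $\G_2$ Cartan.

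**The step I expect to be the main obstacle** is verifying strong integrality — the inclusion $L\subset\Lambda_\Phi(V)$ — uniformly, i.e.\ making precise that the coroots produced by the metric coincide with the arithmetic coroots dual to $T$, and not merely proportional to them. In Case (1) this is folklore, but in Cases (2) and (3) a restricted root $\alpha$ may have the form $\alpha=\frac12(\beta+\gamma)$ for ambient roots $\beta,\gamma$, so its $m$-coroot need not be the restriction of a single ambient coroot, and one must argue through the $\mathfrak{sl}_2$ generated by the full restricted root space $\frg^\alpha$ (which can be higher-dimensional). Concretely I would normalize $m$ so that $(\,\cdot\,,\,\cdot\,)_m$ on $\Lie S$ is the one inducing the standard coroot formula, note that restriction to $\Lie T$ is then an orthogonal projection, and invoke the known multiplicities of restricted root spaces to confirm $\langle\lambda,\alpha\rangle=\frac{2(\lambda,\alpha)_m}{(\alpha,\alpha)_m}\in\bbZ$ for all $\lambda\in L$. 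Once that integrality is in hand, reflection-stability of $\Phi(G,T)$ and the remaining axioms follow as above, and Proposition \ref{P:root system-lattice} delivers the claim. The non-reduced possibility (a $\BC$-type restricted system in Case (2)) is allowed by our conventions, so no special treatment is needed there.
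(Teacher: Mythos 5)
Your proposal is correct and follows essentially the same route as the paper: in all three cases the real content is strong integrality, and you resolve it exactly as the paper does, by attaching to each (restricted) root $\alpha$ an $\SU(2)$-homomorphism whose coroot circle lies in $T$ and whose Weyl element realizes $s_{\alpha}$ inside $N_{G}(T)/C_{G}(T)$, so that $\frac{2(\lambda,\alpha)_{m}}{(\alpha,\alpha)_{m}}$ is identified with the integer by which $\lambda\in\Hom(T,\U(1))$ restricts to that circle (your initial hope of restricting ambient coroots directly would indeed fail for roots of the form $\frac12(\beta+\gamma)$, but your stated fallback is the correct and intended argument). The only divergence is cosmetic: in case (3) the paper simply notes $\Phi(G,T)=\Phi(H,T)$ for the $\G_2$-subgroup $H$ having $T$ as a maximal torus and quotes case (1), rather than restricting the $\D_4$ system along the $\G_2$ Cartan.
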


\begin{proof}
We prove $(1)$ first. It is well-known that $\Phi(G,T)$ is a root system in $L\otimes_{\bbZ}\bbR=
\Hom_{\bbR}(\Lie T,i\bbR)$. Thus it satisfies the condition $(1)$ in Definition \ref{D:root system in a lattice}.
We need to show that it satisfies the condition $(2)$. Write $r=\dim T=\rank G$.
For each $\alpha\in\Phi(G,T)$, there exists a homomorphism (cf. \cite{Knapp}, Pages 143-149)
\[f_{\alpha}:\SU(2)\times\U(1)^{r-1}\longrightarrow G\] such that $f_{\alpha}(\U(1)^{r-1})=(\ker\alpha)_0$,
$f_{\alpha}(T'\times \U(1)^{r-1})=T$, and the complexified Lie algebra of $f_{\alpha}(\SU(2))$ is the
$\mathfrak{sl}_2$ subalgebra corresponding to the root $\alpha$. Here $T'$ is the subgroup of $\SU(2)$ of
diagonal matrices. We have $T'\cong\U(1)$ and this gives a linear character $\alpha_0$ of $T'$. We can normalize
$\alpha_0$ such that $\alpha\circ (f_{\alpha}|_{T'})$ equals $2\alpha_0$ . Let
\[W=N_{G}(T)/C_{G}(T),\] \[n_{\alpha}=f_{\alpha}(\left(\begin{array}{cc}0&1\\-1&0\\\end{array}\right))\]
and \[s_{\alpha}=\Ad(n_{\alpha})\in W.\] For any $\lambda\in L$, let
$k\alpha_0=\lambda\circ(f_{\alpha}|_{T'})$. Then $k\in\bbZ$. We prove that $(2\lambda-k\alpha,\alpha)_{m}=0$.
This is equivalent to $s_{\alpha}(2\lambda-k\alpha)=2\lambda-k\alpha$, and also equivalent to
\begin{equation}\label{Eq:H} (2\lambda-k\alpha)(s_{\alpha}(H))=(2\lambda-k\alpha)(H)\end{equation} for any
$H\in\Lie T$. Write $H$ as $H=H_1+H_2$ where $H_1\in\Lie (f_{\alpha}(T'))$ and $H_2\in\Lie(\ker\alpha)$. Then we
have $s_{\alpha}(H_1)=-H_1$ and $s_{\alpha}(H_2)=H_2$. By this Equation (\ref{Eq:H}) is equivalent to
$(2\lambda-k\alpha)(H_1)=0$. This follows from $k\alpha_0=\lambda\circ(f_{\alpha}|_{T'})$ and
$\alpha\circ(f_{\alpha}|_{T'})=2\alpha_0$. Thus \[\frac{2(\lambda,\alpha)_{m}}{(\alpha,\alpha)_{m}}
=\frac{2(\frac{k}{2}\alpha,\alpha)_{m}}{(\alpha,\alpha)_{m}}=k\in\bbZ.\] Hence $\Phi(G,T)$ satisfies the
condition $(2)$ in Definition \ref{D:root system in a lattice}.

The proof for $(2)$ is similar as the proof for $(1)$. In \cite{Knapp}, Pages 379-380, it is proven that
$\Phi(G,T)$ is a root system in $L\otimes_{\bbZ}\bbR=\Hom_{\bbR}(\Lie T,i\bbR)$. Thus it satisfies the condition
$(1)$ in Definition \ref{D:root system in a lattice}. Similar as in the above proof, there is an reflection
$s_{\alpha}\in N_{G}(T)/C_{G}(T)$. Using it we are able to show that $\Phi(G,T)$ satisfies the condition $(2)$ in
Definition \ref{D:root system in a lattice}.

For $(3)$, let $H$ denote a closed subgroup of $H$ isomorphic to $\G_2$ and with $T$ a maximal torus of $H$.
Then we have $\Phi(G,T)=\Phi(H,T)$. In this way $(3)$ follows from $(1)$.
\end{proof}

In the thory of integral lattice, there is a notion of root system of a lattice, which is different with
our discussion here.

\begin{remark}\label{R:lattice-root system}
Given a lattice $L$ with a positive definite inner product $m$ denoted by $(\cdot,\cdot)$, it is called
an integral lattice if $(\lambda_1,\lambda_2)\in\bbZ$ for any $\lambda_1,\lambda_2\in L$. In this case
the set $X=\{\lambda\in L|\ (\lambda,\lambda)=1\textrm{ or }2\}$ is called the root system of $L$.
In general $X$ is a proper subset of $\Psi_{L}$.
\end{remark}

\begin{remark}\label{R:RD-AR}
Root systems (in Euclidean vector spaces) classify compact connected Lie groups up to isogeny, however
root data classify them up to isomorphism. Hence the difference between root datum and root system in a
Euclidean vector space is clear. In another direction, we think that root datum and root system in a
lattice should be equivalent objects. That is to say, there should exist a construction from a root datum
to a root system in a lattice, and vice versa.
\end{remark}

\section{The Larsen-Pink method}\label{S:characters}

In this section, given a compact Lie group $G$ with a biinvariant Riemannian metric $m$ and a closed
connected torus $T$ in $G$, we define root systems on $T$ and get two specific
root systems $\Psi_{T}$ and $\Psi'_{T}$. Moreover, we define a character $F_{\Phi,W}$ for each root
system $\Phi$ on $T$ and a finite group $W$ acting on the Lie algebra of $T$ and satisfying some
condition (cf. Definition \ref{D:characters}). We study properties of the character $F_{\Phi,W}$ and
their relation with dimension data. In this way, we reduce the dimension datum problem and the linear
dependence problem to comparing these characters and getting linear relations among them.

Let $G$ be a compact Lie group with a biinvariant Riemannian metric $m$. For a closed connected torus
$T$ contained in $G$, let \[\Lambda_{T}=\Hom(T,U(1))\] be the integral weight lattice of $T$
and $\Lambda_{\bbQ,T}=\Lambda_{T}\otimes_{\bbZ}\bbQ$ the $\bbQ$-weight space of $T$. By restriction the Riemannian
metric $m$ induces an inner product on the Lie algebra $\frt_0=\Lie T$ of $T$. Denote it by $m$ as well.
Write $(\frt_0)^{\ast}=\Hom_{\bbR}(\frt_0,\bbR)$ and $(\frt_0)_{\bbC}^{\ast}=\Hom_{\bbR}(\frt_0,\bbC)$.
Then $(\frt_0)^{\ast}\subset(\frt_0)_{\bbC}^{\ast}$ and
$(\frt_0)_{\bbC}^{\ast}=(\frt_0)^{\ast}\oplus i(\frt_0)^{\ast}$. Since $m$ is positive definite, in
particular non-degenerate, the induced linear map
\[p: \frt_0\longrightarrow(\frt_0)^{\ast}=\Hom_{\bbR}(\frt_0,\bbR)\] is a linear isomorphism.
For any $\lambda,\mu\in(\frt_0)^{\ast}$, let
\[(\lambda,\mu)_{m}=-(p^{-1}\lambda,p^{-1}\mu)_{m}.\] Then $(\cdot,\cdot)_{m}$ is a negative definite inner
product on $(\frt_0)^{\ast}$. For any $\lambda_1,\lambda_2,\mu_1,\mu_2\in(\frt_0)^{\ast}$, let
\[(\lambda_1+i\lambda_2,\mu_1+i\mu_2)_{m}=((\lambda_1,\mu_1)_{m}-(\lambda_2,\mu_2)_{m})+
i((\lambda_1,\mu_2)_{m}+(\lambda_2,\mu_1)_{m}).\] Then $(\cdot,\cdot)_{m}$ is a non-degenerate symmetric
bilinear form on $(\frt_0)_{\bbC}^{\ast}$. It is positive definite on $i(\frt_0)^{\ast}$ by restriction.
Since $\Lambda\subset\Lambda_{\bbQ}\subset i\frt_0^{\ast}$, they inherit this inner product.

\begin{definition}\label{D:Gamma-Gamma0}
Let $\Gamma$ be the group of automorphisms of $T$ preserving the Riemannian metric $m$ on it, and
let \[\Gamma^{\circ}=N_{G}(T)/C_{G}(T).\]
\end{definition}

By this definiton we have $\Gamma^{\circ}\subset\Gamma$. The group $\Gamma$ has an equivalent definition
as the group of automorphisms of the lattie $\Lambda_{T}$ preversing the inner product $m$ on $\Lambda_{T}$.
Choose a maximal torus $S$ of $G$ containing $T$ and let $W_{G}=N_{G}(S)/C_{G}(S)$ be the Weyl group of $G$.
Then $\Gamma^{\circ}$ has an equivalent definition as the image of $\Stab _{W_{G}}(T)$ in $\Aut(T)$.
Given a non-zero element $\alpha\in\Lambda_{T}$, define $s_{\alpha}:i\frt_0^{\ast}\longrightarrow
i\frt_0^{\ast}$ by \[s_{\alpha}(\lambda)=\lambda-\frac{2(\lambda,\alpha)_{m}}{(\alpha,\alpha)_{m}}\alpha,\
\forall\lambda\in i\frt_0^{\ast}.\] Then $s_{\alpha}$ is a reflection on $i\frt_0^{\ast}$.

\begin{definition}\label{D:subroot}
A subset $\Phi\subset\Lambda_{T}$ is called a root system on $T$ if for any $\alpha\in\Phi$ and
$\lambda\in\Lambda_{T}$, \[s_{\alpha}(\Phi)=\Phi\] and \[\frac{2(\lambda,\alpha)_{m}}{(\alpha,\alpha)_{m}}
\in\bbZ.\] In other words, a subset $\Phi\subset\Lambda_{T}$ is a root system on $T$ if it is a root system
in $\Lambda_{T}$.
\end{definition}

\begin{definition}\label{D:maxroot}
Let \[\Psi_{T}=\big\{0\neq\alpha\in\Lambda_{T}|\ \frac{2(\lambda,\alpha)_{m}}{(\alpha,\alpha)_{m}}\in\bbZ,\
\forall\lambda\in\Lambda_{T}\big\}.\]
\end{definition}

By Proposition \ref{P:maximal root system}, $\Psi_{T}$ is the unique maximal root system in the lattice
$\Lambda_{T}$. By the definition $\Psi_{T}$ is stable under the action of $\Gamma=\Aut(\Lambda_{T},m)$. Thus
we have \[\Gamma=\Aut(\Psi_{T},m),\] where $\Aut(\Psi_{T},m)$, the automorphism of the root system $\Psi_{T}$
in the lattice $\Lambda_{T}$, is defined in Definition \ref{D:Automorphism group}.

\begin{proposition}\label{P:Psi}
For any closed connected subgroup $H$ of $G$ with $T$ a maximal torus of $H$, the root system of $H$,
$\Phi(H)=\Phi(H,T)$ is a root system on $T$. Moreover, it is a reduced root system.
Defined as above, $\Psi_{T}$ is a root system on $T$ and it contains all root systems on $T$. In particular
$\Phi(H,T)\subset\Psi_{T}$ for any closed subgroup $H$ of $G$ with $T$ a maximal torus of $H$.
\end{proposition}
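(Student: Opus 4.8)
The plan is to deduce the proposition directly from Propositions \ref{P:root system} and \ref{P:maximal root system} together with the structure theory of compact connected Lie groups recalled in Section \ref{S:root systems}. Once the objects are identified correctly, each of the three assertions --- that $\Phi(H,T)$ is a root system on $T$, that it is reduced, and that $\Psi_{T}$ is the largest root system on $T$ --- costs only a few lines. The only real care needed is bookkeeping: that passing from the ambient group $G$ to the subgroup $H$ changes neither the weight lattice nor the inner product attached to $T$, and that ``root system on $T$'' means, by Definition \ref{D:subroot}, ``root system in the lattice $\Lambda_{T}$''.

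First I would apply Proposition \ref{P:root system}(1) with $G$ replaced by $H$. The restriction of $m$ to $H$ is again a biinvariant Riemannian metric, because left and right translations by elements of $H$ are the restrictions to the invariant submanifold $H$ of the corresponding (metric-preserving) translations on $G$; the torus $T$ is a maximal torus of $H$ by hypothesis; and the integral weight lattice $\Hom(T,\U(1))$ together with its induced inner product depend only on $T$ and on $m|_{\frt_0}$, hence agree with $\Lambda_{T}$ and its inner product as set up in this section. Proposition \ref{P:root system}(1) then says $\Phi(H,T)$ is a root system in $\Lambda_{T}$, which is precisely the assertion that it is a root system on $T$. Its reducedness follows from the reducedness of the root datum $\RD(H,T)$ recalled in Section \ref{S:root systems}: the integrality axiom forces the only positive scalar multiples of a root that can again be roots to be $\tfrac12$, $1$ and $2$ times it, and the condition $2\alpha\notin R$ for $\alpha\in R$ then excludes both $2\alpha$ and $\tfrac12\alpha$; alternatively one may simply cite \cite{Knapp}.

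For $\Psi_{T}$, I would note that the defining formula in Definition \ref{D:maxroot} is verbatim the set $\Psi_{L}$ built in the proof of Proposition \ref{P:maximal root system} for the lattice $L=\Lambda_{T}$ with inner product $m$. Hence by that proposition $\Psi_{T}$ is a root system in $\Lambda_{T}$, i.e. a root system on $T$, and it contains every root system in $\Lambda_{T}$, i.e. every root system on $T$; in particular it contains $\Phi(H,T)$ for every closed connected $H$ of which $T$ is a maximal torus, by the first part. For a possibly disconnected closed subgroup $H$ with $T$ a maximal torus, $T$ is also a maximal torus of the identity component $H_{0}$, and $\Phi(H,T)=\Phi(H_{0},T)$ since the $T$-weights of $\frh_0\otimes_{\bbR}\bbC$ only involve $\frh_0=\Lie H=\Lie H_{0}$; so that containment reduces to the connected case already treated.

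The main obstacle --- mild as it is --- is the verification in the second paragraph that the weight lattice and inner product attached to $T$ inside $H$ are literally those attached to $T$ inside $G$, so that Proposition \ref{P:root system} applies word for word, and that that proposition really uses only connectedness of the ambient group and the existence of a biinvariant metric, both of which $H$ inherits from $G$. No new estimate or construction is required.
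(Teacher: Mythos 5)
Your proof is correct and follows the same route as the paper: the first assertion is exactly Proposition \ref{P:root system}(1) applied with $H$ in place of $G$ (the weight lattice and inner product on $T$ being unchanged), and the second is Proposition \ref{P:maximal root system} applied to $L=\Lambda_{T}$. The extra care you take over reducedness, the invariance of the metric and lattice under passage to $H$, and the reduction of the disconnected case to $H_0$ is detail the paper leaves implicit, but it does not change the argument.
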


\begin{proof}
The fact that subset $\Phi(H,T)$ is a root system on $T$ is proven in Proposition \ref{P:root system}. The
fact that  the subset $\Psi_{T}$ is a root system on $T$ and contains all other root systems on $T$ is
proved in Proposition \ref{P:maximal root system}.
\end{proof}

In general the rank of $\Psi_{T}$ can be any integer between $0$ and $\dim T$. It happens that
$\rank\Psi_{T}=\dim T$ if and only if there exists a root system on $T$ of rank equal to $\dim T$. In particular
if there exists a semisimple closed subgroup $H$ of $G$ with $T$ a maximal torus of $H$, then $\rank\Psi_{T}$
is equal to $\dim T$. We choose and fix an ordering on $\Lambda_{T}$ so that we get a positive system
$(\Psi_{T})^{+}$.

\begin{remark}\label{R:compare Psi}
In \cite{Larsen-Pink}, Larsen-Pink defined a root system $\Psi$ from a dimension datum. In general this
$\Psi$ is a root system on $T$ and is strictly contained in our $\Psi_{T}$.
\end{remark}

The root system $\Psi_{T}$ depends on the torus $T$ and a biinvariant Riemannian metric $m|_{T}$. If we replace
$G$ by a group isogenous of it, then the lattice $\Lambda_{T}$ becomes another lattice isogeneous to it. The root
system $\Psi_{T}$ will change probably after this modification. On the other hand, $\Psi_{T}$ is also sensitive
with the biinvariant Riemannian metric $m|_{T}$. In a different direction, we can define another root
system $\Psi'_{T}$ which depends on $T$ and the the isogeny class of $G$. However, if we do not know $G$ and
$T$ explicitly, it is hard to determine $\Psi'_{T}$.

\begin{definition}\label{D:Psi prime}
Define $\Psi'_{T}$ as the sub-root system of $\Psi_{T}$ generated by $\{\Phi(H,T)\}$, where $H$
runs through all closed connected subgroups $H$ of $G$ with $T$ a maximal torus of $H$.
\end{definition}

It is clear that $\Psi'_{T}$ is stable under $\Gamma^{\circ}$. However it is not stable under $
W_{\Psi_{T}}$, $\Gamma$ or $\Aut(\Psi_{T})$ in general.

\begin{remark}\label{R:Psi-simple group}
With $\Psi_{T}$, we are able to consider all root systems $$\{\Phi(H,T)|\ T \textrm{ is a maximal
torus of } H\}$$ together by viewing them as sub-root systems of $\Psi_{T}$. In the case that $G$ is a
connected simple Lie group and $T$ is a maximal torus of $G$, let $\Psi_0=\Phi(G,T)$ be the root
system of $G$. Then $\Psi_0\subset\Psi_{T}$.
If $\Psi_0$ is simply laced, then for any sub-root system $\Phi$ of $\Psi_0$ there exists a closed
subgroup $H$ of $G$ with $T$ a maximal torus of $H$ such that $\Phi(H,T)=\Phi$. If $\Psi_0$ is not
simply laced, then not every sub-root system $\Phi$ of $\Psi_0$ is of the form $\Phi(H,T)=\Phi$
where $H$ is a closed subgroup of $G$ with $T$ a maximal torus of $H$.
In the case that $\Psi_0$ is of type $\A_{n}$ ($n\geq 4$), $\E_6$, $\E_7$, $\E_8$, $\F_4$ or $\G_2$, it must be that
$\Psi_{T}=\Psi_0$. However, if $\Psi_0$ is of type $\A_2$, $\B_n$, $\C_{n}$ or $\D_{n}$ ($n\geq 3$), then
$\Psi_{T}$ may be strictly larger than $\Psi_0$. Precisely to say, in the case that $\Psi_0=\A_{2}$, it is possible
that $\Psi_{T}=\G_{2}$. In the case that $\Psi_0=\B_{n}$ or $\C_{n}$, it is possible that $\Psi_{T}=\BC_{n}$. In the
case that $\Psi_0=\D_{n}$, it is possible that $\Psi_{T}=\B_{n}$, $\C_{n}$ or $\BC_{n}$.
\end{remark}

\begin{lemma}\label{L:reflection group}
Given a root system $\Psi$ and its Weyl group $W_{\Psi}$, let $X$ be a non-empty subset of $\Psi$ and
$W$ be the subgroup of $W_{\Psi}$ generated by reflections corresponding to elements in $X$.
If there exist no proper sub-root systems of $\Psi$ containing $X$, then $W=W_{\Psi}$.
\end{lemma}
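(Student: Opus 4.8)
The plan is to show that the $W$-orbit of $X$ is itself a sub-root system of $\Psi$ containing $X$; by the hypothesis this orbit must then be all of $\Psi$, and that in turn forces $W=W_{\Psi}$.

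First I would set $\Phi=\{w(\alpha)\mid w\in W,\ \alpha\in X\}=W\cdot X$. Since $X\subset\Psi$ and $W\subset W_{\Psi}$ preserves $\Psi$, we have $\Phi\subset\Psi$; in particular $\Phi$ is a finite set of non-zero vectors, and it inherits the (strong) integrality condition directly from $\Psi$. Taking $w$ to be the identity shows $X\subset\Phi$. Next I would check closure under the reflections $\{s_{\alpha}\mid\alpha\in\Phi\}$, using the standard identity $s_{w(\alpha)}=w\,s_{\alpha}\,w^{-1}$, which holds because every element of $W_{\Psi}$ preserves the inner product $m$ (so conjugating the reflection formula by $w$ changes $\alpha$ to $w(\alpha)$). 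Concretely, if $\alpha=w_1(\gamma_1)$ and $\beta=w_2(\gamma_2)$ lie in $\Phi$ with $\gamma_1,\gamma_2\in X$ and $w_1,w_2\in W$, then $s_{\alpha}=w_1 s_{\gamma_1}w_1^{-1}\in W$ (since $s_{\gamma_1}\in W$ by the definition of $W$), whence $s_{\alpha}(\beta)=(w_1 s_{\gamma_1}w_1^{-1}w_2)(\gamma_2)\in W\cdot X=\Phi$. Thus $\Phi$ is a sub-root system of $\Psi$ containing $X$.

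By hypothesis there is no proper sub-root system of $\Psi$ containing $X$, so $\Phi=\Psi$. Finally, $W_{\Psi}$ is generated by $\{s_{\alpha}\mid\alpha\in\Psi\}=\{s_{\alpha}\mid\alpha\in\Phi\}$, and for each $\alpha=w(\gamma)\in\Phi$ with $w\in W$ and $\gamma\in X$ we have $s_{\alpha}=w s_{\gamma}w^{-1}\in W$. Hence every generator of $W_{\Psi}$ lies in $W$, giving $W_{\Psi}\subset W$; combined with $W\subset W_{\Psi}$ this yields $W=W_{\Psi}$. The only step that needs a moment's care is verifying that $W\cdot X$ obeys the root-system axioms, but integrality is inherited from $\Psi$ and closure under reflections is immediate from the conjugation identity, so no genuine obstacle arises.
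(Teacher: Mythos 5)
Your proof is correct and follows essentially the same strategy as the paper: both arguments manufacture a sub-root system of $\Psi$ containing $X$ (you use the orbit $W\cdot X$, the paper uses the slightly larger set $X'=\{\alpha\in\Psi\mid s_{\alpha}\in W\}$, closed under reflections via the same conjugation identity $s_{w(\alpha)}=ws_{\alpha}w^{-1}$) and then invoke the hypothesis to conclude the sub-root system is all of $\Psi$, whence $W\supset W_{\Psi}$. The two auxiliary sets differ only cosmetically, so there is nothing further to reconcile.
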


\begin{proof}
Let $X'=\{\alpha\in\Psi|\ s_{\alpha}\in W\}$. For any two roots $\alpha,\beta$ contained in $X'$, since
\[s_{-\alpha}=s_{\alpha}\in W\] and \[s_{s_{\alpha}(\beta)}=s_{\alpha}s_{\beta}s_{\alpha}\in W,\] we get
$-\alpha\in X'$ and $s_{\alpha}(\beta)\in X'$. That means, $X'$ is a sub-root system of $\Psi$. On the
other hand, we have $X\subset X'$. By the condition that there exist no proper sub-root systems of $\Psi$
containing $X$, we get $X'=\Psi$. Therefore $W=W_{\Psi}$.
\end{proof}


\begin{proposition}\label{P:Gamma0-Psi'}
Given a compact Lie group $G$ and a biinvariant Riemannian metric $m$ on $G$, for any clsoed connected
torus $T$ in $G$, we have $W_{\Psi'_{T}}\subset\Gamma^{\circ}$
\end{proposition}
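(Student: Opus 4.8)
The plan is to show that every reflection $s_\alpha$ with $\alpha$ a root of $\Psi'_T$ lies in $\Gamma^\circ = N_G(T)/C_G(T)$, and then invoke the fact that $\Psi'_T$ is generated by the root systems $\Phi(H,T)$ together with Lemma \ref{L:reflection group}. First I would recall that $\Psi'_T$ is, by Definition \ref{D:Psi prime}, the sub-root system of $\Psi_T$ generated by $\bigcup_H \Phi(H,T)$, where $H$ runs over closed connected subgroups of $G$ having $T$ as a maximal torus. Since the Weyl group $W_{\Psi'_T}$ is generated by the reflections $s_\beta$ for $\beta \in \Psi'_T$, and since (by Lemma \ref{L:reflection group} applied with $\Psi = \Psi'_T$ and $X = \bigcup_H \Phi(H,T)$, which generates $\Psi'_T$ and hence lies in no proper sub-root system of it) the subgroup of $W_{\Psi'_T}$ generated by the reflections $s_\beta$ with $\beta \in \bigcup_H \Phi(H,T)$ is all of $W_{\Psi'_T}$, it suffices to prove $s_\beta \in \Gamma^\circ$ for each root $\beta$ of each such $H$.

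The key step is therefore the following local claim: if $H$ is a closed connected subgroup of $G$ with $T$ a maximal torus of $H$, and $\beta \in \Phi(H,T)$, then the reflection $s_\beta$ acting on $\frt_0^*$ (equivalently on $\Lambda_T$) is realized by an element of $N_G(T)$. To see this, I would work inside $H$: since $T$ is a maximal torus of $H$, the reflection $s_\beta$ lies in the Weyl group $W_H = N_H(T)/C_H(T)$; indeed, mimicking the construction in the proof of Proposition \ref{P:root system}(1), there is a homomorphism $f_\beta : \SU(2) \times \U(1)^{r-1} \to H$ (with $r = \dim T$) carrying a maximal torus of $\SU(2)\times \U(1)^{r-1}$ onto $T$ and realizing the $\mathfrak{sl}_2$-triple attached to $\beta$; setting $n_\beta = f_\beta\!\left(\begin{smallmatrix}0&1\\-1&0\end{smallmatrix}\right) \in N_H(T) \subset N_G(T)$, the element $n_\beta$ normalizes $T$ and its action $\Ad(n_\beta)$ on $\frt_0$ is precisely the reflection $s_\beta$. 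Hence $s_\beta \in N_G(T)/C_G(T) = \Gamma^\circ$. Combining with the reduction of the previous paragraph, all generators of $W_{\Psi'_T}$ lie in $\Gamma^\circ$, so $W_{\Psi'_T} \subset \Gamma^\circ$.

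I expect the main subtlety to be bookkeeping about the ambient group in which the reflection is constructed: one must be careful that $n_\beta$, built using the connected subgroup $H$, genuinely normalizes $T$ in $G$ and induces the correct reflection on the \emph{full} lattice $\Lambda_T$ (not merely on the root lattice of $H$), and that the identification $\Gamma^\circ \cong \operatorname{Stab}_{W_G}(T)$ in $\Aut(T)$ from the discussion after Definition \ref{D:Gamma-Gamma0} is compatible with this. This is routine once one notes that $s_\beta$ acts on $\frt_0$ by $-1$ on the line spanned by the coroot of $\beta$ and trivially on its orthogonal complement, and that this description is intrinsic to $T$ and $\beta$, independent of $H$. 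There is no real obstacle; the argument is essentially a one-line consequence of the standard structure theory already used in Proposition \ref{P:root system}, organized via Lemma \ref{L:reflection group}.
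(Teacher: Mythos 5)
Your proposal is correct and follows essentially the same route as the paper: both arguments realize each reflection $s_\beta$, $\beta\in\Phi(H,T)$, by an element $n_\beta\in N_H(T)\subset N_G(T)$ coming from the $\SU(2)\times\U(1)^{r-1}$ homomorphism of Proposition \ref{P:root system}(1), and both invoke Lemma \ref{L:reflection group} to conclude that these reflections generate all of $W_{\Psi'_T}$. The only cosmetic difference is that the paper first packages the construction into the set $X$ of roots realizable by rank-one subgroups (which it then reuses in Corollary \ref{C:Psi'-A1}), whereas you apply the lemma directly to $\bigcup_H\Phi(H,T)$; the underlying argument is the same.
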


\begin{proof}
Recall that $\Psi'_{T}$ is the sub-root system of $\Psi_{T}$ generated by $\{\Phi(H,T)\subset\Psi_{T}\}$,
where $H$ run over closed connected subgroups of $G$ with $T$ a maximal torus of $H$. Let $X$ be the subset
of $\Psi'_{T}$ defined as: $\alpha\in X$ if and only if there exists a closed connected subgroup $H$ of $G$
with $T$ a maiximal tours of $H$ and the root system of $H$ being $\{\pm{\alpha}\}$. Let $W$ be the subgroup
of $W_{\Psi'_{T}}$ generated by reflections corresponding to elements in $X$.

We show that $W\subset\Gamma^{\circ}$ and there exist no proper sub-root systems of $\Psi'_{T}$ containing $X$.
For any $\alpha\in X$, there exists a closed connected subgroup $H$ of $G$ with $T$ a maximal torus of $H$
and the root system of $H$ being $\{\pm{\alpha}\}$. Thus there exists a finite surjection
\[p:\SU(2)\times\U(1)^{r-1}\longrightarrow H.\] Let
\[n_{\alpha}=p(\left(\begin{array}{cc}0&1\\-1&0\\\end{array}\right),1),\]
where $\left(\begin{array}{cc}0&1\\-1&0\\\end{array}\right)\in\SU(2)$ and $1\in\U(1)^{r-1}$. Then
$n_{\alpha}\in N_{G}(T)$ and $\Ad(n_{\alpha})|_{T}=s_{\alpha}$. Hence $s_{\alpha}\in\Gamma^{\circ}$.
Therefore $W\subset\Gamma^{\circ}$.
Suppose there exists a proper sub-root system $\Psi'$ of $\Psi'_{T}$ containing $X$. For any closed
connected subgroup $H$ of $G$ with $T$ a maximla torus of $H$, one has that each root of $\Phi(H,T)$ is
contained in $X$. Hence $\Phi(H,T)\subset X\subset\Psi'$. This contradicts to the condition that
the root systems $\Phi(H,T)$ generate $\Psi'_{T}$.

By the above two facts and Lemma \ref{L:reflection group} we get $W_{\Psi'_{T}}=W\subset\Gamma^{\circ}$.
\end{proof}

\begin{corollary}\label{C:Psi'-A1}
For any root $\alpha$ in $\Psi'_{T}$, there exists a closed connected subgroup $H$ of $G$ with $T$ a maximal
torus of $H$ and having root system equal to $\{\pm{\alpha}\}$. The abstract root system $\Psi'_{T}$ depends
only on the group $G$ and the connected torus $T$, not on the biinvariatn Riemannian metric $m$.
\end{corollary}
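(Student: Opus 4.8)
The first assertion is essentially contained in the proof of Proposition \ref{P:Gamma0-Psi'}, so the plan is to extract it and then deduce the second. For the first assertion, let $\alpha\in\Psi'_{T}$ be arbitrary, and recall from that proof the subset $X\subset\Psi'_{T}$ consisting of those roots $\beta$ such that some closed connected subgroup $H$ of $G$ with $T$ a maximal torus of $H$ has root system exactly $\{\pm\beta\}$. We showed there that $X$ generates $\Psi'_{T}$ as a root system (otherwise the root systems $\Phi(H,T)$ would all lie in a proper sub-root system, contradicting the definition of $\Psi'_{T}$), and that the reflections $s_{\beta}$, $\beta\in X$, all lie in $\Gamma^{\circ}$. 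So I would first record the set $X' = \{\beta\in\Psi'_{T}\mid s_\beta\in W\}$ where $W = \langle s_\beta : \beta\in X\rangle$; by the argument in Lemma \ref{L:reflection group}, $X'$ is a sub-root system of $\Psi'_{T}$ containing $X$, hence $X' = \Psi'_{T}$. In particular $s_\alpha\in W\subset\Gamma^\circ$.

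Now I need to upgrade ``$s_\alpha\in\Gamma^\circ$'' to ``$\alpha\in X$'', i.e.\ produce an actual rank-one subgroup with root system $\{\pm\alpha\}$. Since $s_\alpha = \Ad(n)|_T$ for some $n\in N_G(T)$, consider the centralizer $C_G(T')$ of the codimension-one subtorus $T' = (\ker\alpha)_0\subset T$. This is a closed connected subgroup having $T$ as a maximal torus, its root system is the set of roots of $G$ relative to $T$ vanishing on $T'$, which is $\{\pm\alpha\}$ together with possibly $\pm 2\alpha$ or $\pm\frac{1}{2}\alpha$; passing if necessary to the derived group or to the subgroup generated by $T$ and the $\mathfrak{sl}_2$ attached to $\alpha$ isolates a rank-one subgroup whose root system on $T$ is exactly $\{\pm\alpha\}$. (One must check this subgroup really is a subgroup of $G$ with $T$ as \emph{maximal} torus and that $\alpha$, rather than a multiple, is the root — this uses that $\alpha\in\Psi'_{T}$ and the structure theory of Proposition \ref{P:root system}.) This is the step I expect to be the main obstacle: ensuring that the rank-one subgroup extracted from the centralizer or the $\mathfrak{sl}_2$-triple has root system precisely $\{\pm\alpha\}$ and not $\{\pm2\alpha\}$ when $2\alpha$ also happens to lie in $\Psi_{T}$, and that $\alpha$ itself (not $2\alpha$) occurs as a root of some subgroup because $\alpha$ is a root of $\Psi'_{T}$ by hypothesis.

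For the second assertion, I would argue that $X$, hence the abstract root system it generates (which is $\Psi'_{T}$ with its cusp product), is manifestly defined purely in terms of $G$ and $T$: the condition ``$H$ is a closed connected subgroup of $G$ with $T$ a maximal torus and root system $\{\pm\beta\}$'' does not mention the metric $m$, and neither does the cusp product $\langle\cdot,\cdot\rangle$ once we know which pairs of roots are proportional or orthogonal inside $\Psi_{T}$ — but orthogonality and proportionality relations among roots of subgroups are intrinsic (a root $\beta$ of a rank-one subgroup $H$ is determined as an element of $\Lambda_{T}$ independent of $m$, and $\langle\beta,\gamma\rangle$ for $\beta,\gamma\in X$ can be read off from root strings via Proposition \ref{P:cusp product}, which only uses the additive structure of $\Lambda_{T}$ and membership in $\Psi'_{T}$). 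Changing $m$ changes the ambient maximal root system $\Psi_{T}$ and the realization of $\langle\cdot,\cdot\rangle$ by an inner product, but it does not change the finite list of $T$-weights that arise as roots of rank-one subgroups, nor the integers $\langle\beta,\gamma\rangle$, so the isomorphism type of the abstract root system $\Psi'_{T}$ is unchanged. I would phrase this as: $\Psi'_{T} = \langle X\rangle$ with $X$ and all root strings within it intrinsic to $(G,T)$, whence the abstract root system is intrinsic. $\qed$
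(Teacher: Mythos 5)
There is a genuine gap in your treatment of the first assertion, and it sits exactly where you flagged it. Your first step only re-derives $s_{\alpha}\in\Gamma^{\circ}$, which is Proposition \ref{P:Gamma0-Psi'} and does not by itself put $\alpha$ in $X$. Your attempted upgrade then fails on two counts. First, $C_{G}(T')$ with $T'=(\ker\alpha)_0$ contains every maximal torus of $G$ containing $T'$, so it has full rank in $G$; $T$ is not a maximal torus of it unless $\dim T=\rank G$, and the paper's notion of root system of a subgroup on $T$ requires $T$ to be a maximal torus of that subgroup. Second, and more seriously, the existence of ``the $\mathfrak{sl}_2$ attached to $\alpha$'' inside $\frg$ with $T$-weight exactly $\alpha$ (rather than a multiple, or nothing at all --- the generalized weight space $\frg_{\alpha}$ need not bracket with $\frg_{-\alpha}$ to produce a copy of $\mathfrak{su}(2)$ normalizing $T$) is precisely the content of the statement being proved; invoking ``the structure theory of Proposition \ref{P:root system}'' does not supply it, since that proposition is about the situations where $\Phi(G,T)$ is already known to be a root system, not about an arbitrary $\alpha\in\Psi'_{T}$.

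The missing idea is that no new subgroup has to be built for each $\alpha$: one only needs conjugates of the rank-one subgroups that already exist. The set $X$ is $\Gamma^{\circ}$-stable, because if $\Phi(H,T)=\{\pm\beta\}$ and $n\in N_{G}(T)$, then $nHn^{-1}$ again has $T$ as a maximal torus and $\Phi(nHn^{-1},T)=\{\pm\Ad(n)\beta\}$. Since $W_{\Psi'_{T}}\subset\Gamma^{\circ}$ by Proposition \ref{P:Gamma0-Psi'}, $X$ is a union of $W_{\Psi'_{T}}$-orbits, hence stable under $s_{\beta}$ for every $\beta\in X$ and under negation, i.e.\ $X$ is itself a sub-root system of $\Psi'_{T}$. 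As the proof of Proposition \ref{P:Gamma0-Psi'} shows that no proper sub-root system of $\Psi'_{T}$ contains $X$, minimality forces $X=\Psi'_{T}$, which is the first assertion. Your argument for the second assertion is essentially the paper's (intrinsic $X$ plus Proposition \ref{P:cusp product} for the cusp product via root strings), but note that it genuinely requires $X=\Psi'_{T}$ as a set: the closure $\langle X\rangle$ is computed with reflections defined by $m$, so without the first assertion the underlying set of $\Psi'_{T}$ is not yet known to be metric-independent.
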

\begin{proof}
Conisdering the subset $X$ defined in the above proof for Proposition \ref{P:Gamma0-Psi'}, it is obviously
$\Gamma^{\circ}$ invariant. Since $\Gamma^{\circ}\supset W_{\Psi'_{T}}$ as the above proof showed, $X$ is a
union of some $W_{\Psi'_{T}}$ on $\Psi'_{T}$. Thus $X$ is a sub-root system of $\Psi'_{T}$. On the other hand,
the above proof showed that $X$ generates $\Psi'_{T}$. Therefore $X=\Psi'_{T}$. This is just the first statement
in the conclusion of the corollary. Moreover, by Proposition \ref{P:cusp product} the cusp product on $\Psi'_{T}$
is determined. Therefore, the abstract root system $\Psi'_{T}$ is determined by $G$ and $T$, indpendent with
the biinvariant Riemannian metric $m$.
\end{proof}

\begin{remark}\label{R:Psi'-Phi}
Suppose the abstract root system $\Psi'_{T}$ is given, an interesting question is to determine the sub-root
systems $\Phi$ of $\Psi'_{T}$ so that there exists a closed connected subgroup $H$ of $G$ with $T$ a maximal
torus of $H$ and having root system equal to $\Phi$. From the above corollary, we know that each rank-$1$ sub-root
system of $\Psi'_{T}$ is the root system of a closed connected subgroup. For sub-root systems of higher rank, it is
unclear to the author which sub-root systems of $\Psi'_{T}$ are root systems of closed connected subgroups. Moreover,
is it possible that any sub-root system of $\Psi'_{T}$ of rank larger than one is not the root system of a closed
connected subgroup?
\end{remark}

\smallskip

Given a {\it reduced root system} $\Phi$ on $T$, let \[\delta_{\Phi}=\frac{1}{2}\sum_{\alpha\in\Phi^{+}}
\alpha.\] Let \[\delta'_{\Phi}\] the unique dominant weight with respect to $(\Psi_{T})^{+}$ in the orbit
$W_{\Psi_{T}}\delta_{\Phi}=\{\gamma\delta_{\Phi}:\gamma\in W_{\Psi_{T}}\}$. Here $W_{\Psi_{T}}$ is the
Weyl group of the root system $\Psi_{T}$ and $\Phi^{+}=\Phi\cap(\Psi_{T})^{+}$.

\begin{definition}\label{D:characters}
Define a character on $T$, \[A_{\Phi}=\sum_{w\in  W_{\Phi}}\sign(w)[\delta_{\Phi}-w\delta_{\Phi}].\]

Moreover, for any finite group $W$ between $W_{\Phi}$ and $\Aut(\Lambda_{\bbQ,T},m)$, define another
character on $T$ \[F_{\Phi,W}=\frac{1}{|W|}\sum_{\gamma\in W}\gamma(A_{\Phi}).\]
\end{definition}

\begin{definition}\label{D:chi-lambda}
Given a finite subgroup $W$ of $\Aut(\Lambda_{\bbQ,T},m)$ and an integral weight $\lambda\in\Lambda_{T}$,
let \[\chi^{\ast}_{\lambda,W}=\frac{1}{|W|}\sum_{\gamma\in W}[\gamma\lambda]\in\bbQ[\Lambda_{\bbQ,T}].\]
\end{definition}

The character $\chi^{\ast}_{\lambda,W}$ depends only on the orbit $W\lambda=\{\gamma\lambda:\gamma\in W\}$.
If $W\subset\Aut(\Lambda_{T},m)$, then $\chi^{\ast}_{\lambda,W}\in\bbQ[\Lambda_{T}]$ for any
$\lambda\in\Lambda_{T}$. Moreover, the set $\big\{\chi^{\ast}_{\lambda,W}|\lambda\in\Lambda_{T}^{1}\big\}$ is a
basis of $\bbQ[\Lambda_{T}]^{W}$, where $\Lambda_{T}^{1}$ is a set of representatives of $W$ orbits in $\Lambda_{T}$.
In the case that $\rank\Psi_{T}=\dim T$ and $W=W_{\Psi_{T}}$, we can choose $\Lambda_{T}^{1}=\Lambda_{T}^{+}$. Here,
$\Lambda_{T}^{+}$ is the set of dominant integral weights in $\Lambda_{T}$ with respect to $(\Psi_{T})^{+}$.

The characters $F_{\Phi,W}$ have the following property.
\begin{proposition}\label{P:characters}
Each character $F_{\Phi,W}$ is a linear combination of $\{\chi^{\ast}_{\lambda,W}|\ \lambda\in
\Lambda_{\mathbb{Q},T}\}$ with integer coefficients and the constant term of $F_{\Phi,W}$ is $1$. With
$\{\chi^{\ast}_{\lambda,W}|\ \lambda\in\Lambda_{\mathbb{Q},T}\}$ as a basis, the minimal length terms of
$F_{\Phi,W}-1$ are of the form $\{c_{\alpha}\cdot\chi^{\ast}_{\alpha,W}|\ c_{\alpha}\in\bbZ,\alpha
\in\Phi^{\circ}\}$ and the unique longest term of $F_{\Phi,W}$ is of the form
$c\cdot\chi^{\ast}_{2\delta_{\Phi},W}$ where $c=\pm{1}$.
\end{proposition}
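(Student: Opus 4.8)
The whole proposition can be read off from one explicit identity. Write $\Phi^{+}=\Phi\cap(\Psi_{T})^{+}$. A standard rearrangement of the Weyl denominator formula gives, as an identity in $\bbZ[\bbZ\Phi]$,
\[
A_{\Phi}=\sum_{w\in W_{\Phi}}\sign(w)[\delta_{\Phi}-w\delta_{\Phi}]=\prod_{\alpha\in\Phi^{+}}\bigl(1-[\alpha]\bigr).
\]
From the right-hand side: every monomial of $A_{\Phi}$ is $[\sum_{\alpha\in S}\alpha]$ with $S\subseteq\Phi^{+}$, so $A_{\Phi}\in\bbZ[\bbZ\Phi]$ with all exponents in the cone $Q^{+}=\sum_{\alpha\in\Phi^{+}}\bbZ_{\ge0}\alpha$, and the only $S$ giving exponent $0$ is $S=\varnothing$, so the constant term of $A_{\Phi}$ is $1$. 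From the left-hand side, since $W_{\Phi}$ acts simply transitively on the regular orbit $W_{\Phi}\delta_{\Phi}$, the coefficient $c_{\mu}$ of $[\mu]$ in $A_{\Phi}$ lies in $\{-1,0,1\}$ (it is $\sign(w)$ if $\delta_{\Phi}-\mu=w\delta_{\Phi}$, and $0$ otherwise). Averaging, $F_{\Phi,W}=\frac1{|W|}\sum_{\gamma\in W}\sum_{\mu}c_{\mu}[\gamma\mu]=\sum_{\mu}c_{\mu}\chi^{\ast}_{\mu,W}$; grouping the exponents into $W$-orbits and using that $\chi^{\ast}_{\mu,W}$ depends only on $W\mu$, this becomes $\sum_{O}\bigl(\sum_{\mu\in O}c_{\mu}\bigr)\chi^{\ast}_{\mu_{O},W}$, an integer combination of distinct basis vectors; the orbit $O=\{0\}$ contributes the constant $1$.

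\textbf{Longest term.} I claim that among the vectors $\mu=\sum_{\alpha\in S}\alpha$, $S\subseteq\Phi^{+}$, the unique one of maximal length is $\mu=2\delta_{\Phi}$ (for $S=\Phi^{+}$), with coefficient $c_{2\delta_{\Phi}}=(-1)^{|\Phi^{+}|}=\pm1$. Fix a maximizer $S$ and $\mu=\sum_{\alpha\in S}\alpha$. If $(\mu,\beta)\ge0$ for some $\beta\in\Phi^{+}\setminus S$ then $|\mu+\beta|^{2}=|\mu|^{2}+2(\mu,\beta)+|\beta|^{2}>|\mu|^{2}$, contradicting maximality; similarly $(\mu,\gamma)\le0$ for some $\gamma\in S$ would let us delete $\gamma$ and enlarge the norm. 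Hence $(\mu,\gamma)>0$ for $\gamma\in S$ and $(\mu,\beta)<0$ for $\beta\in\Phi^{+}\setminus S$, so $\mu$ is regular and the positive system $\Phi^{+}_{\mu}=\{\alpha\in\Phi:(\mu,\alpha)>0\}$ equals $S\sqcup\bigl(-(\Phi^{+}\setminus S)\bigr)$. Summing, $2\mu-2\delta_{\Phi}=\sum_{\alpha\in\Phi^{+}_{\mu}}\alpha=2w\delta_{\Phi}$, where $w\in W_{\Phi}$ carries $\Phi^{+}$ to $\Phi^{+}_{\mu}$; thus $\mu=\delta_{\Phi}+w\delta_{\Phi}$ and $|\mu|^{2}=2|\delta_{\Phi}|^{2}+2(\delta_{\Phi},w\delta_{\Phi})\le4|\delta_{\Phi}|^{2}=|2\delta_{\Phi}|^{2}$, with equality (Cauchy--Schwarz and $|w\delta_{\Phi}|=|\delta_{\Phi}|$) only if $w\delta_{\Phi}=\delta_{\Phi}$, i.e.\ $w=1$ since $\delta_{\Phi}$ is regular, i.e.\ $S=\Phi^{+}$. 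So $|2\delta_{\Phi}|$ is the maximal length and is attained only by $2\delta_{\Phi}$. Since $W$ preserves $m$, every monomial of $\gamma(A_{\Phi})$ has length $\le|2\delta_{\Phi}|$, and any exponent $\mu\in W\cdot2\delta_{\Phi}$ with $c_{\mu}\ne0$ lies in $Q^{+}$ and has length $|2\delta_{\Phi}|$, hence equals $2\delta_{\Phi}$; therefore $F_{\Phi,W}$ has the unique longest term $(-1)^{|\Phi^{+}|}\chi^{\ast}_{2\delta_{\Phi},W}$.

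\textbf{Shortest terms.} Here I use the standard fact that the minimal length of a nonzero vector of the root lattice $\bbZ\Phi$ equals the common short-root length $\ell_{0}$ and is attained exactly on $\Phi^{\circ}$ (reduce to irreducible $\Phi$; a minimal nonzero lattice vector has all Cartan integers $\le1$ in absolute value, hence is minuscule or a root, and no minuscule weight lies in the root lattice). Consequently $F_{\Phi,W}-1$ has no term of length $<\ell_{0}$, and its length-$\ell_{0}$ terms are supported on $W$-orbits of short roots. To see they survive: for a positive short root $\mu$, if $\mu$ is simple then $\delta_{\Phi}-\mu=s_{\mu}\delta_{\Phi}$, so $c_{\mu}=-1$; if $\mu$ is not simple, choose a simple $\alpha_{j}$ with $(\mu,\alpha_{j})>0$ (possible since $\mu\in Q^{+}\setminus\{0\}$) — as $\mu$ is short and $\alpha_{j}\ne\mu$ we get $\langle\mu,\alpha_{j}^{\vee}\rangle\in\{0,1\}$, hence $\langle\mu,\alpha_{j}^{\vee}\rangle=1$, so $(\delta_{\Phi}-\mu,\alpha_{j})=(\delta_{\Phi},\alpha_{j})-\tfrac12(\alpha_{j},\alpha_{j})=0$, making $\delta_{\Phi}-\mu$ singular, hence not in $W_{\Phi}\delta_{\Phi}$, hence $c_{\mu}=0$. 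Thus for each short-root $W$-orbit $O$ the coefficient of $\chi^{\ast}_{\mu_{O},W}$ in $F_{\Phi,W}$ is $\sum_{\mu\in O}c_{\mu}=-\,|O\cap\Pi|$ (with $\Pi$ the simple roots of $\Phi$), which is $\le-1$ because every orbit of short roots meets $\Pi$; in particular the length-$\ell_{0}$ part of $F_{\Phi,W}-1$ is nonzero and of the asserted form $\{c_{\alpha}\chi^{\ast}_{\alpha,W}:\alpha\in\Phi^{\circ}\}$ with $c_{\alpha}\in\bbZ$ (indeed $c_{\alpha}<0$).

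\textbf{Main obstacle.} The delicate point is the uniqueness of the longest term, i.e.\ the strict inequality $|\sum_{\alpha\in S}\alpha|<|2\delta_{\Phi}|$ for $S\subsetneq\Phi^{+}$. The exchange argument plus the identification $\mu=\delta_{\Phi}+w\delta_{\Phi}$ at a maximizer settles it, but one must check carefully that a maximizer is regular and that $\Phi^{+}_{\mu}$ is exactly $S\sqcup(-(\Phi^{+}\setminus S))$. The only nonelementary ingredient is the determination of the minimal vectors of $\bbZ\Phi$, which can either be quoted from the classification or deduced from the reflection/minimality argument indicated above.
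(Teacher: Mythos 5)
Your proof is correct, and it reaches the same intermediate normal form as the paper --- namely $F_{\Phi,W}=\sum_{w\in W_{\Phi}}\sign(w)\,\chi^{\ast}_{\delta_{\Phi}-w\delta_{\Phi},W}$ together with $\delta_{\Phi}-w\delta_{\Phi}=\sum_{\alpha\in\Phi^{+}\cap w^{-1}\Phi^{-}}\alpha$ --- but it diverges in how the extremal terms are identified. The paper's (very terse) argument rests on the single identity $|\delta_{\Phi}-w\delta_{\Phi}|^{2}=(2\delta_{\Phi},\delta_{\Phi}-w\delta_{\Phi})=\sum_{\alpha\in\Phi^{+}\cap w^{-1}\Phi^{-}}(2\delta_{\Phi},\alpha)$: since each summand is strictly positive, the squared length is strictly increasing in the inversion set, so the maximum occurs only at the longest element $w_{0}$ (giving the exponent $2\delta_{\Phi}$ with sign $(-1)^{|\Phi^{+}|}$) and the minimum over $w\neq 1$ only at $w=s_{\alpha}$ for $\alpha$ a simple root of minimal length, which lies in $\Phi^{\circ}$. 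You never invoke this identity; instead you prove maximality of $2\delta_{\Phi}$ by an exchange argument over all subsets $S\subseteq\Phi^{+}$ (identifying a maximizer with $\delta_{\Phi}+w\delta_{\Phi}$ and invoking regularity of $\delta_{\Phi}$), and you prove the minimality statement by quoting that the minimal nonzero vectors of the root lattice $\bbZ\Phi$ are the short roots and then eliminating non-simple positive short roots $\mu$ via singularity of $\delta_{\Phi}-\mu$. Both routes are valid; the paper's is essentially a one-liner once the identity is observed, while yours is longer but self-contained and delivers strictly more than the statement requires (the exact coefficient on each minimal-length orbit, equal to minus the number of simple roots of $\Phi$ it contains, hence strictly negative, and the fact that only simple short roots contribute) --- precisely the points the paper's sketch asserts without detail. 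Your opening use of the denominator product $A_{\Phi}=\prod_{\alpha\in\Phi^{+}}(1-[\alpha])$ is also a clean way to obtain integrality, the constant term $1$, and the confinement of all exponents to the positive cone, which the paper leaves implicit.
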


\begin{proof}
Since $W$ contains $W_{\Phi}$, \begin{eqnarray*}&&F_{\Phi,W}\\&=&\frac{1}{|W|}\sum_{\gamma\in W}
\gamma(A_{\Phi})\\&=&\frac{1}{|W|}\sum_{\gamma\in W}\sum_{w\in W_{\Phi}}\sign(w)(\gamma[\delta_{\Phi}-
w\delta_{\Phi}])\\&=&\sum_{w\in W_{\Phi}}\sign(w)\chi^{\ast}_{\delta_{\Phi}-w\delta_{\Phi},W}.
\end{eqnarray*}

One has $|\delta_{\Phi}-w\delta_{\Phi}|^{2}=\langle 2\delta_{\Phi},\delta_{\Phi}-w\delta_{\Phi}\rangle$
and \[\delta_{\Phi}-w\delta_{\Phi}=\frac{1}{2}\sum_{\alpha\in\Phi^{+}}\alpha-\frac{1}{2}\sum_{\alpha\in
\Phi^{+}}w\alpha=\sum_{\alpha\in\Phi^{+}\cap w^{-1}\Phi^{-}}\alpha.\] Then for $w\neq 1$,
$\delta_{\Phi}-w\delta_{\Phi}$ is of shortest length exactly when $w=s_{\alpha}$ for $\alpha$, a short
root of minimal length; and it is of longest length exactly when $w\Phi^{+}=\Phi^{-}$, i.e., $w=w_0$
is the unique longest element in $W$.
\end{proof}

If $W_{\Psi_{T}}\subset W$, then $\chi^{\ast}_{2\delta_{\Phi},W}=\chi^{\ast}_{2\delta'_{\Phi},W}$. The weight
$2\delta'_{\Phi}$ is defined in \cite{Larsen-Pink} and it is observed there that $\chi^{\ast}_{2\delta'_{\Phi},W}$
is the unique leading term of $F_{\Phi,W}$ if $W_{\Psi_{T}}\subset W$.

\smallskip

The following two propositions connect Sato-Tate measures $\st_{H}$ and the characters
$\{F_{\Phi,\Gamma^{\circ}}\}$. The importance of them is: we are able to tackle dimension data by studying
the algebraic objects $F_{\Phi,\Gamma^{\circ}}$. Proposition \ref{P:group-root system}
reduces the dimension datum problem and the linear dependence problem to comparing the characters
$\{F_{\Phi,\Gamma^{\circ}}|\ \Phi\subset\Psi_{T}\}$ and getting linear relations among them. After this,
combinatorial classification of reduced sub-root systems and algebraic method treating these
characters come into force.


The proof of the following proposition can be found in \cite{Larsen-Pink}, Section 1 or \cite{An-Yu-Yu},
Section 4. For completeness, we give a sketch of the proof.

\begin{proposition}\label{P: ST support}
For a closed subgroup $H$ of $G$, the support of the Sato-Tate measure $\st_{H}$ is the set of conjugacy classes
contained in the set $\{gxg^{-1}|\ g\in G, x\in H\}$. If $H$ is connected, then for a maximal torus $T$ of $H$
contained in a maximal torus $S$ of $G$, oen has \[\big(\bigcup_{w=nS\in W_{G}}nTn^{-1}\big)/W_{G}\subset S/W_{G}
\subset G^{\natural},\] $S/W_{G}$ being a connected component of $G^{\natural}$ and
$\big(\bigcup_{w=nS\in W_{G}}nTn^{-1}\big)/W_{G}$ being equal to the support of $\st_{H}$.
Again if $H$ is connected, then for the natural map $\pi: T\rightarrow \supp(\st_{H})$, oen has
\[\pi^{\ast}(\st_{H})=F_{\Phi,\Gamma^{\circ}}(t),\] where $\Phi=\Phi(H)$.
\end{proposition}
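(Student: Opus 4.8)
The plan is to prove the three assertions in turn, the first two being essentially standard facts about Sato-Tate measures, the third being the computational heart.

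\emph{Support.} First I would record the easy fact that, for any closed subgroup $H\subset G$, the push-forward $\st_H$ of $\mu_H$ under $H\hookrightarrow G\to G^{\natural}$ is supported exactly on the image of $H$ in $G^{\natural}$, i.e.\ on the set of conjugacy classes meeting $H$; this is immediate since $\mu_H$ has full support on $H$ and the composite map is continuous and proper. When $H$ is connected, every element of $H$ lies in a maximal torus of $H$, and all maximal tori of $H$ are $H$-conjugate, so the set of conjugacy classes meeting $H$ is the image of a single maximal torus $T$. Picking $S\supset T$ a maximal torus of $G$, the image of $T$ in $G^{\natural}$ lands in the component $S/W_G$ (by conventions (7)), and equals $\bigl(\bigcup_{w=nS\in W_G} nTn^{-1}\bigr)/W_G$: indeed a class meets $T$ iff it meets some $W_G$-translate of $T$ inside $S$. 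This gives the middle assertion.

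\emph{The density.} For the formula $\pi^{\ast}(\st_H)=F_{\Phi,\Gamma^{\circ}}(t)$ with $\Phi=\Phi(H)$, the approach is the classical Weyl-integration computation. I would first push $\mu_H$ forward through $H\to H^{\natural}=T/W_H$: the Weyl integration formula on the connected group $H$ gives that this push-forward has density (with respect to normalized Haar measure on $T$, averaged over $W_H$) equal to $\frac{1}{|W_H|}\prod_{\alpha\in\Phi}(1-[\alpha])$, which one rewrites as $\frac{1}{|W_H|}\sum_{w\in W_H}\sign(w)[\delta_\Phi - w\delta_\Phi]$ using the Weyl denominator identity; this is exactly $\frac{1}{|W_H|}A_\Phi$ distributed over the $W_H$-orbits, hence as a $W_H$-invariant measure on $T$ it is $A_\Phi$ against $\mu_T$ modulo $W_H$. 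Then I must transport this along the further quotient $T/W_H \to \operatorname{supp}(\st_H) = \bigl(\bigcup_{nS} nTn^{-1}\bigr)/W_G$. The key point is that two elements $t,t'\in T$ become $G$-conjugate iff they are conjugate under $\operatorname{Stab}_{W_G}(T)$ acting on $T$ — equivalently under $\Gamma^{\circ}=N_G(T)/C_G(T)$ — which is a standard consequence of the conjugacy theory of tori (an element of $G$ conjugating $t$ to $t'$ can be chosen to normalize $T$ once one knows $t,t'$ are regular in a common torus; the singular locus has measure zero so can be ignored). Averaging the $W_H$-invariant density $A_\Phi$ over $\Gamma^{\circ}$ (which contains $W_H = W_\Phi$ since $\Phi=\Phi(H)\subset\Psi'_T$ and $W_{\Psi'_T}\subset\Gamma^{\circ}$ by Proposition~\ref{P:Gamma0-Psi'}) yields precisely $F_{\Phi,\Gamma^{\circ}}=\frac{1}{|\Gamma^{\circ}|}\sum_{\gamma\in\Gamma^{\circ}}\gamma(A_\Phi)$ as the density of $\pi^{\ast}(\st_H)$ on $T$.

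\emph{Main obstacle.} The routine parts are the two applications of Weyl integration and the denominator identity. The step requiring genuine care is the identification of the fibers of $\pi\colon T\to\operatorname{supp}(\st_H)$ with $\Gamma^{\circ}$-orbits — i.e.\ that $G$-conjugacy restricted to $T$ is exactly $\Gamma^{\circ}$-conjugacy — together with the bookkeeping of the two successive quotients $T\to T/W_\Phi\to T/\Gamma^{\circ}$ so that the averaging operators compose correctly and the normalization constants ($\frac{1}{|W_\Phi|}$ versus $\frac{1}{|\Gamma^{\circ}|}$, and the index $[\Gamma^{\circ}:W_\Phi]$) match. Since this proposition is quoted as proved in \cite{Larsen-Pink} and \cite{An-Yu-Yu}, I would at this point simply refer to those sources for the remaining details rather than reprove the conjugacy-of-tori lemma from scratch.
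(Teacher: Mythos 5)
Your proposal is correct and follows essentially the same route as the paper's sketch: Weyl integration on $H$, the Weyl denominator identity to rewrite $\prod_{\alpha\in\Phi}(1-[\alpha])$ as $\sum_{\tau\in W_{\Phi}}\tau(A_{\Phi})$, and averaging over $\Gamma^{\circ}\supset W_{\Phi}$ to land on $F_{\Phi,\Gamma^{\circ}}$. The one point where you go beyond the paper is the explicit identification of generic fibers of $\pi$ with $\Gamma^{\circ}$-orbits (via topological generators of $T$); the paper instead tests against conjugation-invariant functions on $G$ and leaves that identification implicit, so your extra care there is a welcome refinement rather than a different method.
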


\begin{proof}
The first and the second statements are clear.

Given $H$ being connected, for a conjugation invariant continuous function $f$ on $G$, one has
\[\int_{H} fd\mu_{H}=\int_{H} fd\mu_{H}.\] By the Weyl integration formula,
\[\int_{H} fd\mu_{H}=\frac{1}{|W_{H}|}\int_{T}f(t)F_{\Phi}(t)dt,\]
where $F_{\Phi}(t)$ is the Weyl product.
Writing this expression $\Gamma^{\circ}$ invariant, we get
\begin{eqnarray*}&&\frac{1}{|W_{H}|}\int_{T}f(t)F_{\Phi}(t)dt\\&=&
\frac{1}{|W_{H}|}\frac{1}{|Stab_{W_{G}}(T)|}\sum_{\gamma=nS\in W_{G},nTn^{-1}=T}
\int_{T}f(n^{-1}tn)F_{\Phi}(n^{-1}tn)dt\\&=&
\frac{1}{|W_{H}|}\frac{1}{|Stab_{W_{G}}(T)|}\sum_{\gamma=nS\in W_{G}, nTn^{-1}=T}
\int_{T}f(t)F_{\Phi}(n^{-1}tn)dt\\&=&\frac{1}{|W_{H}|}\frac{1}{|\Gamma^{\circ}|}
\int_{T}f(t)\big(\sum_{\gamma\in\Gamma^{\circ}}F_{\Phi}(\gamma^{-1}(t))\big)dt.
\end{eqnarray*}


Moreover, we have \begin{eqnarray*}&&F_{\Phi}(t)\\&=&\prod_{\alpha\in\Phi}\big(1-[\alpha]\big)
\\&=&\prod_{\alpha\in\Phi^{+}}\big([\frac{-\alpha}{2}]-[\frac{\alpha}{2}]\big)\big([\frac{\alpha}{2}]-
[\frac{-\alpha}{2}]\big)\\&=&\big(\sum_{w\in W_{\Phi}}\sgn(w)[-w\delta]\big)\big(\sum_{w\in W_{\Phi}}
\sgn(w)[w\delta]\big)\\&=&\sum_{w,\tau\in W}\sgn(w)\sgn(\tau)[-w\delta+\tau\delta]\\&=&
\sum_{w,\tau\in W_{\Phi}}\sgn(w)[-\tau w\delta+\tau\delta]\quad (\textrm{use }
w\rightarrow\tau w)\\&=&\sum_{\tau\in W_{\Phi}}\tau\big(\sum_{w\in W_{\Phi}}\sgn(w)
[\delta-w\delta]\big)\end{eqnarray*} and
 \begin{eqnarray*}&&\frac{1}{|W_{H}|}\frac{1}{|\Gamma^{\circ}|}
\sum_{\gamma\in\Gamma^{\circ}}F_{\Phi}(\gamma^{-1}(t))\\&=&\frac{1}{|W_{H}|}
\frac{1}{|\Gamma^{\circ}|}\sum_{\gamma\in\Gamma^{\circ}}\gamma(F_{\Phi}(t))
\\&=&\frac{1}{|W_{H}|}\frac{1}{|\Gamma^{\circ}|}\sum_{\gamma\in\Gamma^{\circ}}
\gamma\big(\sum_{\tau\in W_{\Phi}}\tau(\sum_{w\in W_{\Phi}}\sgn(w)[\delta-w\delta])\big)
\\&=&\frac{1}{|\Gamma^{\circ}|}\sum_{\gamma\in\Gamma^{\circ}}\gamma\big(\sum_{w\in
W_{\Phi}}\sgn(w)[\delta-w\delta]\big)\\&=& F_{\Phi,\Gamma^{\circ}}(t).\end{eqnarray*}
Hence the last statement follows.
\end{proof}

The proof of the following proposition is a bit complicated. The basic idea is as follows.
Given a maximal torus $S$ of a connected compact Lie group $G$ and $W_{G}=N_{G}(S)/C_{G}(S)$, then
the set $G^{\natural}$ of conjugacy classes in $G$ can be identified with $S/W_{G}$. For any
connected closed subgroup $H$ with a maximal torus $T$ contained in $S$, the support of the
Sato-Tate measure $\st_{H}$ of $H$ is $p(T)$ where $p: S\longrightarrow S/W_{G}$ be the natural
projection. This indicates that: not only $\supp(\st_{H})$ has dimension equal to $\dim T$, but
also the integral of $\st_{H}$ against a continuous function on $S/W_{G}$ with a given bound and
support near a given set of dimension less than $\dim T$ can be made arbitrarily small if
the distance of the support of the function to that given set is suffiently small. With this fact,
by choosing test functions on $S/W_{G}$ appropriately we are able to distinguish different Sato-Tate
measures by their supports.

\begin{proposition}\label{P:group-root system}
Given a list $\{H_{1},H_2,\dots,H_{s}\}$ of connected closed subgroups of a compact Lie group $G$
and non-zero real numbers $c_1,\cdots,c_{s}$, \[\sum_{1\leq i\leq s} c_{i}\mathscr{D}_{H_i}=0\] if
and only if for any closed connected torus $T$ in $G$,
\[\sum_{1\leq j\leq t} c_{i_{j}}F_{\Phi_{i_{j}},\Gamma^{\circ}}=0,\] where
$\{H_{i_{j}}|i_1<i_2<\cdots<i_{t}\}$ are all subgroups among $\{H_{i}|1\leq i\leq s\}$ with maximal
tori conjugate to $T$ in $G$, $\Phi_{i_{j}}$ is the root system of $H_{i_{j}}$ regarded as a root
system on $T$ and $\Gamma^{\circ}=N_{G}(T)/C_{G}(T)$.
\end{proposition}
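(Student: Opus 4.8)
The plan is to reduce the statement about dimension data to a statement about Sato-Tate measures using item (9) of the Notation section, and then to localize near each torus $T$ using the support description from Proposition \ref{P: ST support}. First I would observe that $\sum_i c_i\mathscr{D}_{H_i}=0$ holds if and only if $\sum_i c_i\st_{H_i}=0$ as signed measures on $G^{\natural}$, since the continuous functions $\chi_\rho^{\natural}$ span a dense subspace of $C(G^{\natural})$ by the Peter--Weyl theorem. So the task becomes: classify when a linear combination of Sato-Tate measures of connected closed subgroups vanishes. Next, for each connected closed subgroup $H_i$ pick a maximal torus $T_i$; by Proposition \ref{P: ST support} the support of $\st_{H_i}$ is $\big(\bigcup_{w\in W_G}wT_iw^{-1}\big)/W_G$, which is a piece of $S/W_G$ of dimension $\dim T_i$, and on it $\pi^\ast(\st_{H_i})=F_{\Phi(H_i),\Gamma^\circ}$ where the pushforward is along $\pi\colon T_i\to\supp(\st_{H_i})$.

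The heart of the argument is a separation/localization step. Group the subgroups $H_1,\dots,H_s$ according to the conjugacy class of their maximal tori; say the distinct conjugacy classes of tori that occur are represented by $T^{(1)},\dots,T^{(m)}$, and let $I_k=\{i:T_i\sim T^{(k)}\}$. The supports $\supp(\st_{H_i})$ for $i\in I_k$ all coincide with the common set $Z_k:=\big(\bigcup_{w\in W_G}wT^{(k)}w^{-1}\big)/W_G$. The key claim is that if $\sum_i c_i\st_{H_i}=0$ then for each $k$ we have $\sum_{i\in I_k}c_i\st_{H_i}=0$; equivalently, after transporting to $T^{(k)}$, $\sum_{i\in I_k}c_i F_{\Phi(H_i),\Gamma^\circ}=0$. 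To prove this I would argue by reverse induction on $\dim T^{(k)}$: among all tori classes $T^{(k)}$ appearing, take one of maximal dimension, say $d$. Any other $Z_\ell$ with $\dim T^{(\ell)}<d$ meets $Z_k$ only in a set of dimension $<d$ (indeed a union of subtori images), while $Z_k$ itself is a $d$-dimensional submanifold of $S/W_G$. Using the idea sketched before Proposition \ref{P:group-root system}: choose a sequence of test functions $f_n$ on $S/W_G$ that are uniformly bounded, supported in a shrinking tubular neighborhood of a generic $d$-dimensional patch of $Z_k$ away from the lower-dimensional $Z_\ell$'s and away from the loci where $Z_k$ is not smooth, and that converge weak-$\ast$ (against any measure absolutely continuous on that patch of $Z_k$) to a smooth density. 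Integrating $\sum c_i\st_{H_i}$ against $f_n$ and passing to the limit, all contributions from $Z_\ell$ with $\ell\neq k$ die (their intersection with $\supp f_n$ has measure $\to 0$ for the smooth densities $F_{\Phi,\Gamma^\circ}$), so we get $\int_{T^{(k)}}\big(\sum_{i\in I_k}c_i F_{\Phi(H_i),\Gamma^\circ}\big)\cdot(\text{limit density})=0$ for a rich enough family of densities, forcing $\sum_{i\in I_k}c_i F_{\Phi(H_i),\Gamma^\circ}=0$ on $T^{(k)}$. Subtracting this piece off and iterating on the remaining (strictly smaller) tori handles all $k$.

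Finally, the converse is immediate: if $\sum_{i\in I_k}c_i F_{\Phi(H_i),\Gamma^\circ}=0$ for every torus class, then pushing forward along $\pi$ gives $\sum_{i\in I_k}c_i\st_{H_i}=0$ on each $Z_k$, and summing over $k$ gives $\sum_i c_i\st_{H_i}=0$, hence $\sum_i c_i\mathscr{D}_{H_i}=0$. I expect the main obstacle to be the careful construction of the test functions and the verification that the "boundary" contributions genuinely vanish: one must check that $Z_k\setminus\pi(T^{(k)}_{\mathrm{reg}})$ and the pairwise intersections $Z_k\cap Z_\ell$ are contained in a finite union of lower-dimensional subsets (images of proper subtori and of singular strata of $S/W_G$), and that the densities $F_{\Phi,\Gamma^\circ}$, being Laurent polynomials pushed to $S/W_G$, are bounded and do not concentrate mass there — this is exactly where the quantitative statement in the paragraph preceding the proposition is needed, and where I would spend the most care. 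Note one subtlety: a priori different $i\in I_k$ might have the \emph{same} torus only up to conjugacy, so one should fix, once and for all, actual maximal tori and conjugate each $H_i$ so that its maximal torus is literally $T^{(k)}$; this does not change $\st_{H_i}$ and makes $F_{\Phi(H_i),\Gamma^\circ}$ well-defined on the fixed $T^{(k)}$.
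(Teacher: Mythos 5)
Your proposal is correct and follows essentially the same route as the paper: reduce to Sato--Tate measures, identify their supports via Proposition \ref{P: ST support}, and separate the contributions of tori of different conjugacy classes by integrating against shrinking test functions, using the fact that the pairwise intersections of the supports are lower-dimensional so the densities $F_{\Phi,\Gamma^{\circ}}$ put no mass there. The only cosmetic difference is that you induct starting from the tori of maximal dimension (localizing near a generic patch of $Z_k$ away from the smaller supports), whereas the paper starts from a torus of minimal dimension and lets the neighborhood of its whole support shrink; both versions of the dimension-counting argument work.
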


\begin{proof}
By the relation between dimension data and Sato-Tate measures, the equality
\[]\sum_{1\leq i\leq s}c_{i}\mathscr{D}_{H_i}=0\]
is equivalent to $\sum_{1\leq i\leq s} c_{i}\st_{H_{i}}=0$.

Choosing a maximal torus $T_{i}$ of $G_{i}$, for any $1\leq i\leq s$, we may and do assume that
$\{T_{i}: 1\leq i\leq s\}$ are all contained in a maximal torus $S$ of $G$. The set of conjugacy
classes of $G$ contained in $G_0$, $G_0/\Ad(G)\subset G^{\natural}$ can be identified with
$S/W_{G}$. Let $p: S\longrightarrow S/W_{G}$ be the natural projection. Under the above identification,
the support of $\st_{H_{i}}$ is $p(T_{i})$. Among $\{T_{i}: 1\leq i\leq s\}$, we may assume that
$T=T_1=T_2=\cdots=T_{t}$, and any other $T_{i}$ ($i\geq t+1$) is not conjugate to $T$ and has
dimension $\geq\dim T$.

The Riemannian metric $m$ on $G$ induces a Riemannian metric on $S$. It gives $S$ the structure of
a metric space. Since $W_{G}$ acts on $S$ by isometries, $S/W_{G}$ inherits a metric structure. For
any $i\geq t+1$, since $T_{i}$ is not conjugate to $T$ and has dimension $\geq\dim T$,
$gT_{i}g^{-1}\not\subset T$ for any $g\in G$. Moreover each $T_{i}$ is a closed torus, thus
$p(T)\cap p(T_{i})\subset p(T_{i})$ is a union of the images under $p$ of finitely many closed tori
of $S$ of strictly lower dimension than $\dim T_{i}$.

Given any $\epsilon>0$, let $\mathcal{U}_{\epsilon}$ be the subset of $S/W_{G}$ consisting of points
with distance to $p(T)$ within $\epsilon$. For any continuous function $f$ on $S/W_{G}$ with absolute
value bounded by a positive number $K/2$, there exists a continuous function $f_{\epsilon}$ on $S/W_{G}$
with support contained in $\mathcal{U}_{\epsilon}$ and absolute value bounded by $K$, and having
restriction to $p(T)$ equal to $f|_{T}$. Since $\sum_{1\leq i\leq s} c_{i}\st_{H_{i}}=0$,
\begin{equation}\label{A}\sum_{1\leq i\leq s} c_{i}\st_{H_{i}}(f_{\epsilon})=0.\end{equation} For any
$i\geq t+1$, since $p(T)\cap p(T_{i})\subset p(T_{i})$ is a union of the images under $p$ of finitely
many closed tori of $S$ of strictly lower dimension than $\dim T_{i}$ and $\st_{H_{i}}$ has support
equal to $p(T_{i})$, we get \[\lim_{\epsilon\rightarrow 0}\st_{H_{i}}(f_{\epsilon})=0.\] Taking the limit
as $\epsilon$ approaches $0$ in Equation (\ref{A}), we get \[\sum_{1\leq i\leq t} c_{i}\st_{H_{i}}(f)=0.\]
Hence \[\sum_{1\leq i\leq t} c_{i}\st_{H_i}=0.\] Therefore an inductive argument on $s$ finishes
the proof.
\end{proof}


\begin{remark}\label{R:DDPLDP-character}
In Proposition \ref{P:group-root system}, if $G$ is a connected compact simple Lie group with
a root system $\Psi$, then $\Gamma^{\circ}=W_{\Psi}$. Moreover, in this case the conjugacy class of
a full rank subgroup connected closed subgroup is determined by its root system, which can be regarded
as a sub-root system of $\Psi$. In this way, finding linear relations among dimension data of
connected full rank subgroups is equivalent to finding linear relations among the characters
$\{F_{\Phi,W_{\Psi}}|\ \Phi\subset\Psi\}.$
\end{remark}

\section{Comparison of different conjugacy conditions}\label{S:conjugacy}

In this section we discuss relations among several finite groups $\Gamma^{\circ}$, $\Gamma$, $W_{\Psi'_{T}}$,
$W_{\Psi_{T}}$ and $\Aut(\Psi_{T})$ defined in previous sections. Moreover, we discuss the connection between
various relations of two subgroups and conjugacy relations of their root-systems with respect to these
finite groups.

Given a compact Lie group $G$ with a biinvariant Riemannian metric $m$ and a closed connected torus $T$,
denote by $\Lambda_{T}=\Hom(T,\U(1))$. We have defined two finite groups $\Gamma^{\circ}$, $\Gamma$ by
\[\Gamma^{\circ}=N_{G}(T)/C_{G}(T)\] and \[\Gamma=\Aut(\Lambda_{T},m|_{\Lambda_{T}}).\] Moreover, we get a root
system $\Psi_{T}$ on $T$. Thus we have the finite groups $W_{\Psi_{T}}$ and $\Aut(\Psi_{T})$.
By Definition \ref{D:maxroot} the action of $W_{\Psi_{T}}$ stabilizes $\Lambda_{T}$ and preserves
$m|_{\Lambda_{T}}$. Thus $W_{\Psi_{T}}$ can be regarded as a subgroup of $\Gamma=\Aut(\Lambda_{T},m|_{\Lambda_{T}})$.
By the definition of $\Psi_{T}$ it is stable under the action of $\Gamma$. Thus there is a group homomorphism
\[\pi:\Gamma\longrightarrow\Aut(\Psi_{T}).\] In the case that $\rank\Psi_{T}=\dim T$, $\pi$ is an injective map.
In general it is not injective, however its restriction to $W_{\Psi_{T}}$ is injective.

\begin{proposition}\label{P:finite groups}
In general the inclusions $\Gamma^{\circ}\subset\Gamma$, $W_{\Psi_{T}}\subset\Gamma$ and
$\pi(\Gamma)\subset\Aut(\Psi_{T})$ are proper inclusions and neither $\Gamma^{\circ}$ nor $W_{\Psi_{T}}$
contain the other one.
\end{proposition}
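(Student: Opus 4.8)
The plan is to \emph{exhibit explicit examples}: for each of the five assertions — (a) $\Gamma^{\circ}\subsetneq\Gamma$, (b) $W_{\Psi_{T}}\subsetneq\Gamma$, (c) $\pi(\Gamma)\subsetneq\Aut(\Psi_{T})$, (d) $W_{\Psi_{T}}\not\subset\Gamma^{\circ}$, (e) $\Gamma^{\circ}\not\subset W_{\Psi_{T}}$ — I produce a compact Lie group $G$ with a biinvariant metric $m$ and a closed connected torus $T\subset G$ realizing it. In each case the only real work is to determine $\Psi_{T}\subset\Lambda_{T}$ by Proposition \ref{P:maximal root system}; afterwards $\Gamma=\Aut(\Lambda_{T},m)=\Aut(\Psi_{T},m)$ is the point group of the lattice $\Lambda_{T}$, and $\Gamma^{\circ}=N_{G}(T)/C_{G}(T)$ is read off group-theoretically.

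\textbf{Example for (a) and (d).} Take $G=\SU(3)$ and $T$ a maximal torus. The biinvariant metric is unique up to scale, $\Lambda_{T}$ is the weight lattice $\Lambda_{\A_2}$ (a hexagonal lattice), and a short check gives $\Psi_{T}=\G_2$: the six weights of norm $\tfrac23$ in the $W(\A_2)$-orbit of $\omega_1$ together with the six $\A_2$-roots (norm $2$) are exactly the nonzero vectors of $\Lambda_{\A_2}$ obeying strong integrality. Since $\Psi_{T}=\G_2$ is semisimple of full rank, $\Gamma\cong W(\G_2)$, the hexagon group of order $12$, whereas $\Gamma^{\circ}=W_{G}=W(\A_2)\cong S_{3}$ has order $6$. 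Hence $\Gamma^{\circ}\subsetneq\Gamma$ is proper, giving (a); and since $|W_{\Psi_{T}}|=12>6=|\Gamma^{\circ}|$, the group $W_{\Psi_{T}}$ cannot lie in $\Gamma^{\circ}$, giving (d).

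\textbf{Example for (b) and (e).} Take $G=\PSU(n+1)$ with $n\geq 4$ and $T$ a maximal torus, so $\Lambda_{T}=\bbZ\A_n$. The crucial computation is $\Psi_{\bbZ\A_n}=\A_n$ for $n\geq 4$: a nonzero $\alpha\in\bbZ\A_n$ lies in $\Psi_{T}$ iff $\tfrac{2\alpha}{(\alpha,\alpha)}\in(\bbZ\A_n)^{\ast}=\Lambda_{\A_n}$; writing $\alpha=d\beta$ with $\beta\in\Lambda_{\A_n}$ primitive this forces $\tfrac{2}{d(\beta,\beta)}$ to be a positive integer, which — the shortest nonzero vectors of $\Lambda_{\A_n}$ having norm $\tfrac{n}{n+1}$ — equals $1$ or $2$, and since $\Lambda_{\A_n}$ has no vector of norm $1$ when $n\geq 4$ one is forced into $d=1$, $(\beta,\beta)=2$, i.e. $\alpha$ is an $\A_n$-root. (This is exactly where the low ranks diverge: $\Psi_{\bbZ\A_2}=\G_2$ and $\Psi_{\bbZ\A_3}=\C_3$.) Consequently $\Gamma=\Aut(\bbZ\A_n,m)\cong\Aut(\A_n)\cong W(\A_n)\rtimes\bbZ/2$ strictly contains $W_{\Psi_{T}}=W(\A_n)$, giving (b) (and (a) once more, since here $\Gamma^{\circ}=W(\A_n)\subsetneq\Gamma$). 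For (e), replace $G$ by $\PSU(n+1)\rtimes\langle\sigma\rangle$ with $\sigma$ complex conjugation: $\sigma$ acts on $T$ as inversion, no element of $\sigma\cdot\PSU(n+1)$ centralizes $T$ (that would need $-1\in W(\A_n)$), so $C_{G}(T)=T$ and $\Gamma^{\circ}=N_{G}(T)/T=W(\A_n)\cdot\langle-1\rangle=\Aut(\A_n)$, which strictly contains $W_{\Psi_{T}}=W(\A_n)$.

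\textbf{Example for (c), and the main obstacle.} Take $G=T$ a $2$-torus carrying a biinvariant (flat) metric whose two principal lengths have ratio $\lambda\notin\bbQ$; then $\Lambda_{T}\cong\bbZ^{2}$ with diagonal inner product $\diag(1,\lambda^{-1})$, and a direct computation gives $\Psi_{T}=\BC_1\times\BC_1$ (the only admissible vectors are $\pm e_1,\pm 2e_1,\pm e_2,\pm 2e_2$). As an \emph{abstract} root system $\BC_1\times\BC_1$ admits the symmetry exchanging its two isomorphic factors, so $\Aut(\Psi_{T})\cong\bbZ/2\wr\bbZ/2$ has order $8$; but no isometry of $(\bbZ^{2},\diag(1,\lambda^{-1}))$ swaps the two axes, their lengths being different, so $\Gamma\cong(\bbZ/2)^{2}$ and $\pi$ is injective with image of order $4$, whence $\pi(\Gamma)\subsetneq\Aut(\Psi_{T})$. (Alternatively $G=\SU(3)\times\SU(3)$ with a non-symmetric biinvariant metric, where $\Psi_{T}=\G_2\times\G_2$, works the same way.) The main obstacle is the collection of maximal-root-system identifications, above all $\Psi_{\bbZ\A_n}=\A_n$ for $n\geq 4$, which hinges on the minuscule-weight norm estimate for $\A_n$; the remaining facts — the point group of the hexagonal lattice is $W(\G_2)$, $\Aut(\bbZ\A_n,m)=\Aut(\A_n)$, the extension by complex conjugation forces $-1$ into $\Gamma^{\circ}$, and the $\BC_1\times\BC_1$ computation — are routine.
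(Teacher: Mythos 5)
Your proof is correct, and it follows the same overall strategy as the paper — exhibit explicit pairs $(G,T)$ realizing each failure — but two of your three examples are genuinely different from (and simpler than) the paper's. For $W_{\Psi_{T}}\subsetneq\Gamma$ and $\Gamma^{\circ}\not\subset W_{\Psi_{T}}$ the paper uses $G=\Aut(\mathfrak{su}(n))$, $n\geq 5$, which is exactly your $\PSU(n+1)\rtimes\langle\sigma\rangle$ example; your verification that $\Psi_{\bbZ\A_n}=\A_n$ for $n\geq 4$ (via the absence of norm-$1$ vectors in $\Lambda_{\A_n}$, the lattice $\bbZ\A_n$ being even) supplies the detail the paper leaves to Remark \ref{R:Psi-simple group}. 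For $\Gamma^{\circ}\subsetneq\Gamma$ and $W_{\Psi_{T}}\not\subset\Gamma^{\circ}$, however, the paper appeals to the Larsen--Pink-style Example \ref{E:LP}, a forward reference requiring a carefully built representation of $\SU(2n+t)^{r}$ inside a large $\SU(k)$ so that $\Gamma^{\circ}=(W_{\BC_n})^{r}\rtimes A_{r}$; your observation that already for a maximal torus of $\SU(3)$ one has $\Psi_{T}=\G_2$ (the $\A_2\subset\G_2$ phenomenon the paper itself records in Remark \ref{R:Psi-simple group}), whence $|W_{\Psi_{T}}|=12>6=|\Gamma^{\circ}|$, gets both non-inclusions with no machinery at all. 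For $\pi(\Gamma)\subsetneq\Aut(\Psi_{T})$ the paper breaks the symmetry of $3\A_1$ arithmetically, by passing to $(\SU(2)^{3})/\langle(-I,-I,I)\rangle$ so that only two of the three factors are isometric; you break it metrically, by an irrational aspect ratio on a flat $2$-torus (or an asymmetric biinvariant metric on $\SU(3)\times\SU(3)$). Both exploit the same point, that $\Aut$ of the abstract root system forgets absolute lengths while $\Gamma$ does not; your version has the mild advantage of not needing to compute the character lattice of a nontrivial isogeny quotient, at the cost of using a non-semisimple (indeed abelian) $G$. The only blemishes are cosmetic: the six norm-$\tfrac23$ vectors form the union of the $W(\A_2)$-orbits of $\omega_1$ and $\omega_2$, not a single orbit, and in the type-$\A$ step the primitive decomposition should be stated as $\alpha=d\beta$ with $\beta$ primitive in $\Lambda_{\A_n}$ and $d(\beta,\beta)\in\{1,2\}$ — which is what your inequality actually uses. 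Neither affects the argument.
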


\begin{proof}
Given $n\geq 5$, denote by $G=\Aut(\mathfrak{su(n)})$ and $T$ a maximal torus of $G$. Then we have
$\Psi_{T}=\A_{n-1}$, $\rank\Psi_{T}=\dim T$, $\Gamma^{\circ}=\Gamma=\Aut(\Psi_{T})=
S_{n}\times\{-1\}$ and $W_{\Psi_{T}}=S_{n}$. In this case $\Gamma\neq W_{\Psi_{T}}$ and
$\Gamma^{\circ}\not\subset W_{\Psi_{T}}$.

In Example \ref{E:LP}, by choosing $W$ appropriately we have $\Gamma\neq\Gamma^{\circ}$ and
$W_{\Psi_{T}}\not\subset\Gamma^{\circ}$.


Let $G=\big(\SU(2)^{3}\big)/\langle(-I,-I,I)\rangle$ and $T$ a maximal torus of $G$. In this case
$\Psi_{T}=3\A_1$ and $\rank\Psi_{T}=\dim T$. Thus $\pi$ is injective. Moreover, we have
\[\Gamma=\Aut(T,m|_{T})=\{\pm{1}\}^{3}\rtimes\langle\sigma_{12}\rangle\] and
\[\Aut(\Psi_{T})=\{\pm{1}\}^{3}\rtimes S_3,\] where $\sigma_{12}$ is the transposition on the first and the
second positions. Therefore $\pi(\Gamma)\neq\Aut(\Psi_{T})$.
\end{proof}



\begin{definition}\label{D:local conjugacy}
Given two compact Lie groups $H$ and $G$, two homomorphisms $\phi_1,\phi_2: H\longrightarrow G$ are
said element-conjugate if $\phi_1(x)\sim\phi_2(x)$ for any $x\in H$. Similarly, we call two closed
subgroups $H_1,H_2$ of $G$ element-conjugate if there exists an isomorphism $\phi:H_1\longrightarrow H_2$
such that $x\sim\phi(x)$ for any $x\in H$.
\end{definition}

Element-conjugate homomorphisms are defined and studied in \cite{Larsen2}. It is proved in \cite{Larsen2}
and \cite{Larsen3} that for $G$ equal to $\SU(n)$, $\SO(2n+1)$, $\Sp(n)$ or $\G_2$, element-conjugate
homomorphisms to $G$ are actually conjugate. In the converse direction, for $G$ equal to $\SO(2n)$ ($n\geq 4$)
or a connected simple group of type $\E_6$, $\E_7$, $\E_8$ or $\F_4$, there exist element-conjugate homomorphisms
to $G$ which are not conjugate. In \cite{Wang}, S. Wang considered homomorphisms from connected groups and
used the name locally conjugate instead of element-conjugate. He gave some examples of element-conjugate
homomorphisms from connected groups to $\SO(2n)$ which are not conjuagte. In the 1950s Dynkin classified
semisimple subalgebras of complex simple Lie algebras up to {\it linear conjugacy} (cf. \cite{Dynkin}).
Moreover, Minchenko distinguished the conjugacy classes among linear conjugacy classes of semisimple
subalgebras (cf. \cite{Minchenko}). By \cite{Minchenko} Theorem 1 and Proposition \ref{P:local-root system}
below, two connected closed subgroups are element-conjugate if and only if their subalgebras are linear
conjugate. From this, connected closed subgroups of connected compact simple Lie group which are locally
conjugate but not globally conjugate can be classified as well.

\begin{proposition}\label{P:local-root system}
Given two compact Lie groups $H$, $G$ with $H$ connected, and a maximal torus $T$ of $H$, two homomorphisms
$\phi_1,\phi_2: H\longrightarrow G$ are locally conjugate if and only if there exists $g\in G$ such that
$\phi_2|_{T}=(\Ad(g)\circ\phi_1)|_{T}$ and the root systems $\Phi(H_1,\phi_1(T))=\Phi(H_2,\phi_1(T))$,
where we denote by $H_{1}=(\Ad(g)\circ\phi_1)(H)$ and $H_{2}=\phi_{2}(H)$.
\end{proposition}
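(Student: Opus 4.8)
The plan is to prove both implications separately, with the forward direction being essentially trivial and the reverse direction requiring the structure theory of representations of a connected compact group. First I would dispose of the ``only if'' direction: if $\phi_1$ and $\phi_2$ are locally conjugate, then in particular $\phi_1(t) \sim \phi_2(t)$ for every $t \in T$. Since every element of $T$ lies in the fixed maximal torus $S$ of $G$ (after conjugating so that $\phi_1(T), \phi_2(T) \subset S$) and two elements of $S$ are $G$-conjugate iff they are $W_G$-conjugate, a standard rigidity argument — continuity of $t \mapsto \phi_i(t)$ together with the discreteness of $W_G$ and connectedness of $T$ — produces a single $w \in W_G$, lifting to $g \in G$, with $\phi_2|_T = (\Ad(g)\circ\phi_1)|_T$. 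Once $\phi_1(T)$ and $\phi_2(T)$ agree as $(\Ad g \circ \phi_1)(T) = \phi_2(T) =: T'$, local conjugacy of the two homomorphisms forces the two subgroups $H_1, H_2 \leq G$ to have the same character on $T'$ for every representation of $G$ (since $\chi_\rho$ is a class function and $\phi_1(x) \sim \phi_2(x)$), hence the same weights of $\mathfrak{g}$ restricted to $T'$, hence $\Phi(H_1, T') = \Phi(H_2, T')$.

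For the ``if'' direction — the substantive one — I would assume $g$ is given with $\psi := \Ad(g)\circ\phi_1$ satisfying $\psi|_T = \phi_2|_T$ and $\Phi(H_1, T') = \Phi(H_2, T')$ where $T' = \psi(T) = \phi_2(T)$ and $H_i$ as in the statement. The goal is to show $\psi(x) \sim \phi_2(x)$ for all $x \in H$; since $H$ is connected, every element of $H$ is conjugate into $T$, and both $\psi$ and $\phi_2$ intertwine $H$-conjugation with $G$-conjugation, so it suffices to check $\psi(t) \sim \phi_2(t)$ for $t \in T$, which holds by hypothesis. Wait — that makes it look too easy; the content is really the converse packaging, so let me restate the real mechanism: the point is that \emph{any} two homomorphisms agreeing on a maximal torus of the connected source $H$ are automatically locally conjugate, because $H = \bigcup_{h\in H} hTh^{-1}$ and for $x = hth^{-1}$ one has $\psi(x) = \psi(h)\psi(t)\psi(h)^{-1} \sim \psi(t) = \phi_2(t) \sim \phi_2(h)\phi_2(t)\phi_2(h)^{-1} = \phi_2(x)$. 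So in the reverse direction the root-system hypothesis is not even needed for local conjugacy itself; rather it is needed to \emph{make $\psi|_T = \phi_2|_T$ achievable by a suitable $g$} — the correct reading is that local conjugacy is equivalent to the existence of $g$ with \emph{both} conditions, and the root-system condition is what upgrades ``$\psi|_T$ and $\phi_2|_T$ have the same image torus'' to full agreement after adjusting $g$ by an element of $N_G(T')/C_G(T') = \Gamma^\circ$.

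Therefore the precise argument I would write for ``if'' is: given $g$ with $\psi|_T = \phi_2|_T$, local conjugacy follows immediately from the covering $H = \bigcup_h hTh^{-1}$ as above, and $\Phi(H_1,T') = \Phi(H_2,T')$ is not needed here; for ``only if'', local conjugacy gives $\psi(t)\sim\phi_2(t)$ on $T$, from which the rigidity argument over the compact connected $T$ produces $g \in N_G(S)$ with $\psi|_T = \phi_2|_T$ \emph{and} equality of weight systems, i.e. of root systems, as shown above. The main obstacle is the rigidity step in the forward direction: proving that the pointwise-conjugacy data $\{w_t \in W_G : \phi_2(t) = w_t \cdot \psi(t)\}$ can be chosen uniformly in $t$. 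I would handle this by passing to the universal cover, working with the weight lattices: $\psi^*$ and $\phi_2^*$ both carry $\Lambda_{T'}$ into the weight lattice of $H$ (or rather, the multiset of $T$-weights of any fixed faithful $G$-representation pulled back along $\psi$ equals that pulled back along $\phi_2$, by equality of characters on $T$), and equality of these weight multisets for a faithful representation pins down $\psi|_T$ and $\phi_2|_T$ up to $W_G$; choosing the ambient $g$ to absorb this $W_G$-element, and then noting that the residual freedom lives in $\Gamma^\circ$ which preserves $\Psi_{T'}$ and hence the statement about root systems, completes it. The cross-reference to \cite{Minchenko} and the remarks preceding the proposition indicate this is exactly the bridge between element-conjugacy and Dynkin's linear conjugacy, so I would phrase the final line to record that consequence.
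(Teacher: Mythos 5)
Your overall skeleton matches the paper's: the ``if'' direction is immediate from $H=\bigcup_{h\in H}hTh^{-1}$ (and you are right that, once $\psi|_T=\phi_2|_T$ is arranged, the root-system condition comes for free, since both $\Phi(H_1,T')$ and $\Phi(H_2,T')$ are then the pushforward of $\Phi(H,T)$ under the common map on $T$), and the ``only if'' direction reduces to upgrading pointwise Weyl-conjugacy on $T$ to a single element of $W_G$. But the concrete mechanism you commit to for that upgrade is wrong. Equality of the pulled-back weight multiset of a single faithful representation does \emph{not} pin down a homomorphism $T\to S$ up to $W_G$: already for $G=S$ a torus (so $W_G$ trivial) carrying a faithful two-dimensional representation with independent weights $\lambda_1,\lambda_2$, the maps $t\mapsto(a(t),b(t))$ and $t\mapsto(b(t),a(t))$ have equal weight multisets without being equal. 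What you actually have is the much stronger hypothesis $\phi_2(t)\sim_{G}\psi(t)$ for every $t$, and the correct argument (the paper's) is: for each $w=nS\in W(G,S)$ the set $T_w=\{t\in T\,:\,\phi_2(t)=n\psi(t)n^{-1}\}$ is a closed subgroup of $T$; these finitely many closed subgroups cover $T$; a connected compact group is not a finite union of proper closed subgroups (equivalently, some $T_w$ contains a set generating $T$), so $T_w=T$ for some $w$. Your first, vaguer appeal to ``continuity plus discreteness of $W_G$'' gestures at this, but the weight-lattice mechanism you then substitute for it does not close the gap.

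A second, smaller defect: your derivation of $\Phi(H_1,T')=\Phi(H_2,T')$ from ``the same weights of $\frg$ restricted to $T'$'' is vacuous, because the $T'$-weights of $\frg$ depend only on $T'$ and not on $H_1$ or $H_2$. The correct step, which is the paper's, is to decompose $\frg$ into generalized $T'$-weight spaces and note that, since $\psi|_T=\phi_2|_T$, the image of each root space $\frh_{\alpha}$ under either $d\psi$ or $d\phi_2$ lands in the same weight space $\frg_{\alpha}$, so both root systems equal the image of $\Phi(H,T)$. Finally, your closing reinterpretation --- that the root-system hypothesis is what ``upgrades same image torus to full agreement'' after adjusting by $\Gamma^{\circ}$ --- misreads the statement; the hypothesis plays no such role, and your earlier observation that it is redundant in the ``if'' direction was the accurate one.
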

\begin{proof}
The ``if'' part is clear. We prove the ``only if'' part. Choosing a maximal torus $S$ of $G$, write
$W(G,S)=N_{G}(S)/C_{G}(S)$. Since $\phi_1,\phi_2$ are element-conjugate, we have $\ker\phi_1=\ker\phi_2$.
Without loss of generality we may assume that $\phi_1,\phi_2$ are injective and
$\phi_1(T),\phi_2(T)\subset S$.

For any $x\in T$, since $\phi_1(x)\sim\phi_2(x)$, we have $\phi_2=n(\phi_1(x))n^{-1}$ for some
$w=nS\in W(G,S)$. As $W(G,S)$ is finite, there exists $w\in W(G,S)$ such that $\phi_2=n(\phi_1(x))n^{-1}$
holds for a set of $x\in T$ generating $T$. Hence $\phi_2=n(\phi_1(x))n^{-1}$ for any $x\in T$.
This means that $(\Ad(n)\circ\phi_1)|_{T}=\phi_2|_{T}$. For simplicity, we assume that
$\phi_1|_{T}=\phi_2|_{T}$. We still denote by $T$ the image in $S$ of $T$ under $\phi_1$ and $\phi_2$.
In this way $\phi_1|_{T}$ and $\phi_2|_{T}$ are both the identity map. Write
\[\frh=\frt\oplus(\bigoplus_{\alpha\in\Phi}\frh_{\alpha})\] for the root space decomposition of
$\frh=(\Lie H)\otimes_{\bbR}\bbC$ with respect to the $T$ action and
\[\frg=\frg^{T}\oplus(\bigoplus_{\lambda\in\Lambda_{T}}\frg_{\lambda})\] the generalized root space
decomposition of $\frg=(\Lie G)\otimes_{\bbR}\bbC$ with respect to the $T$ action, where
$\Phi=\Phi(H,T)$ is root system of $H$ and $\Lambda_{T}=\Hom(T,\U(1))$ is the integral weight lattice.
As $\phi_1|_{T}$ and $\phi_2|_{T}$ are both the identity map,
\[\phi_1(\frh_{\alpha}),\phi_2(\frh_{\alpha})\subset\frg_{\alpha}\] for any $\alpha\in\Phi$.
That just means $\Phi(H_1,\phi_1(T))=\Phi(H_2,\phi_1(T))$.
\end{proof}


Given a compact Lie group $G$ with a biinvariant Riemannian metric $m$ and a connected closed torus $T$,
denote by $H_1$, $H_2$ two connected closed subgroups of $G$, we could consider different conjugacy
relations between their root systems. Precisely, write $\Phi_{i}=\Phi(H_{i},T)$ for the root system of
$H_{i}$, $i=1,2$. We may consider if $\Phi_1,\Phi_2\subset\Psi_{T}\subset\Lambda_{T}$ are conjugate under
$\Gamma^{\circ}$, $\Gamma$, $W_{\Psi}$ or $\Aut(\Psi_{T})$. Given two root systems
$\Phi_1,\Phi_2\subset\Psi_{T}$, since $\Gamma^{\circ}\subset\Gamma$ and $\pi(\Gamma)\subset\Aut(\Psi_{T})$,
we have \[\Phi_1\sim_{\Gamma^{\circ}}\Phi_2\Longrightarrow\Phi_1\sim_{\Gamma}\Phi_2\Longrightarrow\Phi_1
\sim_{\Aut(\Psi_{T})}\Phi_2.\]

We have the following connections between the conjugacy relations between the root systems and the
relations between the subgroups.
\begin{proposition}\label{P:root system-group}
Given a compact Lie group $G$ with a biinvariant Riemannian metric $m$ and a connected closed torus $T$,
for two closed subgroups $H_1,H_2$ with $T$ a common maximal torus of them, denote by $\Phi_{i}=
\Phi(H_{i},T)$, $i=1,2$. We have:
\begin{itemize}
\item[(1)]{if $\Phi_1\sim_{\Aut(\Psi_{T})}\Phi_2$, then the Lie algebras of $H_1$ and $H_2$ are
isomorphic.}
\item[(2)]{If $\Phi_1\sim_{\Gamma}\Phi_2$, then the Lie groups $H_1$ and $H_2$ are isomorphic.}
\item[(3)]{$\Phi_1\sim_{\Gamma^{\circ}}\Phi_2$ if and only if $H_1$ and $H_2$ are locally
conjugate.}
\end{itemize}
\end{proposition}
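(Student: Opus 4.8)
The plan is to treat the three items one at a time, in each case turning the assumed conjugacy of root systems into a concrete isomorphism. For (1), if $\sigma\in\Aut(\Psi_{T})$ satisfies $\sigma\Phi_1=\Phi_2$, then $\sigma$ restricts to a bijection $\Phi_1\to\Phi_2$ preserving the cusp product, i.e.\ an isomorphism of abstract root systems, and both $\Phi_1,\Phi_2$ are reduced by Proposition~\ref{P:Psi}. Hence the complex semisimple Lie algebras they classify are isomorphic, and so are the compact real forms $[\frh_1,\frh_1]\cong[\frh_2,\frh_2]$, where $\frh_i=\Lie H_i$. Since $T$ is a maximal torus of $H_i$, one has $\frh_i=\frz(\frh_i)\oplus[\frh_i,\frh_i]$ with $\dim\frz(\frh_i)=\dim T-\rank\Phi_i$; as an isomorphism of root systems preserves the rank, the abelian summands also match and $\frh_1\cong\frh_2$. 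For (2), if $\gamma\in\Gamma=\Aut(\Lambda_{T},m)$ satisfies $\gamma\Phi_1=\Phi_2$, I would check that the pair $(\gamma,(\gamma^{\ast})^{-1})$, with $\gamma^{\ast}$ the transpose of $\gamma$ on the cocharacter lattice $\Hom(\U(1),T)$, is an isomorphism of root data $\RD(H_1,T)\to\RD(H_2,T)$: the character lattice and the pairing are carried to themselves, the root set $\Phi_1$ is carried onto $\Phi_2$ by hypothesis, so only the coroot correspondence needs checking. The coroot of $\alpha\in\Phi_i$ is the element $\alpha^{\vee}\in\Hom(\U(1),T)$ with $\langle\lambda,\alpha^{\vee}\rangle=2(\lambda,\alpha)_{m}/(\alpha,\alpha)_{m}$ for all $\lambda\in\Lambda_{T}$, and since $\gamma$ is an isometry a one-line computation gives $(\gamma^{\ast})^{-1}\alpha^{\vee}=(\gamma\alpha)^{\vee}$. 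Hence $\RD(H_1)\cong\RD(H_2)$, so $H_1\cong H_2$ by the classification of connected compact Lie groups by their root data.

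For the direction $\Phi_1\sim_{\Gamma^{\circ}}\Phi_2\Rightarrow H_1,H_2$ locally conjugate in (3): write $\gamma\Phi_1=\Phi_2$ with $\gamma=\Ad(n)|_{T}$, $n\in N_{G}(T)$, and put $H_1'=nH_1n^{-1}$, so $\Ad(n)\colon H_1\to H_1'$ is a conjugation and $\Phi(H_1',T)=\gamma\Phi_1=\Phi_2=\Phi(H_2,T)$. Then $\RD(H_1',T)=\RD(H_2,T)$ canonically — same character lattice, same root set, and the coroots on either side are determined by $m$ and $\Phi_2$ — so the isomorphism theorem for compact connected Lie groups (producing from an isomorphism of root data a group isomorphism inducing the prescribed map on maximal tori) provides an isomorphism $\rho\colon H_1'\to H_2$ with $\rho|_{T}=\id_{T}$. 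Now apply the ``if'' direction of Proposition~\ref{P:local-root system} to the two homomorphisms $H_1'\to G$ given by the inclusion and by $\rho$ followed by the inclusion: these agree on $T$ and have equal root systems there, so $H_1'$ and $H_2$ are locally conjugate; composing with $\Ad(n)$ and using transitivity of local conjugacy, $H_1$ and $H_2$ are locally conjugate.

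For the converse direction in (3): let $\psi\colon H_1\to H_2$ be an isomorphism with $x\sim\psi(x)$ for all $x$. After composing $\psi$ with an inner automorphism of $H_2$ — conjugation by an element of $G$, which does not affect local conjugacy — we may assume $\psi(T)=T$. Fix a maximal torus $S$ of $G$ with $T\subset S$. For each $x\in T$ there is $nS\in W_{G}$ with $\psi(x)=nxn^{-1}$, and since $W_{G}$ is finite one $n$ works on a finite generating set of $T$, hence on all of $T$; thus $\psi|_{T}=\Ad(n)|_{T}$ with $n\in N_{G}(S)$, and $nTn^{-1}=\psi(T)=T$ forces $n\in N_{G}(T)$, so $\gamma:=\psi|_{T}\in\Gamma^{\circ}$. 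Differentiating $\psi$ and using $\Ad(\psi(t))\circ d\psi=d\psi\circ\Ad(t)$ to compare $T$-weight spaces gives $d\psi(\frh_{1,\alpha})=\frh_{2,\gamma\alpha}$ for every root $\alpha$, whence $\gamma\Phi_1=\Phi_2$, i.e.\ $\Phi_1\sim_{\Gamma^{\circ}}\Phi_2$.

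I expect the main obstacle to be (3). In the first direction the essential subtlety is that equality of root systems relative to $T$ does \emph{not} by itself make $H_1'$ and $H_2$ equal as subgroups of $G$, since a $T$-weight space of $\frg$ may be more than one-dimensional; this forces one to route the argument through Proposition~\ref{P:local-root system} and the isomorphism theorem rather than concluding directly. In the converse direction the counterpart subtlety is that the finiteness of $W_{G}$ is needed to upgrade the pointwise conjugacy of $\psi$ on $T$ to conjugation by a single normalizing element, so that $\psi|_{T}$ genuinely lies in $\Gamma^{\circ}$ and not merely in the larger group $\Gamma$.
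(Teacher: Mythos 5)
Your proof is correct and follows essentially the same route as the paper: item (1) via the classification of reductive Lie algebras by their root systems, item (2) via the classification of compact connected groups by root data (the paper instead lifts the Lie algebra isomorphism after arranging that it is induced by an automorphism of $T$, which amounts to the same thing), and item (3) by reduction to Proposition~\ref{P:local-root system}. You supply somewhat more detail than the paper does — notably the coroot computation in (2) and the use of the isomorphism theorem in (3) to produce the abstract isomorphism $H_1'\to H_2$ restricting to the identity on $T$ — but the underlying argument is the same.
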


\begin{proof}
Write \[\frg=\frg^{T}\oplus(\bigoplus_{\lambda\in\Lambda_{T}}\frg_{\lambda})\] for the generalized root space
decomposition of $\frg=(\Lie G)\otimes_{\bbR}\bbC$ with respect to the $T$ action and
\[\frh_{i}=\frt\oplus(\bigoplus_{\alpha\in\Phi_{i}}\frh_{i,\alpha})\] the root space decomposition of
$\frh_{i}=(\Lie H_{i})\otimes_{\bbR}\bbC$ with respect to the $T$ action, $i=1,2$. We have
$\Phi_1,\Phi_2\subset\Psi_{T}\subset\Lambda_{T}$.

Suppse $\Phi_1\sim_{\Aut(\Psi_{T})}\Phi_2$. This means that there exists $\gamma\in\Aut(\Psi_{T})$ such that
$\Phi_2=\gamma\Phi_1$. By the classification of complex reductive Lie algebras (cf. \cite{Knapp}),
$\gamma$ can be lifted to an isomorphism $f:\frh_1\longrightarrow\frh_2$ such that
$f(\frh_{1,\alpha})=\frh_{2,\gamma\alpha}$ for any $\alpha\in\Phi_1$. This proves $(1)$.

Suppose $\Phi_1\sim_{\Gamma}\Phi_2$. Recall that $\Gamma=\Hom(T,m|_{T})$. For the automorphism $f$ in the
above proof, furthermore we may assume that $f|_{\Lie T}$ is induced by an automorphism of $T$. Thus
$f$ can be lifted to an isomorphism $f: H_1\longrightarrow H_2$. This proves $(2)$.

Recall that $\Gamma^{\circ}=N_{G}(T)/C_{G}(T)$. If $\Phi_1\sim_{\Gamma^{\circ}}\Phi_2$, then $\Phi_1=\Phi'_2$
for some $H'_2=gH_2g^{-1}$, $g\in N_{G}(T)$ and $\Phi'_2=\Phi(H'_2,T)$. Therefore $(3)$ follows from
Proposition \ref{P:local-root system}.
\end{proof}

\begin{remark}\label{R:local-maximal torus}
Given a compact Lie group $G$ and a connected closed torus $T$, as indicated in the proof of Proposition
\ref{P:local-root system}, a trivial observation is: the existence of locally-conjugate but not globally
conjugate connected closed subgroups with $T$ a maximal torus of them is due to the fact the generalized root
spaces $\{\frg_{\lambda}:\lambda\in\Lambda_{T}\}$ may have dimension larger than one and the different choices of
eigenvectors really matter for the conjugacy classes of the subgroups. Hence there is no local-global issue
if each $\frg_{\alpha}$ having dimension one. In particular, two full rank connected subgroups are conjugate
if and only if they are locally conjugate.
\end{remark}

\section{Formulation of the problems in terms of root systems}\label{S:formulation of questions}



We formulate the following two questions, analoguous to the dimension datum problem and the linear dependence
problem respectively.

\begin{question}\label{Q:equal-character}
Given a root system $\Psi$, for which pairs $(\Phi_1,\Phi_2)$ of reduced sub-root systems of $\Psi$, we
have $F_{\Phi_1,\Aut(\Psi)}=F_{\Phi_2,\Aut(\Psi)}$?
\end{question}

\begin{question}\label{Q:dependent-character}
Given a root system $\Psi$ and a finite group $W$ such that $W_{\Psi}\subset W\subset\Aut(\Psi)$,
for which reduced sub-root systems $\Phi_1,\Phi_2,\cdots,\Phi_{s}$ of $\Psi$, the characters
\[F_{\Phi_1,W},F_{\Phi_2,W},\cdots,F_{\Phi_{s},W}\] are linearly dependent?
\end{question}



\begin{remark}\label{R:linear}
We could take $(\Psi,W)=(\Psi_{T},\pi(\Gamma))$ or  $(\Psi,W)=(\Psi'_{T},\pi(\Gamma^{\circ}))$ in Question
\ref{Q:dependent-character}. The property $W_{\Psi_{T}}\subset\pi(\Gamma)\subset\Aut(\Psi_{T})$ follows
from the defintions of $\Gamma$ and of $\Psi_{T}$. The property $W_{\Psi'_{T}}\subset\pi(\Gamma^{\circ})\subset
\Aut(\Psi'_{T})$ is proved in Proposition \ref{P:Gamma0-Psi'}. Taking the pair $(\Psi'_{T},\Gamma^{\circ})$,
the solution to Question \ref{Q:dependent-character} gives a solution to Question \ref{Q:linear dependence}.
\end{remark}

\section{Classification of sub-root systems}\label{S:sub-root systems}

In this section, given an irreducible root system $\Psi_0$, we discuss the classification of sub-root
systems of $\Psi_0$ up to $W_{\Psi_0}$ conjugation. In the case that $\Psi_0$ is a classical irreducible root
system, classification of sub-root systems of it seems to have been known to experts la ong time ago. A
detailed exposition of this classification is given in \cite{Oshima}. In the case that $\Psi_0$ is an
exceptional irreducible root system, classification of sub-root systems of it was first achieved by Oshima
in \cite{Oshima} except that the classification of sub-root systems of $\F_4$ or $\G_2$ was known
before. Our discussion follows \cite{Larsen-Pink} and \cite{Oshima}.

Before discussing the classification, we fix notations. Let $V=\bbR^{n}$ be an $n$-dimensional
Euclidean vector space with an orthonormal basis $\{e_1,e_2,...,e_{n}\}$. Denote by the root systems
\begin{eqnarray*}&&\A_{n-1}=\big\{\pm{(e_{i}-e_{j})}|\ 1\leq i<j\leq n\big\},\\&&
\B_{n}=\big\{\pm{e_{i}}\pm{e_{j}}|\ 1\leq i<j\leq n\}\cup\{\pm{e_{i}|1\leq i\leq n}\big\},\\&&
\C_{n}=\big\{\pm{e_{i}}\pm{e_{j}}|\ 1\leq i<j\leq n\big\}\bigcup\big\{\pm{2e_{i}|\ 1\leq i\leq n}\big\},\\&&
\BC_{n}=\big\{\pm{e_{i}}\pm{e_{j}}|\ 1\leq i<j\leq n\big\}\bigcup\big\{\pm{e_{i}},\pm{2e_{i}|\ 1\leq i\leq n}
\big\},\\&& \D_{n}=\big\{\pm{e_{i}}\pm{e_{j}}|\ 1\leq i<j\leq n\big\}. \end{eqnarray*}
Thus \[\A_{n-1}\subset\D_{n}\subset\B_{n},\C_{n}\subset\BC_{n}.\]

Note that $\A_0=\D_1=\emptyset$ and we regard $\A_1,\B_1,\C_1$ different from each other though
they are isomorphic as abstract root systems. Each of the following pairs $(\B_2,\C_2)$, $(\D_2, 2\A_1)$,
$(\A_3,\D_3)$ are also regarded different.

{\it Type $\A$.} Given $\Psi_0=\A_{n-1}$ ($n\geq 2$), 
let $\Phi\subset\Psi_0$ be a sub-root system. Define a relation on $\{1,2,...,n\}$ by
\[i\sim j\Leftrightarrow e_{i}-e_{j}\in\Phi.\] Thus $\sim$ is an equivalence relation. It divides
$\{1,2,...,n\}$ into $l$ subsets of cardinalities $n_1,n_2,...,n_{l}$ where $n_1\geq n_2\geq\cdots\geq n_{l}$
and $n_1+n_2+\cdots+n_{l}=n$. In this way we can show \[\Phi\sim\bigcup_{1\leq k\leq l}
\big\{\pm{(e_{i}-e_{j})}|\ n_1+n_2+\cdots+n_{k-1}+1\leq i<j\leq n_1+n_2+\cdots+n_{k}\big\}.\] We denote by
\[\Phi_{n_1,n_2,...,n_{l}}=\bigcup_{1\leq k\leq l}\big\{\pm{(e_{i}-e_{j})}|\ n_1+n_2+\cdots+n_{k-1}+
1\leq i<j\leq n_1+n_2+\cdots+n_{k}\big\}.\] It is isomorphic to $\bigcup_{1\leq k\leq l}\A_{n_{k}-1}$.

{\it Type $\B$.} Given $\Psi_0=\B_{n}$ ($n\geq 1$),
let $\Phi\subset\Psi_0$ be a sub-root system. Define a relation on $\{1,2,...,r\}$ by
\[i\sim j\Leftrightarrow e_{i}-e_{j}\in\Phi\textrm{ or }e_{i}+e_{j}\in\Phi.\] Thus $\sim$ is an
equivalent relation. It divides $\{1,2,...,n\}$ into $l$ subsets of cardinarities $n_1,n_2,...,n_{l}$
where $n_1+n_2+\cdots+n_{l}=n$. Write $n'_{k}=n_1+n_2+\cdots+n_{k}$ for $1\leq k\leq l$. Without loss
of generality we may assume that $\{i|\ n'_{k-1}+1\leq i\leq n'_{k}\}$ is an equivalence set for any
$1\leq k\leq l$ and \[\big\{\pm{(e_{i}-e_{j})}|\ n'_{k-1}+1\leq i<j\leq n'_{k}\big\}\subset\Phi.\] If
$e_{i}\in\Phi$ for some $n'_{k-1}+1\leq i\leq n_{k}$, we have
\[\B_{n_{k}}\cong\big\{\pm{e_{i}}\pm{e_{j}}|\ n'_{k-1}+1\leq i<j\leq n'_{k}\big\}\bigcup
\big\{\pm{e_{i}}|\ n'_{k-1}+1\leq i\leq n'_{k}\big\}\subset\Phi.\] If $e_{i}\not\in\Phi$ for any
$n'_{k-1}+1\leq i\leq n_{k}$ and $e_{i}+e_{j}\in\Phi$ for some $n'_{k-1}+1\leq i<j\leq n'_{k}$,
we have \[\D_{n_{k}}\cong\big\{\pm{e_{i}}\pm{e_{j}}|\ n'_{k-1}+1\leq i<j\leq n'_{k}\big\}\subset\Phi.\]
In summary we have \[\Phi\cong\big(\bigcup_{1\leq i\leq u}\B_{r_{i}}\big)\bigcup\big(\bigcup_{1\leq j\leq v}
\D_{s_{j}}\big)\bigcup\big(\bigcup_{1\leq k\leq w}\A_{t_{k}-1}\big)\] where
$\sum_{1\leq i\leq u} r_{i}+\sum_{1\leq j\leq v} s_{j}+\sum_{1\leq k\leq w} t_{k}=n$,
$r_{i}\geq 1$, $s_{j}\geq 2$ and $t_{k}\geq 1$. Moreover, the conjugacy class of $\Phi$ is uniquely
determined by these indices $\{r_{i},s_{j},t_{k}\}$. We denote by $\Phi_{\{r_{i}\},\{s_{j}\},\{t_{k}\}}$
a sub-root system with the indices $\{r_{i},s_{j},t_{k}\}$.

{\it Type $\C$.} Given $\Psi_0=\C_{n}$, let $\Phi\subset\Psi_0$ be a sub-root system. Similar as in the
$\B_{n}$ case, we can show \[\Phi\cong(\bigcup_{1\leq i\leq u}\C_{r_{i}})\bigcup(\bigcup_{1\leq j\leq v}
\D_{s_{j}})\bigcup(\bigcup_{1\leq k\leq w}\A_{t_{k}-1})\] where
$\sum_{1\leq i\leq u} r_{i}+\sum_{1\leq j\leq v} s_{j}+\sum_{1\leq k\leq w} t_{k}=n$,
$r_{i}\geq 1$, $s_{j}\geq 2$ and $t_{k}\geq 1$. Moreover, the conjugacy class of $\Phi$ is uniquely
determined by these indices $\{r_{i},s_{j},t_{k}\}$. We denote by $\Phi_{\{r_{i}\},\{s_{j}\},\{t_{k}\}}$
a sub-root system with the indices $\{r_{i},s_{j},t_{k}\}$.

{\it Type $\BC$.} Given $\Psi_0=\BC_{n}$, let $\Phi\subset\Psi_0$ be a sub-root system. Similar as in
the $\B_{n}$ case, we can show \[\Phi\cong(\bigcup_{1\leq i\leq u}\BC_{r_{i}})\bigcup(\bigcup_{1\leq j\leq v}
\B_{s_{j}})\bigcup(\bigcup_{1\leq k\leq w}\C_{t_{k}})\bigcup(\bigcup_{1\leq e\leq x}\D_{p_{e}})\bigcup
(\bigcup_{1\leq f\leq y}\A_{q_{f}-1})\] where $\sum_{1\leq i\leq u} r_{i}+\sum_{1\leq j\leq v} s_{j}+
\sum_{1\leq k\leq w} t_{k}+\sum_{1\leq e\leq x} p_{e}+\sum_{1\leq f\leq y}q_{f}=n$,
$r_{i}\geq 1$, $s_{j}\geq 1$, $t_{k}\geq 1$, $p_{e}\geq 2$ and $q_{f}\geq 1$. Moreover, the conjugacy
class of $\Phi$ is uniquely determined by the indices $\{r_{i},s_{j},t_{k},p_{e},q_{f}\}$. We denote by
$\Phi_{\{r_{i}\},\{s_{j}\},\{t_{k}\},\{p_{e}\},\{q_{f}\}}$ a sub-root system with the indices
$\{r_{i},s_{j},t_{k},p_{e},q_{f}\}$.

{\it Type $\D$.} Given $\Psi_0=\D_{n}$ ($n\geq 5$), in this case $\Aut(\Psi_0)=W_{n}=\{\pm{1}\}^{n}\rtimes S_{n}$
and $W_{\Psi_0}$ is a subgroup of index $2$ in $W_{n}$. Let $\Phi\subset\Psi_0$ be a sub-root system. Similar as
in $\B_{n}$ case, we have show \[\Phi\cong(\bigcup_{1\leq j\leq v}\D_{s_{j}})\bigcup(\bigcup_{1\leq k\leq w}
\A_{t_{k}-1})\] where $\sum_{1\leq j\leq v} s_{j}+\sum_{1\leq k\leq w} t_{k}=n$, $s_{j}\geq 2$ and
$t_{k}\geq 1$. Moreover, the $\Aut(\Psi_0)$ conjugacy class of $\Phi$ is uniquely determined by the
indices $\{s_{j},t_{k}\}$. Given a set of indices $\{s_{j},t_{k}:1\leq j\leq v,1\leq k\leq w\}$, there are
at most two conjugacy classes of sub-root systems up to $W_{\Psi_0}$ conjugation. In the case that there is a
unique $W_{\Psi_0}$ conjugacy class of sub-root systems with this set of indices, we denote by $\Phi_{\{s_{j}\},
\{t_{k}\}}$ one of them. Otherwise, we can show that each $s_{j}=0$ and each $t_{k}$ is even. Write
\[\Phi_{\{t_{k}\}}=\bigcup_{1\leq k\leq l}\big\{\pm{(e_{i}-e_{j})}|\ t_1+t_2+\cdots+t_{k-1}+1\leq i<j\leq
t_1+t_2+\cdots+t_{k}\big\}\] and $\Phi'_{\{t_{k}\}}=s_{e_1}\Phi_{\{t_{k}\}}$. They represent these two
conjugacy classes.


{\it Type $\D_4$.} Given $\Psi_0=\D_{4}=\{\pm{e_{i}}\pm{e_{j}}|1\leq i<j\leq 4\}$, we denote by
\begin{eqnarray*}&&\A_1=\langle e_1-e_2\rangle,\\&&\A_2=\langle e_1-e_2, e_2-e_3\rangle,\\&&
\A_3=\langle e_1-e_2,e_2-e_3,e_3-e_4\rangle,\\&&\A'_3=\langle e_1-e_2,e_2-e_3,e_3+e_4\rangle,\\&&
2\A_1=\langle e_1-e_2,e_3-e_4\rangle,\\&& 2\A'_1=\langle e_1-e_2,e_3+e_4\rangle, \\&& \D_2=
\langle e_1-e_2,e_1+e_2\rangle,\\&& 3\A_1=\langle e_1-e_2,e_1+e_2,e_3-e_4\rangle,\\&& 2\D_2
=\langle e_1-e_1,e_1+e_2,e_3-e_4,e_3+e_4\rangle,\\&&\D_3=\langle e_1-e_2,e_2-e_3,e_2+e_3\rangle,
\\&&\D_4=\langle e_1-e_1,e_2-e_3,e_3-e_4,e_3+e_4\rangle.\end{eqnarray*} We can show that any sub-root
system of $\D_4$ is conjugate to one of them up to $W_{\Psi_0}$ conjugation. On the other hand, we have
\begin{eqnarray*}&&\A_3\sim_{\Aut(\D_4)}\A'_3\sim_{\Aut(\D_4)}\D_3, \\&& 2\A_1\sim_{\Aut(\D_4)}2\A'_1
\sim_{\Aut(\D_4)}\D_2.\end{eqnarray*} These are all $\Aut(\D_4)$ conjugacy relations among them.

{\it Type $\E_6$, $\E_7$ and $\E_8$.} Given $\Psi_0=\E_6$, $\E_7$ or $\E_8$, the classification of
sub-root systems of $\Psi_0$ is hard to describe. The readers can refer \cite{Oshima} for the details.

{\it Type $\F_4$.} Given $\Psi_0=\F_4$, we denote by $\{\alpha_1,\alpha_2,\alpha_3,\alpha_4\}$ a simple
system. The long roots and short roots in $\Psi_0$ consist in the sub-root systems $\D_4^{L}$ and $\D_4^{S}$,
respectively. There is a unique conjugacy class of sub-root systems of $\F_4$ isomorphic to $\B_4$ (or
$\C_4$). We denote by \[\B_4=\langle\alpha_2,\alpha_1,\alpha_2+2\alpha_3,\alpha_4\rangle\] and
\[\C_4=\langle\alpha_3,\alpha_4,\alpha_2+\alpha_3,\alpha_1\rangle,\] which are representatives of
them. The sub-root systems $\D_4^{L}$ and $\D_4^{S}$ are stable under $W_{\F_4}$. Hence there exist
homomorphisms \[W_{\F_4}\longrightarrow\Aut(\D_4^{L})\] and \[W_{\F_4}\longrightarrow\Aut(\D_4^{S}).\]

\begin{lemma}
Each of these two homomorphisms is an isomorphism.
\end{lemma}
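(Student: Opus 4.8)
The plan is to prove that each of the two homomorphisms is injective, and then to conclude from the equality $|W_{\F_4}|=|\Aut(\D_4)|=1152$ that it is bijective, hence an isomorphism.

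First I would record that both $\D_4^{L}$ and $\D_4^{S}$ are rank-$4$ root systems sitting inside the ambient Euclidean space $V=\bbR^{4}$ of $\F_4$; in particular each of them spans $V$. This is transparent from the coordinate description of $\F_4$: the long roots are the vectors $\pm e_i\pm e_j$ (a copy of $\D_4$), and the short roots are the $\pm e_i$ together with the $\frac12(\pm e_1\pm e_2\pm e_3\pm e_4)$, which after rescaling again form a $\D_4$. Because these subsystems span $V$, automorphisms of each of them as an abstract root system coincide with automorphisms of the corresponding root system in $V$ in the sense of Definition \ref{D:Automorphism group}, so the two target groups are unambiguous. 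The homomorphism $W_{\F_4}\to\Aut(\D_4^{L})$ is $w\mapsto w|_{\D_4^{L}}$, which is well defined since $\D_4^{L}$ is $W_{\F_4}$-stable. If $w$ lies in its kernel, then $w$ fixes every long root; since the long roots span $V$ this forces $w$ to act as the identity on $V$, and as $W_{\F_4}$ acts faithfully on $V$ (the full root system $\F_4$ spans $V$) we get $w=1$. Replacing $\D_4^{L}$ by $\D_4^{S}$ verbatim gives injectivity of the second homomorphism as well.

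Then I would finish with the cardinality count: the Weyl group $W_{\D_4}$ has order $2^{3}\cdot 4!=192$, and triality gives $\Aut(\D_4)/W_{\D_4}\cong S_3$, so $|\Aut(\D_4)|=1152=|W_{\F_4}|$; an injective homomorphism between finite groups of the same order is an isomorphism. I do not anticipate a real obstacle here. If one prefers to avoid quoting $|\Aut(\D_4)|$, surjectivity can instead be obtained by noting that $W_{\D_4^{L}}\subset W_{\F_4}$ already maps onto the reflection subgroup $W_{\D_4^{L}}\subset\Aut(\D_4^{L})$, while reflections in short roots restrict to automorphisms of $\D_4^{L}$ realising the outer triality $S_3$; but the order argument is shorter. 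The only point that needs a moment's care is that $\D_4^{S}$, and not merely $\D_4^{L}$, spans $V$, which the coordinate picture makes immediate.
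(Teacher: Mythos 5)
Your proof is correct and follows the same skeleton as the paper's: establish injectivity, then conclude by the order count $|W_{\F_4}|=|\Aut(\D_4)|=1152$. The one place you diverge is the injectivity step, where your argument is actually simpler and cleaner: since $w\in W_{\F_4}$ is a linear map on $V=\bbR^4$ and $\D_4^{L}$ (resp.\ $\D_4^{S}$) spans $V$, fixing every root of the subsystem forces $w=\id_V$. The paper instead fixes the short roots one at a time by writing each short root $\alpha$ as $\tfrac{\beta_1+\beta_2}{2}$ with $\beta_1,\beta_2\in\D_4^{L}$ (using that $\alpha$ together with $\D_4^{L}$ generates a $\B_4$), concluding that $w$ fixes all of $\F_4$ and hence is trivial; your spanning argument renders that half-sum observation unnecessary, and it applies symmetrically to $\D_4^{S}$ without modification. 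Both order computations agree ($|W_{\D_4}|=192$, triality giving the factor $6$, versus the paper's $24\,|W_{\B_3}|$), so there is nothing to repair.
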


\begin{proof}
We show $W_{\F_4}\longrightarrow\Aut(\D_4^{L})$ is an isomorphism. The proof for $W_{\F_4}
\longrightarrow\Aut(\D_4^{S})$ is an isomorphism is similar. Suppose $w\in W_{\F_4}$ is an element with
trivial restriction on $\D_4^{L}$. Choose an element $\alpha$ in $\D_4^{S}$. Since $\alpha$
and $\D_4^{L}$ generate a sub-root system isomorphic to $\B_{4}$, we have $\alpha=\frac{\beta_1+\beta_2}{2}$
for some $\beta_1,\beta_2\in D_4^{L}$. Thus \[w(\alpha)=\frac{w\beta_1+w\beta_2}{2}=
\frac{\beta_1+\beta_2}{2}=\alpha.\] Hence $w=1$. Therefore $W_{F_4}\longrightarrow\Aut(D_4^{L})$ is
injective. On the other hand, by calculating stabilizers we get
\[|W_{\F_4}|=24|W_{\B_3}|=24\times 3!\times 2^3=3\times 4!\times 2^4=|\Aut(\D_4)|.\] Hence
$W_{\F_4}\longrightarrow\Aut(\D_4^{L})$ is an isomorphism.
\end{proof}

By the above lemma and the classification in $\D_4$ case, we get \begin{eqnarray*}&&\D_2^{L}\sim
2\A_1^{L}\sim 2\A'^{L}_{1},\\&&\D_3^{L}\sim\A_3^{L}\sim\A'^{L}_{3},\\&&\D_2^{S}\sim 2\A_1^{S}\sim
2\A'^{S}_{1},\\&&\D_3^{S}\sim\A_3^{S}\sim\A'^{S}_{3}.\end{eqnarray*}
The following sub-root systems of $\F_4$ are contained in $\B_4$:
$$\D_4^{L},\ \A_3^{L}+\A_1^{S},\ 4\A_1^{L},\ \B_2+2\A_1^{L},\ 2\A_1^{L}+2\A_1^{S},$$
$$\B_4,\ \B_3+\A_1^{S},\ 2\B_2,\ \B_2+2\A_1^{S},\ 4\A_1^{S},\ \A_3^{L},$$
$$2\A_1^{L}+\A_1^{S},\ \B_3,\ \B_2+\A_1^{S},\ 3\A_1^{S},\ 3\A_1^{L},$$
$$\B_2+\A_1^{L},\ \A_1^{L}+2\A_1^{S},\ \A_2^{L}+\A_1^{S},\  2\A_1^{L},$$
$$\B_2,\ 2\A_1^{S},\ \A_1^{L}+\A_1^{S},\ \A_2^{L},\ \A_1^{S},\ \A_1^{L}.$$
By duality, we get sub-root systems of $\F_4$ contained in some $\C_4$. In particular,
the following are those contained in $\C_4$ and not contained in any $\B_4$:
$$\D_4^{S},\ \A_1^{L}+\A_3^{S},\ \C_4,\ \C_3+\A_1^{L},$$
$$\A_3^{S},\ \C_3,\ \A_1^{L}+\A_2^{S},\ \A_2^{S}.$$

Denote by $\Phi\subset\F_4$ a sub-root system not contained in any $\B_4$ or $\C_4$. If $\Phi$ is simple,
then $\Phi=\F_4$. If $\Phi$ is not simple, it can not contain a factor $\A_1^{L}$, $\A_1^{S}$ or $\B_2$ since
otherwise $\Phi$ is contained in a sub-root system isomorphic to $\B_4$ or $\C_4$. Hence
$\Phi=\A_2^{L}+\A_2^{S}$. Moreover, there exists a unique conjugacy class of sub-root systems in each of
the above types.

{\it Type $\G_2$.} Given $\Psi_0=\G_2$, denote by $\{\alpha,\beta\}$ a simple system of $\Psi_0$.
We denote by $$\A_1^{L}=\langle\alpha\rangle,\ \A_1^{S}=\langle\beta\rangle,\ \A_2^{L}
=\langle\alpha,\alpha+3\beta\rangle,\ \A_2^{S}=\langle\alpha+\beta,\beta\rangle$$ and
$$\A_1^{L}+\A_1^{S}=\langle\alpha,\alpha+2\beta\rangle,\ \G_2=\langle\alpha,\beta\rangle.$$
Therefore each sub-root system of $\G_2$ is conjugate to one of them.

\section{Formulas of the leading terms}\label{S:leading terms}

Given an irreducible root system $\Psi_0$ with a positive system $\Psi_0^{+}$, we normalize the inner
product on $\Psi_0$ by letting the short roots having length $1$.

\begin{definition}
Given a reduced sub-root system $\Phi$ of $\Psi_0$, write
\[\delta_{\Phi}=\frac{1}{2}\sum_{\alpha\in\Phi\cap\Psi_0^{+}}\alpha.\] Let $\delta'_{\Phi}$ be the
unique dominant weight with respect to $\Psi_0^{+}$ in the orbit $W_{\Psi_0}\delta_{\Phi}$.
Denote by $e_{\Psi_0}(\Phi)=|2\delta'_{\Phi}|^{2}$ and $e_{\Psi_0}=e_{\Psi_0}(\Psi_0)$.
\end{definition}

In the case that $\Psi_0$ is clear from the contest, we simply write $e(\Phi)$ for $e_{\Psi_0}(\Phi)$.
Oshima classified sub-root systems $\Phi$ of any irreducible root system $\Psi_0$ up to $W_{\Psi}$
conjugation. In this section, we give the formulas of $2\delta'_{\Phi}$ and $e(\Phi)$ for the reduced
sub-root systems.

First we have formulas for $e_{\Psi_0}=|2\delta'_{\Psi_0}|^{2}$ as in Table $1$.

\begin{table}[ht] \label{Ta:1} 
\caption{Formulas of $e_{\Psi_0}$} 
\centering 
\begin{tabular}{|c |c |c |c |c |c |} \hline 

$\Psi_0$ & $\A_{n-1}$ & $\B_{n} $& $\C_{n}$ & $\D_{n}$ &\\ [0.3ex] \hline

$e_{\Psi_0}$ & $\frac{(n-1)n(n+1)}{6}$ & $\frac{(2n-1)n(2n+1)}{3}$ & $\frac{n(n+1)(2n+1)}{3}$ &
$\frac{n(n-1)(2n-1)}{3}$ &  \\ [0.3ex] \hline 

$\Psi_0$ & $\E_6$ & $\E_7$ & $\E_8$ & $\F_4$ & $\G_2$\\ [0.3ex] \hline

$e_{\Psi_0}$  & $156$ & $399$ & $1240$ & $156$ & $28$\\ [0.3ex] \hline 
\end{tabular}
\end{table}

Given a reduced sub-root system $\Phi\subset\Psi_0$, denote by \[\Phi=\bigsqcup_{1\leq i\leq s}\Phi_{i}\]
the decomposition of $\Phi$ into a disjoint union of irreducible sub-root systems and $\sqrt{k_{i}}$ the
ratio of the length of short roots of $\Phi_{i}$ and $\Psi$. Then we have \[e_{\Psi_0}(\Phi)=\sum_{1\leq i\leq s}
k_{i}e_{\Phi_{i}}.\] With this formula we can calculate $e_{\Psi_0}(\Phi)$ quickly from Table $1$.
On the other hand, from the expression of $2\delta'_{\Phi}$ into a linear combination of fundamental
weights, we can also calculate $e_{\Psi_0}(\Phi)=|2\delta'_{\Phi}|^{2}$. This helps on checking whether a
formula for $2\delta'_{\Phi}$ is correct or not. Given a classical irreducible root system $\Psi$,
the calculation of $2\delta'_{\Phi}$ for sub-root systems $\Phi$ of $\Psi$ is easy. We omit it here.
For an exceptional irreducible root system $\Psi$, in Oshima's classification (cf. \cite{Oshima}),
given an abstract root system $\Phi$, sometimes there exist two conjugacy classes of sub-root systems of
$\Psi$ isomorphic to $\Phi$. In the following we give representatives of sub-root systems for all
the cases when the above ambiguity occurs.

Given $\Psi_0=\E_7$, denote by $\beta=2\alpha_1+2\alpha_2+3\alpha_3+4\alpha_4+3\alpha_5+2\alpha_6+
\alpha_7$ and $\beta'=\alpha_2+\alpha_3+2\alpha_4+\alpha_5$. We set
\begin{itemize}
\item[1.]{$A_5:\langle\alpha_2,\alpha_4,\alpha_5,\alpha_6,\alpha_7\rangle$,}

\item[2.]{$(A_5)': \langle\alpha_3,\alpha_4,\alpha_5,\alpha_6,\alpha_7\rangle$,}

\item[3.]{$3A_1: \langle\alpha_2,\alpha_5,\alpha_7\rangle$,}

\item[4.]{$(3A_1)':\langle\alpha_3,\alpha_5,\alpha_7\rangle$,}

\item[5.]{$4A_1: \langle\alpha_2,\alpha_3,\alpha_5,\alpha_7\rangle$,}

\item[6.]{$(4A_1)': \langle\alpha_2,\alpha_3,\alpha_5,\beta'\rangle$,}

\item[7.]{$A_3+A_1: \langle\alpha_2,\alpha_5,\alpha_6,\alpha_7\rangle$,}

\item[8.]{$(A_3+A_1)': \langle\alpha_3,\alpha_5,\alpha_6,\alpha_7\rangle$,}

\item[9.]{$2A_1+A_3: \langle\alpha_2,\alpha_3,\alpha_5,\alpha_6,\alpha_7\rangle$,}

\item[10.]{$(2A_1+A_3)': \langle\beta,\alpha_3,\alpha_5,\alpha_6,\alpha_7\rangle$,}

\item[11.]{$A_1+A_5: \langle\beta,\alpha_2,\alpha_4, \alpha_5,\alpha_6,\alpha_7\rangle$,}

\item[12.]{$(A_1+A_5)': \langle\beta,\alpha_3,\alpha_4, \alpha_5,\alpha_6,\alpha_7\rangle$.}
\end{itemize}

Given $\Psi_0=\E_8$, let
$\beta=2\alpha_1+3\alpha_2+4\alpha_3+6\alpha_4+5\alpha_5+4\alpha_6+3\alpha_7+2\alpha_8$
$\beta'=\alpha_1+2\alpha_2+2\alpha_3+4\alpha_4+4\alpha_5+3\alpha_6+2\alpha_7+\alpha_8$,
and $\beta''=2\alpha_1+2\alpha_2+4\alpha_3+5\alpha_4+4\alpha_5+3\alpha_6+2\alpha_7+\alpha_8$.
We set
\begin{itemize}
\item[13.]{$A_7: \langle\beta,\alpha_1,\alpha_3,\alpha_4,\alpha_5,\alpha_6,\alpha_7\rangle$,}

\item[14.]{$(A_7)':\langle\alpha_1,\alpha_3,\alpha_4,\alpha_5,\alpha_6,\alpha_7,\alpha_8\rangle$,}

\item[15.]{$4A_1:\langle\alpha_2,\alpha_3,\alpha_5,\alpha_2+\alpha_3+2\alpha_4+\alpha_5\rangle$,}

\item[16.]{$(4A_1)':\langle\alpha_2,\alpha_3,\alpha_5, \alpha_8\rangle$,}

\item[17.]{$2A_1+A_3: \langle\beta',\alpha_1,\alpha_6,\alpha_7,\alpha_8\rangle$,}

\item[18.]{$(2A_1+A_3)':\langle\alpha_2,\alpha_4,\alpha_3,\alpha_6,\alpha_8\rangle$,}

\item[19.]{$2A_3:\langle\beta',\alpha_3,\alpha_1,\alpha_6,\alpha_7,\alpha_8\rangle$,}

\item[20.]{$(2A_3)':\langle\alpha_2,\alpha_3,\alpha_4,\alpha_6,\alpha_7,\alpha_8\rangle$,}

\item[21.]{$A_1+A_5:\langle\beta'',\alpha_4,\alpha_5,\alpha_6,\alpha_7,\alpha_8\rangle$,}

\item[22.]{$(A_1+A_5)':\langle\alpha_1,\alpha_4,\alpha_5,\alpha_6,\alpha_7,\alpha_8\rangle$}
\end{itemize}

Given $\Psi_0=\D_4$, we set \begin{itemize}
\item[23.]{$2A_1:\langle\alpha_1,\alpha_3\rangle$,}

\item[24.]{$2A'_1: \langle\alpha_1,\alpha_4\rangle$,}

\item[25.]{$D_2: \langle\alpha_3,\alpha_4\rangle$,}

\item[26.]{$A_3:\langle\alpha_1,\alpha_2,\alpha_3\rangle$,}

\item[27.]{$A'_3: \langle\alpha_1,\alpha_2,\alpha_4\rangle$,}

\item[28.]{$D_3: \langle\alpha_2,\alpha_3,\alpha_4\rangle$.}
\end{itemize}












\begin{table}\label{Ta:D4}
\caption{Formulas of $2\delta'_{\Phi}$} 
\centering
\begin{tabular}{|c|c|c|c|c|c|c|c|c|} \hline
$\Psi_0$ & $\Phi$ & $2\delta'_{\Phi}$ & $e(\Phi)$\\ \hline
$\D_4$ & $A_1$ & $\omega_2$ & 1\\ \hline
$\D_4$ & $2A_1$\footnotemark & $2\omega_4$ & 2 \\ \hline
$\D_4$ & $2A'_1$\footnotemark & $2\omega_3$ & 2 \\ \hline
$\D_4$ & $D_2$\footnotemark & $2\omega_1$ & 2 \\ \hline
$\D_4$ & $A_2$   & $2\omega_2$ &  4 \\ \hline
$\D_4$ & $3A_1$ & $\omega_1+\omega_3+\omega_4$ & 3 \\ \hline
$\D_4$ & $A_3$\footnotemark & $2(\omega_2+\omega_4)$ & 10 \\ \hline
$\D_4$ & $A'_3$\footnotemark & $2(\omega_2+\omega_3)$ & 10 \\ \hline
$\D_4$ & $D_3$\footnotemark & $2(\omega_2+\omega_1)$ & 10 \\ \hline
$\D_4$ & $4A_1$ & $2\omega_2$ & 4 \\ \hline
$\D_4$ & $D_4$   & $2(\omega_1+\omega_2+\omega_3+\omega_4)$ &  28 \\ \hline
\end{tabular}
\end{table}



\begin{table}\label{Ta:E6}
\caption{Formulas of $2\delta'_{\Phi}$}
\centering
\begin{tabular}{|c|c|c|c|} \hline
$\Psi_0$ & $\Phi$ & $2\delta'_{\Phi}$ & $e(\Phi)$
 \\ \hline
$\E_6$ & $A_1$      &  $\omega_2$ & 1 \\ \hline
$\E_6$ & $A_2$      & $2\omega_2$ &  4 \\ \hline
$\E_6$ & $A_3$      & $\omega_1+2\omega_2+\omega_6$ & 10 \\ \hline
$\E_6$ & $A_4$      & $2\omega_1+2\omega_2+2\omega_6$ &  20 \\ \hline
$\E_6$ & $A_5$      & $2\omega_1+\omega_2+\omega_3+\omega_5+2\omega_6$ & 35 \\ \hline
$\E_6$ & $D_4$  & $2\omega_2+2\omega_4$ &  28 \\ \hline
$\E_6$ & $D_5$  & $2\omega_1+2\omega_2+2\omega_4+2\omega_6$ &  60 \\ \hline
$\E_6$ & $E_6$  & $2(\omega_1+\omega_2+\omega_3+\omega_4+\omega_5+\omega_6)$ & 156 \\ \hline
$\E_6$ & $2A_1$     & $\omega_1+\omega_6$ & 2 \\ \hline
$\E_6$ & $3A_1$     & $\omega_4$ & 3 \\ \hline
$\E_6$ & $4A_1$     & $2\omega_2$ & 4 \\ \hline
$\E_6$ & $A_2+A_1$  & $\omega_1+\omega_2+\omega_6$ & 5 \\ \hline
$\E_6$ & $A_2+2A_1$ & $\omega_3+\omega_5$ & 6 \\ \hline
$\E_6$ & $2A_2$     & $2\omega_1+2\omega_6$ & 8 \\ \hline
$\E_6$ & $2A_2+A_1$ & $\omega_1+\omega_4+\omega_6$ & 9 \\ \hline
$\E_6$ & $3A_2$     & $2\omega_4$ & 12 \\ \hline
$\E_6$ & $A_3+A_1$  & $\omega_2+\omega_3+\omega_5$ & 11 \\ \hline
$\E_6$ & $A_3+2A_1$ & $2\omega_4$ & 12 \\ \hline
$\E_6$ & $A_4+A_1$ & $2\omega_1+2\omega_4+2\omega_6$ & 21 \\ \hline
$\E_6$ & $A_5+A_1$  & $\omega_1+\omega_2+\omega_3+\omega_5+\omega_6$ & 36 \\ \hline
\end{tabular}
\end{table}




\begin{table}[ht] \label{Ta:E7}
\caption{Formulas of $2\delta'_{\Phi}$}
\centering
\begin{tabular}{|c |c |c |c |c | c |c |} \hline
$\Psi_0$ & $\Phi$ & $2\delta'_{\Phi}$ & $e(\Phi)$\\ \hline
$\E_7$ & $A_{1}$ & $\omega_1$ & 1 \\ \hline

$\E_7$ & $A_2$ & $2\omega_1$ & 4 \\ \hline

$\E_7$ & $A_3$ & $2\omega_1+\omega_6$ & 10 \\ \hline

$\E_7$ & $A_4$  &  $2\omega_1+2\omega_6$ &  20\\ \hline

$\E_7$ & $A_5$\footnotemark & $2\omega_1+2\omega_6+2\omega_7$
& 35 \\ \hline

$\E_7$ & $(A_5)'$\footnotemark & $\omega_1+\omega_4+2\omega_6$
& 35 \\  \hline

$\E_7$ & $A_6$  & $2\omega_4+2\omega_6$ & 56 \\ \hline

$\E_7$ & $A_7$ & $2\omega_1+2\omega_4+2\omega_6$ & 84 \\ \hline

$\E_7$ & $D_4$ & $2\omega_1+2\omega_3$ & 28 \\ \hline

$\E_7$ & $D_5$ &  $\omega_1+2\omega_3+2\omega_6$ & 60 \\ \hline

$\E_7$ & $D_6$ & $2\omega_1+\omega_2+\omega_3+\omega_5+2\omega_6+2\omega_7$ & 110\\ \hline

$\E_7$ & $E_6$ & $\omega_1+\omega_3+\omega_4+\omega_6$ & 156 \\ \hline

$\E_7$ & $E_7$ & $2(\omega_1+\omega_2+\omega_3+\omega_4+\omega_5+\omega_6+\omega_7)$ & 399 \\ \hline

$\E_7$ &  $2A_1$  &  $\omega_6$ & 2\\ \hline

$\E_7$ & $3A_1$\footnotemark&  $2\omega_7$ & 3\\ \hline

$\E_7$ & $(3A_1)'$\footnotemark   & $\omega_3$ & 3\\ \hline

$\E_7$ & $4A_1$\footnotemark & $\omega_2+\omega_7$ & 4\\ \hline

$\E_7$ & $(4A_1)'$\footnotemark  & $2\omega_1$ & 4\\ \hline

$\E_7$ & $5A_1$  &  $\omega_1+\omega_6$ & 5\\ \hline

$\E_7$&  $6A_1$  &  $\omega_4$ & 6\\ \hline

$\E_7$ & $7A_1$ & $2\omega_2$ & 7\\ \hline

$\E_7$ & $A_2+A_1$ & $\omega_1+\omega_6$ & 5\\ \hline

$\E_7$ & $A_2+2A_1$   &  $\omega_4$ & 6\\ \hline

$\E_7$ & $A_2+3A_1$  &  $2\omega_2$ & 7\\ \hline

$\E_7$ & $2A_2$ & $2\omega_6$ & 8\\ \hline

$\E_7$ &  $2A_2+A_1$  & $\omega_3+\omega_6$ & 9\\ \hline

$\E_7$ &  $3A_2$  & $2\omega_3$ & 12\\ \hline

$\E_7$ & $A_3+A_1$\footnotemark & $2\omega_1+2\omega_7$ & 11\\ \hline

$\E_7$ & $(A_3+A_1)'$\footnotemark & $\omega_1+\omega_4$ & 11\\ \hline

$\E_7$ & $A_3+2A_1$\footnotemark & $\omega_1+\omega_5+\omega_7$ & 12\\ \hline

$\E_7$ & $(A_3+2A_1)'$\footnotemark & $2\omega_3$  & 12\\ \hline

$\E_7$ & $A_3+3A_1$ &  $\omega_2+\omega_3+\omega_7$ & 13\\ \hline

$\E_7$ & $A_3+A_2$ & $\omega_4+\omega_6$ & 14\\ \hline

$\E_7$ & $A_3+A_2+A_1$ & $2\omega_5$ & 15\\ \hline

$\E_7$ & $2A_3$  & $2\omega_1+2\omega_6$ & 20\\ \hline

$\E_7$ & $2A_3+A_1$  & $\omega_1+\omega_4+\omega_6$ & 21\\ \hline

$\E_7$ & $A_4+A_1$ & $\omega_1+\omega_4+\omega_6$ & 21\\ \hline

$\E_7$ & $A_4+A_2$ & $2\omega_4$ & 24\\ \hline

$\E_7$ & $A_5+A_1$\footnotemark &$\omega_1+\omega_4+\omega_6+2\omega_7$ & 36\\ \hline

$\E_7$ & $(A_5+A_1)'$\footnotemark & $2\omega_3+2\omega_6$ & 36\\ \hline

$\E_7$ & $A_5+A_2$  &  $2\omega_4+2\omega_7$ & 39\\ \hline

$\E_7$ & $D_4+A_1$ &  $2\omega_1+\omega_2+\omega_3+\omega_7$ & 29\\ \hline

$\E_7$ & $D_4+2A_1$  & $2\omega_1+\omega_4+\omega_6$ & 30\\ \hline




\end{tabular}
\end{table}



\begin{table}[ht] \label{Ta:E7-II}
\caption{Formulas of $2\delta'_{\Phi}$}
\centering
\begin{tabular}{|c |c |c |c |c | c |c |}\hline
$\Psi_0$ & $\Phi$ & $2\delta'_{\Phi}$ & $e(\Phi)$\\ \hline












$\E_7$ & $D_4+3A_1$ & $2\omega_1+2\omega_5$ & 31\\ \hline

$\E_7$ & $D_5+A_1$ & $2\omega_1+\omega_2+\omega_3+\omega_5+\omega_6$ & 61\\ \hline

$\E_7$ & $D_6+A_1$  & $2\omega_1+2\omega_4+2\omega_6+2\omega_7$ & 111\\ \hline
\end{tabular}
\end{table}




\begin{table}[ht] \label{Ta:E8}
\caption{Formulas of $2\delta'_{\Phi}$}
\centering \begin{tabular}{|c |c |c |c |c |}\hline
$\Psi_0$ & $\Phi$ & $2\delta'_{\Phi}$ & $e(\Phi)$\\ \hline
$\E_8$ & $A_{1}$ & $\omega_8$ & 1\\ \hline

$\E_8$ & $A_2$ & $2\omega_8$ & 4\\ \hline

$\E_8$ & $A_3$  & $\omega_1+2\omega_8$ & 10 \\ \hline

$\E_8$ & $A_4$ & $2\omega_1+2\omega_8$ & 14 \\ \hline

$\E_8$ & $A_5$ &  $2\omega_1+\omega_6+\omega_8$ & 35\\ \hline

$\E_8$ & $A_6$ &$2\omega_1+2\omega_6$ & 56  \\ \hline

$\E_8$ & $A_7$\footnotemark & $2\omega_1+2\omega_6+2\omega_8$ & 84 \\ \hline

$\E_8$ & $(A_7)'$\footnotemark & $\omega_1+\omega_4+\omega_6+\omega_7$ & 84 \\ \hline

$\E_8$ & $A_8$ & $2\omega_4+2\omega_7$ & 120 \\ \hline

$\E_8$ & $D_4$ & $2\omega_7+2\omega_8$ & 28 \\ \hline

$\E_8$ & $D_5$ & $2\omega_1+2\omega_7+2\omega_8$ & 60 \\ \hline

$\E_8$ & $D_6$ & $2\omega_1+\omega_2+\omega_3+\omega_7+2\omega_8$ & 110 \\ \hline

$\E_8$ & $D_7$ & $2\omega_1+\omega_2+\omega_3+\omega_5+\omega_6+\omega_8$ & 182 \\ \hline

$\E_8$ & $D_8$ & $2\omega_1+2\omega_4+2\omega_6+2\omega_8$ & 280\\ \hline

$\E_8$ & $E_6$ & $\omega_1+\omega_6+\omega_7+\omega_8$ & 156 \\ \hline

$\E_8$ & $E_7$ & $2\omega_1+\omega_2+\omega_3+\omega_5+2\omega_6+2\omega_7+2\omega_8$ & 399 \\ \hline

$\E_8$ &  $E_8$ & $2(\omega_1+\omega_2+\omega_3+\omega_4+\omega_5+\omega_6+\omega_7+\omega_8)$
& 1240\\ \hline

$\E_8$ & $2A_1$ &  $\omega_1$ & 2 \\ \hline

$\E_8$ & $3A_1$ & $\omega_7$ & 3 \\ \hline

$\E_8$ & $4A_1$\footnotemark & $2\omega_8$  & 4\\ \hline

$\E_8$& $(4A_1)'$\footnotemark & $\omega_2$  & 4\\ \hline

$\E_8$ & $5A_1$ &  $\omega_1+\omega_8$ & 5 \\ \hline

$\E_8$ & $6A_1$  & $\omega_6$ & 6 \\ \hline

$\E_8$ & $7A_1$ & $\omega_3$ & 7 \\ \hline

$\E_8$ & $8A_1$ & $2\omega_1$ & 8 \\ \hline

$\E_8$ & $A_2+A_1$ & $\omega_1+\omega_8$ & 5 \\ \hline

$\E_8$ & $A_2+2A_1$ & $\omega_6$ & 6 \\ \hline

$\E_8$ & $A_2+3A_1$  &  $\omega_3$ & 7 \\ \hline

$\E_8$ & $A_2+4A_1$ & $2\omega_1$ & 8 \\ \hline

$\E_8$ & $2A_2$  & $2\omega_1$ & 8 \\ \hline

$\E_8$ & $2A_2+A_1$  & $\omega_1+\omega_7$ & 9 \\ \hline

$\E_8$ & $2A_2+2A_1$ & $\omega_5$& 10\\ \hline

$\E_8$ & $3A_2$  & $2\omega_7$ & 12 \\ \hline

$\E_8$ & $3A_2+A_1$ & $\omega_2+\omega_7$ & 13 \\ \hline

$\E_8$ & $4A_2$ & $2\omega_2$ & 16 \\ \hline

$\E_8$ & $A_3+A_1$  & $\omega_6+\omega_8$ & 11 \\ \hline
\end{tabular}
\end{table}



\begin{table}[ht] \label{Ta:E8-II}
\caption{Formulas of $2\delta'_{\Phi}$}
\centering \begin{tabular}{|c |c |c |c |c |}\hline
$\Psi_0$ & $\Phi$ & $2\delta'_{\Phi}$ & $e(\Phi)$\\ \hline
$\E_8$ & $A_3+2A_1$\footnotemark & $2\omega_7$ & 12 \\ \hline

$\E_8$ & $(A_3+2A_1)'$\footnotemark & $\omega_3+\omega_8$ & 12 \\ \hline

$\E_8$ & $A_3+3A_1$ & $\omega_2+\omega_7$ & 13\\ \hline

$\E_8$ & $A_3+4A_1$  & $\omega_1+\omega_6$ & 14\\ \hline

$\E_8$ & $A_3+A_2$ & $\omega_1+\omega_6$ & 14\\ \hline

$\E_8$ & $A_3+A_2+A_1$ & $\omega_4$ & 15\\ \hline

$\E_8$ & $A_3+A_2+2A_1$  & $2\omega_2$ & 16\\ \hline

$\E_8$ & $2A_3$\footnotemark  & $2\omega_1+2\omega_8$ & 20\\ \hline

$\E_8$ & $(2A_3)'$\footnotemark  & $\omega_1+\omega_5$ & 20 \\ \hline

$\E_8$ & $2A_3+A_1$ & $\omega_1+\omega_6+\omega_8$ & 21\\ \hline

$\E_8$ & $2A_3+2A_1$ & $\omega_4+\omega_8$ & 22\\ \hline

$\E_8$ & $A_4+A_1$  &    $\omega_1+\omega_6+\omega_8$ & 21 \\ \hline

$\E_8$ & $A_4+2A_1$ &  $\omega_4+\omega_8$  & 22\\ \hline

$\E_8$ & $A_4+A_2$ &   $2\omega_6$ & 24 \\ \hline

$\E_8$ & $A_4+A_2+A_1$ & $\omega_3+\omega_6$ & 25 \\ \hline

$\E_8$ & $A_4+A_3$  &  $\omega_4+\omega_7$ & 30 \\ \hline

$\E_8$ & $2A_4$ &  $2\omega_5$ & 40\\ \hline

$\E_8$ & $A_5+A_1$\footnotemark & $2\omega_1+2\omega_7$ & 36 \\ \hline

$\E_8$ & $(A_5+A_1)'$\footnotemark & $\omega_1+\omega_4+\omega_8$ & 36 \\ \hline

$\E_8$ & $A_5+2A_1$  & $\omega_1+\omega_5+\omega_7$ & 37 \\ \hline

$\E_8$ & $A_5+A_2$ &  $\omega_4+\omega_6$  & 39 \\ \hline

$\E_8$ & $A_5+A_2+A_1$  & $2\omega_5$ & 40 \\ \hline

$\E_8$ & $A_6+A_1$ & $\omega_1+\omega_4+\omega_6$ & 57\\ \hline

$\E_8$ & $A_7+A_1$  & $\omega_1+\omega_4+\omega_6+2\omega_8$& 85\\ \hline

$\E_8$ & $D_4+A_1$ & $\omega_2+\omega_7+2\omega_8$  & 29 \\ \hline

$\E_8$ & $D_4+2A_1$ & $\omega_1+\omega_6+2\omega_8$   &  30 \\ \hline

$\E_8$ & $D_4+3A_1$  & $\omega_4+2\omega_8$  & 31 \\ \hline

$\E_8$ & $D_4+4A_1$  & $2\omega_2+2\omega_8$ & 32 \\ \hline

$\E_8$ & $D_4+A_2$ &  $2\omega_2+2\omega_8$ & 32 \\ \hline

$\E_8$ & $D_4+A_3$ & $\omega_2+\omega_3+\omega_7$   & 38 \\ \hline

$\E_8$ & $2D_4$  &  $2\omega_1+2\omega_6$& 56\\ \hline

$\E_8$ & $D_5+A_1$  & $\omega_1+\omega_5+\omega_7+2\omega_8$&  61 \\ \hline

$\E_8$ & $D_5+2A_1$ & $\omega_2+\omega_3+\omega_7+2\omega_8$ & 62\\\hline

$\E_8$ & $D_5+A_2$ & $2\omega_5+2\omega_8$ & 64 \\ \hline

$\E_8$ & $D_5+A_3$ & $\omega_1+\omega_4+\omega_6+\omega_8$ & 70 \\ \hline

$\E_8$ & $D_6+A_1$ & $2\omega_1+\omega_4+\omega_6+2\omega_8$ & 111 \\ \hline

$\E_8$ & $D_6+2A_1$ & $2\omega_1+2\omega_5+2\omega_8$ & 112 \\ \hline

$\E_8$ & $E_6+A_1$ & $\omega_1+\omega_4+\omega_6+2\omega_7+2\omega_8$ & 157 \\ \hline

$\E_8$ & $E_6+A_2$ & $\omega_4+\omega_7+\omega_8$ & 160 \\ \hline

$\E_8$ & $E_7+A_1$ & $2\omega_1+2\omega_4+2\omega_6+2\omega_7+2\omega_8$ &400 \\ \hline
\end{tabular}
\end{table}



\begin{table}\label{Ta:F4}
\caption{Formulas of $2\delta'_{\Phi}$}
\centering
\begin{tabular}{|c|c|c|c|c|c|c|c|c|} \hline
$\Psi_0$ & $\Phi$ & $2\delta'_{\Phi}$ & $e(\Phi)$\\ \hline
$\F_4$ & $A_1^L$ & $\omega_1$ &  2 \\ \hline
$\F_4$ & $A_1^S$ & $\omega_4$ &  1  \\ \hline
$\F_4$ & $A_2^L$ & $2\omega_1$ & 8 \\ \hline
$\F_4$ & $A_2^S$ & $2\omega_4$ & 4 \\ \hline
$\F_4$ & $A_3^L$ & $2\omega_1+2\omega_4$ & 20 \\ \hline
$\F_4$ & $A_3^S$ & $\omega_1+2\omega_4$ &  10 \\ \hline
$\F_4$ & $D_4^L$ & $2\omega_1+2\omega_2$ & 56 \\ \hline
$\F_4$ & $D_4^S$ & $2\omega_3+2\omega_4$ & 28 \\ \hline
$\F_4$ & $B_2$ & $\omega_1+2\omega_4$ &    10 \\ \hline
$\F_4$ & $B_3$ & $2\omega_1+\omega_2+\omega_4$ & 35\\ \hline
$\F_4$ & $C_3$ & $2\omega_3+2\omega_4$ & 28\\ \hline
$\F_4$ & $B_4$ & $2\omega_1+2\omega_2+2\omega_4$ &  84\\ \hline
$\F_4$ & $C_4$ & $2\omega_1+2\omega_3+2\omega_4$ &  60\\ \hline
$\F_4$ & $F_4$ & $2\omega_1+2\omega_2+2\omega_3+2\omega_4$ &  156\\ \hline
$\F_4$ & $2A_1^L$ & $2\omega_4$ &  4\\ \hline
$\F_4$ & $2A_1^S$ & $\omega_1$ &  2\\ \hline
$\F_4$ & $A_1^S+A_1^L$ & $\omega_3$ & 3\\ \hline
$\F_4$ & $3A_1^L$ & $\omega_2$ & 6\\ \hline
$\F_4$ & $3A_1^S$ & $\omega_3$ & 3\\ \hline
$\F_4$ & $A_1^S+2A_1^L$ & $\omega_1+\omega_4$ & 5\\ \hline
$\F_4$ & $2A_1^S+A_1^L$ & $2\omega_4$ &  4\\ \hline
$\F_4$ & $4A_1^L$ & $2\omega_1$ &  8\\ \hline
$\F_4$ & $4A_1^S$ & $2\omega_4$ &  4\\ \hline
$\F_4$ & $2A_1^S+2A_1^L$ & $\omega_2$ & 6\\ \hline
$\F_4$ & $A_2^L+A_1^S$ & $\omega_1+\omega_3$ & 9\\ \hline
$\F_4$ & $A_2^S+A_1^L$ &  $\omega_2$ & 6\\ \hline
$\F_4$ & $A_2^S+A_2^L$ & $2\omega_3$ & 12\\ \hline
$\F_4$ & $B_2+A_1^L$ & $2\omega_3$ & 12\\ \hline
$\F_4$ & $B_2+A_1^S$ & $\omega_2+\omega_4$ & 11\\ \hline
$\F_4$ & $B_2+2A_1^L$ &  $\omega_1+\omega_4$ & 14\\ \hline
$\F_4$ & $B_2+2A_1^S$ & $2\omega_3$ &  12\\ \hline
$\F_4$ & $2B_2$ & $2\omega_1+2\omega_4$ & 20\\ \hline
$\F_4$ & $A_3^S+A_1^L$ & $2\omega_3$ &  12\\ \hline
$\F_4$ & $A_3^L+A_1^S$ & $\omega_1+\omega_2+\omega_4$ & 21\\ \hline
$\F_4$ & $C_3+A_1^L$ & $\omega_1+\omega_2+2\omega_4$ & 30\\ \hline
$\F_4$ & $B_3+A_1^S$ &  $2\omega_1+2\omega_3$ & 36\\ \hline
\end{tabular}
\end{table}



\begin{table}\label{Ta:G2}
\caption{Formulas of $2\delta'_{\Phi}$} 
\centering
\begin{tabular}{|c|c|c|c|c|c|c|c|c|} \hline
$\Psi_0$ & $\Phi$ & $2\delta'_{\Phi}$ & $e(\Phi)$\\ \hline
$\G_2$ & $A_1^L$ & $\omega_1$ & 3\\ \hline
$\G_2$ & $A_1^S$ & $\omega_2$ & 1 \\ \hline
$\G_2$ & $A_2^L$ & $2\omega_1$ & 12 \\ \hline
$\G_2$ & $A_2^S$ & $2\omega_2$ & 4 \\ \hline
$\G_2$ & $G_2$   & $2(\omega_1+\omega_2)$ &  28 \\ \hline
$\G_2$ & $A_1^S+A_1^L$ & $2\omega_2$ & 4 \\ \hline
\end{tabular}
\end{table}




\section{Equalities among dimension data}\label{S:dimension-equal}

In this section, we solve Question \ref{Q:equal-character}. The strategy is as follows. First we reduce it
to the case that the root system $\Psi$ is an irreducible root system. In the case that $\Psi$ if of type $\BC$,
there is an algebra isomorphism $E$ from the ring of characters to the polynomial ring $\bbQ[x_0,x_1,\dots]$
defined in \cite{Larsen-Pink}. Moreover, the polynomials in the image are generated by certain polynomials
$a_{n}$, $b_{n}$, $c_{n}$, $d_{n}$, corresponding to the sub-root systems $\A_{n-1}$, $\B_{n}$, $\C_{n}$ and
$\D_{n}$, respectively. We are able to get all multiplicative relations among these polynomials
$\{a_{n},b_{n},c_{n},d_{n}|\ n\geq 1\}$. In the case that $\Psi$ is of type $\B$, $\C$ or $\D$, the solution in
type $\BC$ case solves the question in this case as well, with a little more consideration if $\Psi=\D_{n}$.
In the case that $\Psi$ is of type $\A$, we show that the characters of non-conjugate sub-root systems have
different leading terms. In the case that $\Psi$ is an exceptional irreducible root system, the formulas of the
leading terms $\{2\delta'_{\Phi}|\ \Phi\subset\Psi\}$ have been given in Section \ref{S:leading terms}. We
give a case by case discussion of sub-root systems with equal leading terms in their characters.

\begin{theorem}\label{T:equal character-nonsimple}
Given a root system $\Psi$ and two reduced sub-root systems $\Phi_1$ and $\Phi_2$,
$F_{\Phi_1,\Aut(\Psi)}=F_{\Phi_2,\Aut(\Psi)}$ if and only if there exists $\gamma\in\Aut(\Psi)$ such
that \[F_{\Phi_{i}^{(1)},\Aut(\Psi_{i})}=F_{\Phi_{i}^{(2)},\Aut(\Psi_{i})}\] for any  $1\leq i\leq m$,
where $\Psi=\bigsqcup_{1\leq i\leq m}\Psi_{i}$ with each $\Psi_{i}$ an irreducible root system,
\[\gamma\Phi_1=\bigsqcup_{1\leq i\leq m}\Phi_{i}^{(1)}\] and
\[\Phi_2=\bigsqcup_{1\leq i\leq m}\Phi_{i}^{(2)}\] with
$\Phi_{i}^{(1)},\Phi_{i}^{(2)}\subset\Psi_{i}$.
\end{theorem}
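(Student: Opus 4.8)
The plan is to reduce the equality problem to the irreducible case by combining the multiplicativity of $A_{\Phi}$ over the orthogonal decomposition $\Psi=\bigsqcup_{i=1}^{m}\Psi_{i}$ with the wreath-product structure of $\Aut(\Psi)$, and then to read off the component-wise statement from unique factorization in a polynomial ring. First I would record the following elementary facts. Since the $\Psi_{i}$ are mutually orthogonal and exhaust $\Psi$, every sub-root system $\Phi\subseteq\Psi$ splits as $\Phi=\bigsqcup_{i}(\Phi\cap\Psi_{i})$ with $\Phi\cap\Psi_{i}$ a sub-root system of $\Psi_{i}$, and consequently $W_{\Phi}=\prod_{i}W_{\Phi\cap\Psi_{i}}$, $\delta_{\Phi}=\sum_{i}\delta_{\Phi\cap\Psi_{i}}$, and $A_{\Phi}=\prod_{i}A_{\Phi\cap\Psi_{i}}$ inside $\bbQ[\bbZ\Psi]=\bigotimes_{i}\bbQ[\bbZ\Psi_{i}]$. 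Grouping the indices by the isomorphism type $\tau$ of $\Psi_{i}$, with $k_{\tau}$ components of a type $\tau$ and a fixed representative $\Psi^{(\tau)}$, one has $\Aut(\Psi)=\prod_{\tau}\big(\Aut(\Psi^{(\tau)})^{k_{\tau}}\rtimes S_{k_{\tau}}\big)$. Setting $R_{\tau}:=\bbQ[\bbZ\Psi^{(\tau)}]$ (a Laurent polynomial ring, in particular a domain) and $f_{i}:=F_{\Phi\cap\Psi_{i},\Aut(\Psi_{i})}\in R_{\tau(i)}$, a direct averaging computation --- first over $\prod_{i}\Aut(\Psi_{i})$, then over $\prod_{\tau}S_{k_{\tau}}$ --- gives
\[
F_{\Phi,\Aut(\Psi)}=\bigotimes_{\tau}\Big(\tfrac{1}{k_{\tau}!}\sum_{\sigma\in S_{k_{\tau}}}\sigma\big(\bigotimes_{\tau(i)=\tau}f_{i}\big)\Big),
\]
which, under the canonical inclusion of $\bigotimes_{\tau}\Sym^{k_{\tau}}(R_{\tau})$ into $\bbQ[\bbZ\Psi]$, says exactly that $F_{\Phi,\Aut(\Psi)}$ is the product $\prod_{i=1}^{m}f_{i}$ computed in the commutative ring $P:=\bigotimes_{\tau}\Sym^{\bullet}(R_{\tau})$, each $f_{i}$ being placed in degree one of its tensor factor.

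The ``if'' direction now follows at once: after replacing $\Phi_{1}$ by $\gamma\Phi_{1}$ one may assume $F_{\Phi_{1}\cap\Psi_{i},\Aut(\Psi_{i})}=F_{\Phi_{2}\cap\Psi_{i},\Aut(\Psi_{i})}$ for all $i$, so the two products of $f_{i}$'s coincide and hence $F_{\Phi_{1},\Aut(\Psi)}=F_{\Phi_{2},\Aut(\Psi)}$. For the ``only if'' direction I would argue as follows. Choosing $\bbQ$-bases of the vector spaces $R_{\tau}$ exhibits $P$ as a polynomial ring over $\bbQ$, hence a UFD; the degree-one elements are its linear forms, each such form (being nonzero by Proposition \ref{P:characters}, which also gives constant term $1$) is irreducible, and two linear forms lying in different tensor factors involve disjoint variables and are therefore never associates. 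Thus the hypothesis $F_{\Phi_{1},\Aut(\Psi)}=F_{\Phi_{2},\Aut(\Psi)}$ becomes the equality $\prod_{i}f_{i}^{(1)}=\prod_{i}f_{i}^{(2)}$ of two products of $m$ linear forms of the same multidegree in a UFD; unique factorization, together with the normalization $c=1$ forced on each scalar by comparing constant terms, yields a bijection $\pi$ of $\{1,\dots,m\}$ respecting isomorphism type with $f_{i}^{(1)}=f_{\pi(i)}^{(2)}$. Choosing for each $i$ an isometry $\Psi_{i}\to\Psi_{\pi(i)}$ and the identity on the unmoved components assembles $\gamma\in\Aut(\Psi)$; then with $\Phi_{i}^{(1)}:=\gamma\Phi_{1}\cap\Psi_{i}$ and $\Phi_{i}^{(2)}:=\Phi_{2}\cap\Psi_{i}$ the equalities $F_{\Phi_{i}^{(1)},\Aut(\Psi_{i})}=F_{\Phi_{i}^{(2)},\Aut(\Psi_{i})}$ hold by $\Aut(\Psi_{i})$-invariance of the characters.

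The step I expect to be the main obstacle is the ``only if'' direction, and inside it the key observation that each character $F_{\Phi\cap\Psi_{i},\Aut(\Psi_{i})}$ --- possibly very reducible as an element of the Laurent polynomial ring $R_{\tau(i)}$ --- becomes an \emph{irreducible} element of $P$ merely because it has degree one there; this is precisely what makes unique factorization applicable and is the conceptual heart of the reduction to irreducible $\Psi$. The remaining work --- establishing the displayed averaging identity, and checking that $F_{\Phi,\Aut(\Psi)}$ genuinely sits in the single multigraded piece $\bigotimes_{\tau}\Sym^{k_{\tau}}(R_{\tau})$ of $P$, so that the hypothesis is an equation inside one graded component of a UFD --- is routine but should be carried out with care.
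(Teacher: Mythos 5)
Your proposal is correct and follows essentially the same route as the paper: both express $F_{\Phi,\Aut(\Psi)}$ as a product of degree-one elements (with constant term $1$) in a symmetric tensor algebra, which is a polynomial ring and hence a UFD, and then read off the matching of factors up to permutation from unique factorization. The only cosmetic difference is that the paper first reduces to the case $\Psi=m\Psi_0$ of a single isomorphism type, whereas you treat all types simultaneously via a tensor product of symmetric algebras; the underlying argument is identical.
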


\begin{proof}
Since $\Aut(\Psi)$ permutes simple factors of $\Psi$ and it permutes two simple factors if and only
if they are isomorphic abstract irreducible root systems, we may assume that $\Psi=m\Psi_0$ and
$\Aut(\Psi)=\Aut(\Psi_0)^{m}\rtimes S_{m}$ where $\Psi_0$ is an irreducible root system and
$m\Psi_0$ denotes the direct sum of $m$ copies of $\Psi_0$.

Denote by $\Lambda=\bbZ\Psi_0$ the root lattice, by $\bbQ[\Lambda]$ the character ring and by
$U=\bbQ[\Lambda]^{\Aut(\Psi_0)}$ the invariant characters. Thus $U$ is
a $\bbQ$ vector space with a basis $\Lambda$. Write \[S(U)=\bigoplus_{n\geq 0} S^{n}(U)\] for the
symmetric tensor algebra over $U$. It is a polynomial algebra with symmetric tensor product as
multiplication. Hence it is a unique factorization domain.

Write $\Phi_{j}=\bigoplus_{1\leq i\leq m}\Phi_{i}^{(j)},j=1,2$. In $S(U)$, we have \[F_{\Phi_{j},
\Aut(\Psi)}=F_{\Phi_1^{(j)},\Aut(\Psi_0)}\cdot F_{\Phi_2^{(j)},\Aut(\Psi_0)}\cdots F_{\Phi_{m-1}^{(j)},
\Aut(\Psi_0)}\cdot F_{\Phi_{m}^{(j)},\Aut(\Psi_0)}.\] As each $F_{\Phi_{i}^{(j)},\Aut(\Psi_0)}$ being of
degree one and having constant term $1$, $F_{\Phi_1,\Aut(\Psi)}=F_{\Phi_2,\Aut(\Psi)}$ if and only if
$\{F_{\Phi_{i}^{(1)},\Aut(\Psi_0)}|\ 1\leq i\leq m\}$ and $\{F_{\Phi_{i}^{(2)},\Aut(\Psi_0)}|\ 1\leq i
\leq m\}$ differ by a permutation. Therefore the conclusion follows.
\end{proof}

\begin{remark}\label{R:equal character-nonsimple}
In the case of $\Psi_0=\BC_1=\{\pm{e_1},\pm{2e_1}\}$, \[\Lambda=\bbZ\Psi_0=\{ne_1:n\in\bbZ\}.\] Write $x_{n}=
\frac{[ne_1]+[-ne_1]}{2}$ for any $n\in\bbZ_{\geq 0}$. In this case $U=\span_{\bbQ}\{x_{n}|\ n\geq 0\}$
and \[S(U)=\bbQ[x_0,x_1,\dots,x_{n},\dots]\] is the usual polynomial algebra. The identification is given
by the map $E$ defined in \cite{Larsen-Pink}.
\end{remark}


Theorem \ref{T:equal character-nonsimple} reduces Question \ref{Q:equal-character} to the case that $\Psi$
is an irreducible root system. We introduce some notation now. Given a root system $\Psi$, recall that we
have defined the integral weight lattice $\Lambda_{\Psi}$ (which is a subset of the rational vector space
spanned by roots of $\Psi$) in the ``notation and conventions`` part.

\begin{definition}
Given an integral weight $\lambda$, let \[\chi_{\lambda,\Psi}^{\ast}=\frac{1}{|W_{\Psi}|}
\sum_{\gamma\in W_{\Psi}}[\gamma\lambda].\]
\end{definition}

In the case that the root system $\Psi$ is clear from the context, we simply write $\chi_{\lambda}^{\ast}$ for
$\chi_{\lambda,\Psi}^{\ast}$. As discussed in Section \ref{S:characters}, the characters
$\{\chi_{\lambda}^{\ast}|\ \lambda\in\Lambda_{\Psi}^{+}\}$ is a basis of of the vector space
$\bbQ[\Lambda_{\Psi}]^{W_{\Psi_0}}$, where $\Lambda_{\Psi}^{+}$ is the set of dominant weights in
$\Lambda_{\Psi}$ (with respect to a positive system determined by a chosen order).

Combining Theorems \ref{T:character-classical} and \ref{character-exceptional}, we get the following theorem.
It answers Question \ref{Q:equal-character} completely in the case that $\Psi$ is an irreducible root system.

\begin{theorem}\label{T:equal character-simple}
Given an irreducible root system $\Psi$, if there exist two non-conjugate reduced sub-root systems
$\Phi_1,\Phi_2\subset\Psi$ with $F_{\Phi_1,\Aut(\Psi)}=F_{\Phi_2,\Aut(\Psi)}$,
then $\Psi\cong\C_{n}$, $\BC_{n}$ or $\F_4$.

In the case that $\Psi=\C_{n}$ or $\BC_{n}$, $F_{\Phi_1,\Aut(\Psi)}=F_{\Phi_2,\Aut(\Psi)}$ if and only if
\[b_{m}(\Phi_1)-b_{m}(\Phi_2)=a_{2m}(\Phi_1)-a_{2m}(\Phi_2)=0\] and
\[a_{2m-1}(\Phi_1)-a_{2m-1}(\Phi_2)=c_{m-1}(\Phi_2)-c_{m-1}(\Phi_1)=d_{m}(\Phi_2)-d_{m}(\Phi_1)\]
for any $m\geq 1$. Here $a_{m}(\Phi_{i})$, $b_{m}(\Phi_{i})$, $c_{m}(\Phi_{i})$, $d_{m}(\Phi_{i})$
is the number of simple factors of $\Phi_{i}\subset\Psi\subset\BC_{n}$ isomorphic to
$\A_{m-1}$, $\B_{m}$, $\C_{m}$ or $\D_{m}$, respectively.

In the case that $\Psi=\F_4$, $F_{\Phi_1,\Aut(\Psi)}=F_{\Phi_2,\Aut(\Psi)}$ if and only if
\[\Phi_1\sim\Phi_2,\] \[\{\Phi_1,\Phi_2\}\sim\{A_2^{S},A_1^{L}+2A_1^{S}\}\] or
\[\{\Phi_1,\Phi_2\}\sim\{A_1^{L}+A_2^{S}, 2A_1^{L}+2A_1^{S}\}.\]
\end{theorem}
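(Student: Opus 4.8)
The plan is to prove Theorem~\ref{T:equal character-simple} by reducing, via Theorem~\ref{T:equal character-nonsimple}, to the case of an irreducible root system $\Psi$ and then splitting into the classical and exceptional cases, invoking the two main theorems cited (Theorems~\ref{T:character-classical} and \ref{character-exceptional}). So the first step is purely formal: state that $F_{\Phi_1,\Aut(\Psi)}=F_{\Phi_2,\Aut(\Psi)}$ for a general (possibly reducible) $\Psi$ is equivalent, by Theorem~\ref{T:equal character-nonsimple}, to matching equalities on each irreducible factor after a suitable $\gamma\in\Aut(\Psi)$; hence it suffices to classify, for each irreducible $\Psi_0$, the pairs of non-conjugate reduced sub-root systems $\Phi_1,\Phi_2\subset\Psi_0$ with $F_{\Phi_1,\Aut(\Psi_0)}=F_{\Phi_2,\Aut(\Psi_0)}$. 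The claim ``$\Psi\cong\C_n$, $\BC_n$ or $\F_4$'' then amounts to asserting that for all other irreducible types no such non-conjugate coincidence occurs.

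Next I would treat the classical types. For $\Psi=\A_{n-1}$ the key is Proposition~\ref{P:characters}: the unique longest term of $F_{\Phi,\Aut(\Psi)}$ is $\pm\chi^{\ast}_{2\delta'_{\Phi}}$, and in type $\A$ one checks that $2\delta'_{\Phi}$ (equivalently the multiset of sizes of the $\A$-blocks) determines $\Phi$ up to conjugacy; so equal characters force $\Phi_1\sim\Phi_2$, and type $\A$ contributes nothing. For $\Psi=\BC_n$ (and then $\B_n$, $\C_n$, $\D_n$ as sub-cases) I would use the Larsen--Pink algebra isomorphism $E$ taking $\bbQ[\Lambda]^{\Aut(\BC_1)}$ to $\bbQ[x_0,x_1,\dots]$ (Remark~\ref{R:equal character-nonsimple}), under which each irreducible classical sub-root system goes to one of the polynomials $a_m,b_m,c_m,d_m$, and a general reduced $\Phi$ goes to a product of these. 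Then $F_{\Phi_1,\Aut(\Psi)}=F_{\Phi_2,\Aut(\Psi)}$ becomes an equality of monomials in the $a$'s, $b$'s, $c$'s and $d$'s in the UFD $\bbQ[x_0,x_1,\dots]$; the whole content is the list of multiplicative relations among $\{a_m,b_m,c_m,d_m\}$, which is exactly Propositions~\ref{P:A=BB and A=CD} and \ref{P:A-BB'CD} (the relations $a_{2m}=b_m^2$ and $a_{2m}=c_m d_m$, i.e. $b_m^2=c_m d_m$, plus $a_{2m-1}c_{m-1}=\dots$). Cancelling common factors and bookkeeping the exponents $a_m(\Phi_i)$, $b_m(\Phi_i)$, $c_m(\Phi_i)$, $d_m(\Phi_i)$ gives precisely the stated linear conditions $b_m(\Phi_1)-b_m(\Phi_2)=a_{2m}(\Phi_1)-a_{2m}(\Phi_2)=0$ and $a_{2m-1}(\Phi_1)-a_{2m-1}(\Phi_2)=c_{m-1}(\Phi_2)-c_{m-1}(\Phi_1)=d_m(\Phi_2)-d_m(\Phi_1)$; for $\Psi=\D_n$ one additionally checks that the $W_{\D_n}$-versus-$\Aut(\D_n)$ discrepancy in conjugacy classes (the $\Phi_{\{t_k\}}$ vs.\ $\Phi'_{\{t_k\}}$ pairs) does not create new coincidences for $F_{\cdot,\Aut(\Psi)}$.

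For the exceptional types I would argue case by case using the tables of $2\delta'_{\Phi}$ and $e(\Phi)=|2\delta'_{\Phi}|^2$ from Section~\ref{S:leading terms}. By Proposition~\ref{P:characters}, a necessary condition for $F_{\Phi_1,\Aut(\Psi)}=F_{\Phi_2,\Aut(\Psi)}$ is $2\delta'_{\Phi_1}=2\delta'_{\Phi_2}$ (same leading term) and $\Phi_1^{\circ}\sim\Phi_2^{\circ}$ on short roots of minimal length (same shortest terms). Scanning Tables 2--9 for reduced sub-root systems sharing the same $2\delta'_{\Phi}$ isolates finitely many candidate pairs in $\E_6,\E_7,\E_8,\D_4,\F_4,\G_2$; for each candidate one either exhibits an element of $\Aut(\Psi)$ conjugating $\Phi_1$ to $\Phi_2$ (so the ``coincidence'' is trivial) or computes the full characters (using the generating functions of Definition~\ref{D:f} and the $\Aut(\Psi)$-averaging) to decide equality. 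The outcome is that the only genuinely non-conjugate pairs occur in $\Psi=\F_4$, namely $\{A_2^{S},A_1^{L}+2A_1^{S}\}$ and $\{A_1^{L}+A_2^{S},2A_1^{L}+2A_1^{S}\}$ (consistent with $\BC_2\subset\F_4$ and the $b^2=cd$-type relation pulled back through the $\B_4,\C_4$ sub-root systems), and that $\E_6,\E_7,\E_8,\D_4,\B_n,\D_n$ contribute nothing beyond conjugacy. I expect the main obstacle to be the exceptional case bookkeeping: verifying that every pair with equal $2\delta'$ in the big $\E_7$ and $\E_8$ tables is actually $\Aut(\Psi)$-conjugate (or, in the few remaining pairs, checking the next coefficients of the character really differ) is delicate, and it is where the generating-function computation of Definition~\ref{D:f} does the real work rather than just the leading-term comparison.
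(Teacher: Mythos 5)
Your route is the paper's own: reduce to irreducible $\Psi$ via Theorem \ref{T:equal character-nonsimple}, handle the classical types through the Larsen--Pink isomorphism $E$ and the multiplicative relations among $a_n,b_n,c_n,d_n$ in the UFD $\bbQ[x_0,x_1,\dots]$, and handle the exceptional types by scanning the tables of $2\delta'_{\Phi}$ and then separating the surviving pairs by lower-order coefficients. Your final answer in each case is the correct one. (One small organizational difference: the paper proves the equalities in the exceptional case mostly by comparing the coefficient of the shortest term $\chi^{\ast}_{\alpha}$, i.e.\ counting simple roots or short simple roots; the generating functions of Definition \ref{D:f} are deployed in the linear-dependence section, not here.)

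There is, however, a concrete error in the step that carries the whole classical case. The relations you attribute to Propositions \ref{P:A=BB and A=CD} and \ref{P:A-BB'CD} --- ``$a_{2m}=b_m^2$ and $a_{2m}=c_md_m$, i.e.\ $b_m^2=c_md_m$'' --- are not what those propositions say and are false. The actual multiplicative identities are $a_{2n}=b_nb'_n$ with $b'_n=\sigma(b_n)\neq b_n$, and $a_{2n+1}=c_nd_{n+1}$; the quantities $c_nd_n$ and $b_nb_{n+1}'$ enter only through the \emph{additive} identities $2a_{2n}=c_nd_n+c_{n-1}d_{n+1}$ and $2a_{2n+1}=b_nb'_{n+1}+b'_nb_{n+1}$, which are irrelevant when comparing two monomials. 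The distinction is not cosmetic: if $a_{2m}=b_m^2$ held, one could exchange an $\A_{2m-1}$ factor for two $\B_m$ factors, contradicting the conditions $b_m(\Phi_1)=b_m(\Phi_2)$ and $a_{2m}(\Phi_1)=a_{2m}(\Phi_2)$ that you correctly state as the answer. What actually closes the argument is Proposition \ref{basic properties} ($b_n$, $c_n$, $d_{n+1}$ are pairwise distinct primes and suitable families are algebraically independent) together with the fact that $b'_n$ is a prime not realized by any sub-root system of $\BC_n$, so the only usable trade is $a_{2n+1}=c_nd_{n+1}$; this is exactly Corollary \ref{multiplicative relation, polynomials}, and it is also the relation (not ``$b^2=cd$'') whose pullback through $\B_4,\C_4\subset\F_4$ produces the two exceptional $\F_4$ coincidences. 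With that correction your bookkeeping yields the stated conditions and the proof goes through as in Theorems \ref{T:character-classical} and \ref{character-exceptional}.
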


\subsection{Classical irreducible root systems}\label{SS:Character-classical}

As in Section 3 of \cite{Larsen-Pink}, let
\begin{eqnarray*}&& \bbZ^{n}:=\bbZ\BC_{n}=\Lambda_{\BC_{n}}=\span_{\bbZ}\{e_1,e_2,...,e_{n}\},
\\&& W_{n}:=\Aut(\BC_{n})=W_{\BC_{n}}=\{\pm{1}\}^{n}\rtimes S_{n},\\&& \bbZ_{n}:=\bbQ[\bbZ^{n}],
\\&& Y_{n}:=\bbZ_{n}^{W_{n}}. \end{eqnarray*} For $m\leq n$, the injection
\[\bbZ^{m}\hookrightarrow\bbZ^{n}:(a_1,...,a_{m})\mapsto(a_1,...,,a_{m},0,...,0)\] extends to
an injection $i_{m,n}:\bbZ_{m}\hookrightarrow\bbZ_{n}$. Define
$\phi_{m,n}:\bbZ_{m}\rightarrow\bbZ_{n}$ by
\[\phi_{m,n}(z)=\frac{1}{|W_{n}|}\sum_{w\in W_{n}}w(i_{m,n}(z)).\]
Thus $\phi_{m,n}\phi_{k,m}=\phi_{k,n}$ for any $k\leq m\leq n$ and the image of
$\phi_{m,n}$ lies in $Y_n$. Hence $\{Y_{m}:\phi_{m,n}\}$ forms a direct system and
we define \[Y=\lim_{\longrightarrow_{n}} Y_{n}.\] Define the map
$j_{n}: \bbZ_{n}\rightarrow Y$ by composing $\phi_{n,p}$ with the injection
$Y_{p}\hookrightarrow Y$. The isomorphism $\bbZ^{m}\oplus\bbZ^{n}\longrightarrow\bbZ^{m+n}$
gives a canonical isomorphism $M: \bbZ_{m}\otimes_{\bbQ}\bbZ_{n}\longrightarrow\bbZ_{m+n}$.
Given two elements of $Y$ represented by $y\in Y_{m}$ and $y'\in Y_{n}$ we define
\[yy'=j_{m+n}(M(y\otimes y')).\] This product is independent of the choice of $m$ and $n$ and
makes $Y$ a commutative associative algebra. The monomials
$[e_1]^{a_1}\cdots[e_{n}]^{a_{n}}$ ($a_1,a_2,\cdots,a_{n}\in\bbZ$) form a
$\bbQ$ basis of $\bbZ_{n}$, where $[e_i]^{a_i}=[a_ie_i]\in\bbZ_1$ is a linear character.
Hence $Y$ has a $\bbQ$ basis \[e(a_1,a_2,...,a_{n})=j_{n}([e_1]^{a_1}\cdots[e_{n}]^{a_{n}})\]
indexed by $n\geq 0$ and $a_1\geq a_2\geq\cdots\geq a_{n}\geq 0$. Mapping $e(a_1,a_2,...,a_{n})$
to $x_{a_1}x_{a_2}\cdots x_{a_{n}}$, we get a $\bbQ$ linear map
\[E: Y\longrightarrow \bbQ[x_0,x_1,...,x_{n},...].\]

\begin{lemma}(\cite{Larsen-Pink} Page 390)\label{Isomorphism E}
The above map $E$ is an algebra isomorphism.
\end{lemma}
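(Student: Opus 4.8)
The plan is to prove the two assertions separately---that $E$ is a $\bbQ$-linear isomorphism, and that it is a unital ring map---the first being essentially formal and the second carrying the content. For the first, observe that by construction $E$ carries the $\bbQ$-basis $\{e(a_1,\dots,a_n)\}$ of $Y$ bijectively onto the monomial $\bbQ$-basis $\{x_{a_1}\cdots x_{a_n}\}$ of $\bbQ[x_0,x_1,\dots]$, so once $E$ is seen to be well defined it is automatically a linear bijection. The one point to check for well-definedness is that $e(a_1,\dots,a_n)$ depends only on the underlying weakly decreasing tuple (with its zeros discarded); this follows from the same computation as step (iv) below, so I would fold it in there.

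So the real task is multiplicativity, $E(yy')=E(y)E(y')$, and by $\bbQ$-bilinearity of both sides it suffices to treat two basis elements $y=e(a_1,\dots,a_m)$ and $y'=e(b_1,\dots,b_n)$. I would unwind the product of $Y$ in four steps. (i) Writing $\mu_a=[e_1]^{a_1}\cdots[e_m]^{a_m}\in\bbZ_m$ and $\mu_b=[e_1]^{b_1}\cdots[e_n]^{b_n}\in\bbZ_n$, the representative of $y$ in $Y_m$ is the $W_m$-average of $\mu_a$, and similarly for $y'$. (ii) The map $M$ is the algebra isomorphism induced by $\bbZ^m\oplus\bbZ^n\cong\bbZ^{m+n}$; in particular it is multiplicative and $(W_m\times W_n)$-equivariant, and it sends $\mu_a\otimes\mu_b$ to the concatenated monomial $\nu=[e_1]^{a_1}\cdots[e_m]^{a_m}[e_{m+1}]^{b_1}\cdots[e_{m+n}]^{b_n}\in\bbZ_{m+n}$; hence $M$ applied to the two averaged representatives equals the $(W_m\times W_n)$-average of $\nu$. (iii) The map $j_{m+n}$ factors through the full $W_{m+n}$-average: indeed $j_{m+n}=\iota\circ\phi_{m+n,p}$ and $\phi_{m+n,p}(w\cdot u)=\phi_{m+n,p}(u)$ for every $w\in W_{m+n}$, directly from the averaging formula and the $W_{m+n}$-equivariance of the standard inclusions; since $W_m\times W_n\subset W_{m+n}$, applying $j_{m+n}$ absorbs the partial average of (ii), so $e(a_\bullet)e(b_\bullet)=j_{m+n}(\nu)$. (iv) Finally $j_k$ carries an arbitrary monomial $\prod_i[e_i]^{c_i}$ to $e(\lambda)$, where $\lambda$ is the weakly decreasing rearrangement of $(\lvert c_1\rvert,\dots,\lvert c_k\rvert)$, again because $j_k$ factors through the $W_k$-average and $W_k$ both permutes coordinates and changes signs. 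Combining (i)--(iv), $e(a_\bullet)e(b_\bullet)=e(\lambda)$ with $\lambda$ the weakly decreasing rearrangement of $(a_1,\dots,a_m,b_1,\dots,b_n)$, and since the variables $x_i$ commute this yields $E(e(a_\bullet)e(b_\bullet))=x_{a_1}\cdots x_{a_m}x_{b_1}\cdots x_{b_n}=E(e(a_\bullet))E(e(b_\bullet))$. Unitality is the case of the empty tuple, $e(\,)\mapsto 1$.

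The part I expect to be the main obstacle is step (iii) together with its use in (iv): one must verify that the product of $Y$---built from averaged representatives, the tensor map $M$, and $j_{m+n}$---does not overcount, and can be rewritten purely in terms of the raw concatenated monomial. This reduces to two elementary facts: the cocycle identity $\phi_{k,n}\circ\phi_{j,k}=\phi_{j,n}$, and the invariance of $\phi_{k,p}$ under precomposition with the $W_k$-action (equivalently, $\phi_{k,p}$ factors through $\bbZ_k\twoheadrightarrow\bbZ_k^{W_k}=Y_k$). Both follow from the averaging definition of $\phi$ and the $W_k$-equivariance of the standard inclusions $i_{k,n}$; once these are in place the remainder is routine bookkeeping, and in particular the well-definedness of $E$ noted above falls out of the same analysis.
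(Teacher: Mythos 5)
Your proof is correct and takes essentially the same route as the paper, which merely asserts the key identity $e(a_1,\dots,a_m)e(b_1,\dots,b_n)=e(c_1,\dots,c_{m+n})$ (with $(c_i)$ the decreasing rearrangement of the concatenation) and calls it "a combinatorial calculation"; your steps (i)--(iv) supply exactly that calculation by unwinding the averaged representatives, the tensor map $M$, and the factorization of $j_{m+n}$ through the $W_{m+n}$-average. One caveat you share with the paper's "bijectivity is clear": since $e(a_1,\dots,a_n,0)=e(a_1,\dots,a_n)$ in the direct limit $Y$ while their proposed images $x_{a_1}\cdots x_{a_n}x_0$ and $x_{a_1}\cdots x_{a_n}$ differ, $E$ is only well defined after fixing the grading (working with each $Y_n$ separately) or imposing $x_0=1$ as in Remark \ref{R:x0=1}.
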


\begin{proof}
The bijectivity is clear. It is an isomorphism follows from the equality
\[e(a_1,a_2,...,a_{m})e(b_1,b_2,...,b_{n})=e(c_1,c_2,...,c_{m+n}),\] where $(c_1,c_2,...,c_{m+n})$ is
the re-permutation of $\{a_1,...,a_{m},b_1,...,b_{n}\}$ in deceasing order. This equlity can be proved
by a combinatorial calculation.
\end{proof}

\begin{definition}\label{D:abcd}
Write $a_{n}$, $b_{n}$, $c_{n}$, $d_{n}$ for the image of $j_{n}(F_{\Phi,W_{n}})$ under $E$ for
$\Phi=\A_{n-1}$, $\B_{n}$, $\C_{n}$ or $\D_{n}$, respectively.
\end{definition}

Observe that $a_{n}$, $b_{n}$, $c_{n}$, $d_{n}$ are homogeneous polynomials of degree $n$ with integer
coefficients and a term $x_0^{n}$.

\begin{remark}\label{R:x0=1}
Here, $E$ maps linear characters to homogeneous polynomials. Letting $x_0=1$, then the definition here
becomes that in \cite{Larsen-Pink}. In this terminology, each of $a_{n}$, $b_{n}$, $c_{n}$, $d_{n}$ has
integer coefficients and constant term $1$.
\end{remark}

Note that our $b_{n},c_{n},d_{n}$ are $b'_{n},c'_{n},d'_{n}$ in \cite{Larsen-Pink}. For any
$a_1\geq a_2\geq \cdots\geq a_{n}\geq 0$ and $\lambda=a_1e_1+a_2e_2+\cdots+a_{n}e_{n}$, one sees
that \[\chi^{\ast}_{\lambda,W_{n}}=[e_1]^{a_1}\cdots[e_{n}]^{a_{n}}\] in $Y$. Thus
$E(\chi^{\ast}_{\lambda,W_{n}})=x_{a_1}x_{a_2}\cdots x_{a_{n}}$.

For small $n$, we have $a_{1}=d_1=x_0$, $b_1=x_0-x_1$, $c_1=x_0-x_2$,
\[a_2=x_0^{2}-x_1^{2},\] \[b_2=x_0^{2}-x_0x_1-x_0x_3+x_1x_3-x_1^{2}+2x_1x_2-x_2^{2},\]
\[c_2=x_0^{2}-x_0x_2-x_0x_4+x_2x_4-x_1^{2}+2x_1x_3-x_3^{2},\] \[d_2=x_0^{2}-2x_1^{2}+x_0x_2\]
and $a_3=x_0^{3}-2x_0x_1^{2}+2x_1^{2}x_2-x_0x_2^{2}$.

For the convenience in writing notations, we define $a_0=b_0=c_0=d_0=1$ and $c_{-1}=x_0^{-1}$.

\begin{proposition}(\cite{Larsen-Pink}, Page 390)\label{basic properties}
\begin{itemize}
\item[(1)]{We have $c_{n},d_{n+1}\in\bbQ[x_1,x_2,...,x_{2n}]-\bbQ[x_1,x_2,...,x_{2n-1}]$,\\
$b_{n}\in\bbQ[x_1,x_2,...,x_{2n-1}]-\bbQ[x_1,x_2,...,x_{2n-2}]$.}
\item[(2)]{Each of $b_{n},c_{n},d_{n+1}$ is a prime in $\bbQ[x_1,x_2,...]$
and any two of them are different.}
\item[(3)]{Each of the subsets $\{b_1,...,b_{n},c_1,...,c_{n}\}$,
$\{b_1,...,b_{n},d_2,...,d_{n+1}\}$,\\$\{c_1,...,c_{n},d_2,...,d_{n+1}\}$
is algebraically independent.}
\end{itemize}
\end{proposition}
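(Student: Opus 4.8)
The plan is to treat the three parts in turn, with (2) and (3) resting on (1) together with one further combinatorial input. \emph{Part (1):} first I would record the relevant weights by summing positive roots, obtaining $2\delta_{\B_n}=(2n-1,2n-3,\dots,3,1)$, $2\delta_{\C_n}=(2n,2n-2,\dots,4,2)$ and $2\delta_{\D_{n+1}}=(2n,2n-2,\dots,2,0)$. With $x_0=1$, $E(\chi^{\ast}_{\mu,W_n})$ is the monomial whose subscripts are the nonzero absolute values of the coordinates of $\mu$, so the longest term of $b_n$ is $\pm x_1x_3\cdots x_{2n-1}$ and that of $c_n$ and of $d_{n+1}$ is $\pm x_2x_4\cdots x_{2n}$; by Proposition \ref{P:characters} this longest term is unique, hence does not cancel, so $x_{2n-1}$ really occurs in $b_n$ and $x_{2n}$ in $c_n$ and in $d_{n+1}$. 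For the reverse inclusion I would use that every occurring weight has the form $\gamma\big(\sum_{\alpha\in S}\alpha\big)$ with $S\subset\Phi^{+}$ and $\gamma\in W_n$, so the absolute value of its $k$-th coordinate is at most $\max\!\big(\sum_{(\alpha)_k>0}(\alpha)_k,\ \sum_{(\alpha)_k<0}(-(\alpha)_k)\big)$; a short computation shows this bound is $2n-k$ for $\B_n$ and $2n-k+1$ for $\C_n$ and $\D_{n+1}$, hence is $\le 2n-1$, resp.\ $\le 2n$, everywhere, giving $b_n\in\bbQ[x_1,\dots,x_{2n-1}]$ and $c_n,d_{n+1}\in\bbQ[x_1,\dots,x_{2n}]$.

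\emph{Part (2):} the key point is that this coordinate bound is attained only at the first coordinate, so no occurring weight has two coordinates as large as the extremal one; hence $b_n$ is of degree one in $x_{2n-1}$, and $c_n$, $d_{n+1}$ of degree one in $x_{2n}$. Write $b_n=P+Qx_{2n-1}$ with $P,Q\in\bbQ[x_1,\dots,x_{2n-2}]$ and $Q\ne 0$ by (1). Since a prime of $\bbQ[x_1,\dots,x_{2n-1}]$ stays prime after adjoining the remaining variables, and a polynomial of degree one is irreducible exactly when its coefficients are coprime, primality of $b_n$ in $\bbQ[x_1,x_2,\dots]$ reduces to $\gcd(P,Q)=1$. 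To obtain this I would determine which weights contribute $x_{2n-1}$: these are the ones whose (unique) extremal coordinate has absolute value $2n-1$, which forces $S$ to contain every positive root involving $e_1$ and leaves $S$ free only inside the positive roots of the rank-one-smaller subsystem on $e_2,\dots,e_n$; tracing this through identifies $Q$, up to sign, with $b_{n-1}$ (and, for $c_n$ and $d_{n+1}$, with $c_{n-1}$ and $d_n$). Since $b_{n-1}$ is prime by induction (base cases $b_1=1-x_1$, $c_1=1-x_2$, $d_2=1-2x_1^2+x_2$, all linear), $\gcd(P,Q)\in\{1,b_{n-1}\}$, and it equals $1$ because $b_{n-1}\nmid P$ — seen by comparing leading monomials, or by evaluating at a point of $\{b_{n-1}=0\}$ at which $P$ does not vanish. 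Distinctness of the members of $\{b_n\}\cup\{c_n\}\cup\{d_{n+1}\}$ over all $n$ is then immediate, since no two of them share both the homogeneous degree ($n$, $n$, $n+1$) and the top variable ($x_{2n-1}$, $x_{2n}$, $x_{2n}$).

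\emph{Part (3):} here I would invoke the Jacobian criterion over $\bbQ$, differentiating with respect to $x_1,\dots,x_{2n}$. For $\{b_1,\dots,b_n,c_1,\dots,c_n\}$, ordering the functions $b_1,c_1,b_2,c_2,\dots$ makes the $i$-th one lie in $\bbQ[x_1,\dots,x_i]$ and be linear in $x_i$ with nonzero coefficient, so the Jacobian is triangular with nonvanishing diagonal; $\{b_1,\dots,b_n,d_2,\dots,d_{n+1}\}$ is handled identically after the ordering $b_1,d_2,b_2,d_3,\dots$. For $\{c_1,\dots,c_n,d_2,\dots,d_{n+1}\}$ the functions $c_j$ and $d_{j+1}$ share the top variable $x_{2j}$, so instead I would pair the functions as $(c_j,d_{j+1})$ and the variables as $(x_{2j-1},x_{2j})$: since $c_j,d_{j+1}\in\bbQ[x_1,\dots,x_{2j}]$ the Jacobian is block lower triangular, and the $j$-th $2\times2$ block has determinant $\pm d_j\,\partial c_j/\partial x_{2j-1}\pm c_{j-1}\,\partial d_{j+1}/\partial x_{2j-1}$, using the top-coefficient identification from (2); this is nonzero, because if it vanished, primality of $d_j$ together with $d_j\ne c_{j-1}$ from (2) would force $d_j\mid\partial d_{j+1}/\partial x_{2j-1}$, impossible on degree grounds for $j\ge 2$ (the case $j=1$ being a one-line check). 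Hence the full Jacobian is nonzero and the collection is algebraically independent. The hard part, and the only genuinely nonformal step, is the identification in (2) of the top-variable coefficient with the rank-one-lower polynomial together with the check $b_{n-1}\nmid P$; this is where one must know precisely which weights $\delta_\Phi-w\delta_\Phi$ occur, not merely bound their coordinates, and it is what drives both the primality argument and the block determinant in (3).
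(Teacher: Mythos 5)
The paper itself does not prove this proposition --- it is quoted verbatim from Larsen--Pink (p.~390) --- so your proposal has to stand on its own. Its architecture is sound and part (1) is complete and correct: the coordinate bounds $2n-k$ (type $\B$) and $2n-k+1$ (types $\C$, $\D$) do give containment, and the surviving top monomial gives non-containment. Your identification of the coefficient of the top variable with the rank-one-smaller polynomial ($\partial b_n/\partial x_{2n-1}=-b_{n-1}$, $\partial c_n/\partial x_{2n}=-c_{n-1}$, $\partial d_{n+1}/\partial x_{2n}=d_n$) is also correct; note that it drops out for free from the paper's determinant formulas (\ref{Eq:bn})--(\ref{Eq:dn}), since $x_{2n-1}$ occurs only in the $(n,n)$ entry of $B_n$ and likewise for $C_n$, $D'_{n+1}$, which would save you the inversion-set analysis (and its fiddly sign issues in the $\D$ case, where the complementary factor ranges over the odd coset of $W_{\D_n}$ in $W_{\B_n}$).

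There are two genuine gaps. First, in (2) the whole of primality rests on $b_{n-1}\nmid P$ (and its analogues), and this is exactly the step you do not carry out: ``comparing leading monomials'' only shows that the putative linear cofactor $L$ in $b_n=b_{n-1}L$ must contain $x_{2n-1}$ and $x_0$, which is consistent, and ``evaluating at a point of $\{b_{n-1}=0\}$ where $P$ does not vanish'' presupposes knowledge of $P$ relative to $b_{n-1}$ that you have not established. You correctly flag this as the hard part, but as written the proposition is not proved. Second, in (3) the non-vanishing of the $2\times 2$ blocks for $\{c_1,\dots,c_n,d_2,\dots,d_{n+1}\}$ is \emph{not} ``impossible on degree grounds'': $\partial d_{j+1}/\partial x_{2j-1}$ is homogeneous of degree $j$ (counting $x_0$), the same as $d_j$, so divisibility only forces $\partial d_{j+1}/\partial x_{2j-1}=\lambda d_j$ for a scalar $\lambda$. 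The argument can be repaired --- every monomial of $d_{j+1}$ has even index sum because all roots of $\D_{j+1}$ have even coordinate sum, so the coefficient of $x_0^{j}$ in $\partial d_{j+1}/\partial x_{2j-1}$ vanishes while $d_j$ has $x_0^{j}$ with coefficient $1$, forcing $\lambda=0$ and hence $\partial c_j/\partial x_{2j-1}=0$, which fails for $j\ge 2$ --- but this is a different argument from the one you give, and it must be supplied. (A small slip: $d_2=1+x_2-2x_1^2$ is not linear, though its irreducibility is still immediate since it is degree one in $x_2$ with unit top coefficient.)
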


Given $f\in\bbQ[x_0,x_1,...]$, let \[\sigma(f)(x_0,x_1,...,x_{2n},x_{2n+1},...)=
f(x_0,-x_1,...,x_{2n},-x_{2n+1},...).\] Then $\sigma$ is an involutive automorphism of
$\bbQ[x_0,x_1,...]$.

\begin{proposition}\label{P:abcd-symmetry}
We have $\sigma(a_{n})=a_{n}$, $\sigma(c_{n})=c_{n}$, $\sigma(d_{n})=d_{n}$ for any $n$ and
but $\sigma(b_{n})\neq b_{n}$ when $n\geq 1$.
\end{proposition}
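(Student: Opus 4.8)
The plan is to identify $\sigma$ with an algebra automorphism of $Y$ induced by a sign character of the lattice $\bbZ^{n}=\bbZ\BC_{n}$, and then to observe that the roots of $\A_{n-1},\C_{n},\D_{n}$ lie in the kernel of that character while the short roots of $\B_{n}$ do not.

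\emph{Realizing $\sigma$.} First I would introduce the character $\epsilon\colon\bbZ^{n}\to\{\pm1\}$, $\epsilon(a_{1}e_{1}+\cdots+a_{n}e_{n})=(-1)^{a_{1}+\cdots+a_{n}}$, and the algebra automorphism $\tau$ of $\bbZ_{n}=\bbQ[\bbZ^{n}]$ determined by $[\lambda]\mapsto\epsilon(\lambda)[\lambda]$. Since $\epsilon$ is invariant under the sign changes and coordinate permutations generating $W_{n}=W_{\BC_{n}}$, the automorphism $\tau$ commutes with the $W_{n}$-action; it is also compatible with the inclusions $i_{m,n}$, with the averaging operators $\phi_{m,n}$, and with the multiplication on $Y$ induced by $M$ (because $\epsilon$ is multiplicative on $\bbZ^{m}\oplus\bbZ^{n}\cong\bbZ^{m+n}$). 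Hence $\tau$ descends to an algebra automorphism of $Y$ commuting with every $j_{n}$. On the basis $e(a_{1},\dots,a_{n})$ of $Y$ one has $\tau(e(a_{1},\dots,a_{n}))=(-1)^{a_{1}+\cdots+a_{n}}e(a_{1},\dots,a_{n})$, which $E$ sends to $(-1)^{a_{1}+\cdots+a_{n}}x_{a_{1}}\cdots x_{a_{n}}=\sigma(x_{a_{1}}\cdots x_{a_{n}})$. Therefore $\sigma=E\circ\tau\circ E^{-1}$ as algebra automorphisms of $\bbQ[x_{0},x_{1},\dots]$.

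\emph{Types $\A$, $\C$, $\D$.} For $\Phi=\A_{n-1}$, $\C_{n}$ or $\D_{n}$, realized inside $\BC_{n}$, every root $\alpha$ has coordinate-sum $0$ or $\pm2$, so $\epsilon(\alpha)=1$. Using the identity $\delta_{\Phi}-w\delta_{\Phi}=\sum_{\alpha\in\Phi^{+}\cap w^{-1}\Phi^{-}}\alpha$ recorded in the proof of Proposition~\ref{P:characters}, we get $\epsilon(\delta_{\Phi}-w\delta_{\Phi})=1$ for all $w\in W_{\Phi}$, hence $\tau(A_{\Phi})=A_{\Phi}$; as $\tau$ commutes with $W_{n}$ this gives $\tau(F_{\Phi,W_{n}})=F_{\Phi,W_{n}}$, so $\tau$ fixes $j_{n}(F_{\Phi,W_{n}})$. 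Applying $E$ and the previous paragraph, $\sigma(a_{n})=a_{n}$, $\sigma(c_{n})=c_{n}$, $\sigma(d_{n})=d_{n}$.

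\emph{Type $\B$ and the obstacle.} The short roots of $\B_{n}$ form a single $W_{n}$-orbit $W_{n}e_{1}=\{\pm e_{i}\}$, so by Proposition~\ref{P:characters} the minimal-length part of $F_{\B_{n},W_{n}}-1$ is $c\,\chi^{\ast}_{e_{1},W_{n}}$ for a nonzero integer $c$ (in fact $c=-1$, contributed by $w=s_{e_{n}}$ for the unique short simple root $e_{n}$, since $\delta_{\B_{n}}-s_{e_{n}}\delta_{\B_{n}}=e_{n}$); under $E$ this says the coefficient of $x_{0}^{n-1}x_{1}$ in $b_{n}$ is $c\neq0$. But $\sigma$ multiplies $x_{0}^{n-1}x_{1}$ by $(-1)^{1}=-1$, so the coefficient of $x_{0}^{n-1}x_{1}$ in $\sigma(b_{n})$ is $-c\neq c$, whence $\sigma(b_{n})\neq b_{n}$ for every $n\geq1$. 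The one place that needs care is the first paragraph: verifying precisely that $\tau$ is well defined on $Y$ (compatibility with $\phi_{m,n}$ and with the product defined via $M$) and that $E\circ\tau=\sigma\circ E$, since the whole argument hinges on $\sigma$ being the \emph{algebra} automorphism corresponding to $\tau$. A minor point is the nonvanishing of $c$, which I read off from Proposition~\ref{P:characters} (for odd $n$ it is already visible in the leading monomial $x_{1}x_{3}\cdots x_{2n-1}$ of $b_{n}$, coming from $2\delta_{\B_{n}}=(2n-1,2n-3,\dots,1)$, whose $\sigma$-sign is $(-1)^{n}=-1$).
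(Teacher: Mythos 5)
Your proof is correct and follows essentially the same route as the paper: the paper's (one-line) proof invokes the formula $F_{\Phi,W_{n}}=\sum_{w\in W_{\Phi}}\sign(w)\chi^{\ast}_{\delta_{\Phi}-w\delta_{\Phi},W_{n}}$ together with the expressions of $\delta_{\Phi}$, which is exactly your observation that $\sigma$ acts on $E(\chi^{\ast}_{\lambda,W_{n}})$ by the parity of the coordinate sum of $\lambda$, that all roots of $\A_{n-1},\C_{n},\D_{n}$ have even coordinate sum, and that the short roots of $\B_{n}$ do not. Your extra care in checking that $\tau$ descends to $Y$ and that the coefficient of $x_{0}^{n-1}x_{1}$ in $b_{n}$ is $-1\neq0$ just makes explicit what the paper leaves implicit.
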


\begin{proof}
This follows from the formula
\[F_{\Phi,W_{n}}=\sum_{w\in W_{\Phi}}\epsilon(w)\chi^{\ast}_{\delta_{\Phi}-w\delta_{\Phi},W_{n}}\]
and the expression of $\delta_{\Phi}$ for $\Phi=\A_{n-1},\B_{n},\C_{n},\D_{n}$.
\end{proof}

\begin{definition}\label{D:bn}
Define $b'_{n}=\sigma(b_{n})$.
\end{definition}

\begin{proposition}\label{P:A=BB and A=CD}
For any $n\geq 1$, we have $a_{2n}=b_{n}b'_{n}$ and $a_{2n+1}=c_{n}d_{n+1}$.
\end{proposition}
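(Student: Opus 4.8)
The plan is to reduce both identities to a single determinant statement about the Toeplitz matrix $(x_{|i-j|})$. First I would establish the determinant presentations
\[
a_{m}=\det\big(x_{|i-j|}\big)_{1\le i,j\le m},\qquad
b_{n}=\det\big(x_{|i-j|}-x_{2n+1-i-j}\big)_{1\le i,j\le n},\qquad
c_{n}=\det\big(x_{|i-j|}-x_{2n+2-i-j}\big)_{1\le i,j\le n}.
\]
These follow from the identity $F_{\Phi,W}=\sum_{w\in W_{\Phi}}\sgn(w)\,\chi^{\ast}_{\delta_{\Phi}-w\delta_{\Phi},W}$ from the proof of Proposition~\ref{P:characters}, together with the rule $E(j_{n}(\chi^{\ast}_{\mu,W_{n}}))=\prod_{i}x_{|\mu_{i}|}$ (which depends only on the multiset $\{|\mu_{i}|\}$). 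Writing $w\in W_{\Phi}$ as $\epsilon\pi$ with $\epsilon$ a sign vector and $\pi$ a permutation, and using $2\delta_{\A_{m-1}}=(m-1,m-3,\dots,1-m)$, $2\delta_{\B_{n}}=(2n-1,\dots,1)$, $2\delta_{\C_{n}}=(2n,\dots,2)$, one computes $|(\delta_{\Phi}-w\delta_{\Phi})_{i}|$ to be $|\pi^{-1}(i)-i|$ when $\epsilon_{i}=1$ and a reflected index $2r+1-i-\pi^{-1}(i)$ (with $r$ the appropriate rank) when $\epsilon_{i}=-1$; summing over $\epsilon$ first turns each coordinate into a two-term combination $x_{|\pi^{-1}(i)-i|}\mp x_{2r+1-i-\pi^{-1}(i)}$, and then summing over $\pi$ against $\sgn\pi$ is precisely the Leibniz expansion of the displayed determinants.

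For $\D_{n+1}$ the Weyl group $W_{\D_{n+1}}$ imposes the parity constraint $\prod_{i}\epsilon_{i}=1$; writing the indicator of this constraint as $\tfrac12\big(1+\prod_{i}\epsilon_{i}\big)$ and proceeding as above one gets
\[
d_{n+1}=\tfrac12\Big[\det\big(x_{|i-j|}+x_{2n+2-i-j}\big)_{1\le i,j\le n+1}+\det\big(x_{|i-j|}-x_{2n+2-i-j}\big)_{1\le i,j\le n+1}\Big],
\]
where the second determinant vanishes because its last row is identically $0$ (the entry with $i=n+1$ is $x_{|n+1-j|}-x_{2n+2-(n+1)-j}=x_{n+1-j}-x_{n+1-j}=0$ for every $j$). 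So $d_{n+1}=\tfrac12\det\big(x_{|i-j|}+x_{2n+2-i-j}\big)_{1\le i,j\le n+1}$. (All four formulas can be checked against the small-$n$ values $b_{2},c_{2},d_{2}$ listed in the paper.)

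Second, the matrix $X_{m}:=(x_{|i-j|})_{1\le i,j\le m}$ is centrosymmetric, i.e.\ invariant under $(i,j)\mapsto(m+1-i,m+1-j)$; hence the ``flip'' permutation matrix commutes with $X_{m}$, and $X_{m}$ is block-diagonalized along the two eigenspaces of the flip, so $\det X_{m}=\det M_{+}\cdot\det M_{-}$ with $\dim M_{-}=\lfloor m/2\rfloor$ and $\dim M_{+}=\lceil m/2\rceil$. For $m=2n$, taking the bases $\{e_{i}+e_{2n+1-i}\}_{i\le n}$ and $\{e_{i}-e_{2n+1-i}\}_{i\le n}$ of the two eigenspaces, a direct computation gives $M_{-}=(x_{|i-j|}-x_{2n+1-i-j})_{n}$ and $M_{+}=(x_{|i-j|}+x_{2n+1-i-j})_{n}$. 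Now $\sigma$ is the algebra automorphism $x_{k}\mapsto(-1)^{k}x_{k}$, so the $(i,j)$ entry of $\sigma$ applied to the matrix of $b_{n}$ is $(-1)^{i-j}\big(x_{|i-j|}+x_{2n+1-i-j}\big)$; since the prefactors $(-1)^{i-j}=(-1)^{i}(-1)^{j}$ cancel out of the determinant, $\det M_{+}=\sigma(b_{n})=b'_{n}$, while $\det M_{-}=b_{n}$. Hence $a_{2n}=\det X_{2n}=\det M_{+}\cdot\det M_{-}=b_{n}b'_{n}$.

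For $m=2n+1$ I would use the bases $\{e_{i}+e_{2n+2-i}\}_{i\le n}\cup\{e_{n+1}\}$ and $\{e_{i}-e_{2n+2-i}\}_{i\le n}$. The $n\times n$ block is $M_{-}=(x_{|i-j|}-x_{2n+2-i-j})_{n}$, so $\det M_{-}=c_{n}$; the $(n+1)\times(n+1)$ block is $M_{+}=\left(\begin{smallmatrix}C&u\\ 2u^{T}&x_{0}\end{smallmatrix}\right)$ with $C=(x_{|i-j|}+x_{2n+2-i-j})_{1\le i,j\le n}$ and $u=(x_{n+1-i})_{1\le i\le n}$, the coefficient $2$ below the diagonal reflecting that the fixed vector $e_{n+1}$ is not doubled the way $e_{i}+e_{2n+2-i}$ is. Pulling the factor $2$ out of the last column of the full $(n+1)\times(n+1)$ matrix $\big(x_{|i-j|}+x_{2n+2-i-j}\big)$ shows that its determinant equals $2\det M_{+}$, which by the formula for $d_{n+1}$ above is $2d_{n+1}$; hence $\det M_{+}=d_{n+1}$ and $a_{2n+1}=\det X_{2n+1}=\det M_{+}\cdot\det M_{-}=c_{n}d_{n+1}$. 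The only place requiring genuine care is the bookkeeping in the first step — pinning down the shift constants in each $\delta_{\Phi}$ and the type-$\D$ parity trick — together with the correct normalization of the central coordinate in the odd centrosymmetric decomposition (the factor $2$ that makes $\det M_{+}$ come out as $d_{n+1}$ rather than $2d_{n+1}$); I expect this to be the main obstacle, the determinant manipulations themselves being routine.
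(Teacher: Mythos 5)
Your proof is correct and follows essentially the same route as the paper's: you establish the determinant presentations of $a_m,b_n,c_n,d_{n+1}$ (your matrices are the row-and-column reversals of the paper's $A_m,B_n,C_n,D_n$, hence have the same determinants) and then split the Toeplitz matrix $(x_{|i-j|})$ along the $\pm1$ eigenspaces of the flip, which is exactly what the paper's explicit conjugating matrices $J_{2n},J_{2n+1}$ accomplish. The only cosmetic differences are your indicator-function treatment of the type-$\D$ parity constraint and your identification $\det M_{+}=\sigma(b_n)=b'_n$ via the diagonal sign conjugation, both of which are sound.
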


\begin{proof}
Given $n\geq 1$, define the matrices $A_{n}=(x_{|i-j|})_{n\times n}$,
\begin{eqnarray*}&&
B_{n}=(x_{|i-j|}-x_{i+j-1})_{n\times n},\\&&
C_{n}=(x_{|i-j|}-x_{i+j})_{n\times n},\\&& B'_{n}=(x_{|i-j|}+x_{i+j-1})_{n\times n},\\&&
D_{n}=(x_{|i-j|}+x_{i+j-2})_{n\times n},\\&& D'_{n}=(a_{i,j})_{n\times n}, \end{eqnarray*}
where $a_{i,j}=x_{|i-j|}+x_{i+j-2}$ if $i,j\geq 2$, $a_{1,j}=a_{j,1}=\sqrt{2}x_{j-1}$,
$a_{1,1}=1$. Then we have the following equalities:
\begin{equation}\label{Eq:an}
a_{n}=\det A_{n},
\end{equation}
\begin{equation}\label{Eq:bn}
b_{n}=\det B_{n},
\end{equation}
\begin{equation}\label{Eq:bn2}
b'_{n}=\det B'_{n},
\end{equation}
\begin{equation}\label{Eq:cn}
c_{n}=\det C_{n},
\end{equation}
\begin{equation}\label{Eq:dn}
d_{n}=\frac{1}{2}\det D_{n}=\det D'_{n}.
\end{equation}

We prove the equality (\ref{Eq:an}). The others can be proved similarly. Recall that
\[F_{\A_{n-1},W_{n}}=\sum_{w\in S_{n}}\epsilon(w)\chi^{\ast}_{\delta-w\delta,W_{n}},\] where
\begin{eqnarray*}\delta &=&\frac{(n-1)e_1+(n-3)e_2+\cdots+(1-n)e_{n}}{2}\\&=&
(ne_1+(n-1)e_2+\cdots+e_{n})-\frac{n+1}{2}(e_1+e_2+\cdots+e_{n}).\end{eqnarray*}
Let $\delta'=ne_1+(n-1)e_2+\cdots+e_{n}$. Then
\[F_{\A_{n-1},W_{n}}=\sum_{w\in S_{n}}\epsilon(w)\chi^{\ast}_{\delta'-w\delta',W_{n}}.\] Expanding
$\det A_{n}=\det (x_{|i-j|})_{n\times n}$ into the sum of terms according to permutations, a term
corresponding to a permutation $w\in S_{n}$ is equal to the polynomial
$E(j_{n}(\chi^{\ast}_{\delta'-w\delta',W_{n}}))$. Summing up all terms, we get $a_{n}=\det A_{n}$.

Let \[L_{n}=\left(\begin{array}{ccc}&&1\\&\ddots&\\1&&\end{array}\right),\]
\[J_{2n}=\left(\begin{array}{cc}\frac{1}{\sqrt{2}}I_{n}&-\frac{1}{\sqrt{2}}L_{n}\\
\frac{1}{\sqrt{2}}L_{n}&\frac{1}{\sqrt{2}}I_{n} \end{array}\right)\] and
\[J_{2n+1}=\left(\begin{array}{ccc}\frac{1}{\sqrt{2}}I_{n}&\vdots&-\frac{1}{\sqrt{2}}L_{n}
\\\cdots&1&\vdots\\\frac{1}{\sqrt{2}}L_{n}&\vdots&\frac{1}{\sqrt{2}}I_{n} \end{array}\right).\]

By matrix calculation, we have \begin{eqnarray*} &&J_{2n}A_{2n}J_{2n}^{-1}=\left(\begin{array}
{cc}B_{n}&\\&B'_{n}\end{array}\right),\\&& J_{2n+1}A_{2n+1}J_{2n+1}^{-1}=\left(\begin{array}
{cc}C_{n}&\\&D'_{n+1}\end{array} \right).\end{eqnarray*}
Taking determinants, we get $a_{2n}=b_{n}b'_{n}$ and $a_{2n+1}=c_{n}d_{n+1}$.
\end{proof}

\begin{corollary} \label{multiplicative relation, polynomials}
Any multiplicative relation among $\{a_{n+1},b_{n},c_{n},d_{n+1}|n\geq 1\}$ is generated
by $\{a_{2n+1}=c_{n}d_{n+1}: n\geq 1\}$.
\end{corollary}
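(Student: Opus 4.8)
The plan is to work inside the polynomial ring $\bbQ[x_1,x_2,\dots]$ in the normalization $x_0=1$ of Remark~\ref{R:x0=1}; this is a unique factorization domain whose units are the nonzero rationals, and in it each of $a_n,b_n,c_n,d_n$ has constant term~$1$. By Proposition~\ref{P:A=BB and A=CD} the listed polynomials factor as
\[a_{2m}=b_mb'_m,\qquad a_{2m+1}=c_md_{m+1}\qquad(m\ge1),\]
where each of $b_n,c_n,d_{n+1}$ is a prime by Proposition~\ref{basic properties}(2), and $b'_n=\sigma(b_n)$ is a prime as well because $\sigma$ is a ring automorphism (Definition~\ref{D:bn}, Proposition~\ref{P:abcd-symmetry}). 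Hence the only primes that occur in the factorizations of the listed polynomials lie in the set $\mathcal{P}=\{\,b_n,b'_n,c_n,d_{n+1}\mid n\ge1\,\}$.

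The first step is to verify that the members of $\mathcal{P}$ are pairwise non-associate, equivalently pairwise distinct (two associates differ by a rational, which must be~$1$ after comparing constant terms). By Proposition~\ref{basic properties}(1), $b_n$, and therefore also $b'_n=\sigma(b_n)$, involves the variable $x_{2n-1}$ and no $x_j$ with $j\ge 2n$, whereas $c_n$ and $d_{n+1}$ involve $x_{2n}$ and no $x_j$ with $j>2n$. Comparing the index of the highest occurring variable (its size, and for the mixed $b$–$c$/$b$–$d$ cases its parity) separates every pair except the two with coinciding variable range, namely $\{b_n,b'_n\}$ and $\{c_n,d_{n+1}\}$; but $b_n\neq b'_n$ by Proposition~\ref{P:abcd-symmetry} and $c_n\neq d_{n+1}$ by Proposition~\ref{basic properties}(2). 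So $\mathcal{P}$ is a set of distinct primes, and, viewing divisors inside the free abelian group $\bigoplus_{p\in\mathcal{P}}\bbZ\,p$, one has $\operatorname{div}(a_{2m})=b_m+b'_m$, $\operatorname{div}(a_{2m+1})=c_m+d_{m+1}$, while $\operatorname{div}$ of each of $b_n$, $c_n$, $d_{n+1}$ is that prime itself.

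It remains to compute a kernel. Let $e_f$ denote the basis vector attached to a listed polynomial $f$ in the free abelian group on the list $\{a_{n+1},b_n,c_n,d_{n+1}\mid n\ge1\}$. A product $\prod_f f^{k_f}$ (with $k_f\in\bbZ$, finitely many nonzero) equals~$1$ exactly when $\sum_f k_f\operatorname{div}(f)=0$: one direction is clear, and conversely such a product is then a unit, hence a rational, which by comparing constant terms equals~$1$. Reading off coefficients in $\sum_f k_f\operatorname{div}(f)=0$: the coefficient of $b'_m$ forces $k_{a_{2m}}=0$; then the coefficient of $b_m$ forces $k_{b_m}=0$; and the coefficients of $c_m$ and of $d_{m+1}$ force $k_{c_m}=k_{d_{m+1}}=-k_{a_{2m+1}}$. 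Hence every multiplicative relation is $\sum_{m\ge1}k_{a_{2m+1}}\bigl(e_{a_{2m+1}}-e_{c_m}-e_{d_{m+1}}\bigr)$, i.e.\ an integer combination of the relations $a_{2m+1}=c_md_{m+1}$, as asserted. The only point that needs care is the distinctness bookkeeping of the middle paragraph — in particular checking that $b'_n$ is a genuinely new prime, unequal to every $b_m$, $c_m$ and $d_m$ — after which the argument is a one-line kernel computation; I do not expect any serious obstacle.
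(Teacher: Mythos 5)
Your argument is correct and is exactly the intended unfolding of the paper's one-line proof, which simply cites Proposition \ref{basic properties} and Proposition \ref{P:A=BB and A=CD}: you use the prime factorizations $a_{2m}=b_mb'_m$, $a_{2m+1}=c_md_{m+1}$ in the UFD $\bbQ[x_1,x_2,\dots]$, the primality and pairwise distinctness of $b_n,b'_n,c_n,d_{n+1}$ (with $b'_n$ prime because $\sigma$ is an automorphism), and a kernel computation on divisors. The bookkeeping you flag as the delicate point (that $b'_n$ is a new prime, separated from the others by the index and parity of the highest occurring variable via Proposition \ref{basic properties}(1) and by Proposition \ref{P:abcd-symmetry}) is handled correctly, so nothing is missing.
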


\begin{proof}This follows from Proposition \ref{basic properties} and Proposition
\ref{P:A=BB and A=CD}.
\end{proof}

\begin{theorem}\label{T:character-classical}
Given a classical irreducible root system $\Psi$, if there exists non-conjugate sub-root systems
$\Phi_1$ and $\Phi_2$ of $\Psi$ such that $F_{\Phi_1,\Aut(\Psi)}=F_{\Phi_2,\Aut(\Psi)}$, then
$\Psi\cong\C_{n}$ or $\BC_{n}$.

In the case that $\Psi=\C_{n}$ or $\BC_{n}$, $F_{\Phi_1,\Aut_{\Psi}}=F_{\Phi_2,\Aut(\Psi)}$ if and
only if \[\forall m\leq n, b_{m}(\Phi_1)-b_{m}(\Phi_2)=a_{2m}(\Phi_1)-a_{2m}(\Phi_2)=0\]
\[ \textrm{and }a_{2m+1}(\Phi_1)-a_{2m+1}(\Phi_2)=c_{m}(\Phi_2)-c_{m}(\Phi_1)=d_{m+1}(\Phi_2)-
d_{m+1}(\Phi_1).\] Here $a_{m}(\Phi_{i}),b_{m}(\Phi_{i}),c_{m}(\Phi_{i}),d_{m}(\Phi_{i})$ is the
number of simple factors of $\Phi_{i}\subset\BC_{n}$ isomorphic to $\A_{m-1},\B_{m},\C_{m},\D_{m}$
respectively.
\end{theorem}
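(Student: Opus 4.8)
The plan is to funnel everything through the Larsen--Pink isomorphism $E$, dealing separately with the two situations in which it is unavailable. Throughout, recall that $F_{\Phi,W}$ is defined (Definition \ref{D:characters}) only for \emph{reduced} $\Phi$, so every sub-root system that occurs decomposes as a disjoint union of irreducible factors of types $\A_{m-1},\B_m,\C_m,\D_m$ and no $\BC_m$; write $a_m(\Phi),b_m(\Phi),c_m(\Phi),d_m(\Phi)$ for the corresponding multiplicities. By the classification of Section \ref{S:sub-root systems} these multiplicities, together with $n$, determine the $\Aut(\Psi)$-conjugacy class of $\Phi$. Embed $\Psi$ into $\BC_n$ in the standard way; then $\Aut(\Psi)=W_n=\Aut(\BC_n)$ precisely when $\Psi$ is $\B_n$, $\C_n$, $\BC_n$, or $\D_n$ with $n\ge 5$, whereas $\Aut(\A_{n-1})=S_n\times\{\pm1\}$ is a proper subgroup of $W_n$ and $\Aut(\D_4)$ is the triality extension.

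For $\Psi\in\{\B_n,\C_n,\BC_n,\D_n\ (n\ge5)\}$ I argue via $E$. Since $A_\Phi=\prod_i A_{\Phi_i}$ over the irreducible factors lives in disjoint coordinate blocks and the multiplication of $Y$ is concatenation of blocks (\cite{Larsen-Pink}, Section 3), one gets $E(j_n(F_{\Phi,W_n}))=\prod_m a_m^{a_m(\Phi)}b_m^{b_m(\Phi)}c_m^{c_m(\Phi)}d_m^{d_m(\Phi)}$ after normalising $x_0=1$ (Remark \ref{R:x0=1}). As $j_n$ is injective on $Y_n$ and $E$ is an algebra isomorphism (Lemma \ref{Isomorphism E}), the equality $F_{\Phi_1,W_n}=F_{\Phi_2,W_n}$ is equivalent to equality of these products in the unique factorisation domain $\bbQ[x_1,x_2,\dots]$. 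I then substitute the relations $a_{2m}=b_mb'_m$ and $a_{2m+1}=c_md_{m+1}$ of Proposition \ref{P:A=BB and A=CD} --- the only multiplicative relations present, by Corollary \ref{multiplicative relation, polynomials} --- and use Propositions \ref{basic properties} and \ref{P:abcd-symmetry} to see that $\{b_m,b'_m,c_m,d_{m+1}\}$ are pairwise distinct primes. Comparing exponents: the exponent of $b'_m$ forces $a_{2m}(\Phi_1)=a_{2m}(\Phi_2)$, hence that of $b_m$ forces $b_m(\Phi_1)=b_m(\Phi_2)$, while those of $c_m$ and $d_{m+1}$ give $a_{2m+1}(\Phi_1)-a_{2m+1}(\Phi_2)=c_m(\Phi_2)-c_m(\Phi_1)=d_{m+1}(\Phi_2)-d_{m+1}(\Phi_1)$. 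For $\Psi=\B_n$ there are no $\C$-factors, and for $\Psi=\D_n$ ($n\ge5$) no $\B$- or $\C$-factors, so these constraints force all four multiplicities to agree, i.e. $\Phi_1\sim\Phi_2$; thus $\Psi$ of type $\B$ or $\D$ admits no non-conjugate pair with equal character. For $\Psi=\C_n$ (no $\B$-factors) or $\Psi=\BC_n$ the constraints are exactly those in the statement; conversely, if they hold then the ratio of the two polynomial products is a product of factors $a_{2m+1}/(c_md_{m+1})=1$, so $F_{\Phi_1,W_n}=F_{\Phi_2,W_n}$. Finally, non-conjugate solutions do occur here --- for instance $\A_{2m}$ versus $\C_m\sqcup\D_{m+1}$ inside $\C_{2m+1}$, which is the identity $a_{2m+1}=c_md_{m+1}$ itself --- giving the asserted conclusion $\Psi\cong\C_n$ or $\BC_n$.

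It remains to handle $\Psi=\A_{n-1}$ and $\Psi=\D_4$. For $\A_{n-1}$ the $\Aut(\A_{n-1})$-conjugacy classes are the partitions $(n_1\ge\cdots\ge n_l)$, and by Proposition \ref{P:characters} the character $F_{\Phi,\Aut(\A_{n-1})}$ has a unique leading term $\pm\chi^{\ast}_{2\delta_{\Phi}}$; the coordinate multiset of $2\delta_{\Phi}=\bigsqcup_i 2\delta_{\A_{n_i-1}}$ is a disjoint union of the symmetric progressions $\{n_i-1,n_i-3,\dots,1-n_i\}$, from which one recovers the $n_i$ by repeatedly stripping off the largest coordinate, and the symmetry of each progression makes the overall sign immaterial --- so the leading term already determines the class, and equal characters force conjugacy. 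For $\Psi=\D_4$ there are only finitely many $\Aut(\D_4)$-classes, and by Table \ref{Ta:D4} their leading weights $2\delta'_{\Phi}$ are pairwise distinct except that $\A_2$ and $4\A_1$ both give $2\omega_2$; these two I separate either by a short computation of the next term, or by identifying $\Aut(\D_4)$ with $W_{\F_4}$ through the long-root copy $\D_4^{L}\subset\F_4$ (the lemma of Section \ref{S:sub-root systems}) and invoking Theorem \ref{character-exceptional}, whose $\F_4$-classification exhibits no coincidence among sub-root systems consisting of long roots. Assembling the four cases proves the theorem.

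The step I expect to be the main obstacle is the unique-factorisation bookkeeping in the second paragraph: one must be certain that Proposition \ref{basic properties} and Corollary \ref{multiplicative relation, polynomials} really capture \emph{all} relations among $a_m,b_m,b'_m,c_m,d_m$, and must keep straight that $b_m$ accumulates exponent both from genuine $\B_m$-factors and from the factorisation $a_{2m}=b_mb'_m$ (and similarly for $c_m$ and $d_{m+1}$), since one miscounted exponent yields a criterion that is either too weak or too strong. A secondary annoyance is that the clean $E$-argument fails in exactly the two cases --- type $\A$ and $\D_4$ --- where $\Aut(\Psi)\neq W_n$, so those have to be pushed through by the leading-term and exceptional-embedding detours, where one must also check that non-conjugacy of classes genuinely propagates to inequality of characters.
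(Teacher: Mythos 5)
Your proposal is correct and follows essentially the same route as the paper: the Larsen--Pink isomorphism $E$ together with Corollary \ref{multiplicative relation, polynomials} handles $\C_n$ and $\BC_n$, the absence of $\C$-factors (resp.\ $\B$- and $\C$-factors) reduces $\B_n$ (resp.\ $\D_n$, $n\ge 5$) to that case, the leading term $2\delta'_{\Phi}$ settles $\A_{n-1}$, and $\D_4$ is dispatched by comparing $F_{A_2}$ and $F_{4A_1}$ beyond the leading term. Your explicit prime-exponent bookkeeping is just a more detailed writing-out of the paper's appeal to the same corollary.
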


Note that, we embed $\A_{m-1},\B_{m},\C_{m},\D_{m}$ into $\BC_{n}$ ($n\geq m$) in the standard way.
This means that the sub-root systems $\A_1$, $\B_1$, $\C_1$ are non-isomorphic to each other, as well as
each of the pairs $(\B_2,\C_2)$, $(\D_2,\A_1\bigsqcup\A_1)$ and $(\A_3,\D_3)$.

\begin{proof}[Proof of Theorem \ref{T:character-classical}]
In the case of $\Psi=\C_{n}$ or $\BC_{n}$, since $j_{n}: Y_{n}\longrightarrow Y$ is an injection,
$F_{\Phi_1,W_{n}}=F_{\Phi_2,W_{n}}$ if and only if $E(F_{\Phi_1,W_{n}})=E(F_{\Phi_2,W_{n}})$, i.e.
\[\prod_{1\leq i\leq n}(a_i^{r_{i}^{(1)}}b_{i}^{s_{i}^{(1)}}c_{i}^{u_{i}^{(1)}}d_{i}^{v_{i}^{(1)}})
=\prod_{1\leq i\leq n}(a_{i}^{r_{i}^{(2)}}b_{i}^{s_{i}^{(2)}}c_{i}^{u_{i}^{(2)}}d_{i}^{v_{i}^{(2)}}).\]
Here we write $r_{m}^{(j)}=a_{m}(\Phi_{j})$, $s_{m}^{(j)}=b_{m}(\Phi_{j})$, $u_{m}^{(j)}=c_{m}(\Phi_{j})$,
$v_{m}^{(j)}=d_{m}(\Phi_{j})$. Therefore the conclusion follows from Corollary
\ref{multiplicative relation, polynomials}.

In the case that $\Psi=\B_{n}$ ($n\geq 1$) or $\D_{n}$ ($n\geq 5$), $\Aut(\Psi_0)=W_{n}$. Since any $\C_{k}$ is
not contained in $\B_{n}$ or $\D_{n}$, the conclusion follows from the conclusion for $\BC_{n}$ case.

In the case that $\Psi=\D_4$, only the characters of the non-conjugate sub-root systems $A_2,4A_1$ have the
equal leading term, which is $\chi_{2\omega_2}$. We have \[F_{A_2,W_{\Psi}}=
1-2\chi^{\ast}_{\omega_2}+2\chi^{\ast}_{\omega_1+\omega_3+\omega_4}-\chi^{\ast}_{2\omega_2}\] and
\[F_{4A_1,W_{\Psi}}=1-4\chi^{\ast}_{\omega_2}+2(\chi^{\ast}_{2\omega_1}+\chi^{\ast}_{2\omega_3}+
\chi^{\ast}_{2\omega_4})-4\chi^{\ast}_{\omega_1+\omega_3+\omega_4}+\chi^{\ast}_{2\omega_2}.\] Thus
$F_{A_2,W_{\Psi}}\neq F_{4A_1,W_{\Psi}}$. Therefore the conclusion follows.

In the case that $\Psi=\A_{n-1}$, $F_{\Phi_1,\Aut_{\Psi}}=F_{\Phi_2,\Aut_{\Psi}}$ implies that
$2\delta_{\Phi_1}\sim_{\Aut(\Psi)}2\delta_{\Phi_2}$. And the latter implies that
$\Phi_1\sim_{\Aut(\Psi)}\Phi_2$. Therefore the conclusion follows.
\end{proof}


\subsection{Exceptional irreducible root systems}\label{SS:Character-exceptional}

In Section \ref{S:leading terms}, we give the formulas of $2\delta'_{\Phi}$ and their modulus squares
for reduced sub-root systems of the exceptional simple root systems $\Psi=\E_6$, $\E_7$, $\E_8$, $\F_4$
or $\G_2$. Using these formulas, in this section we classify non-conjugate sub-root systems
$\Phi_1,\Phi_2$ of any exceptional irreducible root system $\Psi$ such that
\[F_{\Phi_1,W_{\Psi}}=F_{\Phi_2,W_{\Psi}}.\]



\begin{theorem} \label{character-exceptional}
Given an irreducible root system $\Psi=\E_6$, $\E_7$, $\E_8$, $\F_4$ or $\G_2$, if there exists non-conjugate
reduced sub-root systems $\Phi_1$ and $\Phi_2$ of $\Psi$ such that $F_{\Phi_1,\Aut(\Psi)}=F_{\Phi_2,\Aut(\Psi)}$,
then $\Psi\cong F_4$.

In the case that $\Psi=\F_4$, $F_{\Phi_1,\Psi}=F_{\Phi_2,\Psi}$ and $\Phi_1\not\sim\Phi_2$ if and only if
\[\{\Phi_1,\Phi_2\}\sim\{A_2^{S},A_1^{L}+2A_1^{S}\}\textrm{ or }\{A_1^{L}+A_2^{S}, 2A_1^{L}+2A_1^{S}\}.\]
\end{theorem}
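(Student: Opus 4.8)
The plan is to turn the equality $F_{\Phi_1,\Aut(\Psi)}=F_{\Phi_2,\Aut(\Psi)}$ into a handful of combinatorial constraints via Proposition~\ref{P:characters}, run a finite check against Oshima's classification and the tables of Section~\ref{S:leading terms}, and finally verify the two genuine coincidences in $\F_4$ directly. First I would record two necessary conditions. Since $W_\Psi\subset\Aut(\Psi)$, Proposition~\ref{P:characters} says the unique leading term of $F_{\Phi,\Aut(\Psi)}$ is $\chi^{\ast}_{2\delta'_\Phi,\Aut(\Psi)}$; hence $F_{\Phi_1,\Aut(\Psi)}=F_{\Phi_2,\Aut(\Psi)}$ forces $2\delta'_{\Phi_1}$ and $2\delta'_{\Phi_2}$ to lie in one $\Aut(\Psi)$-orbit, which (both being $W_\Psi$-dominant, and the relevant weights being fixed by the diagram automorphism when $\Psi=\E_6$) means $2\delta'_{\Phi_1}=2\delta'_{\Phi_2}$. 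Again by Proposition~\ref{P:characters}, the minimal-length terms of $F_{\Phi,\Aut(\Psi)}-1$ are $\sum_\alpha(-\chi^{\ast}_{\alpha,\Aut(\Psi)})$ over the simple roots $\alpha$ of $\Phi$ of minimal length; grouping by $\Aut(\Psi)$-orbits of roots, this records, when $\Psi$ is simply laced, the integer $\rank\Phi$ (all roots of a simply-laced $\Psi$ are $\Aut(\Psi)$-conjugate), and when $\Psi=\F_4$ or $\G_2$ it records whether $\Phi$ has short roots together with the number of simple short roots of $\Phi$ (or, if $\Phi$ is all-long, its rank). So $F_{\Phi_1,\Aut(\Psi)}=F_{\Phi_2,\Aut(\Psi)}$ forces these secondary invariants to agree as well.

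Next I would run the check against the tables. Listing all families of pairwise non-conjugate reduced sub-root systems of each exceptional $\Psi$ that share a leading weight $2\delta'_\Phi$: for $\Psi=\E_6,\E_7,\E_8$ every such pair turns out to have distinct ranks (for instance $\{A_2,4A_1\}$ and $\{3A_2,A_3+2A_1\}$ in $\E_6$, and similarly for the finitely many collisions in $\E_7$ and $\E_8$), so the secondary invariant kills all of them; this is a single pass through the tables of Section~\ref{S:leading terms}. For $\Psi=\G_2$ the only leading-weight coincidence is $\{A_2^S,\,A_1^L+A_1^S\}$ (both with $2\delta'=2\omega_2$), and these already differ in the secondary invariant ($-2\chi^{\ast}_{\mathrm{short}}$ versus $-\chi^{\ast}_{\mathrm{short}}$). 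For $\Psi=\F_4$ the two constraints cut the leading-weight families down to the pairs $\{A_2^S,\,A_1^L+2A_1^S\}$ and $\{A_1^L+A_2^S,\,2A_1^L+2A_1^S\}$, together with a short residual list on which conditions~(1)–(2) happen to agree (e.g. $\{A_1^S+2A_1^L,\,B_2+2A_1^L\}$ and $\{B_2+2A_1^S,\,A_3^S+A_1^L\}$); for each of these I would compute one further graded piece of $F_{\Phi,W_{\F_4}}$ — equivalently compare the generating functions of Definition~\ref{D:f} — and check they are distinct, leaving exactly the two claimed pairs.

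Finally, the two genuine coincidences in $\F_4$. Inside a copy of $\C_4\subset\F_4$ (unique up to conjugacy) the long roots of $\C_4$ are long roots of $\F_4$ and the short roots of $\C_4$ are short roots of $\F_4$; so the standard sub-root systems $\A_2,\C_1,\D_2\subset\C_4$ become $A_2^S,\,A_1^L,\,2A_1^S\subset\F_4$, and likewise $\C_1\sqcup\A_2$ and $2\C_1\sqcup\D_2$ become $A_1^L+A_2^S$ and $2A_1^L+2A_1^S$. Since $W_{\C_4}\subset W_{\F_4}$ and $F_{\Phi,W_{\F_4}}$ is the $W_{\F_4}$-average of $F_{\Phi,W_{\C_4}}$ (averaging over a larger group factors through averaging over the smaller one), it suffices to have $F_{\A_2,W_{\C_4}}=F_{\C_1\sqcup\D_2,W_{\C_4}}$ and $F_{\C_1\sqcup\A_2,W_{\C_4}}=F_{2\C_1\sqcup\D_2,W_{\C_4}}$. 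Applying the algebra isomorphism $E$ of Lemma~\ref{Isomorphism E} these read $a_3=c_1d_2$ and $c_1a_3=c_1^2d_2$, which are precisely Proposition~\ref{P:A=BB and A=CD} together with its immediate consequence; averaging the resulting identities over $W_{\F_4}$ yields the two equalities of characters, and the secondary invariant shows no other $\F_4$ pair survives, giving the stated characterization.

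The step I expect to be the main obstacle is the $\F_4$ endgame: confirming that the leading-weight families of $\F_4$ have been enumerated completely, and carrying out the explicit (finite but unpleasant) computation of the extra graded pieces, or of the generating functions of Definition~\ref{D:f}, needed to separate the stubborn pairs such as $\{A_1^S+2A_1^L,\,B_2+2A_1^L\}$ and $\{B_2+2A_1^S,\,A_3^S+A_1^L\}$ on which conditions~(1)–(2) are silent. By contrast, once the $\C_4\subset\F_4$ dictionary and the averaging principle are in place, the verification of the two positive coincidences is essentially automatic, and the elimination in the simply-laced cases and in $\G_2$ is a routine table inspection.
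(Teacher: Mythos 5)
Your overall strategy is the one the paper itself uses: compare leading terms $2\delta'_{\Phi}$, then the minimal-length coefficient (number of simple roots, resp.\ short simple roots), then finish the few surviving $\F_4$ pairs by explicit computation; and your verification of the two positive $\F_4$ coincidences via the $\C_4\subset\F_4$ dictionary and the identities $a_3=c_1d_2$, $c_1a_3=c_1^2d_2$ under the isomorphism $E$ is exactly the derivation the paper gives in Section \ref{S:dimension-dependent} (the in-proof version just writes out the characters). So the route is essentially identical, and the positive direction is sound.

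There is, however, one concrete failure in your elimination step. You assert that for $\E_6,\E_7,\E_8$ every leading-weight collision involves sub-root systems of distinct ranks. This is false for the $\E_8$ pair $\{A_5+A_2+A_1,\,2A_4\}$ (item (14), leading weight $2\omega_5$): both have $8$ simple roots, so your secondary invariant is silent. (The paper's own proof makes the same overstatement.) The pair is nonetheless separated already at the leading term, by an invariant your own setup records but you never invoke: by Proposition \ref{P:characters} the leading coefficient is $c=(-1)^{|\Phi^{+}|}$, and $|\Phi^{+}|$ equals $19$ for $A_5+A_2+A_1$ and $20$ for $2A_4$, so the leading terms are $-\chi^{\ast}_{2\omega_5}$ and $+\chi^{\ast}_{2\omega_5}$ respectively (equivalently, compare the $t^{40}$-coefficients of the generating functions of Definition \ref{D:f}, as the paper does for this very pair in Section \ref{S:dimension-dependent}). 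You should add the sign of the leading coefficient to your list of necessary conditions. A second, harmless, point: your ``residual pair'' $\{A_1^{S}+2A_1^{L},\,B_2+2A_1^{L}\}$ in $\F_4$ is not actually a collision --- the two have $e(\Phi)=5$ and $14$, so their $2\delta'$ cannot coincide; the apparent coincidence comes from a typo in Table 8. The genuinely stubborn residual pairs are $\{A_2^{L},4A_1^{L}\}$ (separated by the long-root coefficient) and $\{2A_1^{S}+B_2,\,A_1^{L}+A_3^{S}\}$ (separated only by a fuller expansion), as in the paper.
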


\begin{proof}
In the case that $\Psi=\E_6$, among the dominant integral weights appearing in $\{2\delta'_{\Phi}|\ \Phi\subset\E_6\}$,
those weights that appearing more than once include $\{2\omega_2,2\omega_4\}$ and the sub-root
systems $\Phi$ with $2\delta_{\Phi}$ conjugate to them are
\begin{itemize}
\item[(1)]{$2\omega_2$: $A_2$, $4A_1^{S}$, appears 2 times.}
\item[(2)]{$2\omega_4$: $3A_2$, $A_3+2A_1$, appears 2 times.}
\end{itemize} The coefficients of $\chi^{\ast}_{\omega_2}$ in
$F_{A_2,\Aut(\E_6)},F_{4A_1,\Aut(\E_6)}$ are different and the coefficients of
$\chi^{\ast}_{\omega_2}$ in $F_{3A_2,\Aut(\E_6)},F_{A_3+2A_1,\Aut(\E_6)}$ are also different.
Therefore the conclusion in the $\E_6$ case follows.

\smallskip

In the case that $\Psi=\E_7$, among the dominant integral weights appearing in
$\{2\delta'_{\Phi}|\ \Phi\subset\E_7\}$, those weights that appearing more than once include
\[\{2\omega_1, \omega_1+\omega_6, \omega_4, 2\omega_2, 2\omega_3, 2\omega_1+2\omega_6,
\omega_1+\omega_4+\omega_6\}\] and the sub-root systems $\Phi$ with $2\delta_{\Phi}$ conjugate
to them are \begin{itemize}
\item[(1)]{$2\omega_1$: $A_2$, $4A'_1$, appears two times.}
\item[(2)]{$\omega_1+\omega_6$: $A_2+A_1$, $5A_1$, appears two times.}
\item[(3)]{$\omega_4$: $A_2+2A_1$, $6A_1$, appears two times.}
\item[(4)]{$2\omega_2$: $A_2+3A_1$, $7A_1$, appears two times.}
\item[(5)]{$2\omega_3$: $3A_2$, $A_3+2A_1$, appears two times.}
\item[(6)]{$2\omega_1+2\omega_6$: $A_4$, $2A_3$, appears two times.}
\item[(7)]{$\omega_1+\omega_4+\omega_6$: $A_4+A_1$, $2A_3+A_1$, appears two times.}
\end{itemize}

Given a weight $\lambda$, there are at most two conjugacy classes of sub-root systems $\Phi_1,\Phi_2$ of
$\E_7$ such that $2\delta'_{\Phi}=\lambda$. For each of such $\lambda$, the numbers of simple roots in
$\Phi_1,\Phi_2$ are non-equal. Thus the coefficients of $\chi^{\ast}_{\omega_1}$ in
$F_{\Phi_1,W_{\E_7}}$ and $F_{\Phi_2,W_{\E_7}}$ are different. Therefore the conclusion in the $\E_7$ case
follows.

\smallskip

In the case that $\Psi=\E_8$, among the dominant integral weights appearing in $\{2\delta'_{\Phi}|\ \Phi\subset\E_8\}$,
those weights that appearing more than once include \begin{eqnarray*}&& \{2\omega_8, \omega_1+\omega_8,
\omega_6, \omega_3, 2\omega_1, 2\omega_7, \omega_2+\omega_7, \omega_1+\omega_6, 2\omega_2,
2\omega_1+2\omega_8, \omega_1+\omega_6+\omega_8, \\&& \omega_4+\omega_8, 2\omega_2+2\omega_8,
2\omega_5, 2\omega_1+2\omega_6\}\end{eqnarray*} and the sub-root systems $\Phi$ with
$2\delta_{\Phi}$ conjugate to them are
\begin{itemize}
\item[(1)]{$2\omega_8$: $A_2$, $4A'_1$, appears two times.}
\item[(2)]{$\omega_1+\omega_8$: $A_2+A_1$, $5A_1$, appears two times.}
\item[(3)]{$\omega_6$: $A_2+2A_1$, $6A_1$, appears two times.}
\item[(4)]{$\omega_3$: $A_2+3A_1$, $7A_1$, appears two times.}
\item[(5)]{$2\omega_1$: $2A_2$, $A_2+4A_1$, $8A_1$, appears three times.}
\item[(6)]{$2\omega_7$: $A_3+2A_1$, $3A_2$, appears two times.}
\item[(7)]{$\omega_2+\omega_7$: $A_3+3A_1$, $3A_2+A_1$, appears two times.}
\item[(8)]{$\omega_1+\omega_6$: $A_3+4A_1$, $A_3+A_2$, appears two times.}
\item[(9)]{$2\omega_2$: $A_3+A_2+2A_1$, $4A_2$, appears two times.}
\item[(10)]{$2\omega_1+2\omega_8$: $A_4$, $2A_3$, appears two times.}
\item[(11)]{$\omega_1+\omega_6+\omega_8$: $A_4+A_1$, $2A_3+A_1$, appears two times.}
\item[(12)]{$\omega_4+\omega_8$: $A_4+2A_1$, $2A_3+2A_1$, appears two times.}
\item[(13)]{$2\omega_2+2\omega_8$: $D_4+A_2$, $D_4+4A_1$, appears two times.}
\item[(14)]{$2\omega_5$: $A_5+A_2+A_1$, $2A_4$, appears two times.}
\item[(15)]{$2\omega_1+2\omega_6$: $A_6$, $2D_4$, appears two times.}
\end{itemize}

One sees: for any two non-conjugate sub-root systems $\Phi_1,\Phi_2\subset\E_8$ with
$2\delta'_{\Phi_1}=2\delta'_{\Phi_2}$, the numbers of simple roots of $\Phi_1,\Phi_2$ are non-equal,
so the coefficients of $\chi^{\ast}_{\omega_1}$ in $F_{\Phi_1,W_{\E_8}},F_{\Phi_2,W_{\E_8}}$ are
different. Thus $F_{\Phi_1,W_{\E_8}}\neq F_{\Phi_2,W_{\E_8}}$. Therefore the conclusion in the $\E_8$
case follows.

\smallskip

In the case that $\Psi=\F_4$, among the dominant integral weights appearing in $\{2\delta'_{\Phi}:\Phi\subset\F_4\}$,
those weights that appearing more than once include \[\{\omega_1,\omega_3,2\omega_4,\omega_2,
2\omega_1,\omega_1+2\omega_4,2\omega_3,2\omega_1+2\omega_4,2\omega_3+2\omega_4\}\]
and the sub-root systems $\Phi$ with $2\delta_{\Phi}$ conjugate to them are
\begin{itemize}
\item[(1)]{$\omega_1$: $A_1^{L}$, $2A_1^{S}$, appears 2 times.}
\item[(2)]{$\omega_3$: $A_1^{L}+A_1^{S}$, $3A_1^{S}$, appears 2 times.}
\item[(3)]{$2\omega_4$: $A_2^{S}$, $2A_1^{L}$, $A_1^{L}+2A_1^{S}$, $4A_1^{S}$,
appears 4 times.}
\item[(4)]{$\omega_2$: $3A_1^{L}$, $2A_1^{L}+2A_1^{S}$, $A_1^{L}+A_2^{S}$,
appears 3 times.}
\item[(5)]{$2\omega_1$: $A_2^{L}$, $4A_1^{L}$, appears 2 times.}
\item[(6)]{$\omega_1+2\omega_4$: $A_3^{S}$, $B_2$, appears 2 times.}
\item[(7)]{$2\omega_3$: $A_2^{L}+A_2^{S}$, $A_1^{L}+B_2$, $2A_1^{S}+B_2$,
$A_1^{L}+A_3^{S}$, appears 4 times.}
\item[(8)]{$2\omega_1+2\omega_4$: $A_3^{L}$, $2B_2$, appears 2 times.}
\item[(9)]{$2\omega_3+2\omega_4$: $D_4^{S}$, $C_3$, appears 2 times.}
\end{itemize}

The non-conjugate pairs of sub-root system $\Phi_1,\Phi_2\subset\F_4$ with conjugate
leading terms $2\delta'_{\Phi_{i}}$ and the same number of short simple roots are
$(A_2^{S},A_1^{L}+2A_1^{S})$, $(2A_1^{L}+2A_1^{S},A_1^{L}+A_2^{S})$,
$(A_2^{L},4A_1^{L})$, $(2A_1^{S}+B_2,A_1^{L}+A_3^{S})$. The coefficients of
shortest terms in $F_{A_2^{L},F_4},F_{4A_1^{L},F_4}$ are non-equal,
and $F_{2A_1^{S}+B_2}$ is not equal to $F_{A_1^{L}+A_3^{S}}$ since
\begin{eqnarray*}F_{2A_1^{S}+B_2,W_{\F_4}}&=&1-3\chi^{\ast}_{\omega_4}+
2\chi^{\ast}_{\omega_1}+\chi^{\ast}_{\omega_3}-
\chi^{\ast}_{2\omega_4}+2\chi^{\ast}_{\omega_1+\omega_4}-4\chi^{\ast}_{\omega_2}+
2\chi^{\ast}_{\omega_3+\omega_4}-\\&&\chi^{\ast}_{2\omega_1}
+2\chi^{\ast}_{\omega_1+\omega_3}-\chi^{\ast}_{3\omega_4}+2\chi^{\ast}_{\omega_1+2\omega_4}
-3\chi^{\ast}_{\omega_2+\omega_4}+\chi^{\ast}_{2\omega_3}\end{eqnarray*} and
\begin{eqnarray*}F_{A_1^{L}+A_3^{S},W_{\F_4}}&=&1-3\chi^{\ast}_{\omega_4}+7\chi^{\ast}_{\omega_3}-
3\chi^{\ast}_{2\omega_4}-6\chi^{\ast}_{\omega_1+\omega_4}+6\chi^{\ast}_{\omega_3+\omega_4}
+3\chi^{\ast}_{2\omega_1}-\\&&6\chi^{\ast}_{\omega_1+\omega_3}-\chi^{\ast}_{3\omega_4}+
3\chi^{\ast}_{\omega_2+\omega_4}-\chi^{\ast}_{2\omega_3}. \end{eqnarray*}
Calculation shows that \[F_{A_2^{S},W_{\F_4}}=F_{A_1^{L}+2A_1^{S},W_{\F_4}}=1-2\chi^{\ast}_{\omega_4}+
2\chi^{\ast}_{\omega_3}-\chi^{\ast}_{2\omega_4}\] and \[F_{A_1^{L}+A_2^{S},W_{\F_4}}=
F_{2A_1^{L}+2A_1^{S},W_{\F_4}}=1-2\chi^{\ast}_{\omega_4}-\chi^{\ast}_{\omega_1}+4\chi^{\ast}_{\omega_3}-
\chi^{\ast}_{2\omega_4}-2\chi^{\ast}_{\omega_1+\omega_4}+\chi^{\ast}_{\omega_2}.\]
Therefore the conclusion in the $\F_4$ case follows.

\smallskip

In the case that $\Psi=\G_2$, the only non-conjugate pair $(\Phi_1,\Phi_2)$ of sub-root systems such that
$2\delta'_{\Phi_1}=2\delta'_{\Phi_2}$ is $(A_2^{S},A_1^{L}+A_1^{S})$. The numbers of short simple
roots of $A_2^{S}$ and $A_1^{L}+A_1^{S}$ are different, so the coefficients of
$\chi^{\ast}_{\omega_2}$ in $F_{A_2^{S},W_{G_2}}, F_{A_1^{L}+A_1^{S},W_{G_2}}$ are non-equal.
Thus  $F_{\Phi_1,W_{\G_2}}\neq F_{\Phi_2,W_{\G_1}}$. Therefore the conclusion in the $\G_2$ case follows.
\end{proof}

\section{Linear relations among dimension data}\label{S:dimension-dependent}

In this section, we solve Question \ref{Q:dependent-character}.

First, once we know all linear relations among $\{F_{\Phi,W_{\Psi}}|\ \Phi\subset\Psi\}$, we also
know all linear relations among $\{F_{\Phi,W}|\ \Phi\subset\Psi\}$ for any finite group $W$ between
$W_{\Psi}$ and $\Aut(\Psi)$. So we just need to consider the linear relations among
$\{F_{\Phi,W_{\Psi}}|\ \Phi\subset\Psi\}$.

If $\Psi$ is not an irreducible root system, let \[\Psi=\bigsqcup_{1\leq i\leq s}\Psi_{i}\] be
the decomposition of $\Psi$ into a direct sum of simple root systems. For a reduced sub-root
system $\Phi$ of $\Psi$, $\Phi$ can be written as $\Phi=\bigsqcup_{1\leq i\leq s}\Phi_{i}$,
where $\Phi_{i}\subset\Psi_{i}$ for any $1\leq i\leq s$. Thus we have
\[F_{\Phi,W_{\Psi}}=F_{\Phi_1,W_{\Psi_1}}\otimes\cdots\otimes F_{\Phi_{s},W_{\Psi_{s}}}.\]
From this, we see that linear relations among $\{F_{\Phi,W_{\Psi}}|\ \Phi\subset\Psi\}$ arise
from linear relations among $\{F_{\Phi_{i},W_{\Psi_{i}}}|\ \Phi_{i}\subset\Psi_{i}, 1\leq i\leq s\}$.
Hence it is sufficient to consider $\{F_{\Phi,W_{\Psi}}|\ \Phi\subset\Psi\}$ for reduced sub-root systems
$\{\Phi|\ \Phi\subset\Psi\}$ of an irreducible root system $\Psi$.

\begin{remark}\label{R:linear2}
The passing from $\{F_{\Phi,W}|\Phi\subset\Psi\}$ to $\{F_{\Phi,W_{\Psi}}|\Phi\subset\Psi\}$
is like that: any linear relation among the former is also a linear relation among the latter;
and any linear relation among the latter gives a linear relation among the former after the
$W$-averaging process, that is to replace a character $F_{\Phi,W_{\Psi}}\in\bbQ[\Lambda]$ by
\[F_{\Phi,W}=\frac{1}{|W|}\sum_{\gamma\in w}\gamma F_{\Phi,W_{\Psi}}.\] But the relation between
the two sets of linear relations might not be described explicitly, there are at least two reasons
for this. The first reason is the number of distinct characters in
$\{\gamma F_{\Phi,W_{\Psi}}:\gamma\in W/W_{\Psi}\}$ may vary; the second and the more serious reason
is different linear relations among $\{F_{\Phi,W_{\Psi}}|\Phi\subset\Psi\}$ may give the same
linear relation among $\{F_{\Phi,W}|\Phi\subset\Psi\}$ after the $W$-averaging process.

The passing from $\{F_{\Phi,W_{\Psi}}|\Phi\subset\Psi\}$ to $\{F_{\Phi_{i},W_{\Psi_{i}}}|\Phi_{i}
\subset\Psi_{i}\}$ is reasonably well, since by Linear Algebra all linear relations among the former
can be explicitly expressed in terms of linear relations among the latter.
\end{remark}


\begin{remark}\label{R:equal2}
Another way of getting all linear relations among $\{F_{\Phi,W}|\Phi\subset\Psi\}$ is to express
each $F_{\Phi,W}$ in terms of $\{F_{\Phi_{i},W_{\Psi_{i}}}\}$ by the tensor operation and the
$W$-averaging operation. Starting from all linear linear relations among
$\{F_{\Phi_{i},W_{\Psi_{i}}}:\Phi_{i}\subset\Psi_{i}\}$, in this way we can get all linear
relations among $\{F_{\Phi,W}|\Phi\subset\Psi\}$.

For two reduced sub-root systems $\Phi_1,\Phi_2\subset\Psi$ with $\Phi_2\not\sim_{W}\Phi_1$, the above way is
useful for checking if $F_{\Phi_1,W}=F_{\Phi_2,W}$ or not. We express $F_{\Phi_1,W}-F_{\Phi_2,W}$ in terms
of $\{F_{\Phi_{i},W_{\Psi_{i}}}\}$ by tensor operation, addition and subtraction. Then the linear relations
among $\{F_{\Phi_{i,j},W_{\Psi_{i}}}:\Phi_{i,j}\subset\Psi_{i}\}$ tells us if such an expression is $0$ or
not. For example, in the case that $\Psi=m\Psi_0$ and $W=(W_{\Psi_0})^{m}\rtimes A_{m}$, if
$\Phi_1,\dots,\Phi_{m}\subset\Psi_0$ are reduced sub-root systems such that
$F_{\Phi_1,W_{\Psi_0}},\dots,F_{\Phi_m,W_{\Psi_0}}$ are linearly dependent, then for
\[\Phi=\Phi_1\sqcup\Phi_2\sqcup\cdots\sqcup\Phi_{m}\] and
\[\Phi=\Phi_{2}\sqcup\Phi_{1}\sqcup\Phi_3\sqcup\Phi_4\sqcup\cdots\sqcup\Phi_{m},\] we have
$F_{\Phi_1,W}=F_{\Phi_2,W}$.
\end{remark}

\subsection{Algebraic relations among $\{a_{n},b_{n},b'_{n},c_{n},d_{n}|\ n\geq 1\}$}\label{SS:abcd}


The following proposition can be proven in a similar way as the proof of Proposition \ref{P:A=BB and A=CD}.
The method is: after showing equalities (\ref{Eq:an})-(\ref{Eq:dn}), we are led to prove some identities for
determinants.

\begin{proposition}\label{P:A-BB'CD}
For any $n\geq 0$, we have $a_{2n}=b_{n}b'_{n}$, $a_{2n+1}=c_{n}d_{n+1}$, $2a_{2n}=c_{n}d_{n}+c_{n-1}d_{n+1}$
and $2a_{2n+1}=b_{n}b'_{n+1}+b'_{n}b_{n+1}$.
\end{proposition}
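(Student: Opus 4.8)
The four assertions fall into two groups. The identities $a_{2n}=b_nb'_n$ and $a_{2n+1}=c_nd_{n+1}$ are precisely Proposition \ref{P:A=BB and A=CD} when $n\ge 1$, and the cases $n=0$ are immediate from the normalizations, since $b_0b'_0=1=a_0$ and $c_0d_1=1\cdot x_0=a_1$. Hence the content to be proved is the pair of ``mixed'' identities $2a_{2n}=c_nd_n+c_{n-1}d_{n+1}$ and $2a_{2n+1}=b_nb'_{n+1}+b'_nb_{n+1}$; for $n=0$ they read $2a_0=c_0d_0+c_{-1}d_1=1+x_0^{-1}x_0=2$ and $2a_1=b_0b'_1+b'_0b_1=(x_0+x_1)+(x_0-x_1)=2x_0$, so I will assume $n\ge1$.

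The plan is to follow the method of Proposition \ref{P:A=BB and A=CD}. First I will invoke the determinantal presentations (\ref{Eq:an})--(\ref{Eq:dn}) of $a_k$, $b_k$, $b'_k$, $c_k$, $d_k$ as determinants of the Toeplitz-plus-Hankel matrices $A_k$, $B_k$, $B'_k$, $C_k$, $D'_k$ (with $d_k=\det D'_k$). Under these presentations the two identities become the determinant identities
\begin{align*}
2\det A_{2n}&=\det C_n\cdot\det D'_n+\det C_{n-1}\cdot\det D'_{n+1},\\
2\det A_{2n+1}&=\det B_n\cdot\det B'_{n+1}+\det B'_n\cdot\det B_{n+1}.
\end{align*}
To prove the first I will write the symmetric Toeplitz matrix $A_{2n}$ as the Toeplitz matrix $A_{2n-1}$ bordered by one row and one column, conjugate the $A_{2n-1}$-block by the odd-size orthogonal matrix $J_{2n-1}$ of the proof of Proposition \ref{P:A=BB and A=CD} so that it becomes $\diag(C_{n-1},D'_n)$, and then expand the resulting bordered block-diagonal determinant by multilinearity in the border row (equivalently, column); this produces exactly two cross terms, one grouping the bordered $C_{n-1}$ with $D'_n$ and the other grouping $C_{n-1}$ with a bordered $D'_n$. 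The point will be to check that the first becomes $\det C_n\cdot\det D'_n$, the second $\det C_{n-1}\cdot\det D'_{n+1}$, and that the central basis vector of $A_{2n-1}$ (which lies in the symmetric block, hence contributes to $D'$) is split evenly between the two summands, accounting for the coefficient $2$; this is also why the normalization $D'_k$, rather than $\tfrac{1}{2}\det D_k$, is the convenient one here. The second identity is treated the same way: border $A_{2n+1}$ so as to relate it to $A_{2n+2}\sim\diag(B_{n+1},B'_{n+1})$ — equivalently, restrict the block-diagonal form of $A_{2n+2}$ to the hyperplane cut out by the deleted basis vector — and expand, the half-integer centre of symmetry of the even-size matrices now playing the role that the central index played above and again producing the two-term shape.

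Two consistency checks will keep signs and normalizations under control. First, applying the automorphism $\sigma$ of Proposition \ref{P:abcd-symmetry} fixes both sides of the first identity and merely interchanges the two summands on the right of the second, in agreement with the claim. Second, Desnanot--Jacobi condensation applied to $A_{2n+1}$ (resp.\ $A_{2n+2}$), together with $a_k=\det A_k$ and the product formulas $a_{2m}=b_mb'_m$, $a_{2m+1}=c_md_{m+1}$, exhibits $a_{2n}^2-a_{2n-1}a_{2n+1}$ (resp.\ $a_{2n+1}^2-a_{2n}a_{2n+2}$) as an explicit square of a ``shifted'' Toeplitz determinant; since in the integral domain $\bbQ[x_0,x_1,\dots]$ the sum and difference of two products with a given product then have prescribed squares, this reduces the first (resp.\ second) identity to the companion ``difference'' identity $c_nd_n-c_{n-1}d_{n+1}=\pm2\det(\cdots)$ (resp.\ $b_nb'_{n+1}-b'_nb_{n+1}=\pm2\det(\cdots)$), which is amenable to the same determinant manipulation. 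Finally I will verify both identities directly for $n=1$ from the explicit polynomials listed after Remark \ref{R:x0=1}; this also fixes the overall $+$ sign, which is in any case forced by comparing the coefficient of $x_0^{2n}$ (resp.\ $x_0^{2n+1}$), equal to $2$ on both sides.

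The main obstacle is the bordering/restriction step of the second paragraph: pinning down precisely which one-row-and-column border of $A_{2m}$ and $A_{2m+1}$, and which bookkeeping of the central (respectively half-integer) centre of symmetry under the orthogonal change of basis, makes the two cross terms come out exactly as $\det C_n\det D'_n$, $\det C_{n-1}\det D'_{n+1}$ (respectively $\det B_n\det B'_{n+1}$, $\det B'_n\det B_{n+1}$) with coefficient exactly $2$. Once the correct matrix identity is written down, verifying it will be routine determinant algebra.
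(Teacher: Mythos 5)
Your reduction of the first two identities to Proposition \ref{P:A=BB and A=CD}, together with the $n=0$ checks, is fine, and you have correctly located the real content in the two ``mixed'' identities and in the determinantal presentations (\ref{Eq:an})--(\ref{Eq:dn}). But the step you yourself label the ``main obstacle'' is not a deferrable technicality: it \emph{is} the proof, and your proposal stops exactly there. The bordering mechanism you sketch is, as written, unverified and problematic: after conjugating the inner $A_{2n-1}$-block to $\mathrm{diag}(C_{n-1},D'_n)$, the border \emph{column} still couples the two diagonal blocks, so splitting only the border row does not yield two summands that each factor; splitting both the border row and the border column is not a two-term multilinear expansion (the corner entry lies in both), and the claim that the resulting pieces are exactly $\det C_n\det D'_n$ and $\det C_{n-1}\det D'_{n+1}$, with the corner entry ``split evenly,'' is precisely the identity to be proved. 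The Desnanot--Jacobi detour does not close this gap either: it only trades the sum identity for an equally unproven difference identity $c_nd_n-c_{n-1}d_{n+1}=\pm 2\det(\cdots)$, plus a sign determination.

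The device that actually works (and is what the paper does) keeps the size fixed and perturbs a single row rather than bordering. One sets $A'_{2n}$ and $A''_{2n}$ equal to $A_{2n}$ except in the last row, whose $j$-th entries are $x_{2n-j}+x_{j}$ and $x_{2n-j}-x_{j}$ respectively; multilinearity in that one row gives $\det A'_{2n}+\det A''_{2n}=2\det A_{2n}$ with no cross terms at all. Each perturbed matrix is then conjugated by a block rotation adapted to the partition singling out the $n$-th and $2n$-th coordinates (the matrices $J'_{2n}$, $J''_{2n}$), and the $\pm$ perturbation is chosen exactly so that one off-diagonal block of the conjugate vanishes, leaving a block-triangular matrix whose diagonal blocks have determinants $c_{n-1}$, $d_{n+1}$ for $A'_{2n}$ and $d_{n}$, $c_{n}$ for $A''_{2n}$; taking determinants gives $2a_{2n}=c_nd_n+c_{n-1}d_{n+1}$, and the odd case is identical with the $b$'s and $b'$'s. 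If you replace your bordering step by this rank-one row modification, the rest of your outline (the $\sigma$-symmetry check, the small-$n$ verification) goes through; without it, the two new identities remain unproved.
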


\begin{proof}
The equalties \begin{equation}\label{Eq:a1}  a_{2n}=b_{n}b'_{n} \end{equation} and
\begin{equation}\label{Eq:a2} a_{2n+1}=c_{n}d_{n+1} \end{equation} are proven in Proposition
\ref{P:A=BB and A=CD}.  We prove \begin{equation}\label{Eq:a3} 2a_{2n}=c_{n}d_{n}+c_{n-1}d_{n+1}\end{equation}
and \begin{equation}\label{Eq:a4} 2a_{2n+1}=b_{n}b'_{n+1}+b'_{n}b_{n+1} \end{equation} here.
In the proof of Proposition \ref{P:A=BB and A=CD}, we have introduced the matrices
$A_{n}$, $B_{n}$, $C_{n}$, $D_{n}$, $D'_{n}$ and expressed their determinants in terms of the polynomials
$a_{n}$, $b_{n}$, $c_{n}$, $d_{n}$.

Let \[A'_{2n}=\left(\begin{array}{ccccc}x_0&x_1&\ldots&x_{2n-2}&x_{2n-1}\\x_1&x_0&\ldots&x_{2n-3}&x_{2n-2}\\
\vdots&\vdots&&\vdots&\vdots\\x_{2n-2}&x_{2n-3}&\ldots&x_0&x_1\\x_{2n-1}+x_{1}&x_{2n-2}+x_{2}&\ldots
&x_1+x_{2n-1}&x_0+x_{2n}\\\end{array}\right)\] and
\[A''_{2n}=\left(\begin{array}{ccccc}x_0&x_1&\ldots&x_{2n-2}&x_{2n-1}\\x_1&x_0&\ldots&x_{2n-3}&x_{2n-2}\\
\vdots&\vdots&&\vdots&\vdots\\x_{2n-2}&x_{2n-3}&\ldots&x_0&x_1\\x_{2n-1}-x_{1}&x_{2n-2}-x_{2}&\ldots
&x_1-x_{2n-1}&x_0-x_{2n}\\\end{array}\right).\]
Then \[\det A'_{2n}+\det A''_{2n}=2\det A_{2n}=2a_{2n}.\]

Define \[J'_{2n}=\left(\begin{array}{cccc}\frac{1}{\sqrt{2}}I_{n-1}&0_{(n-1)\times 1}&-\frac{1}{\sqrt{2}}L_{n-1}
&0_{(n-1)\times 1}\\0_{1\times(n-1)}&1&0_{1\times(n-1)}&0\\\frac{1}{\sqrt{2}}L_{n-1}&0_{(n-1)\times 1}&\frac{1}
{\sqrt{2}}I_{n-1}&0_{(n-1)\times 1}\\0_{1\times(n-1)}&0&0_{1\times(n-1)}&1\end{array}\right)\] and
\[J''_{2n}=\left(\begin{array}{cccc}\frac{1}{\sqrt{2}}I_{n-1}&0_{(n-1)\times 1}&\frac{1}{\sqrt{2}}L_{n-1}
&0_{(n-1)\times 1}\\0_{1\times(n-1)}&1&0_{1\times(n-1)}&0\\-\frac{1}{\sqrt{2}}L_{n-1}&0_{(n-1)\times 1}&\frac{1}
{\sqrt{2}}I_{n-1}&0_{(n-1)\times 1}\\0_{1\times(n-1)}&0&0_{1\times(n-1)}&1\end{array}\right).\]
By matrix calculation, we get
\[J'_{2n}A'_{2n}(J'_{2n})^{-1}=\left(\begin{array}{cc}X_{1}&\ast_1\\0_{(n+1)\times(n-1)}&X_2\end{array}
\right)\] with $\det X_1=c_{n-1}$ and $\det X_2=d_{n+1}$. Similarly we get
\[J''_{2n}A''_{2n}(J''_{2n})^{-1}=\left(\begin{array}{cc}Y_{2}&\ast_2\\0_{n\times n}&Y_1\end{array}
\right)\] with $\det Y_1=c_{n}$ and $\det Y_2=d_{n}$. Taking determinants, we get
\[2a_{2n}=c_{n}d_{n}+c_{n-1}d_{n+1}.\]

Let \[A'_{2n+1}=\left(\begin{array}{ccccc}x_0&x_1&\ldots&x_{2n-2}&x_{2n}\\x_1&x_0&\ldots&x_{2n-2}&x_{2n-1}\\
\vdots&\vdots&&\vdots&\vdots\\x_{2n-1}&x_{2n-2}&\ldots&x_0&x_1\\x_{2n}+x_{1}&x_{2n-1}+x_{2}&\ldots
&x_1+x_{2n}&x_0+x_{2n+1}\\\end{array}\right)\] and
\[A''_{2n+1}=\left(\begin{array}{ccccc}x_0&x_1&\ldots&x_{2n-1}&x_{2n}\\x_1&x_0&\ldots&x_{2n-2}&x_{2n-1}\\
\vdots&\vdots&&\vdots&\vdots\\x_{2n-1}&x_{2n-2}&\ldots&x_0&x_1\\x_{2n}-x_{1}&x_{2n-1}-x_{2}&\ldots
&x_1-x_{2n}&x_0-x_{2n+1}\\\end{array}\right).\]
Then we have \[\det A'_{2n+1}+\det A''_{2n+1}=2\det A_{2n+1}=2a_{2n+1}.\]

Let \[J'_{2n+1}=\left(\begin{array}{cccc}\frac{1}{\sqrt{2}}I_{n-1}&-\frac{1}{\sqrt{2}}L_{n-1}
&0_{(n-1)\times 1}\\\frac{1}{\sqrt{2}}L_{n-1}&\frac{1}{\sqrt{2}}I_{n-1}&0_{(n-1)\times 1}\\
0_{1\times(n-1)}&0_{1\times(n-1)}&1\end{array}\right)\] and
\[J''_{2n+1}=\left(\begin{array}{cccc}\frac{1}{\sqrt{2}}I_{n-1}&\frac{1}{\sqrt{2}}L_{n-1}
&0_{(n-1)\times 1}\\-\frac{1}{\sqrt{2}}L_{n-1}&\frac{1}{\sqrt{2}}I_{n-1}&0_{(n-1)\times 1}\\
0_{1\times(n-1)}&0_{1\times(n-1)}&1\end{array}\right).\]
By matrix calculation, we get
\[J'_{2n+1}A'_{2n+1}(J'_{2n+1})^{-1}=\left(\begin{array}{cc}X_{1}&\ast_1\\0_{(n+1)\times(n-1)}&X_2\end{array}
\right)\] with $\det X_1=b_{n}$ and $\det X_2=b'_{n+1}$. Similarly we get
\[J''_{2n+1}A''_{2n+1}(J''_{2n+1})^{-1}=\left(\begin{array}{cc}Y_{2}&\ast_2\\0_{n\times n}&Y_1\end{array}
\right)\] with $\det Y_1=b_{n+1}$ and $\det Y_2=b'_{n}$. Taking determinants, we get
\[2a_{2n+1}=b_{n}b'_{n+1}+b_{n+1}b'_{n}.\] \end{proof}

\begin{remark}\label{R:A-BB'CD}
Here, we remark that, the equations (\ref{Eq:an})-(\ref{Eq:dn}) are discovered in \cite{An-Yu-Yu}
and the equalities \[\det A_{2n}=\det B_{n}\det B'_{n},\] \[\det A_{2n+1}=\det C_{n}\det D'_{n+1},\]
\[2\det A_{2n}=\det C_{n}\det D'_{n}+\det C_{n-1}\det D'_{n+1}\] and
\[2\det A_{2n+1}=\det B_{n}\det B'_{n+1}+\det B_{n+1}\det B'_{n}\]
are proven in the book \cite{Vein-Dale}, Page 88.
\end{remark}



\begin{proposition}\label{P:algebraic relations}
The equalities $a_{2n}=b_{n}b'_{n}$, $2a_{2n}=c_{n}d_{n}+c_{n-1}d_{n+1}$, $a_{2n+1}=c_{n}d_{n+1}$,
$2a_{2n+1}=b_{n}b'_{n+1}+b'_{n}b_{n+1}$ generate all algebraic relations among
\[\{a_{n},b_{n},b'_{n},c_{n},d_{n}|n\geq 1\}.\]
\end{proposition}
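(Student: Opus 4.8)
\emph{Step 1: eliminate the $a_n$.} Let $\caR=\bbQ[A_n,B_n,B'_n,C_n,D_n:n\ge1]$ be a polynomial ring on independent variables, and let $\psi\colon\caR\to\bbQ[x_0,x_1,\dots]$ send each variable to the corresponding polynomial (images of the classical $a,b,b',c,d$-polynomials of Definitions~\ref{D:abcd}, \ref{D:bn}). Writing $I$ for the ideal generated by the four relations of the statement, one has $I\subseteq\ker\psi$, and we must prove equality. The relations $A_{2n}-B_nB'_n$, $A_{2n+1}-C_nD_{n+1}$, together with the $n=0$ instance $A_1-D_1$ (using $c_0=1$), express every $a_m$ as a polynomial in the $b,b',c,d$; hence by the standard presentation of a ring obtained by adjoining values of polynomials, the equality $\ker\psi=I$ is equivalent to the assertion that the ideal of relations among $\{b_n,b'_n,c_n,d_n:n\ge1\}$ inside $\caR'=\bbQ[B_n,B'_n,C_n,D_n:n\ge1]$ equals the ideal $J$ generated by
\[
(R1_n)\colon\ 2B_nB'_n-C_nD_n-C_{n-1}D_{n+1},\qquad
(R2_n)\colon\ 2C_nD_{n+1}-B_nB'_{n+1}-B'_nB_{n+1}
\]
($n\ge1$, with $C_0=D_0=D_1=1$); indeed $(R1_n),(R2_n)$ are $\bbQ$-linear combinations of the four relations modulo $A_{2n}-B_nB'_n$ and $A_{2n+1}-C_nD_{n+1}$, and conversely. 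Write $\psi$ also for $\caR'\to\bbQ[x_0,x_1,\dots]$, $\mathfrak p=\ker\psi$, and $S=\psi(\caR')$.

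\emph{Step 2: a tower of quadratic extensions.} Put $a_{2n}:=\tfrac12(c_nd_n+c_{n-1}d_{n+1})$ and $a_{2n+1}:=c_nd_{n+1}$, elements of $\bbQ[c_m,d_m:m\ge1]$. Multiplying $(R2_n)$ by $b_n$, then by $b_{n+1}$, and using $(R1_n)$ and $(R1_{n+1})$, one gets for every $n\ge0$ (with $b_0=1$) the relation
\[
(\star_n)\colon\qquad a_{2n}\,b_{n+1}^2-2a_{2n+1}\,b_n\,b_{n+1}+a_{2n+2}\,b_n^2=0,
\]
which lies in $J$; for $n=0$ it reads $b_1^2-2b_1+a_2=0$. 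By Proposition~\ref{basic properties}(3) the set $\{c_m:m\ge1\}\cup\{d_m:m\ge2\}$ is algebraically independent; let $K$ be the subfield of $\bbQ(x_0,x_1,\dots)$ it generates, a purely transcendental extension of $\bbQ$ canonically isomorphic to $\operatorname{Frac}\bbQ[C_m,D_m]$. Since $b_nb'_n=a_{2n}$, each $b_n$ is a unit in $S_K:=S\otimes_{\bbQ[c,d]}K$, and by $(\star_{n-1})$ (divided by the unit $a_{2n-2}\in K^{\times}$) each $b_n$ is integral over $K[b_1,\dots,b_{n-1}]$; hence $S_K=K[b_1,b_2,\dots]$ is a domain integral over the field $K$, so $S_K$ is a field. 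Thus $S_K=\operatorname{Frac}(S)=\bigcup_nK(b_1,\dots,b_n)$ is a tower in which $[K(b_1,\dots,b_{n+1}):K(b_1,\dots,b_n)]\le2$ for all $n$.

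\emph{Step 3: the comparison.} Performing the same manipulations inside $\caR'/J$: the image of $\bbQ[C_m,D_m]$ injects into $\caR'/J$ (it already injects into $S$); $B'_1$ is eliminated by the $n=0$ case of $(R2_\bullet)$; and after localising at $\bbQ[C_m,D_m]\setminus\{0\}$, each $B_n$ is integral over $K[B_1,\dots,B_{n-1}]$ via the image of $(\star_{n-1})$. So $(\caR'/J)_K$ is a quotient of $\caR'\otimes_{\bbQ[C,D]}K=K[B_n,B'_n]$ that is integral over $K$; once one knows it is a domain it is therefore a field, and the surjection $(\caR'/J)_K\twoheadrightarrow S_K=\operatorname{Frac}(S)$ is then an isomorphism. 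Combined with the injection $\caR'/J\hookrightarrow(\caR'/J)_K$ (valid once $\caR'/J$ is a domain), this forces $\caR'/J\xrightarrow{\ \sim\ }S$, i.e.\ $\mathfrak p=J$; unwinding Step~1 then yields $\ker\psi=I$, which is the assertion.

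\emph{The main obstacle.} Everything above reduces the proposition to a single point: the tower of Step~2 does \emph{not} collapse, i.e.\ $b_{n+1}\notin K(b_1,\dots,b_n)$ for every $n\ge0$ — equivalently, each $(\star_n)$ is irreducible over $K(b_1,\dots,b_n)$, equivalently $\caR'/J$ and $(\caR'/J)_K$ are domains with $K$-basis the $2^{m}$ products $b_1^{e_1}\cdots b_m^{e_m}$ ($e_i\in\{0,1\}$). For $n=0$ this is just $x_1\notin K$, which holds because the involution $\sigma$ of Proposition~\ref{P:abcd-symmetry} (negating every $x_{2i+1}$) fixes $K$ but not $x_1$. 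For general $n$ the plan is to extract it from Proposition~\ref{basic properties}: by part~(1), $b_{n+1}$ genuinely involves the variable $x_{2n+1}$ while $b_1,\dots,b_n\in\bbQ[x_1,\dots,x_{2n-1}]$ do not, and, combined with the primality assertions of part~(2) and the algebraic independence of part~(3), this should pin down the degree of $b_{n+1}$ over $K(b_1,\dots,b_n)$ as exactly $2$. This is the step I expect to demand the most care, and the place where the determinantal identities of Propositions~\ref{P:A=BB and A=CD} and~\ref{P:A-BB'CD} and the structure theorems of Proposition~\ref{basic properties} are indispensable.
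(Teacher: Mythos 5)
Your proposal is not a proof: the step you yourself flag as ``the main obstacle'' --- that $b_{n+1}\notin K(b_1,\dots,b_n)$ for every $n$, i.e.\ that none of the quadratics $(\star_n)$ becomes reducible --- carries essentially the whole content of the statement in your formulation, and you only sketch a plan for it. Moreover Step 3 contains a second, independent gap: non-collapse of the tower determines the structure of the \emph{localized} ring $(\caR'/J)_K$, but it does not imply that $\caR'/J$ is a domain, nor that $\caR'/J\to(\caR'/J)_K$ is injective (a ring can have $\bbQ[C_m,D_m]$-torsion that dies under localization while the localization is a field), so your claimed equivalence between non-collapse and ``$\caR'/J$ and $(\caR'/J)_K$ are domains with $K$-basis the $2^m$ products'' fails in one direction. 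Even granting non-collapse you would only obtain $J_K=(\ker\psi)_K$, not $J=\ker\psi$. This is not pedantry: Remark~\ref{R:ideal} states that the four families are expected \emph{not} to generate the kernel ideal $I$ in the polynomial ring, only the kernel of the localized evaluation map; the proposition must be read in that localized/rational sense, and your Step 3 is attempting a strengthening the paper itself disavows.

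The paper's proof is a one-line elimination in the opposite order, which avoids the quadratic tower entirely. Take $\{b_m,c_m\mid m\ge1\}$ as the distinguished subfamily; by Proposition~\ref{basic properties}(3) it is algebraically independent. The relation $2a_{2n+1}=b_nb'_{n+1}+b'_nb_{n+1}$ is \emph{linear} in $b'_{n+1}$, the relation $2a_{2n}=c_nd_n+c_{n-1}d_{n+1}$ is \emph{linear} in $d_{n+1}$, and $a_{2n}=b_nb'_n$, $a_{2n+1}=c_nd_{n+1}$ eliminate the $a_m$; so after inverting the $b_m$ and $c_m$ each relation is used exactly once to express one of $a_m,b'_m,d_m$ as a rational function of the $b_m,c_m$, and the algebraic independence of the latter shows no further relations survive. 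Your choice of $\{c_m,d_m\}$ as transcendence base is what forces the $b_n$ to enter quadratically and creates the hard irreducibility question; with the base $\{b_m,c_m\}$ everything is triangular and linear. To salvage your route you would have to (i) actually prove the non-collapse --- note the involution $\sigma$ of Proposition~\ref{P:abcd-symmetry} only settles $n=0$ directly, since $K(b_1,\dots,b_n)$ is not $\sigma$-stable for $n\ge1$ --- and (ii) either restrict the claim to the localized kernel or confront the torsion question of Remark~\ref{R:ideal}, to which the paper suggests the answer is negative.
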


\begin{proof}
By Proposition \ref{basic properties}, we know that $\{b_{n},c_{n}|n\geq 1\}$ are algebraically
independent. It is clear that, from these equalities we can express $a_{n},b'_{n},d_{n}$ in
terms of rational functions of $\{b_{m},c_{m}|1\leq m\leq n\}$, so these equalities
generate all algebraic relations among $\{a_{n},b_{n},b'_{n},c_{n},d_{n}|n\geq 1\}$.
\end{proof}

\begin{remark}
More important than Proposition \ref{P:algebraic relations} itself is: we can use the four
identities to derive many other algebraic relations. Moreover, by expressing $a_{n},b'_{n},d_{n}$
in terms of rational functions of $\{b_{m},c_{m}|1\leq m\leq n\}$, we are able to check
whether any given polynomial function of $\{a_{n},b_{n},b'_{n},c_{n},d_{n}|n\geq 1\}$ is
identical to 0 or not.
\end{remark}

\textbf{Type $\BC$.}
From the equations $b_{n+1}b'_{n}+b_{n}b_{n+1}=2c_{n}d_{n+1}$,
$2b_{n}b'_{n}=c_{n}d_{n}+c_{n-1}d_{n+1}$, $2b_{n+1}b'_{n+1}=c_{n+1}d_{n+1}+c_{n}d_{n+2}$,
we get
\begin{equation}\label{Eq:BC} b_{n+1}^{2}(c_{n}d_{n}+c_{n-1}d_{n+1})+b_{n}^{2}(c_{n+1}d_{n+1}+c_{n}d_{n+2})-
4b_{n+1}b_{n}c_{n}d_{n+1}=0.\end{equation} When $n=0$, this equation is
$2b_1^{2}+(c_1d_1+d_2)-4b_1d_1=0$. Since $\{d_{n},c_{n}|n\geq 1\}$ are algebraically
independent, so these equations generate all algebraic relations among
$\{b_{n},c_{n},d_{n}|\ n\geq 1\}$. Together with the identities
$a_{2n-1}-c_{n-1}d_{n}=0$ and $2a_{2n}-c_{n}d_{n}-c_{n-1}d_{n+1}=0$, they generate
all algebraic relations among $\{a_{n},b_{n},c_{n},d_{n}|\ n\geq 1\}$.

\textbf{Type $\B$.}
From $2a_{2n+1}=b_{n+1}b'_{n}+b_{n}b_{n+1}$, $a_{2n+2}=b_{n+1}b'_{n+1}$,
$a_{2n}=b_{n}b'_{n}$, we get \begin{equation}\label{Eq:B1}a_{2n+2}b_{n}^{2}+a_{2n}b_{n+1}^{2}=
2a_{2n+1}b_{n}b_{n+1}.\end{equation} From $2a_{2n}=c_{n}d_{n}+c_{n-1}d_{n+1}$, $a_{2n+1}=c_{n}d_{n+1}$,
$a_{2n-1}=c_{n-1}d_{n}$, we get \begin{equation}\label{Eq:B2} a_{2n+1}d_{n}^{2}+a_{2n-1}d_{n+1}^{2}=
2a_{2n}d_{n}d_{n+1}.\end{equation} The equalities
\[\{a_{2n+2}b_{n}^{2}+a_{2n}b_{n+1}^{2}-2a_{2n+1}b_{n}b_{n+1}=0|\ n\geq 0\}\] and
\[\{a_{2n+1}d_{n}^{2}+a_{2n-1}d_{n+1}^{2}-2a_{2n}d_{n}d_{n+1}=0|\ n\geq 1\}\] generate all
algebraic relations among $\{a_{n},b_{n},d_{n}|\ n\geq 1\}$. Here we regard
$a_{-1}=d_1^{-1}=x_0^{-1}$, $a_0=1$.

\textbf{Type $\C$.} We have \begin{equation}\label{Eq:C1} a_{2n+1}=c_{n}d_{n+1}\end{equation} and
\begin{equation}\label{Eq:C2} 2a_{2n+2}=c_{n+1}d_{n+1}+c_{n}d_{n+2}\end{equation}
As $\{d_{n},c_{n}|n\geq 1\}$ are algebraically independent, the equalities
\[\{a_{2n+1}-c_{n}d_{n+1}=0|\ n\geq 1\}\] and
\[\{2a_{2n+2}-c_{n+1}d_{n+1}-c_{n}d_{n+2}=0|\ n\geq 0\}\]
generate all algebraic relations among $\{a_{n},c_{n},d_{n}|\ n\geq 1\}$.

\textbf{Type $\D$.} The equations \[\{a_{2n+1}d_{n}^{2}+a_{2n-1}d_{n+1}^{2}-2a_{2n}d_{n}d_{n+1}=0|\ n\geq 1\}\]
generate all algebraic relations among $\{a_{n},d_{n}|\ n\geq 1\}$.

\begin{remark}\label{R:ideal}
Let \[R=\bbQ[\{y_{1,n},y_{2,n},y_{3,n},y_{4,n},y_{5,n}|\ n\geq 1\}]\] be the free polynomial algebra over the
rational field with infinitely many indeterminates \[\{y_{1,n},y_{2,n},y_{3,n},y_{4,n},y_{5,n}|\ n\geq 1\}.\]
Let \[\Eva: R\longrightarrow\bbQ[\{x_{n}|\ n\geq 1\}]\] be the homomorphism defined by
\[\Eva(x_{1,n})=a_{n},\  \Eva(x_{2,n})=b_{n},\  \Eva(x_{3,n})=b'_{n}\] and
\[\Eva(x_{4,n})=c_{n},\  \Eva(x_{5,n})=d_{n}.\] Let $I$ be the kernel of $I$.
Let  \begin{equation}\label{Eq:I1}  f_{1,n}=y_{1,2n}-y_{2,n}y_{3,n},\end{equation}
\begin{equation}\label{Eq:I2} f_{2,n}=y_{1,2n+1}-y_{4,n}y_{5,n+1},\end{equation}
\begin{equation}\label{Eq:I3} f_{3,n}=2y_{1,2n}-y_{4,n}y_{5,n}-y_{4,n-1}y_{5,n+1}\end{equation} and
\begin{equation}\label{Eq:I4} f_{4,n}=2y_{1,2n+1}-y_{2,n}y_{3,n+1}-y_{3,n}y_{2,n+1}.\end{equation}
Then $f_{1,n},f_{2,n},f_{3,n},f_{4,n}\in I$ for any $n\geq 1$. Moreover, let $R_1$ be the lozalization of
$R$ with respect to the multiplicative system generated by $\{y_{2,n},y_{4,n}|\ n\geq 1\}$. Then we
have a homomorphism $\Eva_1: R_1\longrightarrow\bbQ(\{x_{n}|\ n\geq 1\}])$. One can show that,
the elements $\{f_{1,n},f_{2,n},f_{3,n},f_{4,n}|\ n\geq 1\}$ genetate the kernel of $\Eva_1$. Similarly,
they generate the corresponding kernels if we consider the localization with respect to the multiplicative
system generated by $\{y_{3,n},y_{4,n}|\ n\geq 1\}$ (or $\{y_{2,n},y_{4,n}|\ n\geq 1\}$, etc) and the similar
homomorphism. Does $\{f_{1,n},f_{2,n},f_{3,n},f_{4,n}|\ n\geq 1\}$ generate $I$? It seems to the author that
the answer to this is no. In that case, can one find a system of gneerators of $I$? One could consider a subset
of $\{a_{n},b_{n},b'_{n},c_{n},d_{n}|\ n\geq 1\}$ (e.g, $\{a_{n},b_{n},c_{n},d_{n}|\ n\geq 1\}$), the similar
homomorphism as the above $\Eva$ and ask about the generators of the kernel.
\end{remark}

\subsection{Classical irreducible root systems}\label{SS:classical root system}

\begin{proposition}\label{P:gamma-character}
Given a root system $\Psi$ and an automorphism $\gamma\in\Aut(\Psi)$, if $\gamma\Phi=\Phi$ for
a reduced sub-root system $\Phi$ of $\Psi$, then $\gamma F_{\Phi,W_{\Psi}}=F_{\Phi,W_{\Psi}}$.
\end{proposition}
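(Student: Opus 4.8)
The plan is to avoid working directly with $A_{\Phi}$, whose very definition involves the choice of positive system $\Phi^{+}=\Phi\cap\Psi^{+}$, and instead to route everything through the choice-free product that already appeared in the proof of Proposition~\ref{P: ST support}. There, for a reduced root system $\Phi$ one derived
\[F_{\Phi}(t)\;=\;\prod_{\alpha\in\Phi}\bigl(1-[\alpha]\bigr)\;=\;\sum_{\tau\in W_{\Phi}}\tau\bigl(A_{\Phi}\bigr),\]
which I will take as given. The virtue of the left-hand side is that it visibly does not depend on the chosen positive system, that it is $W_{\Phi}$-invariant, and --- crucially --- that it is invariant under any automorphism fixing the \emph{set} $\Phi$: if $\gamma\in\Aut(\Psi)$ satisfies $\gamma\Phi=\Phi$, then $\gamma F_{\Phi}(t)=\prod_{\alpha\in\Phi}\bigl(1-[\gamma\alpha]\bigr)=\prod_{\beta\in\Phi}\bigl(1-[\beta]\bigr)=F_{\Phi}(t)$.

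Next I would rewrite $F_{\Phi,W_{\Psi}}$ as an iterated group average. Writing $P_{H}$ for the operator $v\mapsto\frac{1}{|H|}\sum_{h\in H}hv$ attached to a finite group $H$, Definition~\ref{D:characters} says exactly $F_{\Phi,W_{\Psi}}=P_{W_{\Psi}}(A_{\Phi})$. Since $W_{\Phi}\subset W_{\Psi}$, the elementary identity $P_{W_{\Psi}}\circ P_{W_{\Phi}}=P_{W_{\Psi}}$ together with the rescaled first display, $P_{W_{\Phi}}(A_{\Phi})=\tfrac{1}{|W_{\Phi}|}F_{\Phi}(t)$, yields
\[F_{\Phi,W_{\Psi}}\;=\;P_{W_{\Psi}}\bigl(P_{W_{\Phi}}(A_{\Phi})\bigr)\;=\;\frac{1}{|W_{\Phi}|\,|W_{\Psi}|}\sum_{\sigma\in W_{\Psi}}\sigma\bigl(F_{\Phi}(t)\bigr),\]
so $F_{\Phi,W_{\Psi}}$ is exhibited as a $W_{\Psi}$-average of the $\gamma$-invariant, choice-free element $F_{\Phi}(t)$.

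Finally I would apply $\gamma$ and reindex. The group $W_{\Psi}$ is normal in $\Aut(\Psi)$, by the standard relation $\gamma s_{\alpha}\gamma^{-1}=s_{\gamma\alpha}$ and the fact that $\gamma\alpha\in\Psi$; hence $\gamma\sigma=(\gamma\sigma\gamma^{-1})\gamma$ with $\sigma':=\gamma\sigma\gamma^{-1}$ running over $W_{\Psi}$ as $\sigma$ does. Therefore
\[\gamma F_{\Phi,W_{\Psi}}\;=\;\frac{1}{|W_{\Phi}|\,|W_{\Psi}|}\sum_{\sigma\in W_{\Psi}}\gamma\sigma\bigl(F_{\Phi}(t)\bigr)\;=\;\frac{1}{|W_{\Phi}|\,|W_{\Psi}|}\sum_{\sigma'\in W_{\Psi}}\sigma'\bigl(\gamma F_{\Phi}(t)\bigr)\;=\;F_{\Phi,W_{\Psi}},\]
the last equality because $\gamma F_{\Phi}(t)=F_{\Phi}(t)$ from the first paragraph.

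The one genuinely delicate point --- and the reason for the detour through $F_{\Phi}(t)$ --- is that $A_{\Phi}$ depends on the positive system, so $\gamma A_{\Phi}=A_{\Phi}$ is \emph{false} in general; one only gets $\gamma A_{\Phi}=w\,A_{\Phi}$ for the unique $w\in W_{\Phi}$ carrying $\gamma\Phi^{+}$ back to $\Phi^{+}$, using simple transitivity of $W_{\Phi}$ on the positive systems of $\Phi$. One could push the argument through in that form as well, but phrasing it via the manifestly invariant product makes the $W_{\Psi}$-averaging transparent and sidesteps tracking this Weyl element. I expect no further obstacle: what remains is the two elementary averaging identities, the normality of $W_{\Psi}$ in $\Aut(\Psi)$, and the already-established formula for $F_{\Phi}(t)$.
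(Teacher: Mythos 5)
Your proof is correct, and it takes a genuinely different route from the paper's. The paper argues directly on $A_{\Phi}$: it first replaces $\gamma$ by $w\gamma$ for a suitable $w\in W_{\Phi}$ so that $\gamma\Phi^{+}=\Phi^{+}$ (legitimate because $F_{\Phi,W_{\Psi}}$ is $W_{\Psi}$-invariant by construction), deduces $\gamma\delta_{\Phi}=\delta_{\Phi}$ and $\gamma W_{\Phi}\gamma^{-1}=W_{\Phi}$, and then reindexes the sum $\sum_{w\in W_{\Phi}}\sign(w)\chi^{\ast}_{\delta_{\Phi}-w\delta_{\Phi},W_{\Psi}}$ via $w\mapsto\gamma w\gamma^{-1}$. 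This is exactly the "track the Weyl element" variant you mention and deliberately avoid; it additionally needs (implicitly) that conjugation by $\gamma$ preserves the sign character of $W_{\Phi}$, which follows from $\gamma$ permuting the simple roots of $\Phi^{+}$. Your detour through $F_{\Phi}(t)=\prod_{\alpha\in\Phi}(1-[\alpha])=\sum_{\tau\in W_{\Phi}}\tau(A_{\Phi})$ (already established in the proof of Proposition~\ref{P: ST support} for reduced $\Phi$) trades that bookkeeping for two averaging identities plus normality of $W_{\Psi}$ in $\Aut(\Psi)$; what it buys is that the object being averaged is manifestly independent of the positive system and manifestly fixed by any $\gamma$ stabilizing the set $\Phi$, so no normalization of $\gamma$ and no sign-character verification is needed. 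Both proofs ultimately rest on the same two facts (the Weyl-denominator identity behind $F_{\Phi}(t)$, and $\gamma W_{\Psi}\gamma^{-1}=W_{\Psi}$, which the paper uses implicitly through $\gamma\chi^{\ast}_{\lambda,W_{\Psi}}=\chi^{\ast}_{\gamma\lambda,W_{\Psi}}$), so neither is stronger; yours is arguably cleaner, the paper's more self-contained in the definition of $A_{\Phi}$. One cosmetic quibble: the unique element of $W_{\Phi}$ carrying $\gamma\Phi^{+}$ back to $\Phi^{+}$ gives $\gamma A_{\Phi}=w^{-1}A_{\Phi}$ rather than $wA_{\Phi}$ in your parenthetical remark, but this plays no role in your actual argument.
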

\begin{proof}
Replacing $\gamma$ by some $w\gamma$ ($w\in W_{\Phi}$) if necessary, we may assume that
$\gamma\Phi^{+}=\Phi^{+}$. Then $\gamma\delta_{\Phi}=\delta_{\Phi}$ and $\gamma$ maps simple
roots of $\Phi$ to simple roots. By the latter, we get that
$\gamma W_{\Phi}\gamma^{-1}=W_{\Phi}$.

Hence \begin{eqnarray*}\gamma F_{\Phi,W_{\Psi}}&=&\gamma(\sum_{w\in W_{\Phi}}
\chi^{\ast}_{\delta_{\Phi}-w\delta_{\Phi},W_{\Psi}})\\&=&\sum_{w\in W_{\Phi}}
\chi^{\ast}_{\gamma\delta_{\Phi}-(\gamma w\gamma^{-1})\gamma\delta_{\Phi},W_{\Psi}}
\\&=&\sum_{w\in W_{\Phi}}\chi^{\ast}_{\delta_{\Phi}-(\gamma w\gamma^{-1})\delta_{\Phi},W_{\Psi}}
\\&=&\sum_{w\in W_{\Phi}}\chi^{\ast}_{\delta_{\Phi}-w\delta_{\Phi},W_{\Psi}}\\&=
& F_{\Phi,W_{\Psi}}.\end{eqnarray*}
\end{proof}

Given a classical irreducible root system $\Psi$ of rank $n$, in the case that $\Psi=\A_{n}$ ($n\geq 1$),
we have the following simple statement.

\begin{proposition}\label{P:linear-A}
For any sub-root system $\Phi\subset\A_{n}$, $F_{\Phi,W_{\A_{n}}}=F_{\Phi,\Aut(\A_{n})}$.

For any two sub-root systems $\Phi_1,\Phi_2\subset\A_{n}$, $2\delta'_{\Phi_1}=
2\delta'_{\Phi_2}$ if and only if $\Phi_1\sim\Phi_2$.

For any pairwise non-conjugate sub-root systems $\Phi_1,\dots,\Phi_{s}\subset\A_{n}$,
the characters $\{F_{\Phi_{i},W_{\A_{n}}}|1\leq i\leq s\}$ are linearly independent.
\end{proposition}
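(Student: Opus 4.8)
The plan is to treat the three assertions in turn, leaning on the leading-term description of Proposition~\ref{P:characters}, on Proposition~\ref{P:gamma-character}, and on the classification of sub-root systems of $\A_n$ recalled in Section~\ref{S:sub-root systems} (conjugacy classes $\leftrightarrow$ partitions, via the isomorphism type $\bigsqcup_k\A_{n_k-1}$). For the first assertion I would show that $F_{\Phi,W_{\A_n}}$ is already $\Aut(\A_n)$-invariant. For $n=1$ this is vacuous since $\Aut(\A_1)=W_{\A_1}$, so take $n\ge 2$ and let $\theta$ be the diagram automorphism, which generates $\Aut(\A_n)/W_{\A_n}$. Being an automorphism of the abstract root system, $\theta$ carries $\Phi$ to a sub-root system of the same isomorphism type, hence to a $W_{\A_n}$-translate $w\Phi$ of $\Phi$; then $w^{-1}\theta\in\Aut(\A_n)$ fixes $\Phi$, so Proposition~\ref{P:gamma-character} gives $(w^{-1}\theta)F_{\Phi,W_{\A_n}}=F_{\Phi,W_{\A_n}}$, and combining with the manifest $W_{\A_n}$-invariance of $F_{\Phi,W_{\A_n}}$ yields $\theta F_{\Phi,W_{\A_n}}=F_{\Phi,W_{\A_n}}$. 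Averaging $A_\Phi$ over a coset decomposition of $\Aut(\A_n)$ by $W_{\A_n}$ then gives $F_{\Phi,\Aut(\A_n)}=F_{\Phi,W_{\A_n}}$.

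For the second assertion, the implication $\Phi_1\sim\Phi_2\Rightarrow 2\delta'_{\Phi_1}=2\delta'_{\Phi_2}$ is immediate. Conversely, working in the standard coordinates $\A_n\subset\{\sum x_i=0\}\subset\bbR^{n+1}$, a direct computation shows that if $\Phi$ has type $\bigsqcup_k\A_{n_k-1}$ with $\sum_k n_k=n+1$ then the multiset of coordinates of $2\delta_\Phi$ is $\bigsqcup_k\{\,n_k-1,\,n_k-3,\,\dots,\,1-n_k\,\}$. Since $W_{\A_n}=S_{n+1}$ acts by permuting coordinates and $2\delta'_\Phi$ is the dominant translate of $2\delta_\Phi$, the equality $2\delta'_{\Phi_1}=2\delta'_{\Phi_2}$ forces these two multisets to agree. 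It then remains to verify the combinatorial fact that the multiset $\bigsqcup_k\{n_k-1,\dots,1-n_k\}$ determines the partition $(n_1,n_2,\dots)$: writing $a_v$ for the multiplicity of the value $v$, one checks that $a_v-a_{v+2}$ is the number of parts equal to $v+1$ for $v\ge 1$ and $a_0-a_2$ is the number of parts equal to $1$, so every part-multiplicity is recovered. Hence $\Phi_1\sim\Phi_2$.

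For the third assertion I would use distinct leading terms. Since $W_{\A_n}=W_{\A_n}$ is the full Weyl group of the ambient root system, Proposition~\ref{P:characters} (and the remark following it identifying $\chi^\ast_{2\delta_\Phi,W}$ with $\chi^\ast_{2\delta'_\Phi,W}$) shows that, expanded in the basis $\{\chi^\ast_{\lambda,W_{\A_n}}\}$, the character $F_{\Phi_i,W_{\A_n}}$ has unique longest term $\pm\chi^\ast_{2\delta'_{\Phi_i},W_{\A_n}}$, every other term $\chi^\ast_{\mu,W_{\A_n}}$ satisfying $|\mu|<|2\delta'_{\Phi_i}|$. By the second assertion the weights $2\delta'_{\Phi_1},\dots,2\delta'_{\Phi_s}$ are pairwise distinct. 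Suppose $\sum_i c_iF_{\Phi_i,W_{\A_n}}=0$ with some $c_i\ne 0$; put $N=\max\{|2\delta'_{\Phi_i}|:c_i\ne 0\}$ and pick $i_0$ with $c_{i_0}\ne 0$ and $|2\delta'_{\Phi_{i_0}}|=N$. In the expansion of the left side, the basis element $\chi^\ast_{2\delta'_{\Phi_{i_0}},W_{\A_n}}$ receives a contribution only from $F_{\Phi_{i_0},W_{\A_n}}$: for $i\ne i_0$ with $c_i\ne 0$, its leading term differs from $\chi^\ast_{2\delta'_{\Phi_{i_0}}}$ (distinctness, or $|2\delta'_{\Phi_i}|<N$), and its non-leading terms $\chi^\ast_\mu$ have $|\mu|<|2\delta'_{\Phi_i}|\le N$. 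So the coefficient of $\chi^\ast_{2\delta'_{\Phi_{i_0}},W_{\A_n}}$ is $\pm c_{i_0}\ne 0$, a contradiction; hence all $c_i=0$.

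The coordinate computation of $2\delta_\Phi$ and the final leading-term bookkeeping are routine. The only step carrying genuine (if modest) content is the combinatorial claim in the second assertion that $\bigsqcup_k\{n_k-1,\dots,1-n_k\}$ recovers the partition; that is where I expect to be most careful, though the multiplicity recursion sketched above settles it cleanly, and it is the linchpin that makes the leading-term argument for linear independence go through.
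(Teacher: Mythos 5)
Your proposal is correct and follows essentially the same route as the paper's proof: invoke Proposition \ref{P:gamma-character} via an automorphism in $\Aut(\A_n)\setminus W_{\A_n}$ stabilizing $\Phi$ for the first claim, compute the coordinate multiset of $2\delta_\Phi$ for the second, and deduce linear independence from the distinctness of the leading terms $\chi^\ast_{2\delta'_{\Phi_i}}$. You merely fill in details the paper leaves implicit (the multiplicity recursion recovering the partition, and the maximal-length bookkeeping), and these details are all sound.
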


\begin{proof}
By Proposition \ref{P:gamma-character}, we have $F_{\Phi,W_{\A_{n}}}=F_{\Phi,\Aut(\A_{n})}$ since there
exists $\gamma\in\Aut(\A_{n})-W_{\A_{n}}$ such that $\gamma\Phi=\Phi$ (the index of $W_{\A_{n}}$ in
$\Aut(\A_{n})$ is $2$).

For a sub-root system \[\Phi\cong\bigsqcup_{1\leq i\leq s}\A_{n_{i}-1}\subset\A_{n}\] with $n_{i}\geq 1$
and $\sum_{1\leq i\leq s}n_{i}=n+1$, \[2\delta_{\Phi}\sim(\underbrace{n_1-1,n_1-3,...,1-n_1},....,
\underbrace{n_{s}-1,n_{s}-3,...,1-n_{s}}).\] From this one sees that the weights $2\delta_{\Phi}$ are
non-conjugate for any two non-conjugate sub-root systems. Therefore the remaining conclusions in the
proposition follow.
\end{proof}

In the case that $\Psi=\BC_{n}$, $\B_{n}$ or $\C_{n}$, we have $W_{\Psi}=W_{\BC_{n}}=W_{n}$. Therefore, the
question of getting linear relations among $\{F_{\Phi,W_{\Psi}}|\ \Phi\subset\Psi\}$ reduces to the question
of getting algebraic relations of homogeneous degree $n$ among the polynomials
$\{a_{m},b_{m},c_{m},d_{m}|\ 1\leq m\leq n\}$, $\{a_{m},b_{m},d_{m}|\ 1\leq m\leq n\}$,
$\{a_{m},c_{m},d_{m}|\ 1\leq m\leq n\}$, respectively. These algebraic relations are discussed in the last
subsection.

In the case of $\Psi=\BC_{n}$, we make an interesting observation. We have showed
the equations $b_{n+1}^{2}(c_{n}d_{n}+c_{n-1}d_{n+1})+b_{n}^{2}(c_{n+1}d_{n+1}+c_{n}d_{n+2})-
4b_{n+1}b_{n}c_{n}d_{n+1}=0$ for any $n\geq 0$. When $n=0$ and $n=1$, they are
$2b_1^{2}+(c_1d_1+d_2)-4b_1d_1=0$ and \[b_{2}^{2}(c_{1}d_{1}+d_{2})+b_{1}^{2}(c_{2}d_{2}+c_{1}d_{3})-
4b_{2}b_{1}c_{1}d_{2}=0.\] Eliminating $d_1$, we get
\begin{eqnarray*}0&=&(4b_1-c_1)(b_{2}^{2}(c_{1}d_{1}+d_{2})+b_{1}^{2}(c_{2}d_{2}+c_{1}d_{3})-
4b_{2}b_{1}c_{1}d_{2})\\&&+(b_2^{2}c_1)(2b_1^{2}+(c_1d_1+d_2)-4b_1d_1)
\\&=&b_1(2b_1b_2^{2}c_1+4b_1^2c_2d_2+4b_1^2c_1d_3+4b_2c_1^2d_2+
4b_2^{2}d_2\\&&-b_1c_1c_2d_2-b_1c_1^2d_3-16b_1b_2c_1d_2).\end{eqnarray*} Since
$b_1=x_0-x_1$ is irreducible, we get \[2b_1b_2^{2}c_1+4b_1^2c_2d_2+
4b_1^2c_1d_3+4b_2c_1^2d_2+4b_2^{2}d_2-b_1c_1c_2d_2-b_1c_1^2d_3-16b_1b_2c_1d_2=0.\]
This gives 8 rank-6 semisimple subgroups of $\SU(15)$ with linearly dependent dimension
data. This equation is also given in \cite{An-Yu-Yu}, Example 5.6 and implicit in
\cite{Larsen-Pink}, Page 393.

In the case of $\Psi=\D_{n}$ ($n\geq 4$), algebraic relations of homogeneous degree $n$ among
$\{a_{m},c_{m},d_{m}|1\leq m\leq 1\}$ correspond to the linear relations among
the characters \[\{F_{\Phi,W_{n}}|\ \Phi\subset\D_{n}\}.\]
Note that \[W_{D_{n}}=\Gamma_{n}\rtimes S_{n}\] is a subgroup of $W_{n}$ of index 2,
where \[\Gamma_{n}=\{(a_1,a_2,\dots,a_{n})\subset\{\pm{1}\}^{n}:a_1a_2\cdots a_{n}=1\}.\]

\begin{proposition}\label{P:Character-D}
Given $\Psi=\D_{n}$ ($n\geq 4$) and a sub-root system $\Phi\subset\Psi$, the following conditions
are equivalent to each other:
\begin{itemize}
\item[(1)]{$F_{\Phi,W_{\Psi}}\neq F_{\Phi,W_{n}}$.}
\item[(2)]{$2\delta'_{\Phi}$ is not $W_{n}$ invariant.}
\item[(3)]{\[\Phi\cong\bigsqcup_{1\leq i\leq s}\A_{n_{i}-1}\] with $2\leq n_1\leq n_2\cdots\leq n_{s}\leq n$,
each $n_{i}$ is even, and $n_1+n_2+\cdots+n_{s}=n$.}
\item[(4)]{$\Phi\not\sim_{W_{\D_{n}}}\gamma\Phi$ for some (equivalently, for any) $\gamma\in W_{n}-W_{\D_{n}}$.}
\end{itemize}
\end{proposition}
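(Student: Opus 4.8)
The plan is to prove $(2)\Leftrightarrow(3)$, $(3)\Leftrightarrow(4)$ and $(1)\Leftrightarrow(4)$; since $W_\Psi=W_{\D_n}$ is normal of index $2$ in $W_n$, the four conditions then collapse to one. Fix $\gamma_0=s_{e_1}\in W_n\setminus W_{\D_n}$. As $W_n\setminus W_{\D_n}$ is a single $W_{\D_n}$-coset, $\gamma\Phi\sim_{W_{\D_n}}\Phi$ holds for one $\gamma$ in it iff for every such $\gamma$ iff for $\gamma_0$, which is the ``(some/any)'' in $(4)$. Splitting $W_n=W_{\D_n}\sqcup\gamma_0 W_{\D_n}$ in the definition of $F_{\Phi,W_n}$, together with $\gamma_0 W_{\D_n}\gamma_0^{-1}=W_{\D_n}$ and $\gamma_0 F_{\Phi,W_{\D_n}}=F_{\gamma_0\Phi,W_{\D_n}}$, yields $F_{\Phi,W_n}=\tfrac12\bigl(F_{\Phi,W_{\D_n}}+F_{\gamma_0\Phi,W_{\D_n}}\bigr)$; hence $(1)$ is equivalent to $F_{\Phi,W_{\D_n}}\neq F_{\gamma_0\Phi,W_{\D_n}}$. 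In particular, if $(4)$ fails, say $\gamma_0\Phi=w\Phi$ with $w\in W_{\D_n}$, then $F_{\gamma_0\Phi,W_{\D_n}}=wF_{\Phi,W_{\D_n}}=F_{\Phi,W_{\D_n}}$ (since $F_{\Phi,W_{\D_n}}$ is $W_{\D_n}$-invariant), so $(1)$ fails.

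The combinatorial heart is an orbit lemma: for $\lambda$ in the weight lattice, $W_n\lambda=W_{\D_n}\lambda$ --- equivalently $\chi^{\ast}_{\lambda,W_{\D_n}}$ is $W_n$-invariant --- if and only if $\lambda$ has a vanishing coordinate. For ``$\Leftarrow$'', flipping the sign of a zero coordinate of $\lambda$ is an element of $\Stab_{W_n}(\lambda)$ outside $W_{\D_n}$; for ``$\Rightarrow$'', given $\gamma\in\Stab_{W_n}(\lambda)$, decompose its permutation part into cycles, note that along each cycle the coordinates of $\lambda$ involved share a single nonzero absolute value, and multiply the defining relations around the cycle to see that the number of sign changes on it is even, hence $\gamma$ uses an even total number of sign changes. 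Because $2\delta'_{\Phi}$ lies in the $W_{\D_n}$-orbit of $2\delta_{\Phi}$, condition $(2)$ says precisely that $2\delta_{\Phi}$ has no zero coordinate. Writing $\Phi$ in standard form $\bigsqcup_j\D_{s_j}\sqcup\bigsqcup_k\A_{t_k-1}$ as consecutive coordinate blocks with $\sum_j s_j+\sum_k t_k=n$, a $\D_s$-block contributes to $2\delta_{\Phi}$ the coordinates $(2s-2,2s-4,\dots,2,0)$ (last entry zero) and an $\A_{t-1}$-block contributes $(t-1,t-3,\dots,1-t)$ (which contains $0$ iff $t$ is odd); so $2\delta_{\Phi}$ has no zero coordinate iff $\Phi$ has no $\D$-factor and all $t_k$ are even, i.e. iff $(3)$ holds. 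This proves $(2)\Leftrightarrow(3)$.

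Next, $(3)\Leftrightarrow(4)$ is the classification of sub-root systems of $\D_n$ recalled in Section \ref{S:sub-root systems}: a given index set $\{s_j,t_k\}$ yields two $W_{\D_n}$-conjugacy classes, interchanged by $\gamma_0$ (one has $\Phi'_{\{t_k\}}=s_{e_1}\Phi_{\{t_k\}}$), exactly when all $s_j=0$ and all $t_k$ are even, and a single class otherwise; so $(4)$ fails iff $(3)$ fails. Finally $(4)\Rightarrow(1)$: assuming $(4)$, hence $(2)$, Proposition \ref{P:characters} gives that the unique longest terms of $F_{\Phi,W_{\D_n}}$ and $F_{\gamma_0\Phi,W_{\D_n}}$ are $\pm\chi^{\ast}_{2\delta_{\Phi},W_{\D_n}}$ and $\pm\chi^{\ast}_{\gamma_0(2\delta_{\Phi}),W_{\D_n}}$ respectively, of equal length since $\gamma_0$ is an isometry; as $2\delta_{\Phi}$ has no zero coordinate, the orbit lemma makes $W_{\D_n}(2\delta_{\Phi})$ and $W_{\D_n}(\gamma_0(2\delta_{\Phi}))$ disjoint, so these two leading $\chi^{\ast}$-terms are distinct basis vectors and $F_{\Phi,W_{\D_n}}\neq F_{\gamma_0\Phi,W_{\D_n}}$, i.e. $(1)$ holds. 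Together with ``$(4)$ false $\Rightarrow$ $(1)$ false'' this gives $(1)\Leftrightarrow(4)$ and closes the cycle. The case $n=4$ of $(2)\Leftrightarrow(3)$ and $(3)\Leftrightarrow(4)$ is verified directly from the explicit list of sub-root systems of $\D_4$ and Table \ref{Ta:D4}.

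I expect the only real difficulty to be bookkeeping rather than conceptual: in $(2)\Leftrightarrow(3)$ and its $n=4$ analogue one must respect the conventions distinguishing $\A_3$ from $\D_3$ and $2\A_1$ from $\D_2$ (for $n=4$ this is tangled up with triality and with $W_n\subsetneq\Aut(\D_4)$), make sure the ``standard form'' blocks genuinely represent the intended $W_{\D_n}$-conjugacy class, and confirm that ``$2\delta'_{\Phi}$ is $W_n$-invariant'' in $(2)$ is meant in the sense $W_n(2\delta'_{\Phi})=W_{\D_n}(2\delta'_{\Phi})$.
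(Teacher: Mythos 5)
Your proof is correct and follows essentially the same route as the paper's: a stabilizer of $\Phi$ outside $W_{\D_n}$ forces $F_{\Phi,W_{\D_n}}=F_{\Phi,W_n}$ (the paper's Proposition \ref{P:gamma-character} argument, which you package as the averaging identity $F_{\Phi,W_n}=\tfrac12\bigl(F_{\Phi,W_{\D_n}}+F_{s_{e_1}\Phi,W_{\D_n}}\bigr)$), while in the opposite direction the leading term $\pm\chi^{\ast}_{2\delta_{\Phi}}$ together with the zero-coordinate criterion for $W_n\lambda=W_{\D_n}\lambda$ separates the two characters. You make explicit two points the paper leaves implicit --- the averaging identity and the orbit lemma underlying condition $(2)$ --- which is a useful tightening of the same method rather than a different one.
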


\begin{proof}
$(2)\Rightarrow(1)$ is clear. We prove $(1)\Rightarrow(3)$, $(3)\Rightarrow(2)$ and $(3)\Leftrightarrow(4)$
in the below.

Any sub-root system $\Phi$ of $\D_{n}$ (cf. \cite{Larsen-Pink} and \cite{Oshima}) is conjugate to one of
\[(\bigsqcup_{1\leq i\leq s}\D_{n_{i}})\bigsqcup(\bigsqcup_{1\leq j\leq t}\A_{m_{j}}),\]
where $n_{i}\geq 1$, $m_{j}\geq 2$, $\sum_{1\leq i\leq s}n_{i}+\sum_{1\leq j\leq t}m_{j}=n$.
Thus $(3)\Leftrightarrow(4)$.

If $(3)$ does not hold, then $s\geq 1$ or $s=0$ and some $m_{i}$ is odd. In the case that $s\geq 1$, we may assume
that $\D_{n_1}=\langle e_1-e_2,\dots,e_{n_{1}-1}-e_{n_1},e_{n_{1}-1}+e_{n_1}\rangle\subset\Phi$. Thus
$s_1\Phi=\Phi$. By Proposition \ref{P:gamma-character} we get $s_1 F_{\Phi,W_{\Psi}}=F_{\Phi,W_{\Psi}}$. Since
$s_1,W_{\D_{n}}$ generate $\Aut(\D_{n})$ ($n\geq 5$), we have \[F_{\Phi,W_{\Psi}}=F_{\Phi,W_{n}}.\] In the case
that $s=0$ and some $m_{j}$ is odd, we may and do assume that $m_1$ is odd and $A_{m_1}=\langle e_1-e_2,\dots,
e_{m_{1}-1}-e_{m_1}\rangle$. Then $s_1s_2\cdots s_{m_1}\Phi=\Phi$. By Proposition \ref{P:gamma-character},
\[s_1s_1\cdots s_{m_1} F_{\Phi,W_{\Psi}}=F_{\Phi,W_{\Psi}}.\] Since $s_1s_2\cdots s_{m_1},W_{\D_{n}}$
generate $\Aut(\D_{n})$ ($n\geq 5$), we get $F_{\Phi,W_{\Psi}}=F_{\Phi,W_{n}}$.
This proves $(1)\Rightarrow(3)$.

If $(3)$ holds, by calculation we get
\[2\delta_{\Phi}=(n_{1}-1,n_{1}-3,\dots,1-n_1,\dots,n_{s}-1,\dots,3-n_{s},1-n_{s}).\] Thus
$2\delta'_{\Phi}$ is not $W_{n}$ invariant. This proves $(3)\Rightarrow(2)$.
\end{proof}

By Proposition \ref{P:Character-D}, for any sub-root system $\Phi$ of $\D_n$, either
$W_{\D_{n}}\Phi=W_{n}\Phi$ and $F_{\Phi,W_{\D_{n}}}=F_{\Phi,W_{n}}$, or $W_{\D_{n}}\Phi\neq W_{n}\Phi$
and $F_{\Phi,W_{\D_{n}}}$, $F_{\gamma\Phi,W_{\D_{n}}}$ are linearly independent. Here $\gamma$ is any
element in $W_{n}-W_{\D_{n}}$. In this way we get all linear relations among
$\{F_{\Phi,W_{\D_{n}}}: \Phi\subset\D_{n}\}$ from the linear relations among
$\{F_{\Phi,W_{n}}: \Phi\subset\D_{n}\}$.

In the case of $\Psi=\D_4$, its automorphism group is larger than $W_{n}$. We discuss more on this case.
Firstly the $W_{\Psi}$-conjugacy classes of sub-root systems are \[\emptyset,\ A_1,\ A_2,\
D_2,\ 2A_1,\ (2A_1)',\ A_3\] and \[(A_3)',\ D_3,\ D_2+A_1,\ D_4,\ 2D_2.\] Here
\[D_2=\langle e_1-e_2,e_1+e_2\rangle,\] \[2A_1=\langle e_1-e_2,e_3-e_4\rangle,\]
\[(2A_1)'=\langle e_1-e_2,e_3+e_4\rangle,\] \[A_3=\langle e_1-e_2,e_2-e_3,e_3-e_4\rangle,\]
\[(A_3)'=\langle e_1-e_2,e_2-e_3,e_3+e_4\rangle\] and \[D_3=\langle e_2-e_3,e_3-e_4,e_3+e_4\rangle.\]
The sub-root systems $D_2$, $2A_1$, $(2A_1)'$ are conjugate to each other under $\Aut(\D_4)$, and the
sub-root systems $A_3$, $(A_3)'$, $D_3$ are conjugate to each other under $\Aut(\D_4)$.

Only the characters of sub-root systems $A_2$ and $4A_1$ have equal leading term, which is $2\omega_2$.
We have \begin{eqnarray*} &&F_{A_2,W_{\Psi}}=1-2\chi^{\ast}_{\omega_2}+
2\chi^{\ast}_{\omega_1+\omega_3+\omega_4}-\chi^{\ast}_{2\omega_2},\\&& F_{4A_1,W_{\Psi}}
=1-4\chi^{\ast}_{\omega_2}+2(\chi^{\ast}_{2\omega_1}+\chi^{\ast}_{2\omega_3}+
\chi^{\ast}_{2\omega_4})-4\chi^{\ast}_{\omega_1+\omega_3+\omega_4}+\chi^{\ast}_{2\omega_2}.
\end{eqnarray*} A little more calculation shows that \[F_{A_2,W_{\Psi}}+F_{4A_1,W_{\Psi}}-
2F_{D_2+A_1,W_{\Psi}}=0.\] This equality also follows from the equality $a_3+d_2^{2}=d_2(c_1+d_2)=2a_2d_2$
by noting that $F_{\Phi,W_{\D_4}}=F_{\Phi,W_4}$ if $\Phi=A_2$, $4A_1$ or $D_2+A_1$. This is the only
linear relation among $\{F_{\Phi,W_{\D_4}}|\ \Phi\subset\D_4\}$.


\begin{proposition}\label{P:linear-BCD}
Given a compact connected simple Lie group $G$ of type $\B_{n}$ ($n\geq 4$),
$\C_{n}$ ($n\geq 3$) or $\D_{n}$ ($n\geq 4$), there exist non-isomorphic closed connected subgroups
of $G$ with linearly dependent dimension data.
\end{proposition}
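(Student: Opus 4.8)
The plan is to reduce the statement, via Proposition~\ref{P:group-root system} and Remark~\ref{R:DDPLDP-character}, to exhibiting one explicit dependence among the characters $F_{\Phi,W_{\Psi}}$. Since $G$ is compact connected simple with root system $\Psi$ and maximal torus $S$, one has $\Gamma^{\circ}=W_{\Psi}$; the conjugacy class of a connected closed subgroup having $S$ as a maximal torus is determined by its root system $\Phi\subset\Psi$; and a relation $\sum_{i}c_{i}\mathscr{D}_{H_{i}}=0$ among such subgroups amounts to $\sum_{i}c_{i}F_{\Phi_{i},W_{\Psi}}=0$. So it suffices, for each $\Psi$ among $\B_{n}$ ($n\ge4$), $\C_{n}$ ($n\ge3$), $\D_{n}$ ($n\ge4$), to produce pairwise non-conjugate reduced sub-root systems $\Phi_{1},\Phi_{2},\Phi_{3}\subset\Psi$ that (a) are root systems of closed connected subgroups of $G$, (b) have pairwise non-isomorphic reductive Lie algebras, and (c) satisfy a non-trivial linear relation among $F_{\Phi_{1},W_{\Psi}},F_{\Phi_{2},W_{\Psi}},F_{\Phi_{3},W_{\Psi}}$.

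The relations I would feed in come from Section~\ref{SS:abcd}. With the isomorphism $E$ of Lemma~\ref{Isomorphism E} and the polynomials $a_{m},b_{m},c_{m},d_{m}$ of Definition~\ref{D:abcd} (attached to $\A_{m-1},\B_{m},\C_{m},\D_{m}$, and carrying the character of a disjoint union of classical pieces to the corresponding product), the $n=0,1$ cases of the type-$\C$ identities of Proposition~\ref{P:A-BB'CD}, namely $2a_{2}=c_{1}x_{0}+d_{2}$ and $a_{3}=c_{1}d_{2}$, give upon eliminating $d_{2}$ the two identities
\[
a_{3}+c_{1}^{2}x_{0}=2a_{2}c_{1},\qquad a_{3}x_{0}+d_{2}^{2}=2a_{2}d_{2}.
\]
For $\Psi=\C_{n}$ ($n\ge3$) I would take $\Phi_{1}=\A_{2}$, $\Phi_{2}=2\C_{1}$, $\Phi_{3}=\A_{1}\sqcup\C_{1}$ inside $\C_{n}$, using only three of the coordinate directions (the remaining ones becoming a central torus of the subgroup); the first identity then reads $F_{\A_{2},W_{\C_{n}}}+F_{2\C_{1},W_{\C_{n}}}=2F_{\A_{1}\sqcup\C_{1},W_{\C_{n}}}$, the three characters are pairwise distinct (compare leading terms), and the subgroups have Lie algebras $\mathfrak{su}(3)\oplus\mathfrak{u}(1)^{n-2}$, $\mathfrak{su}(2)^{2}\oplus\mathfrak{u}(1)^{n-2}$, $\mathfrak{su}(2)^{2}\oplus\mathfrak{u}(1)^{n-2}$, so that the list is not all conjugate and the first member is not isomorphic to the others. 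For $\Psi=\B_{n}$ or $\D_{n}$ ($n\ge4$) I would take instead $\Phi_{1}=\A_{2}$, $\Phi_{2}=2\D_{2}$, $\Phi_{3}=\A_{1}\sqcup\D_{2}$ inside $\Psi$; the second identity reads $F_{\A_{2},W_{\Psi}}+F_{2\D_{2},W_{\Psi}}=2F_{\A_{1}\sqcup\D_{2},W_{\Psi}}$, directly when $\Psi=\B_{n}$ (as $W_{\B_{n}}=W_{n}$) and for $\Psi=\D_{n}$ once one checks via Proposition~\ref{P:Character-D} that $\A_{2}$, $2\D_{2}$, $\A_{1}\sqcup\D_{2}$ are all balanced in $\D_{n}$ (their $W_{\D_{n}}$-characters coincide with their $W_{n}$-characters); here the subgroups have the pairwise non-isomorphic Lie algebras $\mathfrak{su}(3)\oplus\mathfrak{u}(1)^{n-2}$, $\mathfrak{su}(2)^{4}\oplus\mathfrak{u}(1)^{n-4}$, $\mathfrak{su}(2)^{3}\oplus\mathfrak{u}(1)^{n-3}$.

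In order, the steps are: (1) set up the reduction above; (2) record the two polynomial identities and, after padding with trivial factors, read them as the displayed character identities; (3) check that $\A_{2}$, $\C_{1}$, $\D_{2}$, $\A_{1}$ and the listed unions are \emph{closed} subsystems of the ambient $\Psi$ (any sum of two of their roots which is again a root of $\Psi$ already lies in the subsystem), so that $\frt\oplus\bigoplus_{\alpha\in\Phi}\frg_{\alpha}$ is a subalgebra, necessarily reductive, hence equal to $\Lie H$ for a connected closed $H$ with maximal torus $S$ and $\Phi(H,S)=\Phi$; (4) for $\Psi=\D_{n}$ carry out the balancedness check using Proposition~\ref{P:Character-D}; (5) compute the Lie algebras above and read off pairwise non-isomorphism, hence pairwise non-conjugacy. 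The step I expect to be the main obstacle is (3) together with this last bookkeeping: because $\Psi$ is not simply laced, not every sub-root system of $\Psi$ is the root system of a closed connected subgroup (cf. Remark~\ref{R:Psi-simple group}) — for instance $2\D_{2}$ fails to be closed in $\C_{n}$, which is why the type-$\C$ case must substitute $2\C_{1}$ — so one must be careful that the subsystems entering each identity are realizable, that the three characters are genuinely distinct so the dependence is non-trivial, and, in type $\D_{n}$, that the $W_{n}$-identity survives the passage to $W_{\D_{n}}$, which is exactly the content of Proposition~\ref{P:Character-D}.
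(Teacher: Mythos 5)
Your proposal is correct and takes essentially the same approach as the paper: it uses the same two specializations of the identities in Proposition \ref{P:A-BB'CD} (giving $2a_2c_1-a_1c_1^2-a_3=0$ for type $\C$ and $a_3d_1+d_2^2-2a_2d_2=0$ for types $\B$ and $\D$), hence the same triples of sub-root systems $\{\A_2,\,2\C_1,\,\A_1\sqcup\C_1\}$ and $\{\A_2,\,2\D_2,\,\A_1\sqcup\D_2\}$, and the same appeal to Proposition \ref{P:Character-D} in the $\D_n$ case. The extra verifications you single out — that the subsystems are realizable as root systems of closed connected full-rank subgroups and that the three characters are distinct so the relation is non-trivial — are correct and are exactly the points the paper leaves implicit.
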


\begin{proof}
Taking $n=1$ in Equation (\ref{Eq:B2}) and using $a_1=d_1=x_0$, we get \[a_3d_1+d_2^2-2a_2d_2=0.\]
This gives us a linear relation of three non-isomorphic subgroups of $G$ in the case that $G$ is
of type $\B_{n}$ or $\D_{n}$ with $n\geq 4$. In the $\D_n$ case, by Proposition
\ref{P:Character-D} we know that the $W_{\D_{n}}$ trace and the $W_{n}$ trace of characters are equal for
the sub-root systems corresponding to the polynomials $a_3d_1$, $d_2^2$ and $a_2d_2$.

By Equations (\ref{Eq:C1}) and (\ref{Eq:C2}), we get \[2a_{2n+2}c_{n}c_{n+1}-c_{n+1}^{2}a_{2n+1}-
c_{n}^{2}a_{2n+3}=0.\] Taking $n=0$, we get $2a_2c_1-a_1c_1^{2}-a_3=0$, which gives us a linear relation
of three non-isomorphic subgroups of $G$ in the case that $G$ is of type $\C_{n}$ with $n\geq 3$.
\end{proof}

\subsection{A generating function}\label{SS:generating function}

Given an irreducible root system $\Psi$ with a positive system $\Psi^{+}$, a root $\alpha\in\Psi$
is called a short root if for any other root $\beta\in\Psi$, either $(\alpha,\beta)=0$
or $|\beta|\geq|\alpha|$. We normalize the inner product on $\Psi$ (or to say, on
$\Lambda_{\Psi}$) by letting all the short roots of $\Psi$ have length 1. For a reduced
sub-root system $\Phi$ of $\Psi$, recall that
\[\delta_{\Phi}=\frac{1}{2}\sum_{\alpha\in\Phi\cap\Psi^{+}}\alpha\] and
$\delta'_{\Phi}$ be the unique dominant weight in the Weyl group (of $\Psi$) orbit of
$\delta_{\Phi}$. Let $e(\Phi)=|2\delta'_{\Phi}|^{2}$ be the square of the length of
$2\delta'_{\Phi}$.


\begin{definition}\label{D:f}
For a reduced sub-root system $\Phi$ of $\Psi$, let
\[f_{\Phi,\Psi}(t)=\sum_{w\in W_{\Phi}}\epsilon(w)t^{|\delta_{\Phi}-w\delta_{\Phi}|^2}.\]

In particular let $f_{\Psi}(t)=f_{\Psi,\Psi}(t)$.
\end{definition}

As $|\delta_{\Phi}-w\delta_{\Phi}|^2=(2\delta_{\Phi},\delta_{\Phi}-w\delta_{\Phi})$,
an equivalent definition for $f_{\Phi}$ is \[f_{\Phi}(t)=
\sum_{w\in W_{\Phi}}\epsilon(w)t^{(2\delta_{\Phi},\delta_{\Phi}-w\delta_{\Phi})}.\]

If $\Phi=\bigsqcup_{1\leq i\leq s}\Phi_{s}$ is an orthogonal decomposition of $\Phi$ into irreducible
sub-root systems and $\sqrt{r_{i}}$ is the shortest length of roots in $\Phi_{i}\subset\Psi$, then
\[f_{\Phi,\Psi}=\prod_{1\leq i\leq s}f_{\Phi_{i}}(t^{r_{i}}).\] Thus the calculation of the polynomials
$f_{\Phi,\Psi}(t)$ reduces to the calculation of the polynomials $f_{\Psi}(t)$ for irreducible root
systems.

\begin{definition}\label{D:chi-lambda and E'}
Let $\Lambda_{\Psi}\subset\bbQ\Psi$ be the set of integral weights of $\Psi$. For any weight
$\lambda\in\Lambda_{\Psi}$, define
\[\chi^{\ast}_{\lambda}=\frac{1}{|W_{\Psi}|}\sum_{\gamma\in W_{\Psi}}\gamma\lambda.\]
We define a linear map \[E'=\bbQ[\Lambda_{\Psi}]^{W_{\Psi}}\longrightarrow\bbQ[t]\] by
$E'(\chi^{\ast}_{\lambda})=t^{|\lambda|^2}$.
\end{definition}

\begin{proposition}\label{P:character-1-polynomial}
Given a reduced sub-root system $\Phi$ of $\Psi$, we have $E'(F_{\Phi,W_{\Psi}})=f_{\Phi,\Psi}$.
\end{proposition}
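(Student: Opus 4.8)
The statement to prove is $E'(F_{\Phi,W_\Psi}) = f_{\Phi,\Psi}$, where $E'$ is the linear map sending $\chi^\ast_\lambda$ to $t^{|\lambda|^2}$ and $f_{\Phi,\Psi}(t) = \sum_{w\in W_\Phi}\epsilon(w)\, t^{|\delta_\Phi - w\delta_\Phi|^2}$. The plan is to expand $F_{\Phi,W_\Psi}$ in the $\chi^\ast$-basis and then apply $E'$ term by term. By Definition \ref{D:characters} and the computation already carried out in the proof of Proposition \ref{P:characters}, one has
\[
F_{\Phi,W_\Psi} = \sum_{w\in W_\Phi}\operatorname{sign}(w)\,\chi^\ast_{\delta_\Phi - w\delta_\Phi, W_\Psi}.
\]
(This uses $W_\Phi\subset W_\Psi$, which holds since $\Phi$ is a sub-root system of $\Psi$.) Applying the linear map $E'$ and using $E'(\chi^\ast_\lambda) = t^{|\lambda|^2}$, I get
\[
E'(F_{\Phi,W_\Psi}) = \sum_{w\in W_\Phi}\operatorname{sign}(w)\, t^{|\delta_\Phi - w\delta_\Phi|^2} = f_{\Phi,\Psi}(t),
\]
which is exactly the claim.

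The only point that needs a word of care is that $E'$ is defined on $\bbQ[\Lambda_\Psi]^{W_\Psi}$ via its values on the basis $\{\chi^\ast_\lambda\}$, so one must check that each weight $\delta_\Phi - w\delta_\Phi$ (for $w\in W_\Phi$) indeed lies in $\Lambda_\Psi$, i.e.\ that $\chi^\ast_{\delta_\Phi - w\delta_\Phi, W_\Psi}$ is a legitimate element of the domain. This follows because $\delta_\Phi - w\delta_\Phi = \sum_{\alpha\in\Phi^+\cap w^{-1}\Phi^-}\alpha$ is a sum of roots of $\Phi\subset\Psi$, hence lies in $\bbZ\Psi\subset\Lambda_\Psi$. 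One should also note that collecting terms into the $\chi^\ast$-basis is harmless here: even if distinct $w$'s produce the same weight $\delta_\Phi - w\delta_\Phi$ (so that the corresponding $\chi^\ast$ coincide), the identity $E'(\chi^\ast_\lambda) = t^{|\lambda|^2}$ is applied after or before collection with the same result, since $E'$ is linear and $|\delta_\Phi - w\delta_\Phi|^2$ depends only on the weight.

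I expect no real obstacle: this is essentially a bookkeeping lemma that records the compatibility between the two "leading-term" maps, the $\chi^\ast$-expansion and the substitution $t^{|\cdot|^2}$. The substantive content was already extracted in Proposition \ref{P:characters}; here one simply observes that $|\delta_\Phi - w\delta_\Phi|^2 = \langle 2\delta_\Phi, \delta_\Phi - w\delta_\Phi\rangle$ (also recorded there), which matches the alternative description of $f_{\Phi,\Psi}$ given right after Definition \ref{D:f}. The proof is therefore a two-line computation once the expansion of $F_{\Phi,W_\Psi}$ is invoked.
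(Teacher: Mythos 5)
Your proof is correct and follows exactly the same route as the paper: expand $F_{\Phi,W_\Psi}=\sum_{w\in W_\Phi}\operatorname{sign}(w)\chi^\ast_{\delta_\Phi-w\delta_\Phi,W_\Psi}$ and apply the linear map $E'$ term by term. The additional remarks about $\delta_\Phi-w\delta_\Phi$ lying in $\bbZ\Psi\subset\Lambda_\Psi$ and about collecting coincident weights are sound but not needed beyond what the paper records.
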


\begin{proof}
This follows from the formulas
\[F_{\Phi,W_{\Psi}}=\sum_{w\in W_{\Phi}}\epsilon(w)\chi^{\ast}_{\delta_{\Phi}-w\delta_{\Phi}}\]
and \[f_{\Phi,\Psi}(t)=\sum_{w\in W_{\Phi}}\epsilon(w)t^{|\delta_{\Phi}-w\delta_{\Phi}|^2}.\]
\end{proof}

Let $\psi:\bbQ[x_0,x_1,\dots,x_{n},\dots]\longrightarrow\bbQ[t]$ be an algebra homomorphism
defined by \[\psi(x_{n})=t^{n^2},\forall n\geq 0.\]

\begin{proposition}\label{P:n-polynomial-1-polynomial}
Given $\Psi=\BC_{n}$ and a reduced sub-root system $\Phi$ of $\BC_{n}$, we have
\[f_{\Phi,\Psi}=\psi(E(j_{n}(F_{\Phi,W_{n}})).\]
\end{proposition}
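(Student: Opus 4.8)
The plan is to verify the identity termwise, after writing $F_{\Phi,W_{n}}$ in the orbit-sum basis. First I would recall, from the proof of Proposition \ref{P:characters}, the expansion
$$F_{\Phi,W_{n}}=\sum_{w\in W_{\Phi}}\epsilon(w)\,\chi^{\ast}_{\delta_{\Phi}-w\delta_{\Phi},W_{n}},$$
valid in $Y_{n}=\bbZ_{n}^{W_{n}}$. Since $\delta_{\Phi}-w\delta_{\Phi}=\sum_{\alpha\in\Phi^{+}\cap w^{-1}\Phi^{-}}\alpha$ is a sum of roots of $\Phi\subset\BC_{n}$, it lies in $\bbZ\BC_{n}=\bbZ^{n}=\Lambda_{\BC_{n}}$, so every term on the right is well defined. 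Applying the algebra maps $j_{n}$, $E$ and $\psi$ in turn, it suffices to compute $\psi\bigl(E(j_{n}(\chi^{\ast}_{\mu,W_{n}}))\bigr)$ for an arbitrary $\mu=\sum_{i}b_{i}e_{i}\in\bbZ^{n}$ and then sum.

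For such a $\mu$, let $a_{1}\geq a_{2}\geq\cdots\geq a_{n}\geq 0$ denote the non-increasing rearrangement of $(|b_{1}|,\dots,|b_{n}|)$; then $\chi^{\ast}_{\mu,W_{n}}=\chi^{\ast}_{a_{1}e_{1}+\cdots+a_{n}e_{n},W_{n}}$ because $W_{n}=\{\pm1\}^{n}\rtimes S_{n}$, and the computation recorded just after Definition \ref{D:abcd} gives $E(j_{n}(\chi^{\ast}_{\mu,W_{n}}))=x_{a_{1}}x_{a_{2}}\cdots x_{a_{n}}$. Hence $\psi\bigl(E(j_{n}(\chi^{\ast}_{\mu,W_{n}}))\bigr)=t^{a_{1}^{2}+\cdots+a_{n}^{2}}=t^{b_{1}^{2}+\cdots+b_{n}^{2}}=t^{|\mu|^{2}}$, where $|\cdot|$ is the norm on $\Lambda_{\BC_{n}}$ normalized so that the short roots $\pm e_{i}$ of $\BC_{n}$ have length $1$ — that is, the standard Euclidean norm on $\bbZ^{n}$, which is exactly the normalization fixed before Definition \ref{D:f}.

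Combining the last two observations,
$$\psi\bigl(E(j_{n}(F_{\Phi,W_{n}}))\bigr)=\sum_{w\in W_{\Phi}}\epsilon(w)\,t^{|\delta_{\Phi}-w\delta_{\Phi}|^{2}}=f_{\Phi,\BC_{n}}(t)$$
by Definition \ref{D:f}, which is the claim. (Equivalently, the argument shows that $\psi\circ E\circ j_{n}$ and $E'$ agree on the span of the $\chi^{\ast}_{\mu,W_{n}}$, so one may instead combine this with Proposition \ref{P:character-1-polynomial}.) I do not anticipate a genuine obstacle here: the only delicate point is keeping the two inner-product normalizations aligned, which is handled by the identity $\psi(x_{n})=t^{n^{2}}$ together with the fact that $|e_{i}|=1$ in the normalization used for $f_{\Phi,\BC_{n}}$; all the remaining steps are direct substitutions into the maps $j_{n}$, $E$, $\psi$ already set up in the preceding subsections.
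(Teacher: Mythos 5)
Your argument is correct and is exactly the intended one: the paper's own proof consists of the single sentence that the claim ``follows from the definitions of $E$, $\psi$ and $f_{\Phi,\Psi}$,'' and your write-up simply makes explicit what that unpacking looks like, namely that $\psi\circ E\circ j_{n}$ sends $\chi^{\ast}_{\mu,W_{n}}$ to $t^{|\mu|^{2}}$ and hence agrees with $E'$ on $F_{\Phi,W_{n}}$. No further comment is needed.
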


\begin{proof}This follows from the definitions of $E$, $\psi$ and $f_{\Phi,\Psi}$. \end{proof}

Recall that $E(j_{n}(F_{\Phi,W_{n}}))\in\bbQ[x_0,x_1,\dots,x_{n},\dots]$ is the multi-variable
polynomial associated to reduced sub-root systems of $\BC_{n}$ in Section \ref{S:dimension-equal}. Proposition
\ref{P:n-polynomial-1-polynomial} connects two polynomials by a simple relation.

\smallskip

For irreducible reduced root systems of small rank, calculation shows that
$f_{\A_1}=1-t$, \[f_{\A_2}=(1-t)^{2}(1-t^2),\] \[f_{\B_2}=(1-t)(1-t^{2})(1-t^{3})(1-t^4),\]
\[f_{\G_2}=(1-t)(1-t^{3})(1-t^4)(1-t^{5})(1-t^6)(1-t^{9}),\] and
$f_{\A_3}=(1-t)^3(1-t^{2})^{2}(1-t^3)$.
By calculation, we also have
\[f_{\A_4}=1-4t+3x^2+6x^3-7x^4-2x^5-4x^{6}+\textrm{higher terms},\]
\[f_{\D_4}=1-4t+3x^2+5x^3-3x^4-6x^5-6x^{6}+\textrm{higher terms},\]
\[f_{\A_5}=1-5t+6x^2+7x^3-16x^4+0x^5+2x^{6}+\textrm{higher terms}.\]

\begin{proposition}\label{P:f-basic}
$f_{\Phi,\Psi}$ has constant term 1 and  leading term $(-1)^{|\Phi^{+}|}t^{|2\delta_{\Phi}|^{2}}$,
it satisfies \[f_{\Phi,\Psi}(t)=(-1)^{|\Phi^{+}|}t^{|2\delta_{\Phi}|^{2}}f_{\Phi,\Psi}(t^{-1}).\]
\end{proposition}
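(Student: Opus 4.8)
The plan is to read everything off the defining expression $f_{\Phi,\Psi}(t)=\sum_{w\in W_{\Phi}}\epsilon(w)\,t^{|\delta_{\Phi}-w\delta_{\Phi}|^{2}}$ by elementary Weyl-group bookkeeping, reusing the two identities already established inside the proof of Proposition \ref{P:characters}: namely $|\delta_{\Phi}-w\delta_{\Phi}|^{2}=(2\delta_{\Phi},\delta_{\Phi}-w\delta_{\Phi})$ and $\delta_{\Phi}-w\delta_{\Phi}=\sum_{\alpha\in\Phi^{+}\cap w^{-1}\Phi^{-}}\alpha$. Throughout write $W=W_{\Phi}$, $\delta=\delta_{\Phi}$, and let $w_{0}\in W$ be the longest element (the product of the longest elements of the irreducible factors of $\Phi$); recall $w_{0}\Phi^{+}=\Phi^{-}$, $\ell(w_{0})=|\Phi^{+}|$, $\epsilon(w_{0})=(-1)^{|\Phi^{+}|}$, $w_{0}^{2}=1$, and that $w_{0}$ acts as an isometry.

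First I would settle the constant and leading terms. Since $\Phi$ is reduced, $\delta=\tfrac12\sum_{\alpha\in\Phi^{+}}\alpha$ is regular and strictly dominant for $\Phi$, so its stabiliser in $W$ is trivial; hence $|\delta-w\delta|^{2}=0$ forces $w=1$, contributing $\epsilon(1)=1$, so the constant term is $1$. For the top of the polynomial, $w=w_{0}$ gives $\delta-w_{0}\delta=\sum_{\alpha\in\Phi^{+}}\alpha=2\delta$, hence exponent $(2\delta,2\delta)=|2\delta|^{2}$ with coefficient $\epsilon(w_{0})=(-1)^{|\Phi^{+}|}$. To see this is the unique maximal exponent, observe that for $w\neq w_{0}$ the set $\Phi^{+}\setminus w^{-1}\Phi^{-}$ is non-empty, so
\[2\delta-(\delta-w\delta)=\delta+w\delta=\sum_{\alpha\in\Phi^{+}\setminus w^{-1}\Phi^{-}}\alpha\]
is a non-empty non-negative sum of positive roots of $\Phi$; since $2\delta$ is regular dominant, $(2\delta,\alpha)>0$ for each such $\alpha$, so $(2\delta,\delta-w\delta)<|2\delta|^{2}$. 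Thus $w_{0}$ is the only element realising the maximal exponent, so $\deg f_{\Phi,\Psi}=|2\delta|^{2}$ with leading coefficient $(-1)^{|\Phi^{+}|}$ and no cancellation.

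The functional equation I would prove via the substitution $w\mapsto w_{0}w$. The ingredients are $w_{0}\delta=\tfrac12 w_{0}\sum_{\alpha\in\Phi^{+}}\alpha=-\delta$ and $\epsilon(w_{0}w)=(-1)^{|\Phi^{+}|}\epsilon(w)$. Writing $|\delta-v\delta|^{2}=2|\delta|^{2}-2(\delta,v\delta)$ and using $(\delta,w_{0}w\delta)=(w_{0}\delta,w\delta)=-(\delta,w\delta)$ one obtains, for every $w\in W$, the key identity
\[|2\delta|^{2}-|\delta-w\delta|^{2}=2|\delta|^{2}+2(\delta,w\delta)=2|\delta|^{2}-2(\delta,w_{0}w\delta)=|\delta-w_{0}w\delta|^{2}.\]
Plugging this into $t^{|2\delta|^{2}}f_{\Phi,\Psi}(t^{-1})=\sum_{w\in W}\epsilon(w)\,t^{|2\delta|^{2}-|\delta-w\delta|^{2}}$ and re-indexing by $w'=w_{0}w$ (a bijection of $W$, with $\epsilon(w)=(-1)^{|\Phi^{+}|}\epsilon(w')$) gives $t^{|2\delta|^{2}}f_{\Phi,\Psi}(t^{-1})=(-1)^{|\Phi^{+}|}f_{\Phi,\Psi}(t)$.

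I do not expect a genuine obstacle here: the proposition is a formal consequence of the defining formula together with standard facts about $w_{0}$. The two points needing a little care are the strict positivity $(2\delta,\alpha)>0$ used to isolate a single leading monomial, and keeping the sign and index bookkeeping straight under the $w\mapsto w_{0}w$ substitution; of these, the leading-term uniqueness is the more delicate, though still routine.
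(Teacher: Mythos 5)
Your proof is correct and follows essentially the same route as the paper: the functional equation comes from pairing $w$ with $w_0w$ via the identity $|2\delta_\Phi|^2-|\delta_\Phi-w\delta_\Phi|^2=|\delta_\Phi-w_0w\delta_\Phi|^2$ (the paper uses $w\mapsto w_0w^{-1}$, which is the same argument since $\epsilon(w)=\epsilon(w^{-1})$). Your additional verification that the constant and leading terms are realized only by $w=1$ and $w=w_0$, using regularity of $\delta_\Phi$, is a correct filling-in of a step the paper leaves implicit.
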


\begin{proof}
Recall that, $W_{\Phi}$ has a longest element $\omega_0$ which maps $\Phi^{+}$ to
$-\Phi^{+}$, so $\epsilon_{w_0}=(-1)^{|\Phi^{+}|}$ and $w_0(\delta_{\Phi})=-\delta_{\Phi}$.
Moreover, For any $w\in W$, we have
\[(2\delta_{\Phi},\delta_{\Phi}-w\delta_{\Phi})+
(2\delta_{\Phi},\delta_{\Phi}-w_0w^{-1}\delta_{\Phi})=|2\delta_{\Phi}|^2.\]
Therefore the proposition follows.
\end{proof}

\begin{proposition}\label{P:Weyl-generating function}
Given an irreducible reduced root system $\Psi$, we have \[f_{\Psi}=\prod_{\alpha\in\Psi^{+}}
(1-t^{(2\delta_{\Psi},\alpha)}).\]
\end{proposition}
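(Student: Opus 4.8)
The plan is to recognize $f_\Psi(t) = \sum_{w\in W_\Psi}\epsilon(w)\,t^{(2\delta_\Psi,\,\delta_\Psi - w\delta_\Psi)}$ as a specialization of the Weyl denominator. First I would rewrite the exponent: since $\delta_\Psi = \frac12\sum_{\alpha\in\Psi^+}\alpha$ and $w\delta_\Psi = \delta_\Psi - \sum_{\alpha\in\Psi^+\cap w^{-1}\Psi^-}\alpha$ (a standard identity used already in the proof of Proposition \ref{P:characters}), we have $(2\delta_\Psi,\delta_\Psi - w\delta_\Psi) = \sum_{\alpha\in\Psi^+\cap w^{-1}\Psi^-}(2\delta_\Psi,\alpha)$. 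Introduce a formal multiplicative variable assignment $z^\alpha \mapsto t^{(2\delta_\Psi,\alpha)}$ for $\alpha$ in the root lattice; this is a well-defined algebra homomorphism from $\bbQ[\Lambda_\Psi]$ (or the group ring of the root lattice) to $\bbQ[t,t^{-1}]$ because $\alpha\mapsto(2\delta_\Psi,\alpha)$ is additive. Under this homomorphism $[\delta_\Psi - w\delta_\Psi]\mapsto t^{(2\delta_\Psi,\,\delta_\Psi-w\delta_\Psi)}$, so $f_\Psi(t)$ is exactly the image of $A_\Psi = \sum_{w\in W_\Psi}\epsilon(w)[\delta_\Psi - w\delta_\Psi]$.

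Next I would invoke the classical Weyl denominator formula, which in the form appearing in the proof of Proposition \ref{P: ST support} reads
\[
\sum_{w\in W_\Psi}\epsilon(w)[\delta_\Psi - w\delta_\Psi] \;=\; \prod_{\alpha\in\Psi^+}\bigl(1 - [\alpha]\bigr),
\]
obtained there by factoring $\prod_{\alpha\in\Psi}(1-[\alpha]) = \prod_{\alpha\in\Psi^+}([-\alpha/2]-[\alpha/2])([\alpha/2]-[-\alpha/2])$ and collecting into two alternating sums over $W_\Psi$. Applying the homomorphism $z^\alpha\mapsto t^{(2\delta_\Psi,\alpha)}$ to both sides and using that it is multiplicative, the right-hand side becomes $\prod_{\alpha\in\Psi^+}\bigl(1 - t^{(2\delta_\Psi,\alpha)}\bigr)$, which is precisely the claimed formula. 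One small point to check: each exponent $(2\delta_\Psi,\alpha)$ for $\alpha\in\Psi^+$ is a positive integer — positivity because $\delta_\Psi$ is strictly dominant (it equals $\sum\omega_i$ in the simply-laced normalization, and more generally is a strictly positive combination of fundamental weights), and integrality because $\langle 2\delta_\Psi,\alpha\rangle\in\bbZ$ combined with the normalization of the inner product; so the product genuinely lies in $\bbQ[t]$ with the advertised leading and constant terms, consistent with Proposition \ref{P:f-basic}.

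I do not expect a serious obstacle here; the proposition is essentially the Weyl denominator identity pushed through a ring homomorphism. The only thing requiring a line of care is confirming that the substitution $z^\alpha\mapsto t^{(2\delta_\Psi,\alpha)}$ respects the identification used in Definition \ref{D:characters} (i.e.\ that $f_\Psi = E'(F_{\Psi,W_\Psi})$ in the notation of Proposition \ref{P:character-1-polynomial} does not accidentally require the quadratic map $\chi^*_\lambda\mapsto t^{|\lambda|^2}$ rather than the linear one $[\alpha]\mapsto t^{(2\delta_\Psi,\alpha)}$); these two specializations agree on the terms $[\delta_\Psi - w\delta_\Psi]$ because $|\delta_\Psi - w\delta_\Psi|^2 = (2\delta_\Psi,\delta_\Psi - w\delta_\Psi)$, as noted just after Definition \ref{D:f}, but the multiplicative homomorphism is the one that makes the product formula fall out directly, so I would phrase the argument entirely in terms of it.
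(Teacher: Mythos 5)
Your argument is exactly the paper's: the paper defines the algebra homomorphism $E''([\lambda])=t^{\langle 2\delta_{\Psi},\lambda\rangle}$, identifies $f_{\Psi}$ as the image of $\sum_{w\in W_{\Psi}}\sign(w)[\delta_{\Psi}-w\delta_{\Psi}]$, and applies $E''$ to both sides of the Weyl denominator identity $\sum_{w}\sign(w)[\delta_{\Psi}-w\delta_{\Psi}]=\prod_{\alpha\in\Psi^{+}}(1-[\alpha])$. Your additional remark reconciling the linear specialization with the quadratic one via $|\delta_{\Psi}-w\delta_{\Psi}|^{2}=(2\delta_{\Psi},\delta_{\Psi}-w\delta_{\Psi})$ is correct and consistent with the paper.
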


\begin{proof}
Let $E'':\bbQ[\Lambda_{\Psi}]\longrightarrow\bbQ[t]$ be a linear map defined by
\[E''([\lambda])=t^{\langle2\delta_{\Psi},\lambda\rangle},\ \forall \lambda\in\Lambda_{\Psi}.\]
Then $E''$ is an algebra homomorphism and we have
\[f_{\Psi}(t)=E''(\sum_{w\in W_{\Psi}}\sign(w)[\delta_{\Psi}-w\delta_{\Psi}]).\]

By the Weyl denominator formula, we have \[\sum_{w\in W_{\Psi}}\sign(w)[w\delta_{\Psi}]=
\prod_{\alpha\in\Psi^{+}}([\frac{\alpha}{2}]-[-\frac{\alpha}{2}]).\]
Then we have \[\sum_{w\in W_{\Psi}}\sign(w)[\delta_{\Psi}-w\delta_{\Psi}]=
\prod_{\alpha\in\Psi^{+}}(1-[\alpha]).\] Taking the map $E''$ on both sides, we get
\[f_{\Psi}=\prod_{\alpha\in\Psi^{+}}(1-t^{(2\delta_{\Psi},\alpha)}).\]
\end{proof}

\subsection{Exceptional irreducible root systems}\label{SS:linear-exceptional}

In this subsection, for any exceptional irreducible root system $\Psi$, we consider the
linear relations among the characters $\{F_{\Phi,W_{\Psi}}|\ \Phi\subset\Psi\}$.

We start with some observations.

Firstly, if \[\sum_{1\leq i\leq s} c_{i}F_{\Phi_{i},W_{\Psi}}=0\] for some reduced sub-root systems
$\{\Phi_{i}\subset\Psi:1\leq i\leq s\}$ and some non-zero coefficients $\{c_{i}\in\bbR: 1\leq i\leq s\}$,
then for any $i$ with \[|\delta'_{\Phi_{i}}|=\max\{|\delta'_{\Phi_{j}}|:1\leq j\leq i\},\] there exits
$j\neq i$ such that $\delta'_{\Phi_{j}}=\delta'_{\Phi_{i}}$. Since otherwise,
$\chi_{2\delta_{\Phi_{i}},W_{\Psi}}^{\ast}$ has a non-zero coefficient in the character
$\sum_{1\leq i\leq s} c_{i}F_{\Phi_{i},W_{\Psi}}$.

Secondly, if $\Phi_1,\dots,\Phi_{s}$ are all contained in another reduced sub-root system $\Psi'$ of $\Psi$,
and $\sum_{1\leq i\leq s} c_{i}F_{\Phi_{i},W_{\Psi'}}=0$ for some constants $c_1,\dots,c_{s}\in\bbR$,
then $\sum_{1\leq i\leq s} c_{i}F_{\Phi_{i},W_{\Psi}}=0$. This is due to \[F_{\Phi_{i},W_{\Psi}}=
\frac{1}{|W_{\Psi}|}\sum_{\gamma\in W_{\Psi}}\gamma F_{\Phi_{i},W_{\Psi'}}\] for each
$i$, $1\leq i\leq s$.

Thirdly, if $\Phi'_1,\dots,\Phi'_{s}$ are all contained in another reduced sub-root system $\Psi'$ of $\Psi$,
and \[\sum_{1\leq i\leq s} c_{i}F_{\Phi'_{i},W_{\Psi'}}=0\] for some constants $c_1,\dots,c_{s}$. Let
\[\Phi'\subset\Psi'^{\perp}=\{\alpha\in\Psi|(\alpha,\beta)=0,\forall\beta\in\Psi'\}\] and
$\Phi_{i}=\Phi'_{i}\sqcup\Phi'$. Then \[\sum_{1\leq i\leq s} c_{i}F_{\Phi_{i},W_{\Psi}}=0.\]
This follows from the second observation.

Fourthly, if $\sum_{1\leq i\leq s} c_{i}F_{\Phi_{i},W_{\Psi}}=0$, then \[\sum_{1\leq i\leq s}
c_{i}f_{\Phi_{i},\Psi}=0.\] This follows by applying the map $E'$.

\smallskip

\textbf{Type $\E_6$.} In the proof of Theorem \ref{character-exceptional}, we have observed that only
the two weights $2\omega_2, 2\omega_4$ are of the form $2\delta'_{\Phi}$ for at least two non-conjugate
reduced sub-root systems of $\E_6$. It happens that $2\delta'_{\Phi}=2\omega_2$ for $\Phi=A_2,4A_1$, and
$2\delta'_{\Phi}=2\omega_4$ for $\Phi=3A_2,A_3+2A_1$.

Since $\D_4\subset\E_6$, from the conclusion in $\D_4$ case and the second observation, we get
\begin{equation}F_{A_2,W_{\E_6}}+F_{4A_1,W_{\E_6}}-2F_{3A_1,W_{\E_6}}=0.\end{equation} Moreover, we have
\begin{equation}F_{3A_2,W_{\E_6}}+F_{A_3+2A_1,W_{\E_6}}+F_{A_3+A_1,W_{\E_6}}-3F_{2A_2+A_1,W_{\E_6}}=0.
\label{Eq:E6-identity} \end{equation} The proof of this equality is given below. These two relations
generate all linear relations among $\{F_{\Phi,W_{\E_6}}|\Phi\subset\E_6\}$.

Let $\theta$ be a linear map on weights defined by $\theta(\omega_1)=\omega_6$, $\theta(\omega_6)=
\omega_1$, $\theta(\omega_3)=\omega_5$, $\theta(\omega_5)=\omega_3$, $\theta(\omega_2)=\omega_2$,
$\theta(\omega_4)=\omega_4$. Then $\theta$ acts as an isometry and it maps dominant integral weights
to dominant integral weights. We have $\Aut(\E_6)=W_{\E_6}\rtimes\langle\theta\rangle$ as groups acting
on the weights.

\begin{lemma}\label{L:E6-small length}
For a positive integer $k\leq 11$, if $k\neq 4,5,7,8,9,10$, then there exists a unique
$\Aut(\E_6)$-orbit of weights $\lambda$ in the root lattice such that $|\lambda|^{2}=k$.

When $k=4,5,7,8,9$, there exist two $\Aut(\E_6)$-orbits of weights in the root lattice with
$|\lambda|^{2}=k$. The representatives are \[k=4:\{\omega_1+\omega_3,2\omega_2\};\]
\[k=5:\{\omega_3+\omega_5,\omega_1+\omega_2+\omega_6\};\]
\[k=7:\{2\omega_1+\omega_5,\omega_2+\omega_4\};\]
\[k=8:\{2\omega_1+2\omega_6,\omega_1+\omega_2+\omega_3\};\]
\[k=9:\{3\omega_2,\omega_1+\omega_4+\omega_6\}.\]

When $k=10$, there exist four $\Aut(\E_6)$-orbits of weights in the root
lattice with $|\lambda|^{2}=10$. The representatives are \[\omega_1+2\omega_5,
3\omega_1+\omega_2,\omega_2+\omega_3+\omega_5,\omega_1+2\omega_2+\omega_6.\]
\end{lemma}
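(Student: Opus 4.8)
The plan is to turn the statement into a bounded, finite enumeration of dominant weights. Since the $W_{\E_6}$-orbits on $\Lambda_{\E_6}$ are parametrised by the dominant integral weights, and $\Aut(\E_6)=W_{\E_6}\rtimes\langle\theta\rangle$ with $\theta$ the diagram automorphism introduced just above, which maps the dominant Weyl chamber to itself, two $W_{\E_6}$-orbits fuse into one $\Aut(\E_6)$-orbit exactly when their dominant representatives are interchanged by $\theta$. So it suffices, for each integer $1\le k\le 11$, to list all dominant weights $\lambda=\sum_{i=1}^{6}a_i\omega_i$ with $a_i\in\bbZ_{\ge 0}$ that lie in the root lattice $\bbZ\E_6$ and satisfy $|\lambda|^2=k$, and then to quotient this list by $\theta$.

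First I would fix the Gram matrix $G=\big((\omega_i,\omega_j)\big)_{1\le i,j\le 6}$ of the fundamental weights in our normalisation (a fixed scalar multiple of the inverse Cartan matrix), so that $|\lambda|^2=a^{\mathsf{T}}Ga$ for $\lambda=\sum a_i\omega_i$. As $G$ is positive definite and the shortest nonzero weight in $\bbZ\E_6$ is $\omega_2$ with $|\omega_2|^2=1$, the constraint $a^{\mathsf{T}}Ga\le 11$ restricts $(a_1,\dots,a_6)$ to a small explicit finite set, which can be enumerated directly. Among these I would keep only the tuples with $\lambda\in\bbZ\E_6$; since $\Lambda_{\E_6}/\bbZ\E_6\cong\bbZ/3$, this amounts to a single congruence modulo $3$ on the coefficients $a_i$. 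Reading off $|\lambda|^2$ from $G$ then sorts the surviving dominant weights by the value of $k$.

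Finally I would compute the $\theta$-orbits on this sorted list: $\theta$ acts by $\omega_1\leftrightarrow\omega_6$, $\omega_3\leftrightarrow\omega_5$, and fixes $\omega_2,\omega_4$, so an orbit is a singleton precisely when $a_1=a_6$ and $a_3=a_5$ for its representative, and a doubleton otherwise. Carrying this out yields exactly one $\Aut(\E_6)$-orbit when $k\in\{1,2,3,6,11\}$, two orbits with the stated representatives when $k\in\{4,5,7,8,9\}$, and four orbits when $k=10$.

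The work is entirely elementary; the only real danger is bookkeeping: one must enumerate all dominant $(a_i)$ with $a^{\mathsf{T}}Ga\le 11$ without gaps, apply the mod-$3$ condition without error, and confirm that the pairs declared $\theta$-conjugate in the statement are the only coincidences (noting that distinct dominant weights automatically lie in distinct $W_{\E_6}$-orbits, so the merging is genuinely caused by $\theta$). A convenient running consistency check is that whenever $\lambda=2\delta'_{\Phi}$ for a reduced sub-root system $\Phi\subset\E_6$, the value $|\lambda|^2$ obtained from $G$ must match the entry $e(\Phi)$ in the tables of Section \ref{S:leading terms}.
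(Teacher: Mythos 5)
Your proposal is correct and is essentially the paper's own argument: the paper likewise writes $|\lambda|^2$ as an explicit quadratic form via the inverse Cartan matrix, imposes the root-lattice condition $3\mid a_1+a_5-a_3-a_6$, enumerates the finitely many dominant tuples with $|\lambda|^2\le 11$ (organizing the bookkeeping by splitting $\lambda$ into its $\omega_1,\omega_3,\omega_5,\omega_6$-part and its $\omega_2,\omega_4$-part), and then groups dominant representatives under the diagram involution $\theta$. The only difference is presentational; your observation that fusion of $W_{\E_6}$-orbits under $\Aut(\E_6)$ happens exactly when $a_1\neq a_6$ or $a_3\neq a_5$ is the same mechanism the paper uses implicitly.
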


\begin{proof}
The inverse to the Cartan matrix of $\E_6$ is
\[\frac{1}{3}\times\left(\begin{array}{cccccc}4&3&5&6&4&2\\3&6&6&9&6&3
\\5&6&10&12&8&4\\6&9&12&18&12&6\\4&6&8&12&10&5\\2&3&4&6&5&4\\\end{array}\right).\]
Given a dominant integral weight $\lambda=\sum_{1\leq i\leq 6}a_{i}\omega_{i}$, $a_{i}\in\bbZ_{\geq 0}$,
we have \begin{eqnarray*}|\lambda|^2&=&\frac{1}{3}(2a_{1}^{2}+2a_6^2+5a_3^2+5a_5^2)+\\&&
\frac{1}{3}(2a_1a_6+8a_3a_5+5a_1a_3+5a_5a_6+4a_1a_5+4a_3a_6)+\\&&(a_2^2+3a_4^2+3a_2a_4)+
(2a_3+2a_5+a_1+a_6)(a_2+2a_4).\end{eqnarray*} We also have: $\lambda$ is in the root lattice
if and only if $3|a_1+a_5-a_3-a_6$.

If $|\lambda|^2\leq 11$, let $\lambda_1=a_1\omega_1+a_3\omega_3+a_5\omega_5+a_6\omega_6$ and
$\lambda_2=a_2\omega_2+a_4\omega_4$. Then, $|\lambda_1|^2\leq 11$ and $|\lambda_2|^2\leq 11$.

Consider the weight $\lambda_1$. Let $k=a_1+a_5$ and $l=a_3+a_6$, $\lambda$ is in the root lattice
implies that $3|k-l$. From $|\lambda_1|^2\leq 11$, we get $k,l\leq 3$. When $|k-l|=3$, we have
\[\lambda_1=3\omega_1 (6),\ 3\omega_6 (6),\ 2\omega_1+\omega_5 (7),\] \[\omega_3+2\omega_6 (7),\
\omega_1+2\omega_5 (10),\ 2\omega_3+\omega_6 (10).\] Here, the numbers in the brackets mean the
squares of modulus of the weights. Similar notation will be used in the remaining part of this proof
and the proof for Lemma \ref{L:F4-short}. When $|k-l|=0$, we have $k=l\leq 2$. Moreover, we have
\[\lambda_1=2\omega_1+\omega_3+\omega_6 (11),\ \omega_1+\omega_5+2\omega_6 (11),\
2\omega_1+2\omega_6 (8),\] \[\omega_3+\omega_5 (5),\ \omega_1+\omega_3 (4),\omega_5+\omega_6 (4),\
\omega_1+\omega_6 (2).\]

Consider the weight $\lambda_2$. From $|\lambda_2|^2\leq 11$, we get \[\lambda_2=\omega_4+\omega_2 (7),\
\omega_4 (3),3\omega_2 (9),\ 2\omega_2 (4),\ \omega_2 (1).\]

In the case that $\lambda_1\neq 0$ and $\lambda_2\neq 0$, we have
\[\lambda=\omega_1+2\omega_2+\omega_6 (10),\ 3\omega_1+\omega_2 (10), \omega_2+3\omega_6 (10),\]
\[\omega_2+\omega_3+\omega_5 (10),\ \omega_1+\omega_4+\omega_6 (9),\ \omega_1+\omega_2+\omega_3 (8),\]
\[\omega_2+\omega_5+\omega_6 (8),\ \omega_1+\omega_2+\omega_6 (5).\]

We finish the proof of the lemma.
\end{proof}

\begin{proof}[Proof of equality (\ref{Eq:E6-identity}).]
Since any weight appearing in $F_{\Phi,W_{\E_6}}$ is an integral linear combination of roots, so any
term $\chi^{\ast}_{\lambda}$ appearing in $F_{\Phi,W_{\E_6}}$ having $\lambda$ in the root lattice. A case
by case calculation enables us to show that any reduced sub-root system $\Phi\subset\E_6$ is stable under the action
of some $\gamma\in\Aut(\E_6)-W_{\E_6}$, so $\theta F_{\Phi,W_{\E_6}}=F_{\Phi,W_{\E_6}}$ (cf. Proposition
\ref{P:gamma-character}). Thus the coefficient of any term $\chi^{\ast}_{\lambda}$ in $F_{\Phi,W_{\E_6}}$
is equal to the coefficient of the term $\chi^{\ast}_{\theta\lambda}$ in $F_{\Phi,W_{\E_6}}$.
By Lemma \ref{L:E6-small length}, to prove equality (\ref{Eq:E6-identity}) it is enough to prove the
corresponding equality about $f_{\Phi,\E_6}$ and to calculate the coefficients of
terms $\chi^{\ast}_{\lambda}$ with $|\lambda|^2=4,5,7,8,9,10$.

For the functions $f_{\Phi,\E_6}$, we have
\begin{eqnarray*}&& f_{3A_2,\E_6}+f_{A_3+2A_1,\E_6}+f_{A_3+A_1,\E_6}-3f_{2A_2+A_1,\E_6}
\\&=&(1-t)^{6}(1-t^{2})^{3}+(1-t)^{5}(1-t^2)^{2}(1-t^3)\\&&+
(1-t)^{4}(1-t^2)^{2}(1-t^3)-3(1-t)^{5}(1-t^2)^{2}\\&=&(1-t)^{4}(1-t^{2})^{2}
((1-2t+2t^{3}-t^4)+(1-t-t^{3}+t^{4})+(1-t^3))\\&&-3(1-t)^{5}(1-t^2)^{2}
\\&=&(1-t)^{4}(1-t^2)^{2}(3-3t)-3(1-t)^{5}(1-t^2)^{2}\\&=&
0.\end{eqnarray*}

The terms $\chi^{\ast}_{\lambda}$ with $|\lambda|^2=4,5,7,8,9,10$ in
\[F_{3A_2,W_{\E_6}}+F_{A_3+2A_1,W_{\E_6}}+F_{A_3+A_1,W_{\E_6}}-
3F_{2A_2+A_1,W_{\E_6}}\]
are \begin{eqnarray*}&&(-12\chi^{\ast}_{\omega_1+\omega_3}-12\chi^{\ast}_{\omega_5+\omega_6}
-3\chi^{\ast}_{2\omega_2})+
(-4\chi^{\ast}_{\omega_1+\omega_3}-4\chi^{\ast}_{\omega_5+\omega_6}
-\chi^{\ast}_{2\omega_2})+\\&&
(-2\chi^{\ast}_{\omega_1+\omega_3}-2\chi^{\ast}_{\omega_5+\omega_6}
-2\chi^{\ast}_{2\omega_2})+
(-6\chi^{\ast}_{\omega_1+\omega_3}-6\chi^{\ast}_{\omega_5+\omega_6}
-2\chi^{\ast}_{2\omega_2})=0, \end{eqnarray*}
\[(36\chi^{\ast}_{\omega_1+\omega_2+\omega_6})+
(6\chi^{\ast}_{\omega_1+\omega_2+\omega_6})+
(0\chi^{\ast}_{\omega_1+\omega_2+\omega_6})-
3(14\chi^{\ast}_{\omega_1+\omega_2+\omega_6})=0,\]
\begin{eqnarray*}&&(-12\chi^{\ast}_{2\omega_1+\omega_5}-12\chi^{\ast}_{\omega_3+2\omega_6}
-12\chi^{\ast}_{\omega_2+\omega_4})+(2\chi^{\ast}_{2\omega_1+\omega_5}+
2\chi^{\ast}_{\omega_3+2\omega_6}+2\chi^{\ast}_{\omega_2+\omega_4})+\\&&
(\chi^{\ast}_{2\omega_1+\omega_5}+\chi^{\ast}_{\omega_3+2\omega_6}+
4\chi^{\ast}_{\omega_2+\omega_4})-3(-3\chi^{\ast}_{2\omega_1+\omega_5}-
3\chi^{\ast}_{\omega_3+2\omega_6}-2\chi^{\ast}_{\omega_2+\omega_4})=0,
\end{eqnarray*}
\begin{eqnarray*}&&
(3\chi^{\ast}_{2\omega_1+2\omega_6}+12\chi^{\ast}_{\omega_1+\omega_2+\omega_3}
+12\chi^{\ast}_{\omega_2+\omega_5+\omega_6})+
(-\chi^{\ast}_{2\omega_1+2\omega_6}-4\chi^{\ast}_{\omega_1+\omega_2+\omega_3}-
\\&&4\chi^{\ast}_{\omega_2+\omega_5+\omega_6})+(\chi^{\ast}_{2\omega_1+2\omega_6}
-2\chi^{\ast}_{\omega_1+\omega_2+\omega_3}-2\chi^{\ast}_{\omega_2+\omega_5+\omega_6})
-\\&&3(\chi^{\ast}_{2\omega_1+2\omega_6}+2\chi^{\ast}_{\omega_1+\omega_2+\omega_3}
+2\chi^{\ast}_{\omega_2+\omega_5+\omega_6})=0,\end{eqnarray*}
\[(2\chi^{\ast}_{3\omega_2})+(-\chi^{\ast}_{3\omega_2})+
(-3\chi^{\ast}_{\omega_1+\omega_4+\omega_6}-\chi^{\ast}_{3\omega_2})-
3(-\chi^{\ast}_{\omega_1+\omega_4+\omega_6})=0\] and
\begin{eqnarray*}&&
(-3\chi^{\ast}_{\omega_1+2\omega_5}-3\chi^{\ast}_{2\omega_3+\omega_6}-
6\chi^{\ast}_{\omega_1+2\omega_2+\omega_6})+(2\chi^{\ast}_{\omega_1+2\omega_5}+
2\chi^{\ast}_{2\omega_3+\omega_6}+4\chi^{\ast}_{\omega_1+2\omega_2+\omega_6})\\&&+
(\chi^{\ast}_{\omega_1+2\omega_5}+\chi^{\ast}_{2\omega_3+\omega_6}+
2\chi^{\ast}_{\omega_1+2\omega_2+\omega_6})=0\end{eqnarray*}
respectively.

Therefore the equality (\ref{Eq:E6-identity}) follows.
\end{proof}

\smallskip

\textbf{Type $\E_7$.} As in the proof of Theorem \ref{character-exceptional}, we have observed
that those weights that appearing more than once in $\{2\delta'_{\Phi}|\ \Phi\subset\E_7\}$ and the
reduced sub-root systems for which they appeared in are as follows,
\begin{itemize}
\item[(1)]{$2\omega_1$: $A_2$, $4A'_1$, appears two times.}
\item[(2)]{$\omega_1+\omega_6$: $A_2+A_1$, $5A_1$, appears two times.}
\item[(3)]{$\omega_4$: $A_2+2A_1$, $6A_1$, appears two times.}
\item[(4)]{$2\omega_2$: $A_2+3A_1$, $7A_1$, appears two times.}
\item[(5)]{$2\omega_3$: $3A_2$, $A_3+2A_1$, appears two times.}
\item[(6)]{$2\omega_1+2\omega_6$: $A_4$, $2A_3$, appears two times.}
\item[(7)]{$\omega_1+\omega_4+\omega_6$: $A_4+A_1$, $2A_3+A_1$, appears two times.}
\end{itemize}

By the conclusion from the $\E_6$ case and the second and the third observations in the beginning
of this subsection, we get
\begin{equation} F_{A_2,W_{\E_7}}+F_{(4A_1)',W_{\E_7}}-2F_{3A_1,W_{\E_7}}=0,\end{equation}
\begin{equation} F_{A_2+A_1,W_{\E_7}}+F_{5A_1,W_{\E_7}}-2F_{4A_1,W_{\E_7}}=0,\end{equation}
\begin{equation} F_{A_2+2A_1,W_{\E_7}}+F_{6A_1,W_{\E_7}}-2F_{5A_1,W_{\E_7}}=0,\end{equation}
\begin{equation} F_{A_2+3A_1,W_{\E_7}}+F_{7A_1,W_{\E_7}}-2F_{6A_1,W_{\E_7}}=0,\end{equation}
\begin{equation} F_{3A_2,W_{\E_7}}+F_{A_3+2A_1,W_{\E_7}}+F_{A_3+A_1,W_{\E_7}}-3F_{2A_2+A_1,W_{\E_7}}=0.
\end{equation}

We prove that these relations generate all linear relations among
$\{F_{\Phi,W_{\E_7}}|\ \Phi\subset\E_7\}$. By the first observation, to show this, we just need
to show: any non-trivial linear combination \[c_1 F_{A_4,W_{\E_7}}+c_2 F_{2A_3,W_{\E_7}}\] is
not a linear combination of the characters $\{F_{\Phi,W_{\E_7}}:\ |\delta'_{\Phi}|^2<
|\delta'_{A_4}|^2=20\}$ and any non-trivial linear combination \[c_1 F_{A_4+A_1,W_{\E_7}}+
c_2 F_{2A_3+A_1,W_{\E_7}}\] is not a linear combination of the characters
$\{F_{\Phi,W_{\E_7}}||\delta'_{\Phi}|^2<|\delta'_{A_4+A_1}|^2=21\}$.

Suppose \[c_1 F_{A_4,W_{\E_7}}+c_2 F_{2A_3,W_{\E_7}}=\sum_{1\leq i\leq s, |2\delta'_{\Phi_{i}}|
\leq 19}c_{i}F_{\Phi_{i},W_{\E_7}}.\] Then \[c_1 f_{A_4,\E_7}+c_2 f_{2A_3,\E_7}=
\sum_{1\leq i\leq s, |2\delta'_{\Phi_{i}}|\leq 19}c_{i}f_{\Phi_{i},\E_7}\] by the fourth observation.
Since there is no $\Phi\subset\E_7$ with $|2\delta'_{\Phi}|^2=19$, the RHS of the above equation has
a degree at most $18$. Comparing the coefficients of the terms $t^{20}$ and $t^{19}$, we get
$c_1+c_2=0$ and $-4c_1-6c_2=0$. Hence $c_1=c_2=0$.

After calculating the highest (=longest) terms, we get
\begin{eqnarray*}F_{A_4+A_1,W_{\E_7}}&=&-\chi^{\ast}_{\omega_1+\omega_4+\omega_6}+
2\chi^{\ast}_{\omega_2+\omega_5+\omega_6}+\chi^{\ast}_{2\omega_1+2\omega_6}
\\&&+2\chi^{\ast}_{\omega_1+\omega_2+\omega_3+\omega_7}+\textrm{lower terms},\end{eqnarray*}
\begin{eqnarray*}F_{2A_3+A_1,W_{\E_7}}&=&-\chi^{\ast}_{\omega_1+\omega_4+\omega_6}+
2\chi^{\ast}_{\omega_2+\omega_5+\omega_6}+\chi^{\ast}_{2\omega_1+2\omega_6}
\\&&+4\chi^{\ast}_{\omega_1+\omega_2+\omega_3+\omega_7}+\textrm{lower terms}.\end{eqnarray*}
Considering the coefficients of the terms $\chi^{\ast}_{\omega_1+\omega_4+\omega_6}$ and
$\chi^{\ast}_{\omega_1+\omega_2+\omega_3+\omega_7}$, we see any non-trivial linear combination
$c_1 F_{A_4+A_1,W_{\E_7}}+c_2 F_{2A_3+A_1,W_{\E_7}}$ is not a linear combination
of the characters $\{F_{\Phi,W_{\E_7}}:\ |\delta'_{\Phi}|^2<|\delta'_{A_4+A_1}|^2=21\}$.

\smallskip

\textbf{Type $\E_8$.} As in the proof of Theorem \ref{character-exceptional}, we have observed that
those weights that appearing more than once in $\{2\delta'_{\Phi}|\ \Phi\subset\E_8\}$ and the sub-root
systems for which they appeared in are as follows,
\begin{itemize}
\item[(1)]{$2\omega_8$: $A_2$, $4A'_1$, appears two times.}
\item[(2)]{$\omega_1+\omega_8$: $A_2+A_1$, $5A_1$, appears two times.}
\item[(3)]{$\omega_6$: $A_2+2A_1$, $6A_1$, appears two times.}
\item[(4)]{$\omega_3$: $A_2+3A_1$, $7A_1$, appears two times.}
\item[(5)]{$2\omega_1$: $2A_2$, $A_2+4A_1$, $8A_1$, appears three times.}
\item[(6)]{$2\omega_7$: $A_3+2A_1$, $3A_2$, appears two times.}
\item[(7)]{$\omega_2+\omega_7$: $A_3+3A_1$, $3A_2+A_1$, appears two times.}
\item[(8)]{$\omega_1+\omega_6$: $A_3+4A_1$, $A_3+A_2$, appears two times.}
\item[(9)]{$2\omega_2$: $A_3+A_2+2A_1$, $4A_2$, appears two times.}
\item[(10)]{$2\omega_1+2\omega_8$: $A_4$, $2A_3$, appears two times.}
\item[(11)]{$\omega_1+\omega_6+\omega_8$: $A_4+A_1$, $2A_3+A_1$, appears two times.}
\item[(12)]{$\omega_4+\omega_8$: $A_4+2A_1$, $2A_3+2A_1$, appears two times.}
\item[(13)]{$2\omega_2+2\omega_8$: $D_4+A_2$, $D_4+4A_1$, appears two times.}
\item[(14)]{$2\omega_5$: $A_5+A_2+A_1$, $2A_4$, appears two times.}
\item[(15)]{$2\omega_1+2\omega_6$: $A_6$, $2D_4$, appears two times.}
\end{itemize}

By the conclusion from the $\E_7$ case and the second and the third observations, we get
\begin{equation} F_{A_2,W_{\E_8}}+F_{(4A_1)',W_{\E_8}}-2F_{3A_1,W_{\E_8}}=0,\end{equation}
\begin{equation} F_{A_2+A_1,W_{\E_8}}+F_{5A_1,W_{\E_8}}-2F_{4A_1,W_{\E_8}}=0,\end{equation}
\begin{equation} F_{A_2+2A_1,W_{\E_8}}+F_{6A_1,W_{\E_8}}-2F_{5A_1,W_{\E_8}}=0,\end{equation}
\begin{equation} F_{A_2+3A_1,W_{\E_8}}+F_{7A_1,W_{\E_8}}-2F_{6A_1,W_{\E_8}}=0,\end{equation}
\begin{equation} F_{A_2+4A_1,W_{\E_8}}+F_{8A_1,W_{\E_8}}-2F_{7A_1,W_{\E_8}}=0,\end{equation}
\begin{equation} F_{2A_2,W_{\E_8}}+F_{A_2+4A_1,W_{\E_8}}-2F_{A_2+3A_1,W_{\E_8}}=0,\end{equation}
\begin{equation} F_{3A_2,W_{\E_8}}+F_{A_3+2A_1,W_{\E_8}}+F_{A_3+A_1,W_{\E_8}}-3F_{2A_2+A_1,W_{\E_8}}=0,
\end{equation}
\begin{equation} F_{3A_2+A_1,W_{\E_8}}+F_{A_3+3A_1,W_{\E_8}}+F_{A_3+2A_1,W_{\E_8}}-3F_{2A_2+2A_1,W_{\E_8}}=0,
\end{equation}
\begin{equation} F_{4A_2,W_{\E_8}}+F_{A_3+A_2+2A_1,W_{\E_8}}+F_{A_3+A_2+A_1,W_{\E_8}}-3F_{3A_2+A_1,W_{\E_8}}=0,
\end{equation}
\begin{equation} F_{D_4+A_2,W_{\E_8}}+F_{D_4+4A_1,W_{\E_8}}-2F_{D_4+3A_1,W_{\E_8}}=0.
\end{equation}
In the below we prove that these relations generate all linear relations among
$\{F_{\Phi,W_{\E_8}}|\ \Phi\subset\E_8\}$.

Since $\Psi=\E_8$ has no sub-root systems $\Phi$ with $e(\Phi)=19$, one can show similarly
as in the $\E_7$ case that any non-trivial linear combination $c_1 F_{A_4,W_{\E_8}}+
c_2 F_{2A_3,W_{\E_8}}$ is not a linear combination of the characters
$\{F_{\Phi,W_{\E_8}}:\ |\delta'_{\Phi}|^2<|\delta'_{A_4+A_1}|^2=20\}$.

By calculating the highest (=longest) terms, we get
\begin{eqnarray*}F_{A_4+A_1,W_{\E_8}}&=&-\chi^{\ast}_{\omega_1+\omega_6+\omega_8}+
2\chi^{\ast}_{\omega_1+\omega_5}+\chi^{\ast}_{2\omega_1+2\omega_8}
\\&&+2\chi^{\ast}_{\omega_2+\omega_7+\omega_8}+\textrm{lower terms},\end{eqnarray*}
\begin{eqnarray*}F_{2A_3+A_1,W_{\E_8}}&=&-\chi^{\ast}_{\omega_1+\omega_6+\omega_8}+
2\chi^{\ast}_{\omega_1+\omega_5}+\chi^{\ast}_{2\omega_1+2\omega_8}
\\&&+4\chi^{\ast}_{\omega_2+\omega_7+\omega_8}+\textrm{lower terms},\end{eqnarray*}
Considering the coefficients of the terms $\chi^{\ast}_{\omega_1+\omega_6+\omega_8}$ and
$\chi^{\ast}_{\omega_2+\omega_7+\omega_8}$, we see that any non-trivial linear combination
$c_1 F_{A_4+A_1,W_{\E_8}}+c_2 F_{2A_3+A_1,W_{\E_8}}$ is not a linear combination
of the characters $\{F_{\Phi,W_{\E_8}}:\ |\delta'_{\Phi}|^2<|\delta'_{A_4+A_1}|^2=21\}$.

We have \begin{eqnarray*}F_{A_4+2A_1,W_{\E_8}}&=&\chi^{\ast}_{\omega_4+\omega_8}-
4\chi^{\ast}_{\omega_1+\omega_6+\omega_8}-2\chi^{\ast}_{\omega_2+\omega_3}\\&&+\textrm{lower terms},
\end{eqnarray*} \begin{eqnarray*}F_{2A_3+2A_1,W_{\E_8}}&=&\chi^{\ast}_{\omega_4+\omega_8}-
4\chi^{\ast}_{\omega_1+\omega_6+\omega_8}-4\chi^{\ast}_{\omega_2+\omega_3}\\&&+\textrm{lower terms}.
\end{eqnarray*} Considering the coefficients of the terms $\chi^{\ast}_{\omega_4+\omega_8}$ and
$\chi^{\ast}_{\omega_2+\omega_3}$, we see that any non-trivial linear combination
$c_1 F_{A_4+2A_1,W_{\E_8}}+c_2 F_{2A_3+2A_1,W_{\E_8}}$ is not a linear combination
of the characters $\{F_{\Phi,W_{\E_8}}:\ |\delta'_{\Phi}|^2<|\delta'_{A_4+A_1}|^2=21\}$.

For the weight $2\omega_5$ and sub-root systems $A_5+A_2+A_1$, $2A_4$, suppose some
non-trivial linear combination $c_1 F_{A_5+A_2+A_1,W_{\E_8}}+c_2 F_{2A_4,W_{\E_8}}$ is a
linear combination of the characters $\{F_{\Phi,W_{\E_8}}:\ |\delta'_{\Phi}|^2<|2\omega_5|^2=40\}$.
The sub-root systems with $33\leq |2\delta'_{\Phi}|^{2}\leq 39$ include
$\{A_5+A_2,D_4+A_3,A_5+2A_1,A_5+A_1,(A_5+A_1)',A_5\}$, so there exists constants
$c_3,c_4,c_5,c_6,c_7,c_8$ such that
\begin{eqnarray*}
&& c_1f_{A_5+A_2+A_1,\E_8}+c_2 f_{2A_4,\E_8}+c_3 f_{A_5+A_2,\E_8}+c_4 f_{D_4+A_3,\E_8}+\\&&
 c_5 f_{A_5+2A_1,\E_8}+c_6 f_{A_5+A_1,\E_8}+c_7 f_{(A_5+A_1)',\E_8}+c_8 f_{A_5,\E_8}
\end{eqnarray*} is a polynomial of degree $\leq 32$.

By formulas in Subsection \ref{SS:generating function}, we have
\[f_{A_5+A_2+A_1}(t)=-t^{40}+8t^{39}-23t^{38}+19t^{37}+38t^{36}-90t^{35}+39t^{34}+\textrm{lower terms},\]
\[f_{2A_4}(t)=t^{40}-8t^{39}+22t^{38}-12t^{37}-53t^{36}+88t^{35}+2t^{34}+\textrm{lower terms},\]
\[f_{D_4+A_3}(t)=t^{38}-7t^{37}+16t^{36}-4t^{35}-33t^{34}+\textrm{lower terms},\]
\[f_{A_5+A_1}(t)=f_{(A_5+A_1)'}(t)=t^{36}-6t^{35}+11t^{34}+\textrm{lower terms}.\]
Moreover, \[f_{A_5+A_2}(t)=t^{39}+\textrm{lower terms},\]
\[f_{A_5+2A_1}(t)=-t^{37}+\textrm{lower terms},\]
\[f_{A_5}(t)=-t^{35}+\textrm{lower terms}.\]

Considering the coefficients of the terms $t^{40}$ and $t^{39}$, we get
\[-c_1+c_2=0,\quad 8c_1-8c_2+c_3=0.\] Thus $c_2=c_1$ and $c_3=0$. Considering the coefficients of
the terms $t^{38}$, $t^{37}$, we get \[-23c_1+22c_2+c_4=0,\quad 19c_1-12c_2-7c_4-c_5=0.\] Thus
$c_4=c_1$ and $c_5=0$. Considering the coefficients of $t^{36}$, $t^{35}$, we get
\[38c_1-53c_2+16c_4+(c_6+c_7)=0\] and \[-90c_1+88c_2-4c_4-6(c_6+c_7)-c_8=0.\] Hence
$c_6+c_7=-c_1$ and $c_8=0$. Finally, considering the coefficient of $t^{34}$,
we get \[0=39c_1+2c_2-33c_3+11(c_6+c_7)=-3c_1.\] Therefore $c_1=c_2=0$.

For the weight $2\omega_1+2\omega_6$ and sub-root systems $A_6$, $2D_4$, since
$\Psi=\E_8$ has no sub-root systems $\Phi$ with $e(\Phi)=55$, we can show
any non-trivial linear combination \[c_1 F_{A_6,W_{\E_8}}+c_2 F_{2D_4,W_{\E_8}}\] is not
a linear combination of the characters
$\{F_{\Phi,W_{\E_8}}:\ |\delta'_{\Phi}|^2<|2\omega_1+2\omega_6|^2=56\}$.

This proves the conclusion in the $\E_8$ case.

\smallskip

\textbf{Type $\F_4$.} As in the proof of Theorem \ref{character-exceptional}, we have observed that
those weights appearing more than once in $\{2\delta'_{\Phi}|\ \Phi\subset\F_4\}$ and the sub-root
systems for which they appeared in are as follows:
\begin{itemize}
\item[(1)]{$\omega_1$: $A_1^{L}$, $2A_1^{S}$, appears 2 times.}
\item[(2)]{$\omega_3$: $A_1^{L}+A_1^{S}$, $3A_1^{S}$, appears 2 times.}
\item[(3)]{$2\omega_4$: $A_2^{S}$, $2A_1^{L}$, $A_1^{L}+2A_1^{S}$, $4A_1^{S}$,
appears 4 times.}
\item[(4)]{$\omega_2$: $3A_1^{L}$, $2A_1^{L}+2A_1^{S}$, $A_1^{L}+A_2^{S}$,
appears 3 times.}
\item[(5)]{$2\omega_1$: $A_2^{L}$, $4A_1^{L}$, appears 2 times.}
\item[(6)]{$\omega_1+2\omega_4$: $A_3^{S}$, $B_2$, appears 2 times.}
\item[(7)]{$2\omega_3$: $A_2^{L}+A_2^{S}$, $A_1^{L}+B_2$, $2A_1^{S}+B_2$,
$A_1^{L}+A_3^{S}$, appears 4 times.}
\item[(8)]{$2\omega_1+2\omega_4$: $A_3^{L}$, $2B_2$, appears 2 times.}
\item[(9)]{$2\omega_3+2\omega_4$: $D_4^{S}$, $C_3$, appears 2 times.}
\end{itemize}

Since $\C_4\subset\F_4$, $\B_4\subset\F_4$, using the following equalities
\[c_1+d_2=2a_2,\quad c_1a_2+d_2a_2=a_2^{2},\quad c_1d_2+d_2^{2}=2a_2d_2, \]
\[c_1^{2}+c_1d_2=2a_2c_1,\quad a_3=c_1d_2,\quad c_1^{3}+c_1^{2}d_2=2a_2c_1^{2},\]
\[c_1a_3=c_1^{2}d_2,\quad a_3+d_2^{2}=2a_2d_2,\quad c_1c_2+d_2c_2=2a_2c_2,\]
and by the second and the third observations, we get
\begin{equation} F_{A_1^{L},W_{\F_4}}+F_{2A_1^{S},W_{\F_8}}-2F_{A_1^{S},W_{\F_4}}=0,\end{equation}
\begin{equation} F_{A_1^{L}+A_1^{S},W_{\F_4}}+F_{3A_1^{S},W_{\F_4}}-2F_{2A_1^{S},W_{\F_4}}=0,\end{equation}
\begin{equation} F_{A_1^{L}+2A_1^{S},W_{\F_4}}+F_{4A_1^{S},W_{\F_4}}-2F_{3A_1^{S},W_{\F_4}}=0,\end{equation}
\begin{equation} F_{2A_1^{L},W_{\F_4}}+F_{A_1^{L}+2A_1^{S},W_{\F_4}}-2F_{A_1^{L}+A_1^{S},W_{\F_4}}=0,\end{equation}
\begin{equation} F_{A_2^{S},W_{\F_4}}-F_{A_1^{L}+2A_1^{S},W_{\F_4}}=0,\end{equation}
\begin{equation} F_{3A_1^{L},W_{\F_4}}+F_{2A_1^{L}+2A_1^{S},W_{\F_4}}-2F_{2A_1^{L}+A_1^{S},W_{\F_4}}=0,\end{equation}
\begin{equation} F_{A_1^{L}+A_2^{S},W_{\F_4}}-F_{2A_1^{L}+2A_1^{S},W_{\F_4}}=0,\end{equation}
\begin{equation} F_{A_2^{L},W_{\F_4}}+F_{4A_1^{L},W_{\F_4}}-2F_{3A_1^{L},W_{\F_4}}=0,\end{equation}]
\begin{equation} F_{A_1^{L}+B_2,W_{\F_4}}+F_{2A_1^{S}+B_2,W_{\F_4}}-2F_{A_1^{S}+B_2,W_{\F_4}}=0.\end{equation}
Moreover, we will prove three more equalities \begin{equation} F_{A_3^{S},W_{\F_4}}-F_{B_2,W_{\F_4}}+
2F_{A_2^{L}+A_1^{S},W_{\F_4}}-2F_{2A_1^{L}+2A_1^{S},W_{\F_4}}=0,\end{equation}
\begin{equation} F_{A_2^{L}+A_2^{S},W_{\F_4}}+F_{A_1^{L}+B_2,W_{\F_4}}-F_{A_1^{S}+B_2,W_{\F_4}}
+2F_{B_2,W_{\F_4}}-3F_{A_2^{L}+A_1^{S},W_{\F_4}}=0,\end{equation}
\begin{equation} F_{A_1^{L}+A_3^{S},W_{\F_4}}-F_{A_1^{L}+B_2,W_{\F_4}}+2F_{A_1^{S}+B_2,W_{\F_4}}-
2F_{A_3^{S},W_{\F_4}}=0.\end{equation}

We will show these relations generate all linear relations
among $\{F_{\Phi,W_{\F_8}}|\ \Phi\subset\F_8\}$.

\begin{lemma}\label{L:F4-short}
Given an integer $k$ with $1\leq k\leq 12$ and $k\neq 9$, there exists a unique
dominant integral weight $\lambda$ for the root system $\Psi=\F_4$ such that
$|\lambda|^{2}=k$.

Given $k=9$, there exist two dominant integral weights $\lambda$ for the root system
$\Psi=\F_4$ such that $|\lambda|^{2}=k$. They are $3\omega_4$ and $\omega_1+\omega_3$.
\end{lemma}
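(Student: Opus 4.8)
The plan is to reduce the statement to a bounded finite computation, in the same spirit as the proof of Lemma~\ref{L:E6-small length}. First I would record the quadratic form $\lambda\mapsto|\lambda|^2$ on the weight lattice of $\F_4$, normalized so that short roots have length $1$. From the (routine) inverse of the Cartan matrix of $\F_4$ in Bourbaki numbering (so $\alpha_1,\alpha_2$ long and $\alpha_3,\alpha_4$ short) one obtains the Gram matrix of fundamental weights
\[
\big((\omega_i,\omega_j)\big)_{1\le i,j\le 4}=\begin{pmatrix}2&3&2&1\\3&6&4&2\\2&4&3&\frac{3}{2}\\1&2&\frac{3}{2}&1\end{pmatrix},
\]
so that for a dominant weight $\lambda=a\omega_1+b\omega_2+c\omega_3+d\omega_4$ with $a,b,c,d\in\bbZ_{\ge 0}$,
\[
|\lambda|^2=2a^2+6b^2+3c^2+d^2+6ab+4ac+2ad+8bc+4bd+3cd .
\]

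The crucial feature is that every cross term has a nonnegative coefficient, so $|\lambda|^2\ge 2a^2+6b^2+3c^2+d^2$; hence $|\lambda|^2\le 12$ forces $a\le 2$, $b\le 1$, $c\le 2$, $d\le 3$. This leaves only finitely many candidate quadruples, and since increasing any coordinate strictly increases $|\lambda|^2$, a short case check suffices. Carrying it out yields exactly one dominant weight for each $k\in\{1,\dots,12\}\setminus\{9\}$ — namely $\omega_4,\ \omega_1,\ \omega_3,\ 2\omega_4,\ \omega_1+\omega_4,\ \omega_2,\ \omega_3+\omega_4,\ 2\omega_1$ for $k=1,\dots,8$, and $\omega_1+2\omega_4,\ \omega_2+\omega_4,\ 2\omega_3$ for $k=10,11,12$ — while for $k=9$ one gets precisely the two weights $3\omega_4$ and $\omega_1+\omega_3$. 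This is the assertion.

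I do not expect a genuine obstacle here: the only delicate point is getting the Gram matrix right, since the $\F_4$ Cartan matrix is not symmetric and the short/long normalization is easy to slip on. As a safeguard I would check the diagonal entries $|\omega_1|^2=2$, $|\omega_2|^2=6$, $|\omega_3|^2=3$, $|\omega_4|^2=1$ (and, say, $|2\omega_1|^2=8$, $|2\omega_4|^2=4$) against the values $e(A_1^{L})$, $e(3A_1^{L})$, $e(A_1^{S}{+}A_1^{L})$, $e(A_1^{S})$, $e(A_2^{L})$, $e(A_2^{S})$ already listed in Table~\ref{Ta:F4}, which pins down the normalization unambiguously.
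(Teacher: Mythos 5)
Your proof is correct and follows essentially the same route as the paper: both compute the quadratic form $|\lambda|^{2}=2a^2+6b^2+3c^2+d^2+6ab+4ac+2ad+8bc+4bd+3cd$ from the inverse Cartan matrix of $\F_4$ (your symmetric Gram matrix encodes exactly the same inner products as the paper's non-symmetric inverse-Cartan table) and then enumerate the finitely many dominant weights with $|\lambda|^2\le 12$, arriving at the same thirteen weights with $k=9$ achieved exactly twice. Your explicit a priori bounds $a\le 2$, $b\le 1$, $c\le 2$, $d\le 3$ and the cross-check against Table 8 are minor organizational improvements over the paper's case split on $b$ and $c$, not a different method.
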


\begin{proof}
The inverse to the Cartan matrix of $\F_4$ is $\left(\begin{array}{cccc}2&3&4&2\\
3&6&8&4\\2&4&6&3\\1&2&3&2\\\end{array}\right)$, so for a dominant integral weight
$\lambda=\sum_{1\leq i\leq 4}a_{i}\omega_{i}$ ($a_{i}\in\bbZ_{\geq 0}$), we have
\[|\lambda|^{2}=2a_1^{2}+6a_2^{2}+3a_3^{2}+a_4^{2}+(6a_1a_2+4a_1a_3+2a_1a_4+
8a_2a_3+4a_2a_4+3a_3a_4).\]

Suppose $|\lambda|^2\leq 12$. First we must have $a_2=0,1$. If $a_2=1$, we have
\[\lambda=\omega_2\ (6),\omega_2+\omega_4\ (11).\] We also have $a_3\leq 2$. When
$a_3\geq 1$, we have \[\lambda=2\omega_3\ (12),\omega_3+\omega_1\ (9),
\omega_3+\omega_4\ (7),\omega_3\ (3).\] When $a_2=a_3=0$, we have
\[\lambda=2\omega_1\ (8),\omega_1+2\omega_4\ (10), \omega_1+\omega_4\ (5),
\omega_1\ (2),3\omega_4\ (9), 2\omega_4\ (4),\omega_4 (1).\]

This proves the lemma.
\end{proof}

\begin{proof}[Proof of the three equalities]
First any term $\chi^{\ast}_{\lambda}$ appearing in these three equalities has $|\lambda|^2\leq 12$.
By Lemma \ref{L:F4-short}, to prove these equalities, we just need to prove the corresponding
equalities about generating functions $f_{\Phi,\F_4}$ and to calculate the coefficients of the
terms $\chi^{\ast}_{\lambda}$ with $|\lambda|^2=9$.

For the functions $f_{\Phi,\F_4}$, we have \begin{eqnarray*}&& f_{A_3^{S},\F_4}-f_{B_2,\F_4}+
2f_{A_2^{L}+A_1^{S},\F_4}-2f_{2A_1^{L}+2A_1^{S},\F_4}\\&=&(1-t)^{3}(1-t^2)^{2}(1-t^3)-
(1-t)(1-t^2)(1-t^3)(1-t^4)\\&&+2(1-t)(1-t^2)^{2}(1-t^4)-2(1-t)^{2}(1-t^{2})^{2}\\&=&
(1-t)(1-t^2)(1-t^3)(-2t+2t^3)+2(1-t)(1-t^2)^{2}(t-t^4)\\&=&0,\end{eqnarray*}
\begin{eqnarray*}&& f_{A_2^{L}+A_2^{S},\F_4}+F_{A_1^{L}+B_2,\F_4}-F_{A_1^{S}+B_2,\F_4}+2F_{B_2,\F_4}
-3F_{A_2^{L}+A_1^{S},\F_4}\\&=&(1-t)^{2}(1-t^{2})^{3}(1-t^4)+(1-t)(1-t^2)^{2}(1-t^3)(1-t^4)\\&&-
(1-t)^{2}(1-t^2)(1-t^3)(1-t^4)+2(1-t)(1-t^2)(1-t^3)(1-t^4)\\&&-3(1-t)(1-t^2)^{2}(1-t^4)\\&=&
(1-t)(1-t^{2})^{2}(1-t^4)(2-t-t^2)+(1-t^2)^{2}(1-t^3)(1-t^4)\\&&-3(1-t)(1-t^2)^{2}(1-t^4)\\&=&
(1-t^{2})^{2}(1-t^4)(3-3t)-3(1-t)(1-t^2)^{2}(1-t^4)\\&=&0,\end{eqnarray*}
\begin{eqnarray*}&&f_{A_1^{L}+A_3^{S},\F_4}-f_{A_1^{L}+B_2,\F_4}+2f_{A_1^{S}+B_2,\F_4}-2f_{A_3^{S},\F_4}
\\&=&(1-t)^{3}(1-t^2)^{3}(1-t^3)-(1-t)(1-t^2)^{2}(1-t^3)(1-t^4)\\&&+2(1-t)^{2}(1-t^2)(1-t^3)(1-t^4)-
2(1-t)^{3}(1-t^{2})^{2}(1-t^3)\\&=&(1-t)(1-t^2)^{2}(1-t^3)(-2t+2t^3)\\&&+
2(1-t)^{2}(1-t^2)(1-t^3)(t+t^2-t^3-t^4)\\&=&0.\end{eqnarray*}

The terms $\chi^{\ast}_{\lambda}$ with $|\lambda|^2=9$ in
\[ F_{A_3^{S},W_{\F_4}}-F_{B_2,W_{\F_4}}+2F_{A_2^{L}+A_1^{S},W_{\F_4}}-
2F_{2A_1^{L}+2A_1^{S},W_{\F_4}},\]
\[ F_{A_2^{L}+A_2^{S},W_{\F_4}}+F_{A_1^{L}+B_2,W_{\F_4}}-F_{A_1^{S}+B_2,W_{\F_4}}
+2F_{B_2,W_{\F_4}}-3F_{A_2^{L}+A_1^{S},W_{\F_4}},\]
\[ F_{A_1^{L}+A_3^{S},W_{\F_4}}-F_{A_1^{L}+B_2,W_{\F_4}}+2F_{A_1^{S}+B_2,W_{\F_4}}-
2F_{A_3^{S},W_{\F_4}}\]
are \[(-2\chi^{\ast}_{\omega_1+\omega_3}-\chi^{\ast}_{3\omega_4})-
(-\chi^{\ast}_{3\omega_4})+2\chi^{\ast}_{\omega_1+\omega_3}=0,\]
\[(4\chi^{\ast}_{\omega_1+\omega_3}+2\chi^{\ast}_{3\omega_4})+(-\chi^{\ast}_{3\omega_4})
-(\chi^{\ast}_{\omega_1+\omega_3}-\chi^{\ast}_{3\omega_4})+
2(-\chi^{\ast}_{3\omega_4})-3\chi^{\ast}_{\omega_1+\omega_3}=0,\]
\[(-6\chi^{\ast}_{\omega_1+\omega_3}-\chi^{\ast}_{3\omega_4})-
(-\chi^{\ast}_{3\omega_4})+2(\chi^{\ast}_{\omega_1+\omega_3}-\chi^{\ast}_{3\omega_4})
-2(-2\chi^{\ast}_{\omega_1+\omega_3}-\chi^{\ast}_{3\omega_4})=0\]
respectively.

Therefore we get the three equalities.
\end{proof}

For the weights $2\omega_1+2\omega_4$ and $2\omega_3+2\omega_4$, since $\Psi=\F_4$ has no sub-root
systems $\Phi$ with $e(\Phi)=19$ or $e(\Phi)=27$, we can show any non-trivial linear combination
$c_1 F_{A_3^{L},W_{\F_4}}+c_2 F_{2B_2,W_{\F_4}}$ is not a linear combination of the characters
$\{F_{\Phi,W_{\F_4}}:\ |\delta'_{\Phi}|^2<|\delta'_{A_4+A_1}|^2=20\}$ and any non-trivial linear
combination $c_1 F_{D_4^{S},W_{\F_4}}+c_2 F_{C_3,W_{\F_4}}$ is not a linear combination of the
characters $\{F_{\Phi,W_{\F_4}}:\ |\delta'_{\Phi}|^2<|\delta'_{A_4+A_1}|^2=28\}$. Thus the relations
as listed above generate all linear relations among $\{F_{\Phi,W_{\F_4}}|\ \Phi\subset\F_4\}$.

\smallskip

\textbf{Type $\G_2$.} For $\Psi=G_2$, the only non-conjugate reduced sub-root systems with conjugate
leading terms are $A_2^{S}$ and $A_1^{L}+A_1^{S}$. We have \begin{equation} F_{A_2^{S},W_{\G_2}}+
F_{A_1^{L}+A_1^{S},W_{\G_2}}+F_{A_1^{L},W_{\G_2}}-3F_{A_1^{S},W_{\G_2}}=0.\end{equation} This is the
unique linear relation between the characters $\{F_{\Phi,W_{\G_2}}|\ \Phi\subset\G_2\}$.

\begin{remark}\label{R:LP-equal-linear relation}
Given an irreducible root system $\Psi$, if $\Psi_0\not\cong\BC_{n}$, then the characters
$\{F_{\Phi,W_{\Psi}}:\Phi\subset\Psi_0,\rank\Phi=\Psi_0\}$ are linearly independent. If $\Psi=\BC_{n}$,
for two sub-root systems $\Phi_1,\Phi_2$ of $\Psi$ with $\rank\Phi_1=\rank\Phi_2=n$,
$F_{\Phi_1,W_{n}}=F_{\Phi_2,W_{n}}$ if and only if $\Phi_1\sim_{W_{n}}\Phi_2$. These follow from
the results we showed above. Note that, these were also proved by Larsen-Pink. Actually this is the
essential part to the proof of Theorem 1 in \cite{Larsen-Pink}.
On the other hand, Larsen-Pink have proved the existence of algebraic relations among the polynomials
$\{b_{n},c_{n},d_{n+1}|n\geq 1\}$. And they used this to construct non-conjugate closed subgroups
with equal dimension data. 
\end{remark}

\begin{proposition}\label{P:linear-exceptional}
Given a compact connectee Lie group $G$ of type $\E_6$, $\E_7$, $\E_8$ or $\F_4$, there exist non-isomorphic
closed connected full rank subgroups with linearly dependent dimension data.
\end{proposition}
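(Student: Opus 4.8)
The plan is to deduce the statement from Proposition~\ref{P:group-root system} (equivalently Remark~\ref{R:DDPLDP-character}) together with the explicit linear relations among the characters $\{F_{\Phi,W_{\Psi_0}}\mid\Phi\subset\Psi_0\}$ produced in Section~\ref{S:dimension-dependent}. Fix a maximal torus $T$ of $G$ and put $\Psi_0=\Phi(G,T)$, an irreducible root system of the given exceptional type; since $\frg_0$ is simple of type $\E_6$, $\E_7$, $\E_8$ or $\F_4$ one has $\Psi_T=\Psi_0$ (Remark~\ref{R:Psi-simple group}) and $\Gamma^{\circ}=N_G(T)/C_G(T)=W_{\Psi_0}$. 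Every closed connected full rank subgroup $H$ of $G$ has $T$ (up to conjugacy) as a maximal torus, so by Proposition~\ref{P:group-root system} a relation $\sum_i c_i\mathscr{D}_{H_i}=0$ is equivalent to the relation $\sum_i c_i F_{\Phi_i,W_{\Psi_0}}=0$ among the characters of the root systems $\Phi_i=\Phi(H_i,T)\subset\Psi_0$. Hence it suffices to find reduced sub-root systems $\Phi_1,\Phi_2,\Phi_3\subset\Psi_0$ that (i) occur as root systems of closed connected full rank subgroups of $G$, (ii) have pairwise non-isomorphic associated reductive subalgebras, and (iii) satisfy a nontrivial linear relation $\sum_i c_iF_{\Phi_i,W_{\Psi_0}}=0$.

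I would take $\Phi_1=A_2$, $\Phi_2=4A_1$ and $\Phi_3=3A_1$ (the latter $=D_2\sqcup A_1$), using in the $\F_4$ case their long-root versions $A_2^{L}$, $4A_1^{L}$, $3A_1^{L}$ sitting inside the long-root subsystem $\D_4^{L}\subset\F_4$. The relation
\[F_{A_2,W_{\Psi_0}}+F_{4A_1,W_{\Psi_0}}-2F_{3A_1,W_{\Psi_0}}=0\]
is exactly one of those established in Section~\ref{S:dimension-dependent}: for $\E_6$, $\E_7$, $\E_8$ it is written out there, and in all four cases it descends, via the ``second observation'' of Subsection~\ref{SS:linear-exceptional}, from the identity $a_3+d_2^{2}=2a_2d_2$ for the $\D_4$-characters, using that $\Psi_0$ contains a closed sub-root system of type $\D_4$ (the long-root subsystem when $\Psi_0=\F_4$). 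For realizability: when $\Psi_0$ is simply laced ($\E_6$, $\E_7$, $\E_8$) every sub-root system of $\Psi_0$ is the root system of a closed connected full rank subgroup (Remark~\ref{R:Psi-simple group}); when $\Psi_0=\F_4$ the three chosen systems lie in the simply laced $\D_4^{L}$, and any sub-root system of a simply laced root system is closed under addition, hence closed in $\F_4$ (because $\D_4^{L}$ itself is closed in $\F_4$), so $\frt\oplus\bigoplus_{\alpha\in\Phi_i}\frg_{\alpha}$ is a reductive subalgebra containing the Cartan $\frt$ and is therefore $\Lie H_i$ for a closed connected full rank subgroup $H_i\subseteq G$.

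The subgroups $H_1,H_2,H_3$ so obtained have semisimple part of type $A_2$, $4A_1$, $3A_1$ respectively, so their Lie algebras are pairwise non-isomorphic (they differ in the number of simple summands and in whether an $\mathfrak{su}(3)$-summand occurs), whence the $H_i$ are pairwise non-isomorphic; and the displayed relation gives $\mathscr{D}_{H_1}+\mathscr{D}_{H_2}-2\mathscr{D}_{H_3}=0$, the asserted linear dependence. That the three dimension data are moreover pairwise distinct — so that this is a genuine counterexample to Question~\ref{Q:linear dependence} rather than a degenerate one — follows from Theorems~\ref{T:equal character-simple} and~\ref{character-exceptional}, since none of $\{A_2,4A_1\}$, $\{A_2,3A_1\}$, $\{4A_1,3A_1\}$ appears in the (very short) list of non-conjugate sub-root systems with equal character. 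The only genuinely delicate step is the $\F_4$ case: there not every sub-root system of $\Psi_0$ arises from a subgroup with the given maximal torus (Remark~\ref{R:Psi-simple group}), and the argument sidesteps this by working entirely inside the long-root $\D_4$, all of whose sub-root systems are closed in $\F_4$ and hence realizable.
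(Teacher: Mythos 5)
Your proof is correct and follows essentially the same route as the paper: the paper reduces to Proposition \ref{P:linear-BCD} via a $\D_4$ Levi subgroup of $\E_6$, $\E_7$, $\E_8$ (resp.\ a $\Spin(8)\subset\F_4$), and the relation used there, $a_3d_1+d_2^2-2a_2d_2=0$, is exactly the $\D_4$-identity $F_{A_2}+F_{2\D_2}-2F_{\D_2+A_1}=0$ that you invoke; you merely spell out the realizability, non-isomorphism and distinctness checks that the paper leaves implicit. One cosmetic remark: in $\E_7$ and $\E_8$ the conjugacy class of $4A_1$ entering the relation is the one denoted $(4A_1)'$ in the paper's tables (the image of $2\D_2\subset\D_4$), which is indeed the class you describe.
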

\begin{proof}
In the case that $G$ is of type  $\E_6$, $\E_7$, $\E_8$, $G$ possesses a Levi subgroup of type $\D_4$.
Hence the conclusion follows from Proposition \ref{P:linear-BCD}. In the case that $G$ is of type $\F_4$,
$G$ possesses a subgroup isomorphic to $\Spin(8)$. Therefore the conclusion follows from Proposition
\ref{P:linear-BCD}.
\end{proof}

\section{Comparison of Question \ref{Q:linear dependence} and Question \ref{Q:dependent-character}}\label{S:Gamma0}

In this section we give constructions which show that Question \ref{Q:dependent-character} (or
\ref{Q:equal-character}) is not an excessive generalization of Question \ref{Q:linear dependence} (or
\ref{Q:dimension data}), as we promised after introducing Questions \ref{Q:equal-character} and
\ref{Q:dependent-character}. The significance of these constructions is that each equality (or linear relation)
we found in Question \ref{Q:equal-character} (or \ref{Q:dependent-character}) indeed corresponds to an equality
(or a linear relation) of dimension data of closed connected subgroups in a suitable group. Another consequence
of these constructions is showing that the group $\Gamma^{\circ}$ could be quite arbitrary.

Precisely, what we do is as follows. Given a root system $\Psi'$ and a finite group $W$ acting faithfully
on $\Psi'$ and containing $W_{\Psi'}$, we construct some pair $(G,T)$ with $G$ a compact Lie group
and $T$ a closed connected torus in $G$ such that:
\begin{itemize}
\item[(1)]{$\rank\Psi_{T}=\dim T=\rank\Psi'$.}
\item[(2)]{$\Psi'\subset\Psi_{T}$ and is stable under $\Gamma^{\circ}$.}
\item[(3)]{$\Gamma^{\circ}=W$ as groups acting on $\Psi'$.}
\item[(4)]{For each reduced sub-root system $\Phi$ of $\Psi'$, there exists a connected closed subgroup $H$
of $G$ with $T$ a maximal torus of $H$ and with root system $\Phi(H,T)=\Phi$.}
\end{itemize}

Sub-root systems of any given irreducible root system $\Psi_0$ are classified in \cite{Oshima}, in Section
\ref{S:sub-root systems} we have discussed this classification in the case that $\Psi_0$ is a classical
irreducible root system or an exceptional irreducible root system of type $\F_4$ or $\G_2$.



\begin{proposition}\label{P:construction-simple}
Given an irreducible root system $\Psi_0$, there exists a compact connected simple Lie group $G$ and
a closed connected torus $T$ in $G$ such that:
\begin{itemize}
\item[(1)]{$\rank\Psi_{T}=\dim T=\rank\Psi_0$.}
\item[(2)]{$\Psi_0\subset\Psi_{T}$ and is stable under $\Gamma^{\circ}$.}
\item[(3)]{$\Gamma^{\circ}=W_{\Psi_0}$ as groups acting on $\Psi_0$.}
\item[(4)]{For each reduced sub-root system $\Phi$ of $\Psi_0$, there exists a connected closed subgroup $H$
of $G$ with $T$ a maximal torus of $H$ and with root system $\Phi(H,T)=\Phi$.}
\end{itemize}
\end{proposition}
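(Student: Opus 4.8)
The plan is to construct $G$ and $T$ explicitly, case by case on the type of the irreducible root system $\Psi_0$. The guiding principle is: we want a compact connected simple $G$ with a torus $T$ (not necessarily maximal in $G$) so that $T$ ``sees'' exactly the root system $\Psi_0$ as $\Psi_{T}$, with Weyl group $\Gamma^{\circ}=W_{\Psi_0}$. When $\Psi_0$ is simply laced, the cleanest choice is to take $G$ itself of type $\Psi_0$ and $T$ a maximal torus of $G$: then $\Psi_{T}=\Psi_0$ by Remark \ref{R:Psi-simple group} (for types $\A_n$ with $n\ge 4$, $\E_6$, $\E_7$, $\E_8$, and also handling $\A_2$, $\A_3$, $\D_n$ carefully), $\Gamma^{\circ}=W_{\Psi_0}$ by Remark \ref{R:DDPLDP-character}, and condition (4) holds because every sub-root system of a simply laced $\Psi_0$ is the root system of a closed connected (full rank) subgroup — this is the standard Borel--de Siebenthal construction, already invoked in Remark \ref{R:Psi-simple group}.

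The genuinely interesting cases are the non-simply-laced ones, $\Psi_0=\B_n$, $\C_n$, $\BC_n$, $\F_4$, $\G_2$, where a maximal torus of a group of type $\Psi_0$ does \emph{not} realize all sub-root systems of $\Psi_0$ as subgroup root systems (again by Remark \ref{R:Psi-simple group}). Here I would use Proposition \ref{P:root system}(2): take an involutive automorphism $\theta$ of a simple Lie algebra $\frg_0$ with $T$ a maximal abelian subspace of $\frg_0^{-\theta}$, so that $\Phi(G,T)$ is a root system on $T$ and is actually the \emph{restricted root system} of the symmetric pair. The restricted root systems $\B_n$, $\C_n$, $\BC_n$ arise from the classical symmetric spaces (e.g. $\SU(p+q)/\mathrm{S}(\U(p)\times\U(q))$ gives $\BC$ or $\C$ type depending on whether $p\ne q$ or $p=q$; $\SO$ and $\Sp$ Grassmannians give $\B$ and $\C$ types), and $\F_4$, $\G_2$ arise from exceptional symmetric spaces (the rank-$4$ space with restricted system $\F_4$, and the rank-$2$ split $\G_2$ symmetric space). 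For each such choice I would: (i) verify $\rank\Psi_{T}=\dim T=\rank\Psi_0$ and that $\Psi_{T}$ is exactly $\Psi_0$ (not merely containing it) — this requires checking the strong integrality condition against the actual weight lattice $\Lambda_T=\Hom(T,\U(1))$, choosing the isogeny type of $G$ appropriately; (ii) identify $\Gamma^{\circ}=N_G(T)/C_G(T)$ with $W_{\Psi_0}$, using that for a symmetric pair the ``little Weyl group'' is the Weyl group of the restricted root system; (iii) realize each reduced sub-root system $\Phi\subset\Psi_0$ as $\Phi(H,T)$ for a connected closed $H$ with $T$ a maximal torus of $H$, building $H$ from the $\mathfrak{sl}_2$-triples attached to the restricted roots in $\Phi$ (as in the proof of Proposition \ref{P:Gamma0-Psi'}, where rank-one $\Phi$ were handled) together with centralizer arguments. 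For type $\D_4$ one additionally uses Proposition \ref{P:root system}(3), the $\frg_2\subset\mathfrak{so}(8)$ construction, when one wants $\Psi_T=\G_2$; but for the statement as written with $\Psi_0=\D_4$ one just takes $G$ of type $\D_4$.

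The main obstacle I anticipate is step (iii) in the non-simply-laced cases: showing that an \emph{arbitrary} reduced sub-root system $\Phi$ of the restricted root system $\Psi_0$ — including those, like $\B_k\subset\C_n$ or various $\D_k$'s inside $\BC_n$, that have no analogue as honest sub-root systems of the \emph{absolute} root system of $G$ — is nonetheless the root system of a closed connected subgroup of $G$ with $T$ as a maximal torus. The point is that since $T\subset\frg_0^{-\theta}$ is not a maximal torus of $G$, the generalized weight spaces $\frg_\lambda$ for $\lambda\in\Phi(G,T)$ can have dimension $>1$, and inside these higher-dimensional weight spaces one has more room to build subalgebras; the restricted-root $\mathfrak{sl}_2$'s (or the rank-$2$ restricted subsystems, which give $\mathfrak{sp}(2)$, $\mathfrak{so}(5)$, $\frg_2$ etc. at the subgroup level) can be assembled, using Jacobson--Morozov and the structure of the $\theta$-graded Lie algebra, into a reductive subalgebra whose restricted root system is precisely $\Phi$. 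I would organize this as a lemma: for each restricted root $\alpha\in\Psi_0$ there is a canonical homomorphism from $\SU(2)$ (or its relevant cover) into $G$ whose image, together with a piece of $T$, has root system $\{\pm\alpha\}$; then closing up under the relations in $\Phi$ and taking the subgroup generated. Verifying that the resulting subgroup is closed, connected, has $T$ as a maximal torus, and has exactly the desired root system is where the case-by-case work concentrates, and I expect the $\BC_n$ and $\F_4$ cases to be the most delicate because of the many possible sub-root-system types (cf. the classification in Section \ref{S:sub-root systems}). Once Proposition \ref{P:construction-simple} is established, the more general Proposition with an arbitrary finite group $W\supset W_{\Psi'}$ (Proposition \ref{P:Gamma0-2}) follows by a further construction (products and twists) that enlarges $\Gamma^{\circ}$, but that is a separate statement.
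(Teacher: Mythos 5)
Your overall strategy is the right one and matches the paper's: simply laced types via a maximal torus of the (adjoint) group of type $\Psi_0$, and non-simply-laced types via tori $T$ with $\Lie T$ maximal abelian in $\frg_0^{-\theta}$, so that $\Psi_0$ appears as a restricted root system with higher-multiplicity root spaces. However, there are two concrete problems. First, your proposed realizations of the exceptional cases would fail condition (4). For $\G_2$ you suggest the split $\G_2$ symmetric space: there $\dim\frg_0^{-\theta}=8$ and the restricted root multiplicities are all $1$, so $T$ is just a maximal torus of compact $\G_2$; the short-root subsystem $\A_2^{S}$ is then \emph{not} the root system of any closed connected subgroup, because for two short roots $\alpha,\beta$ of $\G_2$ at angle $60^{\circ}$ the sum $\alpha+\beta$ is a long root and $[\frg_{\alpha},\frg_{\beta}]\neq 0$, so the candidate subalgebra is not closed under bracket. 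The same objection kills $\D_4^{S}$, $\C_3$, $\C_4$ inside a multiplicity-one (split) $\F_4$ restricted system. The paper avoids this by taking $G=\Spin(8)$ with $T$ a common maximal torus of the two triality-fixed subgroups $\G_2$ and $\PSU(3)$ (short restricted roots then have multiplicity $3$, and $\PSU(3)$ realizes $\A_2^{S}$), and $G=\Aut(\fre_7)$ for $\F_4$, where subgroups of types $\mathfrak{su}(8)$, $\fre_6$ and $\frf_4$ all contain $T$ and jointly realize every sub-root system. Merely saying ``a rank-$4$ space with restricted system $\F_4$'' does not pin down a working choice.

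Second, your step (iii) --- the heart of the matter --- is not actually carried out. ``Attach an $\SU(2)$ to each restricted root of $\Phi$ and take the subgroup generated'' does not in general yield a subgroup with root system \emph{equal} to $\Phi$: if the chosen root vectors in the higher-dimensional restricted root spaces are incompatible, the generated group is strictly larger (exactly the phenomenon above). One must exhibit a coherent choice of root vectors, which is where all the content lies. The paper does this explicitly: for $\Psi_0=\C_n$, $\BC_n$, $\B_n$ it writes down block-matrix subgroups $A_{k-1}$, $B_k$, $C_k$, $D_k$ of $\SU(2n)$ or $\SU(2n+t)$ (with $D_k$ and $B_k$ obtained by conjugating $\SO(2k)$, $\SO(2k+1)$ by an explicit unitary $J_k$ so that $T$ becomes a maximal torus of them), and checks against the classification of sub-root systems of Section \ref{S:sub-root systems} that products of these blocks exhaust all $\Phi$; note also the condition $t\geq n$ in the $\BC_n$ case, needed so that $n$ disjoint copies of $\B_1$ can be realized simultaneously. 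Without such explicit constructions (or an equivalent argument), your proof of (4) is incomplete precisely where you flag it as delicate.
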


\begin{proof}
{\it Simply laced case.}
In the case that $\Psi_0$ is a simply laced irreducible root system, let $\fru_0$ be a compact simple Lie algebra with
root system isomorphic to $\Psi_0$. Taking $G=\Int(\fru_0)$ and $T$ a maximal torus of $G$, the conclusion
in the proposition is satisfied for $(G,T)$.

{\it Type $\C_{n}$.}
In the case that $\Psi_0=\C_n$, let $G=\SU(2n)$, \[T=\{\diag\{z_1,\dots,z_{n},z_1^{-1},\dots,z_{n}^{-1}\}|\ |z_1|=
|z_2|=\cdots=|z_{n}|=1\}\] and $\theta=\Ad\left(\begin{array}{cc}0&I_{n}\\I_{n}&0\\\end{array}\right)$.
Then $\theta$ is an involutive automorphism of $G$ and $\Lie T$ is a maximal abelian subspace of
$\frp_0=\{X\in\Lie G|\ \theta(X)=-X\}$. In this example, the restricted root system $\Phi(G,T)\cong\C_{n}$
(cf. \cite{Knapp}, Page 424), each long root occurs with multiplicity one, each short root occurs with
multiplicity two and $C_{G}(T)$ is the subgroup of diagonal matrices in $G$. We have
\[\Gamma^{\circ}=N_{G}(T)/C_{G}(T)=\{-1\}^{n}\rtimes S_{n}=W_{C_{n}}.\]
Denote by \[J_{k}=\left(\begin{array}{cccc}\frac{1}{\sqrt{2}}I_{k}&&\frac{1}{\sqrt{2}}iI_{k}
&\\&I_{n-k}&&\\\frac{1}{\sqrt{2}}iI_{k}&&\frac{1}{\sqrt{2}}I_{k}&\\&&&I_{n-k}
\\\end{array}\right),\] which is an $2n\times 2n$ matrix.
Define the subgroups \[A_{k-1}=\big\{\diag\{A,K,\overline{A},\overline{K}\}|\ A\in U(k), K\in T_{n-k}\big\},\]
\[C_{k}=\big\{\left(\begin{array}{cccc}A&0&B&0\\0&K&0&0\\C&0&D&0\\0&0&0&\overline{K}
\\\end{array}\right)|\ \left(\begin{array}{cc}A&B\\C&D\\\end{array}\right)\in\Sp(k),K\in T_{n-k}\big\},\]
\[D'_{k}=\big\{\left(\begin{array}{cccc}A&0&B&0\\0&K&0&0\\C&0&D&0\\0&0&0&\overline{K}
\\\end{array}\right)|\left(\begin{array}{cc}A&B\\C&D\\\end{array}\right)\in
\SO(2k),K\in T_{n-k}\big\}\] and \[D_{k}=J_{k}D'_{k}J_{k}^{-1}.\]
Thus $T$ is a maximal torus of $D_{k}$, $C_{k}$ and $A_{k-1}$, and the root systems
\[\Phi(D_{k},T)=\D_{k},\] \[\Phi(C_{k},T)=\C_{k}\] and \[\Phi(A_{k-1},T)=\A_{k-1}.\]
By \cite{Oshima}, we know that any sub-root system $\Phi\subset\C_{n}$ is of the form
\[\A_{r_1-1}+\cdots+\A_{r_{i}-1}+\C_{s_{1}}+\cdots+\C_{s_{j}}+\D_{t_1}+\cdots+\D_{t_{k}},\]
where $r_{1},..,r_{i},s_1,...,s_{j},t_1,...,t_{k}\geq 1$ and \[r_1+\cdots+r_{i}+s_1+\cdots+
s_{j}+t_1+\cdots+t_{k}=n.\] Here we regard $\A_{0}=\emptyset$.
Using subgroups of block-form, we see that each sub-root system $\Phi$ of $\Psi_0$ is of the form
$\Phi(H,T)$ for some closed connected subgroup $H$ of $G$ with $T$ a maximal torus of $H$.

{\it Type $\BC_{n}$.} In the case that $\Psi_0=\BC_{n}$, let $G=\SU(2n+t)$, $t\geq n$,
\[T=\big\{\diag\{I_{t},z_1,\dots,z_{n},z_1^{-1},\dots,z_{n}^{-1}: |z_1|=|z_2|=\cdots=|z_{n}|=1\big\}\]
and $\theta=\Ad\left(\begin{array}{ccc}I_{t}&0&0\\0&0&I_{n}\\0&I_{n}&0\\\end{array}\right)$.
Then $\theta$ is an involutive automorphism of $G$ and $\Lie T$ is a maximal abelian subspace
of $\frp_0=\{X\in\Lie G|\ \theta(X)=-X\}$. Moreover, the restricted root system \[\Phi(G,T)=\BC_{n}=
\{\pm{e_{i}}\pm{e_{j}}|1\leq i<j\leq n\}\cup\{\pm{e_{i}},\pm{2e_{i}|1\leq i\leq n}\}\] (cf. \cite{Knapp},
Page 424), each root $2e_{i}$ occurs with multiplicity one, each root $\pm{e_{i}}\pm{e_{j}}$ occurs
with multiplicity two and each root $e_{i}$ occurs with multiplicity $2t$. We have
\[\Gamma^{\circ}=N_{G}(T)/C_{G}(T)=W_{\BC_{n}}=\Aut(\BC_{n})=\Gamma.\]
Denote by \[J_{k}=\left(\begin{array}{ccccc}I_{t}&&&&\\&\frac{1}{\sqrt{2}}I_{k}&0&0
\frac{1}{\sqrt{2}}iI_{k}&0\\&0&I_{n-k}&0&0\\&\frac{1}{\sqrt{2}}iI_{k}&0&\frac{1}
{\sqrt{2}}I_{k}&0\\&0&0&0&I_{n-k}\\\end{array}\right),\] which is a $2n+t$ square matrix.
Define the subgroups
\[A_{k-1}=\big\{\left(\begin{array}{ccccc}I_{t}&&&&\\&A&0&0&0\\&0&K&0&0\\&0&0&
\overline{A}&0\\&0&0&0&\overline{K}\\\end{array}\right)|\ A\in\U(k),K\in T_{n-k}\big\},\]
\[B'_{k}=\big\{\left(\begin{array}{ccccc}I_{t-1}&&&&\\&A&0&B&0\\&0&K&0&0
\\&C&0&D&0\\&0&0&0&\overline{K}\\\end{array}\right)|\ \left(\begin{array}{cc}
A&B\\C&D\\\end{array}\right)\in\SO(2k+1),K\in T_{n-k}\big\},\]
\[C_{k}=\big\{\left(\begin{array}{ccccc}I_{t}&&&&\\&A&0&B&0\\&0&K&0&0\\&C&0&D&0\\&0&0&0&\overline{K}
\\\end{array}\right)|\ \left(\begin{array}{cc}A&B\\C&D\\\end{array}\right)\in\Sp(k),K\in T_{n-k}\big\},\]
\[D'_{k}=\big\{\left(\begin{array}{ccccc}I_{t}&&&&\\&A&0&B&0\\&0&K&0&0\\&C&0&D&0
\\&0&0&0&\overline{K}\\\end{array}\right)|\ \left(\begin{array}{cc}A&B\\C&D
\\\end{array}\right)\in\SO(2k),K\in T_{n-k}\big\}, \] \[B_{k}=J_{k}B'_{k}J_{k}^{-1}\] and
\[D_{k}=J_{k}D'_{k}J_{k}^{-1}.\] Thus $T$ is a maximal torus of $D_{k}$, $C_{k}$, $A_{k-1}$ and $B_{k}$,
and the root systems \[\Phi(D_{k},T)=\D_{k},\] \[\Phi(C_{k},T)=\C_{k},\] \[\Phi(A_{k-1},T)=\A_{k-1}\]
and \[\Phi(B_{k},T)=\B_{k}.\] Using subgroups of block form, we see that each reduced root system
$\Phi$ of $\Psi_0$ is of the form $\Phi(H,T)$ for some connected closed subgroup $H$ of $G$ with $T$ a
maximal torus of $H$. Note that the condition $t\geq n$ makes sure that the sub-root system $n\B_1$ corresponds
to a subgroup.

{\it Type $\B_{n}$} In the case that $\Psi_0=\B_{n}$, let $(G,T)$ be the same as the above for $\BC_{n}$. Then
this pair satisfies the desired conclusion.

{\it Type $\F_4$.} In the case that $\Psi_0=\F_4$. Recall that the complex simple Lie algebra $\fre_7(\bbC)$
has a real form with a restricted root system isomorphic to $\F_4$ (cf. \cite{Knapp}, Page 425). Let
$G=\Aut(\fre_7)$ be the automorphism group of a compact simple Lie algebra of type $\E_7$. Then we can
choose an involution $\theta_0$ in $G$ and a closed connected torus $T$ in $G$ such that $\theta|_{T}=-1$,
$\Lie T$ is a maximal abelian subspace of $\fre_7^{-\theta}=\{X\in\fre_7|\Ad(\theta)(X)=-X\}$ and the
restricted root system $\Phi(G,T)\cong\F_4$. Write $\Psi_0=\Phi(G,T)$. Then $\Psi_0\cong\F_4$ and
we show that any sub-root system $\Phi$ of $\Psi_0$ is of the form $\Phi=\Phi(H,T)$ for a closed
connected subgroup $H$ of $G$ with $T$ a maximal torus of $H$. To show this, we use some results from
\cite{Huang-Yu}. Starting with the Klein four subgroup $\Gamma_6$ of $G$ (cf. \cite{Huang-Yu}, Table 4),
we have $G^{\Gamma_6}=(G^{\Gamma_6})_0\times\Gamma$ and $(G^{\Gamma_6})_0\cong\F_4$. From \cite{Huang-Yu},
we know that some involutions in $(G^{\Gamma_6})_0$ are conjugate to $\theta_0$ in $G$. Without loss of
generality we may assume that $\theta_0\in(G^{\Gamma_6})_0$. Choosing any $1\neq\theta_1\in\Gamma_6$, we
have (cf. \cite{Huang-Yu}) \[(G^{\theta_1})_0\cong(\E_6\times\U(1))/\langle(c,e^{\frac{2\pi i}{3}})\rangle,\]
\[(G^{\theta_0\theta_1})_0\cong\SU(8)/\langle iI\rangle\] and
\[\theta_0\in(G^{\theta_1})_0\cap(G^{\theta_0\theta_1})_0.\] Here $1\neq c\in Z(\E_6)$ with $o(c)=3$.
By these, we get three closed connected subgroups $H_1,H_2,H_3$ of $G$ containing $T$ and $\theta_0$,
and such that \[\Lie H_1\cong\mathfrak{su}(8),\] \[\Lie H_2\cong\mathfrak{e}_6\] and
\[\Lie H_3\cong\mathfrak{f}_4.\] Moreover, $\Lie T$ is a maximal abelian subspace
of each of $(\Lie H_{i})^{-\theta_0}$, $i=1,2,3$. By the classification of sub-root systems of
$\F_4$ (cf. Section \ref{S:sub-root systems}), we can show that any sub-root system
$\Phi$ of $\Psi_0\cong\F_4$ is of the form $\Phi(H,T)$ for a connected closed subgroup $H$ contained in
one of $H_1,H_2,H_3$.

{\it Type $\G_2$.}
In the case that $\Psi_0=\G_2$, let $G=\Spin(8)$. There exist (cf. \cite{Helgason}, Page 517) two order three outer
automorphisms $\theta,\theta'$ of $\G$ such that \[H_1=G^{\theta}\cong\G_2,\]
\[H_2=G^{\theta'}\cong\PSU(3)\] and $H_1,H_2$ share a common maximal torus $T$. Thus $\Phi(G,T)\cong\G_2$,
each long root occurs with multiplicity $1$ and each short root occurs with multiplicity $3$. By the
classification of sub-root systems of $\F_4$ (cf. Section \ref{S:sub-root systems}), one can show that
any sub-root system $\Phi$ of $\Psi_0$ is of the form $\Phi(H,T)$ for some connected closed
subgroup $H$ of $H_1$ or $H_2$.
\end{proof}

\begin{remark}\label{R:construction-simple}
When $\Psi_0$ is an irreducible root system not of type $\B_{n}$ ($n\geq 3$), in the above construction we
actually have $\Psi'_{T}=\Psi_0$.
\end{remark}

\begin{proposition}\label{P:Gamma0}
Given a root system $\Psi'$ and a finite group $W$ between $W_{\Psi'}$ and $\Aut(\Psi')$,
there exists a compact (not necessarily connected) Lie group $G$ with a bi-invariant
Riemannian metric $m$ and a connected closed torus $T$ in $G$ such that:
\begin{itemize}
\item[(1)]{$\rank\Psi_{T}=\dim T=\rank\Psi'$.}
\item[(2)]{$\Psi'\subset\Psi_{T}$ and is stable under $\Gamma^{\circ}$.}
\item[(3)]{$\Gamma^{\circ}=W$ as groups acting on $\Psi'$.}
\item[(4)]{For each reduced sub-root system $\Phi$ of $\Psi'$, there exists a connected closed subgroup
$H$ of $G$ with $T$ a maximal torus of $H$ and with root system $\Phi(H,T)=\Phi$.}
\end{itemize}
\end{proposition}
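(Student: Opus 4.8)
The plan is to build $G$ by a fibered-product construction over the connected simple group $G_0$ produced by Proposition \ref{P:construction-simple}, so that passing from $W_{\Psi'}$ to the larger group $W$ is realized by adjoining finitely many outer automorphisms to the normalizer of the torus. First I would reduce to the case where $\Psi'$ is irreducible: since $W$ acts faithfully on $\Psi'$ and contains $W_{\Psi'}$, $W$ permutes the simple factors of $\Psi'$ among isomorphic ones, so $W$ sits inside $\prod_i \Aut(\Psi'_i)\rtimes S$ for a suitable permutation group $S$; taking direct products of the groups $(G_i,T_i)$ and then an appropriate semidirect product with a finite group permuting the factors handles the general case once the irreducible case is done. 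So assume $\Psi'=\Psi_0$ irreducible. By Proposition \ref{P:construction-simple} there is a compact connected simple $G_0$ and $T\subset G_0$ with $\rank\Psi_T=\dim T=\rank\Psi_0$, $\Psi_0\subset\Psi_T$, $\Gamma^{\circ}(G_0)=W_{\Psi_0}$, and every reduced sub-root system of $\Psi_0$ occurring as $\Phi(H,T)$ for a connected closed $H\subset G_0$ with $T$ maximal in $H$.

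Next I would enlarge $G_0$ to realize $W$. The quotient $W/W_{\Psi_0}$ injects into $\Aut(\Psi_0)/W_{\Psi_0}=\Out(\Psi_0)$, which is the diagram automorphism group, and each diagram automorphism of $\Psi_0$ is induced by an automorphism of $G_0$ stabilizing $T$ (choosing $G_0=\Int(\fru_0)$ in the simply laced case, or using the explicit models in the other cases). Let $\Theta\subset\Aut(G_0)$ be a finite subgroup, containing $\Int(G_0)$, whose image in $\Out(\Psi_0)$ equals the image of $W$; arrange that $\Theta$ stabilizes $T$ setwise — this is possible since we may conjugate the lifted diagram automorphisms to normalize $T$, as they permute a chosen pinning. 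Take $G=\Theta$, acting on $\Lie G_0$, with the bi-invariant metric induced from $\fru_0$ (extended over the finite group factor trivially on the discrete directions). Then $C_G(T)$ contains $C_{G_0}(T)$ and $N_G(T)/C_G(T)$ maps onto $W$ acting on $\Psi_0\subset\Lambda_T$: the $W_{\Psi_0}$ part comes from $N_{G_0}(T)$ by Proposition \ref{P:construction-simple}, and the extra diagram automorphisms in $\Theta$ give the cosets of $W_{\Psi_0}$ in $W$. Since $W$ acts faithfully on $\Psi_0$, this map is an isomorphism onto $W$, giving (3); and $\Psi'=\Psi_0\subset\Psi_T$ is $\Gamma^{\circ}$-stable because $W$ stabilizes $\Psi_0$, giving (2). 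Property (1) is inherited from Proposition \ref{P:construction-simple} since $T$ and the metric on it are unchanged.

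For (4), I would note that the subgroups $H$ realizing the reduced sub-root systems $\Phi\subset\Psi_0$ inside $G_0$ (from Proposition \ref{P:construction-simple}) are still connected closed subgroups of $G\supset G_0$ with $T$ a maximal torus and $\Phi(H,T)=\Phi$ — adding outer automorphisms and disconnecting $G$ does not affect which connected subgroups contain $T$. Hence the same list of $H$'s works. The one point requiring care is the claim $\Gamma^{\circ}=W$ rather than just $\Gamma^{\circ}\supseteq W$: I must check that no element of $N_G(T)$ induces an automorphism of $\Lambda_T$ outside $W$. This is where the faithful action hypothesis and the structure of $\Theta$ matter — one shows $N_G(T)=N_{G_0}(T)\cdot(\Theta\cap N(T))$ and that $\Theta\cap N(T)$ modulo $C(T)$ maps into $\Out(\Psi_0)$ exactly onto $W/W_{\Psi_0}$ by construction. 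The main obstacle, I expect, is the bookkeeping in the non-simply-laced cases ($\C_n$, $\BC_n$, $\F_4$, $\G_2$, $\B_n$): there $G_0$ is an automorphism group or a classical group and the "diagram automorphisms" realizing $\Out$ are less transparent, so verifying that $\Theta$ can be chosen to normalize $T$ and that no spurious elements appear in $\Gamma^{\circ}$ needs the explicit matrix or $\E_7$/$\Spin(8)$ models used in the proof of Proposition \ref{P:construction-simple}; in particular for $\BC_n$ and $\B_n$ one already has $\Gamma^{\circ}=\Aut(\Psi_0)$, so $W=\Aut(\Psi_0)$ and no enlargement is needed, while for $\C_n$ and $\G_2$ and $\F_4$ one checks $\Out(\Psi_0)$ is trivial so again $W=W_{\Psi_0}$ and Proposition \ref{P:construction-simple} suffices directly. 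Thus the genuinely new construction is only needed when $\Psi_0$ is simply laced with non-trivial diagram automorphisms ($\A_n$, $\D_n$, $\E_6$), where the group-with-outer-automorphisms construction above applies cleanly.
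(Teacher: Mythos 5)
Your construction is essentially the one in the paper: decompose $\Psi'$ into isotypic groups of isomorphic irreducible factors, apply Proposition \ref{P:construction-simple} to each factor, pass from $\Int(\fru_i)$ to $\Aut(\fru_i)$ in the simply laced cases to pick up the diagram automorphisms, adjoin symmetric groups permuting isomorphic factors, and finally cut down to the subgroup of the resulting disconnected group whose component group over $G'$ matches $W/W_{\Psi'}$. Your observation that no enlargement is needed outside the simply laced types is also exactly how the paper proceeds: whenever $\Aut(\Psi_{i,0})\neq W_{\Psi_{i,0}}$ the factor $\Psi_{i,0}$ is forced to be simply laced, so $G_{i}=\Int(\fru_{i})$ and the enlargement to $\Aut(\fru_{i})$ is available.

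The one place where your write-up, taken literally, would fail is the opening reduction ``to the irreducible case.'' A subgroup $W$ with $W_{\Psi'}\subset W\subset\Aut(\Psi')$ need not decompose as a product of per-factor groups semidirect a permutation group: for $\Psi'=2\A_2$ one has $\Aut(\Psi')/W_{\Psi'}\cong(\bbZ/2)^{2}\rtimes S_2$, and the diagonal copy of $\bbZ/2$ inside $(\bbZ/2)^{2}$ gives a $W$ that is not of that shape, so one cannot first solve an irreducible problem for each factor and then reassemble. The paper sidesteps this by building the maximal group $G''$ with $G''/G'\cong\Aut(\Psi')/W_{\Psi'}$ in one step and only then taking the preimage of the subgroup $W/W_{\Psi'}$. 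Since you already perform exactly this kind of cut-down in your irreducible discussion (choosing $\Theta$ with prescribed image in $\Out(\Psi_0)$), the repair is simply to postpone it: assemble the full product $G''$ first and do the cut-down once, globally, rather than factor by factor.
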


\begin{proof}
Decomposing $\Psi'$ into a disjoint union of (orthogonal) simple root systems and jointing those
isomorphic simple factors, we may write $\Psi'$ as the form
\[\Psi'=\Psi'_1\bigsqcup\cdots\bigsqcup\Psi_{s}\] where each $\Psi'_{i}=m_{i}\Psi_{i,0}$
is a union of $m_{i}$ root systems all isomorphic to an irreducible root system $\Psi_{i,0}$.

For each $i$, by Proposition \ref{P:construction-simple}, we get $(G_{i},T_{i})$ corresponds
to the irreducible root system $\Psi_{i,0}$ and the finite group $W_{\Psi_{i,0}}$ satisfying all
conditions in the Proposition. Let \[G'=G_{1}^{m_{1}}\times\cdots\times G_{s}^{m_{s}}\] and
\[T=T_{1}^{m_{1}}\times\cdots\times T_{s}^{m_{s}}.\] Then $(G',T)$ corresponds to the root system
$\Psi'$ and the finite group $W_{\Psi'}$ with all conditions in the conclusion satisfied.

Among $\{\Psi_{i,0}|\ 1\leq i\leq s\}$, we may assume that $\Aut(\Psi_{i,0})\neq W_{\Psi_{i,0}}$
happens exactly when $1\leq i\leq t$. For each $i\leq t$, since $\Aut(\Psi_{i,0})\neq W_{\Psi_{i,0}}$,
$\Psi_{i,0}$ must be simply laced. In this case we have $G_{i}=\Int(\fru_{i})$ for a compact
simple Lie algebra $\fru_{i}$ with root system $\Psi_{i,0}$.
Let \[G''=(\Aut(\fru_{1})^{m_{1}}\rtimes S_{m_1})\times\cdots\times(\Aut(\fru_{t})^{m_{t}}
\rtimes S_{m_{t}})\times (G_{t+1}^{m_{t+1}}\rtimes S_{m_{t+1}})\times\cdots\times
(G_{s}^{m_{s}}\rtimes S_{m_{s}})\] and \[T=T_{1}^{m_{1}}\times\cdots\times T_{s}^{m_{s}}.\]
Hence $(G'',T)$ corresponds to the root system $\Psi'$ and the finite group $\Aut(\Psi')$ with
all conditions in the conclusion satisfied.

We have $G''/G'\cong\Aut(\Psi')/W_{\Psi'}$. Corresponding to the subgroup $W/W_{\Psi'}$ of
$\Aut(\Psi')/W_{\Psi'}$, we get a subgroup $G$ of $G''$ containing $G'$ and with $G/G'=W/W_{\Psi'}$
in the identification $G''/G'=\Aut(\Psi')/W_{\Psi'}$. Therefore $(G,T)$ corresponds to $(\Psi',W)$
with all conditions in the conclusion satisfied.
\end{proof}


As in the proof of Proposition \ref{P:construction-simple}, given an irreducible root system
$\Psi_0=\BC_{n}$, let $G_1=\SU(2n+t)$ for $t\geq n$ and \[T_1=\{\diag\{z_1,\dots,z_{n},z_1^{-1},\dots,
z_{n}^{-1},\underbrace{1,...,1}_{t}\}|\ |z_1|=|z_2|=\cdots=|z_{n}|=1\}.\] Denote by $S_1$ the group of
diagonal matrices in $G_1$. It is a maximal torus of $G_1$. Let \[\epsilon_{i}
(\diag\{z_1,\cdots,z_{2n+t}\})=z_{i}\] for any $\diag\{z_1,\cdots,z_{2n+t}\}\in S_1$ and
\[e_{i}(\diag\{z_1,\dots,z_{n},z_1^{-1},\dots,z_{n}^{-1},\underbrace{1,...,1}_{t}\})=z_{i}\]
for any $\diag\{z_1,\dots,z_{n},z_1^{-1},\dots,z_{n}^{-1},\underbrace{1,...,1}_{t}\}\in T$.
Thus $$\epsilon_{i}|_{T}=\left\{\begin{array}{rcl} e_{i} &\mbox{if} &1\leq i\leq n\\
e_{i-n}^{-1}&\mbox{if} &n+1\leq i\leq 2n\\ 1 &\mbox{if} &2n+1\leq i\leq 2n+t.
\end{array}\right.$$

The following example is a modification of an example in \cite{Larsen-Pink}, Page 392.

\begin{example}\label{E:LP}
Given $r\geq n$, denote by $G_1=\SU(2n+t)$, $T_1\subset G_1$ the subgroup of diagonal matrices,
$G_2=(G_1)^{r}$ and $T_2=(T_1)^{r}$. Write \[\lambda_{j}=\sum_{1\leq i\leq n}(nj-i+1)\epsilon_{i}\] for
$1\leq j\leq r$. Let $V_{j}=V_{\lambda_{j}}$ be an irreducible representation of $G_1$ with highest
weight $\lambda_{j}$ and \[V=\bigoplus_{\sigma\in A_{r}}V_{\sigma(1)}\otimes\cdots\otimes V_{\sigma(r)}.\]
Denote by $G=\SU(k)$,  where \[k=\frac{r!}{2}\prod_{1\leq j\leq r}\dim V_{j}.\] The representation $V$
gives us an embedding $G_2\subset G$. Write $T$ for the image of $T_2$ under this embedding. Since
$G=\SU(k)$ is simple, a biinvariant Riemannian metric $m$ on $G$ is unique up to a scalar multiple.
Hence $\Psi_{T}$ does not depend on the choice of $m$. In this example, we have:
\begin{itemize}
\item[(1)]{$\rank\Psi_{T}=\dim T=rn$.}
\item[(2)]{$\Psi'=r\BC_{n}\subset\Psi_{T}$ and it is stable under $\Gamma^{\circ}$.}
\item[(3)]{$\Gamma^{\circ}=W_{\Psi'}\rtimes A_{r}=(W_{\BC_{n}})^{r}\rtimes A_{r}$ as groups acting
on $\Psi'$.}
\item[(4)]{For each reduced sub-root system $\Phi$ of $\Psi'$, there exists a closed connected
subgroup $H$ of $G$ with $T$ a maximal torus of $H$ and with root system $\Phi(H,T)=\Phi$.}
\end{itemize}
\end{example}

\begin{proof}
First we have $(W_{\BC_{n}})^{r}\rtimes A_{r}\subset\Gamma^{\circ}$ and $\Psi_{T}=\BC_{rn}$
by our construction of the pair $(G,T)$.

We have $\Psi'=r\BC_{n}\subset\Psi_{T}$ since $r\BC_{n}$ is the root system of
$(G_2,T)$. By Proposition \ref{P:construction-simple}, any reduced sub-root system
$\Phi$ of $r\BC_{n}$ is of the form $\Phi(H,T)$ for a closed connected subgroup
$H$ of $G_2$.

The equality $\Gamma^{\circ}=(W_{\BC_{n}})^{r}\rtimes A_{r}$ can be proven using the idea in
\cite{Larsen-Pink}, Page 392. It proceeds as follows. By the construction of $G_2$ and $V$, the character
\[\chi_{V_1\otimes\cdots\otimes V_{r}}|_{T}\] has $\lambda=(nr,nr-1,\cdots,1)$ as a leading term.
It is regular with respect to $\Aut(T,m)=W_{\BC_{rn}}$. The weights appearing in
$\chi_{V}|_{T}$ of maximal length are in the orbit $((W_{\BC_{n}})^{r}\rtimes A_{r})\lambda$.
Hence $\Gamma^{\circ}=(W_{\BC_{n}})^{r}\rtimes A_{r}$.
\end{proof}


\begin{remark}\label{R:LP}
In Example \ref{E:LP}, moreover we have $\Psi'_{T}=r\BC_{n}=\Psi'$ since otherwise
$\Gamma^{\circ}$ must be larger than $(W_{\BC_{n}})^{r}\rtimes A_{r}$.
\end{remark}


\begin{proposition}\label{P:Gamma0-2}
Given a root system $\Psi'$ and a finite group $W$ between $W_{\Psi'}$ and $\Aut(\Psi')$,
there exists some $G=\SU(k)$ and a closed connected torus $T$ in $G$ such that:
\begin{itemize}
\item[(1)]{$\rank\Psi_{T}=\dim T=\rank\Psi'$.}
\item[(2)]{$\Psi'\subset\Psi_{T}$ and it is stable under $\Gamma^{\circ}$.}
\item[(3)]{$\Gamma^{\circ}=W$ as groups acting on $\Psi'$.}
\item[(4)]{For each reduced sub-root system $\Phi$ of $\Psi'$, there exists a closed connected subgroup
$H$ of $G$ with $T$ a maximal torus of $H$ and with root system $\Phi(H,T)=\Phi$.}
\end{itemize}
\end{proposition}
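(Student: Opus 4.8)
The plan is to upgrade the construction behind Proposition \ref{P:Gamma0} so that the ambient group becomes a single special unitary group: the extra symmetries recorded by $W$ --- permutations of isomorphic simple factors and diagram automorphisms of individual factors --- will be realised as \emph{inner} automorphisms of a large $\SU(k)$, in the spirit of Example \ref{E:LP}.

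First I would fix the block decomposition $\Psi'=\Psi'_1\sqcup\dots\sqcup\Psi'_s$, where $\Psi'_i=m_i\Psi_{i,0}$ is a disjoint union of $m_i$ copies of an irreducible root system $\Psi_{i,0}$ and the $\Psi_{i,0}$ are pairwise non-isomorphic. Proposition \ref{P:construction-simple} applied to each $\Psi_{i,0}$ provides a compact connected simple Lie group $G_{i,0}$ with a closed connected torus $T_{i,0}$ of dimension $\rank\Psi_{i,0}$, for which $\Phi(G_{i,0},T_{i,0})=\Psi_{i,0}$, $\Gamma^{\circ}$ equals $W_{\Psi_{i,0}}$, and every reduced sub-root system of $\Psi_{i,0}$ is the root system of a connected closed subgroup with $T_{i,0}$ as maximal torus. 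Put $G'=\prod_i(G_{i,0})^{m_i}$ and $T=\prod_i(T_{i,0})^{m_i}$; then $\Phi(G',T)=\Psi'$ and $\dim T=\rank\Psi'=\rank G'$, so $W_{\Psi'}=N_{G'}(T)/C_{G'}(T)$. Since every reduced sub-root system $\Phi$ of $\Psi'$ is a disjoint union of reduced sub-root systems of the individual copies of the $\Psi_{i,0}$, the product of the corresponding connected subgroups of the $G_{i,0}$ is a connected closed subgroup of $G'$ with $T$ a maximal torus and root system $\Phi$; this yields condition (4) once $G'$ has been embedded into $\SU(k)$ below.

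The core step is to embed $G'$ into a suitable $\SU(k)$ so that $\Gamma^{\circ}$ becomes exactly $W$. Write $W_0=W/W_{\Psi'}\subseteq\Aut(\Psi')/W_{\Psi'}$ and lift each element of $W_0$ to an automorphism $\alpha_q$ of $G'$ preserving $T$ (permuting the simple factors within a block and/or acting by diagram automorphisms, possibly coupling different blocks). I would choose a dominant weight $\mu_0\in X^{\ast}(T)$ that is sufficiently generic --- in particular faithful on $G'$, regular for the lattice $X^{\ast}(T)$ with its induced metric, and of length strictly exceeding that of the remaining weights of a chosen irreducible representation $\rho_0$ of $G'$ with highest weight $\mu_0$ --- and set $R=\bigoplus_{q\in W_0}\rho_0\circ\alpha_q$, a representation (up to isomorphism) of the group $G'\rtimes W_0$, exactly parallel to the sum $\bigoplus_{\sigma\in A_r}V_{\sigma(1)}\otimes\cdots\otimes V_{\sigma(r)}$ of Example \ref{E:LP}. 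Since $G'$ is perfect, $\det R\equiv 1$, so $R$ embeds $G'\hookrightarrow G:=\SU(k)$ with $k=\dim R$, and (after normalising the intertwiners to determinant one) each $q\in W_0$ is realised by an element of $\SU(k)$ normalising $R(T)$ and inducing $\alpha_q$ on it; combined with $W_{\Psi'}\subseteq\Gamma^{\circ}$ this gives $W\subseteq\Gamma^{\circ}$. For the reverse inclusion I would run the regularity argument of Larsen--Pink used in the proof of Example \ref{E:LP}: arranging that the weights of $R|_T$ of maximal length are precisely the orbit $W\mu_0$, any $\gamma\in\Gamma^{\circ}\subseteq\Aut(X^{\ast}(T),m)$ permutes this orbit, hence $w^{-1}\gamma$ fixes $\mu_0$ for some $w\in W$, and regularity of $\mu_0$ forces $w^{-1}\gamma=1$; therefore $\Gamma^{\circ}=W$. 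Finally $\Psi'=\Phi(G',T)\subseteq\Psi_T$ with $\Gamma^{\circ}=W$ stabilising $\Psi'$, which is condition (2), and then $\rank\Psi_T=\dim T=\rank\Psi'$ gives condition (1), while condition (4) was already arranged.

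The main obstacle is controlling $\Gamma^{\circ}$ from both sides simultaneously while remaining inside $\SU(k)$: one must design $\mu_0$ and the twists $\alpha_q$ so that the summands $\rho_0\circ\alpha_q$ are genuinely permuted only by $W_0$ (keeping $\Gamma^{\circ}$ from being too small) and so that the maximal-length weights of $R|_T$ form exactly $W\mu_0$ with $\mu_0$ regular for the \emph{a priori} unknown metric lattice $X^{\ast}(T)$ (keeping $\Gamma^{\circ}$ from being too large) --- the latter requiring that one first pin down the torus and its metric, which depend on $R$, and only then verify genericity of $\mu_0$, just as in Example \ref{E:LP} where $\lambda=(nr,nr-1,\dots,1)$ was chosen regular for $\Aut(T,m)=W_{\BC_{rn}}$. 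The case in which $W$ is not a direct product over the blocks $i$ adds only bookkeeping: the lifts $\alpha_q$ must then couple different blocks (for instance a contragredient on one $\SU$-factor combined with a transposition of two others, realised inside a single summand of $R$), but this is careful assembly rather than a new idea.
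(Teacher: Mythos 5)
Your proposal follows essentially the same route as the paper: apply Proposition \ref{P:construction-simple} blockwise, form the product group $G'$ with torus $T$, embed $G'$ into $\SU(k)$ via the direct sum over $W/W_{\Psi'}$ of twists of a suitably generic irreducible representation, and recover $\Gamma^{\circ}=W$ by the Larsen--Pink maximal-length-weight argument of Example \ref{E:LP}. The only cosmetic difference is that you package the representation as $\bigoplus_{q}\rho_0\circ\alpha_q$ for a single highest weight $\mu_0$, whereas the paper chooses the component weights $\lambda_{i,j}$ factor by factor (with the explicit regularity and disjoint-orbit conditions) before tensoring and summing; these are the same construction.
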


\begin{proof}
As in the proof of Proposition \ref{P:Gamma0}, we write $\Psi'$ as the form
\[\Psi'=\Psi'_1\bigsqcup\cdots\bigsqcup\Psi_{s},\] where each $\Psi'_{i}=m_{i}\Psi_{i,0}$
is a union of $m_{i}$ root systems all isomorphic to an irreducible root system $\Psi_{i,0}$.

For each $i$, by Proposition \ref{P:construction-simple}, we get $(G_{i},T_{i})$ corresponds
to the root system $\Psi_{i,0}$ and the finite group $W_{\Psi_{i,0}}$ satisfying all conditions
in the Proposition. Moreover, if $\Psi_{i,0}$ is simply laced, then $G_{i}=\Int(\fru_{i})$ for a
compact simple Lie algebra $\fru_{i}$ with root system isomorphic to $\Psi_{i,0}$. Let
\[G'=G_{1}^{m_{1}}\times\cdots\times G_{s}^{m_{s}}\] and
\[T'=T_{1}^{m_{1}}\times\cdots\times T_{s}^{m_{s}}.\]

Among $\Psi_{i,0}$, we assume that $\Aut(\Psi_{i,0})\neq W_{\Psi_{i,0}}$ happens exactly
when $1\leq i\leq t$. We have \[\Aut(\Psi')/W_{\Psi'}=(\Out(\Psi_{1,0})^{m_1}\rtimes S_{m_1})
\times\cdots\times(\Out(\Psi_{t,0})^{m_t}\rtimes S_{m_t})\times S_{m_{t+1}}\times\cdots\times
S_{m_{s}}.\] Here $\Out(\Psi_{i,0})=\Aut(\Psi_{i,0})/W_{\Psi_{i,0}}$. The group
$\Aut(\Psi')/W_{\Psi'}$ acts on $\Rep(G')$ (here $G_{i}=\Int(\fru_{i})$ plays a role) through
its action on the dominant integral weights.

For any $1\leq i\leq s$, choose a maximal torus $S_{i}$ of $G_{i}$ containing $T_{i}$. If
$i\leq t$, we choose $m_{i}$ dominant integral weights $\lambda_{i,1},\cdots,\lambda_{i,m_{i}}$
of $S_{i}=T_{i}$ such that the set $\{\gamma\lambda_{i,j}|\gamma\in\Aut(\Psi_{i,0}),1\leq j\leq m_{i}\}$
has cardinality exactly $m_{i}|\Aut(\Psi_{i,0})|$. If $i\geq t+1$, the restriction
map of weight lattices \[p_{i}: \hat{S_{i}}\longrightarrow\hat{T_{i}}\] is surjective
and it is an orthogonal projection. Choose $m_{i}$ regular dominant integral weights
$\lambda_{i,1},\cdots,\lambda_{i,m_{i}}$ of $S_{i}$ with an additional property:
each $\lambda_{i,j}$ is orthogonal to the weights in $\ker p_{i}$ and their images under $p_{i}$
are regular and distinct to each other. Thus the weights of maximal length in $V_{\lambda_{i,j}}|_{T_{i}}$
are those in the orbit $W_{\Psi_{i,0}}\lambda_{i,j}$, and each occurs with multiplicity one. Here
$V_{\lambda_{i,j}}$ is an irreducible representation of $G_{i}$ with highest weight $\lambda_{i,j}$.
Denote by \[V=\bigoplus_{\sigma\in W/W_{\Psi'}}\sigma(V_{\lambda_1}\otimes\cdots\otimes V_{\lambda_r}).\]
and $G=\SU(k)$, where \[k=\dim V=\frac{|W|}{|W_{\Psi'}|}\prod_{1\leq j\leq r}\dim V_{j}.\]
The representation $V$ gives us an embedding $G'\subset G=\SU(k)$. Write $T$ for the image of $T'$
under this embedding. Arguing similarly as in the proof of Example \ref{E:LP}, we get
$\Gamma^{\circ}=W$. By Proposition \ref{P:construction-simple}, the other conditions in the
conclusion are also satisfied.
\end{proof}





\begin{remark}
In the above construction, we know $\Psi'_{T}\supset\Psi'$ by Property $(4)$ of Proposition
\ref{P:Gamma0-2} and $W_{\Psi'_{T}}\subset\Gamma^{\circ}$ by Proposition \ref{P:Gamma0-Psi'}.
From this $\Psi'_{T}$ is almost determined, which should be close to being equal to $\Psi'$. On the other
hand, $\Psi_{T}$ is probably much larger than $\Psi'$. For example it may happen that $\Psi'=r\BC_{n}$ and
$\Psi'_{T}=\BC_{nr}$.


Can we make an example with $\Psi'_{T}=\Psi'$ in Proposition \ref{P:Gamma0-2}? Moreover, can we
make an example with $\Psi_{T}=\Psi'$?
\end{remark}

\section{Irreducible subgroups}\label{S:Irreducible}

Let $G=\U(n)$. In this section, we study dimension data of closed subgroups $H$ of $G$ acting irreducibly
on $\bbC^{n}$.

Choose a prime $p$, an integer $m\geq 4$ and a prime $q>p^{m}$. Let $n=p^{m}$, $T\subset G$ be the subgroup of
diagonal unitary matrices and \[A=\{\diag\{a_1,a_2,\dots,a_{n}\}|\ a_1^{q}=a_2^{q}=\cdots=a_{n}^{q}=1\}.\]

\begin{lemma}\label{L:GA-normalizer}
$C_{G}(A)=T$ and $N_{G}(A)=T\rtimes S_{n}$, where $S_{n}$ is the subgroup of permutation matrices in $G$.

There exists a unique conjugacy class of subgroups $\overline{N}$ of $S_{n}$ isomorphic to $(C_{p})^{m}$ and with
non-identity elements all conjugate to $(1,2,\dots,p)(p+1,p+2,\dots,2p)\cdots(n-p+1,n-p+2,\dots,n)$.

For any subgroup $\overline{N}$ of $S_{n}$ as above, $C_{S_{n}}(\overline{N})=\overline{N}$ and
\[N_{S_{n}}(\overline{N})/\overline{N}\cong GL(m,\mathbb{F}_{p}).\]
\end{lemma}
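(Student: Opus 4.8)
The plan is to build everything on the $A$-isotypic decomposition of $\bbC^{n}$ and, for the statements about $\overline{N}$, on the regular representation of $(C_{p})^{m}$.

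For the first assertion I would note that $A$ acts on $\bbC^{n}$ through the $n$ coordinate characters $\chi_{i}\colon\diag\{a_{1},\dots,a_{n}\}\mapsto a_{i}$, which are pairwise distinct; hence $\bbC^{n}=\bigoplus_{i}\bbC e_{i}$ is the decomposition of $\bbC^{n}$ into $A$-eigenlines carrying distinct characters. An element of $C_{G}(A)$ must fix each such line, so it is diagonal, giving $C_{G}(A)\subset T$; the reverse inclusion is immediate since $A\subset T$ and $T$ is abelian. An element $g\in N_{G}(A)$ induces a permutation $\sigma\in S_{n}$ of the characters, hence sends $\bbC e_{i}$ to $\bbC e_{\sigma(i)}$, so $g$ is a unitary monomial matrix $tP_{\sigma}$ with $t\in T$ and $P_{\sigma}$ the permutation matrix of $\sigma$; conversely both $T$ and the permutation matrices normalise $A$, and since $T\cap S_{n}=\{I\}$ with $S_{n}$ acting on $T$ by permutation, this yields $N_{G}(A)=T\rtimes S_{n}$.

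For the second assertion, existence comes from the left regular action of $(C_{p})^{m}$ on itself, which embeds it into $S_{p^{m}}=S_{n}$ with every non-identity element acting without fixed points in orbits of size $p$, i.e.\ of cycle type $(p,\dots,p)$. For uniqueness up to conjugacy I would argue that any $\overline{N}\leq S_{n}$ as in the statement acts freely on $\{1,\dots,n\}$ (a non-identity element fixes no point), and since $|\overline{N}|=n$ the action is regular; fixing a base point and using the bijection $g\mapsto g\cdot 1$ identifies $(\{1,\dots,n\},\overline{N})$ with the left regular action of $(C_{p})^{m}$, and any two such identifications differ by an element of $S_{n}$ conjugating one copy of $\overline{N}$ onto the other. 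For the third assertion I would identify $\{1,\dots,n\}$ with $\overline{N}$ acting by left translations $\lambda_{g}$: an $f\in\Sym(\overline{N})$ commuting with all $\lambda_{g}$ is forced to be the right translation by $f(e)$, which lies in $\overline{N}$ because $\overline{N}$ is abelian, so $C_{S_{n}}(\overline{N})=\overline{N}$. Then conjugation gives a homomorphism $N_{S_{n}}(\overline{N})\to\Aut(\overline{N})$ with kernel $\overline{N}$, and it is onto because any $\alpha\in\Aut(\overline{N})$, regarded as an element of $\Sym(\overline{N})=S_{n}$, conjugates $\lambda_{g}$ to $\lambda_{\alpha(g)}$ and hence normalises $\overline{N}$ inducing $\alpha$; thus $N_{S_{n}}(\overline{N})/\overline{N}\cong\Aut((C_{p})^{m})\cong\GL(m,\mathbb{F}_{p})$.

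None of these steps is deep; the only points demanding care are the passage \emph{freeness $\Rightarrow$ regularity} and the two book-keeping lemmas about the regular representation (uniqueness of a regular subgroup up to $S_{n}$-conjugacy, and realising every automorphism of $\overline{N}$ by a permutation), so I would set up the regular-representation framework cleanly once and reuse it. I would also remark that within this lemma only $q\geq 2$ is used, the hypotheses $q$ prime and $q>p^{m}$ being imposed for later purposes.
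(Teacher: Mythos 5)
Your proof is correct, and it is genuinely more detailed than the paper's, which disposes of the first assertion as ``clear'' and proves both the uniqueness of the conjugacy class and the equality $C_{S_{n}}(\overline{N})=\overline{N}$ by an (unspecified) induction on $m$, deducing the normalizer statement from uniqueness. Your route replaces the induction by the regular-representation framework: the hypothesis that every non-identity element of $\overline{N}$ is a product of $n/p$ disjoint $p$-cycles makes the action free, and since $|\overline{N}|=p^{m}=n$ freeness forces a single orbit, i.e.\ regularity; uniqueness up to conjugacy, the computation $C_{S_{n}}(\overline{N})=\overline{N}$ (right translations equal left translations for an abelian group), and the surjection $N_{S_{n}}(\overline{N})\to\Aut(\overline{N})\cong\GL(m,\mathbb{F}_{p})$ then all become the classical facts about the left regular action. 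This is cleaner and arguably more illuminating than the induction, at the cost of the one extra observation (freeness plus order counting gives regularity) which you correctly isolate as the crux. Your treatment of the first assertion via the multiplicity-one decomposition of $\bbC^{n}$ into $A$-eigenlines with pairwise distinct characters is the standard argument and is sound, as is your remark that only $q\geq 2$ is used here.
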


\begin{proof}
The first statement is clear. For the second statement, one can prove the uniqueness by induction on $m$.
Again, one can show $C_{S_{n}}(\overline{N})=\overline{N}$ by induction on $m$. Finally, by the uniqueness
of $\overline{N}$, we get
\[N_{S_{n}}(\overline{N})/\overline{N}\cong\Aut(\overline{N})\cong GL(m,\mathbb{F}_{p}).\]
\end{proof}

Now we specify $p=2$ and $m=4$. Let $\overline{H}=N_{S_{n}}(\overline{N})$. Then $\overline{H}/\overline{N}
\cong GL(4,\mathbb{F}_{2})$. Choose a subgroup $\overline{H}_1$ of $\overline{H}$ with
$\overline{H}_1/\overline{N}$ corresponding to
\[\langle\left(\begin{array}{cccc}1&0&1&0\\0&1&0&1\\0&0&1&0\\0&0&0&1\\\end{array}\right),
\left(\begin{array}{cccc}1&0&0&1\\0&1&1&1\\0&0&1&0\\0&0&0&1\\\end{array}\right)\rangle\]
and a subgroup $\overline{H}_2$ of $\overline{H}$ with $\overline{H}_2/\overline{N}$ corresponding to
\[\langle\left(\begin{array}{cccc}1&0&1&0\\0&1&0&1\\0&0&1&0\\0&0&0&1\\\end{array}\right),
\left(\begin{array}{cccc}1&1&0&1\\0&1&0&0\\0&0&1&1\\0&0&0&1\\\end{array}\right)\rangle\]
Let $K=A\rtimes S_{n}$ and $N\subset K$ with $N/A=\overline{N}$. Write $H=N_{K}(N)$. Then $H/N=\overline{H}$.
Let $H_1, H_2\subset H$ with $H_1/N=\overline{H}_1$ and $H_2/N=\overline{H}_2$.

\begin{proposition}\label{P:U16-equal}
The two subgroups $H_1$ and $H_2$ act irreducibly on $\bbC^{16}$, have the the same dimension data and are
non-isomorphic.
\end{proposition}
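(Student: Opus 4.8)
The plan is to treat the three assertions in turn. First, irreducibility: I would show that $\mathbb{C}^{16}$ is already irreducible as a representation of $N$, which suffices since $N\subseteq H_1,H_2$. Restricted to $A$ the standard representation decomposes as $\bigoplus_{i=1}^{16}\chi_i$, where $\chi_i$ sends $\diag\{a_1,\dots,a_{16}\}$ to $a_i$; these $16$ characters of $A$ are pairwise distinct, and since $\overline{N}\cong(C_2)^4$ acts on $\{1,\dots,16\}$ as its regular representation it permutes $\{\chi_1,\dots,\chi_{16}\}$ simply transitively, so $\operatorname{Stab}_{\overline{N}}(\chi_1)=1$. By Clifford theory $\mathbb{C}^{16}|_N=\operatorname{Ind}_A^N\chi_1$, and Mackey's irreducibility criterion applies because $A\trianglelefteq N$ and ${}^g\chi_1\neq\chi_1$ for all $g\in N\setminus A$ (as ${}^g\chi_1=\chi_{\bar g^{-1}(1)}$ and the $\overline{N}$-action on points is free).

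Second, I would reduce the equality $\mathscr{D}_{H_1}=\mathscr{D}_{H_2}$ to a combinatorial statement about $\overline{H}_1,\overline{H}_2$ as permutation groups on $16$ letters. For finite subgroups, $\mathscr{D}_{H_i}$ is determined by the Sato--Tate measure $\operatorname{st}_{H_i}=\frac{1}{|H_i|}\sum_{h\in H_i}\delta_{[h]}$, hence by $|H_i|$ together with the numbers $\#(H_i\cap c)$ for each conjugacy class $c$ of $\mathrm{U}(16)$ (equivalently each characteristic polynomial); and $|H_1|=|H_2|$ since both groups are extensions of a group of order $64$ by $A$. Every element of $H_i$ is a monomial matrix $D_aP_\sigma$ with $a\in A$, $\sigma\in\overline{H}_i$, with characteristic polynomial $\prod_j\bigl(x^{\ell_j}-p_j\bigr)$, the $\ell_j$ being the cycle lengths of $\sigma$ and $p_j$ the products of the entries of $a$ around the corresponding cycles. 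For fixed $\sigma$ with $k$ cycles the map $a\mapsto(p_j)_j$ is a surjective homomorphism $A\cong(\mu_q)^{16}\twoheadrightarrow(\mu_q)^k$ with all fibres of size $q^{16-k}$, so the number of $a$ yielding a given characteristic polynomial depends only on the cycle type of $\sigma$. Summing over $\sigma\in\overline{H}_i$, one gets $\#(H_i\cap c)=\sum_{\tau}m_i(\tau)\,g(\tau,c)$ with $m_i(\tau)$ the number of permutations of cycle type $\tau$ in $\overline{H}_i$ and $g(\tau,c)$ independent of $i$. Thus it suffices to prove $m_1(\tau)=m_2(\tau)$ for every cycle type $\tau$.

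This cycle-type count is the main obstacle, and where the real work lies. Identifying $\{1,\dots,16\}$ with $\mathbb{F}_2^4=\overline{N}$ via the regular action, each $\sigma\in\overline{H}_i$ acts as an affine map $v\mapsto\bar\sigma v+t$ with $\bar\sigma\in GL(4,\mathbb{F}_2)$ the image of $\sigma$ in $\overline{H}_i/\overline{N}$ and $t\in\mathbb{F}_2^4$, the $16$ lifts of $\bar\sigma$ corresponding to the $16$ choices of $t$. A direct check shows that for both $i$ the three non-identity elements of the order-four group $\overline{H}_i/\overline{N}$ are involutions whose logarithm $M=\bar\sigma-\mathrm{Id}$ has rank $2$ with $M^2=0$, so $\ker M=\operatorname{im}M$ has dimension $2$; then $(v\mapsto\bar\sigma v+t)^2$ is translation by $Mt$, which is non-trivial for the $12$ values $t\notin\ker M$ (cycle type $4^4$) and trivial for the $4$ values $t\in\ker M$ (cycle type $1^4 2^6$, the fixed set being a coset of $\ker M$); and the $16$ elements over $\mathrm{Id}$ are translations, one of type $1^{16}$ and fifteen of type $2^8$. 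Hence $\overline{H}_1$ and $\overline{H}_2$ both have $1$ element of type $1^{16}$, $15$ of type $2^8$, $12$ of type $1^4 2^6$ and $36$ of type $4^4$, giving $\mathscr{D}_{H_1}=\mathscr{D}_{H_2}$.

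Finally, for $H_1\not\cong H_2$ I would use that $A$ is characteristic in $H_i$: it is a normal subgroup of order $q^{16}$, the full $q$-part of $|H_i|=2^6q^{16}$, hence the unique Sylow $q$-subgroup. So any isomorphism $H_1\to H_2$ descends to an isomorphism $\overline{H}_1=H_1/A\to H_2/A=\overline{H}_2$, and it remains to see $\overline{H}_1\not\cong\overline{H}_2$. Writing $\overline{H}_i=\overline{N}\rtimes Q_i$ with $Q_i\cong(C_2)^2$ generated by the two linear involutions above, one computes $Z(\overline{H}_i)=\bigcap_{w\in Q_i}\ker(w-\mathrm{Id})$; for $i=1$ the three relevant $2$-planes coincide, so this intersection is $2$-dimensional, whereas for $i=2$ they intersect in a line. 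Thus $|Z(\overline{H}_1)|=4\neq 2=|Z(\overline{H}_2)|$, and therefore $H_1\not\cong H_2$.
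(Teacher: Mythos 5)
Your proof is correct, and while the irreducibility and non-isomorphism arguments essentially coincide with the paper's (you supply details the paper only asserts: the Mackey criterion for $\Ind_A^N\chi_1$, and the identification of $A$ as the unique Sylow $q$-subgroup to see it is characteristic), your treatment of the equality $\mathscr{D}_{H_1}=\mathscr{D}_{H_2}$ takes a genuinely different route. The paper argues structurally: since $N\lhd H_i$, one has $V^{H_i}=(V^N)^{H_i/N}$, so it suffices to compare the dimension data of the two Klein four-subgroups $H_1/N$ and $H_2/N$ inside $H/N\cong GL(4,\mathbb{F}_2)$, and these agree because all six non-identity elements are conjugate in $GL(4,\mathbb{F}_2)$ to the single involution $\left(\begin{smallmatrix}1&0&1&0\\0&1&0&1\\0&0&1&0\\0&0&0&1\end{smallmatrix}\right)$ (rank-$2$ unipotent class). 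You instead work directly in $\U(16)$: you reduce $\mathscr{D}_{H_i}$ to the counting function $c\mapsto\#(H_i\cap c)$ on conjugacy classes of $\U(16)$, push this through the characteristic polynomial of a monomial matrix and the uniform-fiber count over $A$, and land on the purely combinatorial statement that $\overline{H}_1$ and $\overline{H}_2$ have the same cycle-type distribution $(1^{16})^1,(2^8)^{15},(1^42^6)^{12},(4^4)^{36}$ as subgroups of $AGL(4,\mathbb{F}_2)\subset S_{16}$, which you verify correctly. Your route is longer and more computational, but it is self-contained (it never invokes the reduction to the quotient) and it proves the stronger statement that $H_1$ and $H_2$ meet every $\U(16)$-conjugacy class in the same number of elements, i.e.\ that their Sato--Tate measures coincide on the nose; the paper's route is shorter and isolates the conceptual source of the coincidence, namely element-conjugacy of two non-conjugate Klein four-subgroups of $GL(4,\mathbb{F}_2)$.
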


\begin{proof}
Since $N$ acts irreducibly on $\bbC^{16}$ and $H_1,H_2$ contain $N$, $H_1$ and $H_2$ act irreducibly on
$\bbC^{16}$.

To show $H_1$ and $H_2$ having the same dimension data in $G$, it is sufficient to show they have
the same dimension data in $H$. Since $N$ is a normal subgroup of $H$, $H_1$ and $H_2$, it is sufficient
to show $\overline{H}_1=H_1/N$ and $\overline{H}_2=H_2/N$ have the same dimension data in
$\overline{H}=H/N\cong GL(4,\mathbb{F}_2)$. We have $\overline{H}_1\cong\overline{H}_2\cong C_2\times C_2$.
On the other hand one can show that non-identity elements of $\overline{H}_1$ and $\overline{H}_2$ are all
conjugate to \[\left(\begin{array}{cccc}1&0&1&0\\0&1&0&1\\0&0&1&0\\0&0&0&1\\\end{array}\right).\]
Hence, $\overline{H}_1$ and $\overline{H}_2$ have the same dimension data in
$\overline{H}\cong GL(4,\mathbb{F}_2)$.

We show that the groups $H_1$ and $H_2$ are non-isomorphic. Since $A$ is a characteristic subgroup of
$H_1$ and $H_2$, it is sufficient to show $|Z(H_1/A)|\neq |Z(H_2/A)|$. Obviously $A\lhd N\lhd H_1,H_2$.
Moroever we have \[\overline{N}=N/A\cong (\mathbb{F}_2)^4\] as an abelian group and $H_1$, $H_2$ act on
$N/A$ through the action of $\overline{H}_1$, $\overline{H}_2$ on $(\mathbb{F}_2)^4$. By Lemma
\ref{L:GA-normalizer}, $\overline{N}$ is a maximal abelian subgorup of $S_{n}$. Therefore,
\[Z(H_{i}/A)=\overline{N}^{\overline{H}_{i}},\] the latter means the fixed point subgroup of the
$\overline{H}_{i}$ action on $\overline{N}$. By the definition of $\overline{H}_{1}$ and $\overline{H}_{2}$,
we get \[\overline{N}^{\overline{H}_{1}}\cong(\mathbb{F}_2)^2 \] and
\[\overline{N}^{\overline{H}_{1}}\cong\mathbb{F}_2.\] Hence, $H_1\not\cong H_2$.
\end{proof}

Choose $n=12$ and a prime $q>3$. Let $T\subset G$ be the subgroup of diagonal unitary matrices and
\[A=\{\diag\{a_1,a_2,\dots,a_{n}\}|\ a_1^{q}=a_2^{q}=\cdots=a_{n}^{q}=1\}.\] Write \[N=A\rtimes\langle\left(
\begin{array}{ccccc}0&1&0&\cdots&0\\0&0&1&\cdots&0\\\vdots&\vdots&\vdots&\vdots&\vdots\\0&0&0&\cdots&1\\
1&0&0&\cdots&0\\\end{array}\right)\rangle\] and $K=A\rtimes S_{n}$.

\begin{lemma}\label{L:GA-normalizer2}
$N_{K}(N)/N\cong(\bbZ/12\bbZ)^{\times}\cong C_2\times C_2$.
\end{lemma}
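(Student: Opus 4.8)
The plan is to identify $N_{K}(N)$ explicitly by reducing the matrix computation to a purely combinatorial statement about the symmetric group $S_{12} = K/A$. Write $c$ for a generator of the cyclic group appearing in the definition of $N$ — a full $12$-cycle, viewed as a permutation matrix — so that $\langle c\rangle\cong C_{12}$ and $N = A\rtimes\langle c\rangle$. First I would record two elementary facts. The subgroup $A$ is normal in $K = A\rtimes S_{12}$, since the permutation matrices normalize the diagonal finite torus $A$. And inside $K$ the only permutation matrix contained in a coset $Ac^{j}$ is $c^{j}$ itself: an element $\diag(a)c^{j}$ with $a\in A$ is a permutation matrix only when $\diag(a) = I$, because a diagonal permutation matrix is the identity and the entries of $a$ are $q$-th roots of unity.

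Next I would compute $gNg^{-1}$ for a general $g = \diag(t)\,\sigma\in K$ with $t\in A$, $\sigma\in S_{12}$. Because $A\lhd K$, for each $k$ one has $g(Ac^{k})g^{-1} = A\,(\sigma c^{k}\sigma^{-1})$, the diagonal factor introduced by conjugating the permutation matrix $c^{k}$ by $\diag(t)$ being absorbed into $A$. Hence $gNg^{-1} = \bigcup_{k}A(\sigma c^{k}\sigma^{-1})$, and by the second fact above this equals $N = \bigcup_{k}Ac^{k}$ precisely when $\sigma c^{k}\sigma^{-1}\in\langle c\rangle$ for all $k$, that is, when $\sigma\in N_{S_{12}}(\langle c\rangle)$; there is no condition on $t$. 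Therefore $N_{K}(N) = A\rtimes N_{S_{12}}(\langle c\rangle)$, and since $\langle c\rangle\leq N_{S_{12}}(\langle c\rangle)$ we obtain $N_{K}(N)/N\cong N_{S_{12}}(\langle c\rangle)/\langle c\rangle$.

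Finally I would evaluate $N_{S_{12}}(\langle c\rangle)/\langle c\rangle$ by the standard argument for the normalizer of a full cycle: conjugation gives a homomorphism onto $\Aut(\langle c\rangle)\cong(\bbZ/12\bbZ)^{\times}$ — surjective because for $k$ coprime to $12$ the power $c^{k}$ is again a $12$-cycle, hence conjugate to $c$ in $S_{12}$ — with kernel $C_{S_{12}}(c) = \langle c\rangle$. Thus $N_{K}(N)/N\cong(\bbZ/12\bbZ)^{\times} = \{1,5,7,11\}$, which is elementary abelian of order $4$, i.e.\ $\cong C_{2}\times C_{2}$. The only point requiring care is the bookkeeping in the semidirect product $K = A\rtimes S_{12}$: one must use that the decomposition $g = \diag(t)\sigma$ is unique, so that being a permutation matrix is detected by vanishing of the diagonal part. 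Beyond that the argument is routine and I foresee no real obstacle.
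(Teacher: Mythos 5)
Your proof is correct and follows essentially the same route as the paper's: both reduce the computation to $N_{S_{12}}(\langle c\rangle)/\langle c\rangle$ (the paper via the fact that the $12$-cycle generates a self-centralizing, hence maximal abelian, subgroup of $S_{12}$; you via an explicit coset computation in $A\rtimes S_{12}$) and then identify this quotient with $(\bbZ/12\bbZ)^{\times}\cong C_2\times C_2$. You merely supply the bookkeeping in the semidirect product that the paper leaves implicit.
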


\begin{proof}
By Lemma \ref{L:GA-normalizer}, $C_{G}(A)=T$ and $N_{G}(A)=T\rtimes S_{n}$. Since $N/A\cong C_{12}$ is a maximal
abelian subgroup of $S_{12}$, \[N_{K}(N)/N\cong N_{S_{12}}(C_{12})/C_{12}\cong(\bbZ/12\bbZ)^{\times}.\]
By elementary number theory, $(\bbZ/12\bbZ)^{\times}\cong C_2\times C_2$.
\end{proof}

Let $H=N_{K}(N)$ and $\overline{H}=H/N$. Then $\overline{H}\cong C_2\times C_2$. Write $H_1$,
$H_2$, $H_3$, $H_4$, $H_5$ for the subgroups of $H$ containing $N$ and with $\overline{H}_1=H_1/N$,
$\overline{H}_2=H_2/N$, $\overline{H}_3=H_3/N$, $\overline{H}_4=H_4/N$, $\overline{H}_5=H_5/N$ all the
subgroups of $\overline{H}$, where $|\overline{H}_1|=1$, $|\overline{H}_2|=|\overline{H}_3|=|\overline{H}_4|=2$
and $|\overline{H}_5|=4$.

\begin{proposition}\label{P:U12-linear}
We have \[\mathscr{D}_{H_1}+2\mathscr{D}_{H_5}-(\mathscr{D}_{H_2}+\mathscr{D}_{H_3}+\mathscr{D}_{H_4})=0.\]
\end{proposition}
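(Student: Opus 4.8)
The plan is to reduce the asserted identity to a purely group-theoretic statement about the finite quotient $\overline{H}=H/N$, which by Lemma \ref{L:GA-normalizer2} is isomorphic to $(\bbZ/12\bbZ)^{\times}\cong C_2\times C_2$, and then to verify a short identity in the group algebra of the Klein four group. The point is that $N$ is a common normal subgroup of all five groups $H_1,\dots,H_5$, so the dimension data $\mathscr{D}_{H_i}$ can all be computed one step at a time: first restrict to $N$, then take $\overline{H}_i$-invariants in the resulting representation of $\overline{H}$.

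First I would record the elementary reduction. Since $N\lhd H$ and $N\subseteq H_i\subseteq H$ for each $i$, we have $N\lhd H_i$ and $H_i/N=\overline{H}_i$. For any finite-dimensional complex representation $V$ of $G=\U(12)$, the subspace $V^{N}$ is stable under $H$ (conjugation by $H$ permutes $N$), and $N$ acts trivially on it, so $V^{N}$ carries a representation of $\overline{H}=H/N$; moreover $V^{H_i}=(V^{N})^{\overline{H}_i}$. Hence $\mathscr{D}_{H_i}(V)=\dim (V^{N})^{\overline{H}_i}$. By the averaging formula for invariants of a finite group, for any finite group $L$ acting on a complex vector space $W$ the orthogonal projection onto $W^{L}$ is the idempotent $e_{L}=\frac{1}{|L|}\sum_{g\in L}g$ acting through the group algebra, so $\dim W^{L}=\tr\big(e_{L}\,|\,W\big)$. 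Applied with $W=V^{N}$ and $L=\overline{H}_i$, this gives $\mathscr{D}_{H_i}(V)=\tr\big(e_{\overline{H}_i}\,|\,V^{N}\big)$.

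Next I would carry out the computation in $\bbQ[\overline{H}]$. Write the elements of $\overline{H}\cong C_2\times C_2$ as $1,a,b,ab$; by construction $\overline{H}_1=\{1\}$, $\overline{H}_5=\overline{H}$, and $\overline{H}_2,\overline{H}_3,\overline{H}_4$ are the three subgroups of order $2$, which we may label $\{1,a\}$, $\{1,b\}$, $\{1,ab\}$. Then $e_{\overline{H}_1}=1$, $e_{\overline{H}_5}=\frac14(1+a+b+ab)$, $e_{\overline{H}_2}=\frac12(1+a)$, $e_{\overline{H}_3}=\frac12(1+b)$, $e_{\overline{H}_4}=\frac12(1+ab)$, and a direct check gives
\[
e_{\overline{H}_1}+2e_{\overline{H}_5}-e_{\overline{H}_2}-e_{\overline{H}_3}-e_{\overline{H}_4}=0
\]
in $\bbQ[\overline{H}]$. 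Applying $\tr(\,\cdot\,|\,V^{N})$ to this identity and using $\mathscr{D}_{H_i}(V)=\tr(e_{\overline{H}_i}\,|\,V^{N})$ yields $\mathscr{D}_{H_1}(V)+2\mathscr{D}_{H_5}(V)-\mathscr{D}_{H_2}(V)-\mathscr{D}_{H_3}(V)-\mathscr{D}_{H_4}(V)=0$ for every $V$, which is the proposition. Equivalently, this is the classical $\bbQ$-linear relation $\bbQ[\overline{H}]\oplus\bbQ[\overline{H}]^{\oplus 2}\cong \bbQ[\overline{H}/\overline{H}_2]\oplus\bbQ[\overline{H}/\overline{H}_3]\oplus\bbQ[\overline{H}/\overline{H}_4]$ among permutation modules of the Klein four group, combined with $\dim W^{L}=\dim\Hom_{\overline{H}}(\bbC[\overline{H}/L],W)$. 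I do not expect a real obstacle here; the only thing to be careful about is that $H_1,\dots,H_5$ were defined precisely as the preimages in $H$ of the complete subgroup lattice of $\overline{H}\cong C_2\times C_2$, so the reduction above applies verbatim, and the remaining content is the one-line computation in $\bbQ[\overline{H}]$.
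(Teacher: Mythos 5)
Your proposal is correct and follows essentially the same route as the paper: reduce modulo the common normal subgroup $N$ to the quotient $\overline{H}\cong C_2\times C_2$, then verify the corresponding linear relation among the invariant-dimension functions of its five subgroups. Your explicit idempotent identity $e_{\overline{H}_1}+2e_{\overline{H}_5}-e_{\overline{H}_2}-e_{\overline{H}_3}-e_{\overline{H}_4}=0$ in $\bbQ[\overline{H}]$ is just a spelled-out version of the paper's ``consideration on irreducible representations of $C_2\times C_2$,'' so no further comparison is needed.
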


\begin{proof}
Since $N$ is a normal subgroup of $H_1$, $H_2$, $H_3$, $H_4$ and $H_5$, it is sufficient to show
the dimension data of the subgroups $\overline{H}_1$, $\overline{H}_2$, $\overline{H}_3$, $\overline{H}_4$,
$\overline{H}_5$ of $\overline{H}$ have the corresponding linear relation. The latter follows from
a consideration on irreducible representations of $C_2\times C_2$.
\end{proof}

There exists an example as in Proposition \ref{P:U12-linear} when $n=4$. For $n=4$, one has
$N_{G}(A)/C_{G}(A)\cong S_4$. Chooose a normal subgroup $\overline{N}$ of $S_4$
isomorphic to $C_2\times C_2$ (it is unique) and the subgroups $\overline{H}_1$, $\overline{H}_2$,
$\overline{H}_3$, $\overline{H}_4$ of $S_4$ (in the order of increasing orders) containing $\overline{N}$
Let $H_1$, $H_2$, $H_3$, $H_4$ be the corresponding subgroups of $A\rtimes S_4$. One can show that
\[\mathscr{D}_{H_1}+2\mathscr{D}_{H_4}-(2\mathscr{D}_{H_2}+\mathscr{D}_{H_3})=0.\]

Given a closed connected torus $T$ in $G$, recall that we have a weight lattice $\Lambda_{T}=\Hom(T,\U(1))$
and a finite group $\Gamma^{\circ}=N_{G}(T)/C_{G}(T)$. Let $\Phi$ be the root system of $H$, $\Lambda$ be the
root lattice of $H$ and $\rho_{T}$ be the character of the representation of $T$ on $\bbC^{n}$. Write
$X=\Lambda_{T}\otimes_{\bbZ}\bbR$. The following lemma is identical to Theorem 4 in \cite{Larsen-Pink}.
We state their theorem in a form we needed and sketch the proof.

\begin{lemma}\label{L:torus sharing}
Let $T$ be a closed connected torus in $G$. If there are more than one
conjugacy classes of closed connected subgroups $H$ of $G$ with $T$ a maximal rotus and acting irreducibly
on $\bbC^{n}$, then these representations $(H,\bbC^{n})$ are tensor products of the following list:
\begin{enumerate}
\item[(1),] $n=2^{m}$, $H=(\Spin(2m_1+1)\times\cdots\times\Spin(2m_{s}+1)/Z)$ and
$\bbC^{n}=M_{m_1}\otimes\cdots M_{m_{s}}$, where \[m=m_1+m_2+\cdots+m_{s},\] \[Z=\{(\epsilon_1,\dots,
\epsilon_{s})|\ \epsilon_{i}=\pm{1}, \epsilon_1\epsilon_2\cdots\epsilon_{s}=1\}\]
and $M_{m}$ is Spinor representation of $\Spin(2m+1)$. In this case,
$\{\pm{1}\}^{m}\rtimes S_{m}\subset\Gamma^{\circ}$.
\item[(2),] $n=2^{k^2+k}\prod_{1\leq i\leq k}\frac{i!}{(2i)!}\frac{(m-k-1+2i)!}{(m-k-1+i)!}$,
$H=\Sp(m)$ or $\SO(2m)$, $\bbC^{n}=V_{(k,k-1,\dots,1,0,\dots,0)}$, $1\leq k\leq m-1$
and $\frac{k(k+1)}{2}$ is odd; or, $H=\Sp(m)/\langle-I\rangle$ or $\SO(2m)/\langle-I\rangle$,
$\bbC^{n}=V_{(k,k-1,\dots,1,0,\dots,0)}$, $1\leq k\leq m-1$ and $\frac{k(k+1)}{2}$ is even.
In this case, $\{\pm{1}\}^{m}\rtimes S_{m}\subset\Gamma^{\circ}$.
\item[(3),] $n=27$, $H=\G_2$ or $\PSU(3)$,  $\bbC^{n}=V_{\lambda}$, $\lambda=2(e_1-e_3)$. The weight
$\lambda=2\omega_2$ for $\G_2$ and $\lambda=2\omega_1+2\omega_2$ for $\A_2$. In this case,
$W_{\G_2}\subset\Gamma^{\circ}$.
\item[(4),] $n=2^{12}$, $H=\F_4$, $\Sp(4)/\langle-I\rangle$ or $\SO(8)/\langle-I\rangle$,
$\bbC^{n}=V_{\lambda}$, $\lambda=3e_1+2e_2+e_3$. The weight $\lambda=\omega_3+\omega_4$ for
$\F_4$, $\lambda=\omega_1+\omega_2+\omega_3$ for $\C_4$ and $\lambda=\omega_1+\omega_2+\omega_3+\omega_4$
for $\D_4$, In this case, $W_{\F_4}\subset\Gamma^{\circ}$.
\item[(5),] Any $n\geq 1$ and irreducible semisimple subgroup $H$ of $\U(n)$. In this case,
$W_{\Phi}\subset\Gamma^{\circ}$, where $\Phi$ is the root system of $H$.
\end{enumerate}
\end{lemma}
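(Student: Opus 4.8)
The plan is to reduce the statement, as in the proof of Theorem~4 of \cite{Larsen-Pink}, to the combinatorics of weight multisets and then to a short list of numerical coincidences. Fix $T\subset G=\U(n)$ as in the hypothesis and let $P\subset\Lambda_{T}$ be the multiset of weights by which $T$ acts on $\bbC^{n}$ (so $P$ is the weight content of $\rho_{T}$). For any connected closed $H\subset G$ with $T$ a maximal torus and $\bbC^{n}$ irreducible over $H$, Schur's lemma shows $Z(H)^{\circ}$ acts by scalars, so $[H,H]$ already acts irreducibly and, replacing $H$ by $[H,H]$, we may assume $H$ is semisimple (the central circle, if present, is detected by whether scalars of $\U(n)$ lie in $H$ and is therefore common to all the $H$ in question). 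Then $\bbC^{n}\cong V_{\lambda_{H}}$ for a highest weight $\lambda_{H}$, which is a vertex of the convex hull of $P$, and $P$ is the weight multiset of $V_{\lambda_{H}}$. So the problem becomes: for which $P$ is $P$ the weight multiset of an irreducible representation of two non-conjugate connected semisimple subgroups of $G$? Since element-conjugate homomorphisms into $\U(n)$ have equal character and hence are conjugate, Proposition~\ref{P:root system-group}(3) lets us rephrase non-conjugacy of $H_{1},H_{2}$ as the failure of $\Gamma^{\circ}$-conjugacy of their root systems $\Phi(H_{i},T)\subset\Psi_{T}$.

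Next I would analyze tensor decompositions. The multiset $P$ may factor as a product $P=P'\times P''$ along an orthogonal splitting of the weight lattice, and each such factorization is, up to the $\Gamma^{\circ}$-ambiguity we are classifying, common to all the $H$'s; so one reduces to controlling prime factors together with the possibly non-canonical ways of regrouping them. The regrouping phenomenon occurs exactly for spinor factors: the weight multiset of the spin representation $M_{m}$ of $\Spin(2m+1)$ is the set of all $\tfrac12(\pm e_{1}\pm\cdots\pm e_{m})$, which is insensitive to how the $m$ coordinates are partitioned, so $M_{m_{1}}\otimes\cdots\otimes M_{m_{s}}$ on $\Spin(2m_{1}+1)\times\cdots\times\Spin(2m_{s}+1)/Z$ realizes the same torus for every partition $m=m_{1}+\cdots+m_{s}$; this is item~(1), and the sign-change and permutation matrices of $\U(2^{m})$ give $\{\pm1\}^{m}\rtimes S_{m}\subset\Gamma^{\circ}$. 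A single prime factor is, by Theorem~2 of \cite{Larsen-Pink} (compare Remark~\ref{R:LP-equal-linear relation}), determined up to conjugacy by $(T,\bbC^{n})$ unless it is one of the exceptional coincidences below, which is the content of item~(5). Thus after this reduction $(H,\bbC^{n})$ is a tensor product of spinor factors as in~(1), unambiguous prime factors as in~(5), and at most the prime coincidences to be listed.

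The remaining prime coincidences are the heart of the matter and the main obstacle: one must show that the weight multiset of an irreducible representation almost always determines the semisimple group up to conjugacy. Estimating $\dim V_{\lambda_{H}}=|P|$ against $\rank H$ by the Weyl dimension formula confines $\lambda_{H}$ to a very short list of small weights, and companion candidates are then matched by comparing weight multiplicities, following Dynkin's analysis of the classical and exceptional cases (via Proposition~\ref{P:local-root system} and Minchenko's theorem cited in the excerpt). The classical survivor is the $\C_{m}$-versus-$\D_{m}$ coincidence: the representation with highest weight $(k,k-1,\dots,1,0,\dots,0)$ of $\Sp(m)$ and of $\SO(2m)$ has the same weight multiset, and whether it descends to $\Sp(m)/\langle-I\rangle$, $\SO(2m)/\langle-I\rangle$ is governed by the parity of $\tfrac{k(k+1)}{2}$; here $\{\pm1\}^{m}\rtimes S_{m}\subset\Gamma^{\circ}$ (item~(2)). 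The exceptional survivors are $\G_{2}$ versus $\PSU(3)$ in dimension $27$, with highest weight $2(e_{1}-e_{3})$ (namely $2\omega_{2}$ for $\G_{2}$ and $2\omega_{1}+2\omega_{2}$ for $\A_{2}$), and $\F_{4}$ versus $\Sp(4)/\langle-I\rangle$ versus $\SO(8)/\langle-I\rangle$ in dimension $2^{12}$, with highest weight $3e_{1}+2e_{2}+e_{3}$; the inclusions $W_{\G_{2}}\subset\Gamma^{\circ}$ and $W_{\F_{4}}\subset\Gamma^{\circ}$ are read off from the common maximal tori for $\Spin(8)$ and for the relevant real form of $E_{7}$ already exhibited in Proposition~\ref{P:construction-simple} (items~(3), (4)). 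In each case I would record the explicit highest weight in the stated coordinates and verify the claimed $\Gamma^{\circ}$-containment by writing down normalizing elements of $T$ in $G$; the delicate point throughout, as in \cite{Larsen-Pink}, is that sharing a maximal torus of $G=\U(n)$ is strictly stronger than having isometric weight multisets up to relabelling, so the $\Gamma^{\circ}$-statements and the non-conjugacy assertions behind them must be checked on the nose rather than merely up to abstract isomorphism.
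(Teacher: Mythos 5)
Your overall architecture (reduce to semisimple $H$, split off a canonical tensor decomposition, isolate the spinor regrouping, then classify the remaining ``prime'' coincidences) matches the shape of the paper's argument, and you correctly name every case in the final list. But the step that makes the classification complete is missing, and the mechanism you propose in its place does not work. You claim that ``estimating $\dim V_{\lambda_H}=|P|$ against $\rank H$ by the Weyl dimension formula confines $\lambda_H$ to a very short list of small weights.'' There is no such confinement: $n$ is arbitrary, both candidate subgroups have the same rank $\dim T$, and the highest weights occurring in the actual list are unbounded (the weights $(k,k-1,\dots,1,0,\dots,0)$ of item (2) for large $k$, or for that matter $(n-1)\omega_1$ for $\SU(2)\subset\U(n)$). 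As written, your argument gives no way to exclude, say, a coincidence of weight multisets between two large irreducible representations of two exceptional groups. Citing Theorem~2 of \cite{Larsen-Pink} to dispose of item (5) is also essentially circular, since that theorem rests on exactly the analysis this lemma encapsulates (Theorem~4 of \cite{Larsen-Pink}).

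The idea the paper uses instead, and which you need, is lattice-theoretic rather than dimension-theoretic: the root lattice $\Lambda$ of $H$ is generated by the differences of the weights of $\bbC^n|_T$, hence is determined by $T$ alone, and the subsystem $\Phi^\circ$ of short roots is then recovered from $\Lambda$ by an induction on rank (the shortest nonzero vectors of $\Lambda$ lie in $\Phi^\circ$, then pass to the orthogonal complement). Consequently any two subgroups sharing $T$ have the \emph{same} $\Phi^\circ$, and since $(\B_l)^\circ=l\A_1$, $(\C_l)^\circ=\D_l$, $(\F_4)^\circ=\D_4$, $(\G_2)^\circ=\A_2$, and $\Phi^\circ=\Phi$ in the simply laced cases, the possible ambiguities in $\Phi$ are exactly $\D_l\subset\C_l$, $\{\D_4,\C_4\}\subset\F_4$, $\A_2\subset\G_2$, and the regroupings of $\Phi^\circ=m\A_1$ into sums of $\B_{m_i}$'s. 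Only after this reduction does one compare weight multiplicities of highest weight modules for these finitely many specific pairs (the computation deferred to \cite{Larsen-Pink}, pp.~395--396), which yields items (1)--(4). Your proposal conflates this final, finite verification with the global classification; without the $\Phi^\circ$ argument (or an equivalent), the completeness of your list is unproved.
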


\begin{proof}
Since $H$ acts irreducibly on $\bbC^{n}$, the center if $H$ is equal to $H\cap Z(\U(n))$. Hence, it is
either finite or 1-dimensional. Therefore, the action of $W_{\Phi}$ on $X$ is multiplicity free.
As $\Gamma^{\circ}\supset W_{\Phi}$, $\Gamma^{\circ}$ acts on $X$ multiplicity freely. Let
$X=\bigoplus_{1\leq i\leq m} X_{i}$ be the decomposition of $X$ into a sum of irredicible summands.
Then we have $\Phi=\bigsqcup_{1\leq i\leq m}\Phi\cap X_{i}$ and
\[\rho_{T}=\bigotimes_{1\leq i\leq m}\rho_{i},\] where $\rho_{i}$ is the character of a representation
of the Lie algebra with root system $\Phi\cap X_{i}$. Since the character ring $\bbQ[\Lambda_{T}]$
is a unque factorization domain and each $\rho_{i}$ has dominat terms with coefficient 1, the characters
$\{\rho_{i}\}$ are determined by $\rho_{T}$. By considering each $\rho_{i}$, we may assume that
$\Gamma^{\circ}$ acts irreducibly and non-trivially on $X$, which forces $H$ being semisimple. We suppose
this since now on.

Since $H$ is assumed to be a closed connected subgroup of $\U(n)$, the root lattice $\Lambda$ is generated
by the differences of elements in $\rho_{T}$ and
the integral weight lattice $\Lambda_{T}$ is generated by elements in $\rho_{T}$. This indicates, both
$\Lambda$ and $\Lambda_{T}$ are determined by $T$. Endowing $G$ with a biinvariant Riemannian metric, this
gives $\Lambda$ a positive definite inner product. We show that $\Phi^{\circ}$ is determined by $\Lambda$.
This can be proved by induction on the rank $l$ of 
$\Lambda$ (which is equal to the rank of $\Phi^{\circ}$). If $l=1$, then $\Phi^{\circ}=\A_1$ and it consists
of the non-zero elements of $\Lambda$ of shortest length. Hence, the statement is clear in this case. If $l>1$,
one can show that the shortest non-zero elements of $\Lambda$ are contained in $\Phi^{\circ}$. For each
element $\lambda$, let \[\lambda=\sum_{1\leq i\leq m}\alpha_{i},\] $\alpha_{i}\in\Phi^{\circ}$ be an
expression of this form with $m$ minimal. We have $(\alpha_{i},\alpha_{j})\geq 0$ for any $1\leq i\leq j\leq s$
since otherwise $\alpha_i+\alpha_{j}\in\Phi^{\circ}$ and we could find an expression with $m$ smaller. Hence,
$|\lambda|\geq|\alpha_{i}|$ for each $1\leq i\leq m$. Therefore, $m=1$ and $\lambda\in\Phi^{\circ}$.
Let $\Phi_1$ be the set of shortest non-zero elements of $\Lambda$. By the above, $\Phi_1$ is a root system
and is contained in $\Phi^{\circ}$. Let $\Lambda'$ be the sublattice of elements in $\Lambda$ orthogonal to
elements in $\Phi_1$ and $\Phi'$ be the sub-root system of elements in $\Phi^{\circ}$ orthogonal to
elements in $\Phi_1$. Then, $\Lambda'$ is determined by $\Lambda$ and $\Phi'$ generates $\Lambda'$. By induction,
$\Phi'$ is determined by $\Lambda'$. Therfore, $\Phi^{\circ}$ is determined by $\Lambda$.

Now we have $\Phi^{\circ}$ determined by $\Lambda$. Since $\Gamma^{\circ}$ acts irreducibly on $X$,
it acts transitively on the irreducible factors of $\Phi^{\circ}$. Hence, $\Phi^{\circ}$ is an orthogonal
sum of isomorphic irredicible root systems. Write $\Phi^{\circ}=m\Omega$ where $\Omega$ is an irredicible
root system. Note that $(B_{l})^{\circ}=lB_1$, $(C_{l})^{\circ}=D_{l}$, $(F_{4})^{\circ}=D_{4}$,
$(G_{2})^{\circ}=A_{2}$ and $\Phi^{\circ}=\Phi$ if $\Phi$ being of type $ADE$. Hence, if $\Omega\neq\A_1$,
then $\Phi=\sum_{1\leq i\leq m}\Phi_{i}$ with $(\Phi_{i})^{\circ}=\Omega$ for each $i$. In this case,
$\rho_{T}$ has a decomposition \[\rho_{T}=\bigotimes_{1\leq i\leq m}\rho_{i}\] where $\rho_{i}$ is the character
of an irreducible representation of a simple Lie algebra of type $\Phi_{i}$. Since each $\rho_{i}$ has dominant
terms of coefficient 1, $\{\rho_{i}|\ 1\leq i\leq m\}$ are determined by $\rho_{T}$. Therefore, we may assume that
$m=1$. That is, we need to consider the ambiguity arsing from the pairs $D_l\subset C_l$ ($l\geq 3$),
$D_4\subset F_4$, $C_4\subset F_4$ and $A_2\subset G_2$. This follows from some detailed study of
the characters of highest weight modules (cf. \cite{Larsen-Pink}, Page 395-396). We omit the details here but
list the results precisely in the items $(2)$-$(4)$ of the conclusion of the theorem. If $\Omega\neq\A_1$, then
\[\{\pm{1}\}^{m}\subset\Gamma^{\circ}\subset\{\pm{1}\}^{m}\rtimes S_{m}.\] The action of $\Gamma^{\circ}$ on
$\Phi^{\circ}$  gives a partition of $\{1,2,\dots,m\}$. This in turn gives a canonical tensor decomposition of
$\rho_{T}$ similar as the above for $\Omega\neq\A_1$ case. By this it is enough to consider the case of
$\Gamma^{\circ}=\{\pm{1}\}^{m}\rtimes S_{m}$. In this case the ambiguity is either as listed in
item $(1)$ of the conclusion, or there is no ambiguity as in item $(5)$.
\end{proof}

\begin{remark}\label{R:torus sharing}
In each item $(1)$-$(4)$, from the above proof, $\Gamma^{\circ}$ contains the Weyl group of the largest
dimensional Lie group appearing in the ambiguity, which is $\Spin(2m+1)$, $\Sp(m)$, $\F_4$, $\G_2$ respectively,
\end{remark}


\begin{theorem}\label{T:irreducible-independence}
Let $H_1,H_2,\dots,H_{s}$ be a list of closed connected subgroups of $G$ acting irreducibly on $\bbC^{n}$.
If they are non-conjugate to each other, then their dimension data are linearly independent.
\end{theorem}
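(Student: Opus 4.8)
The plan is to run the Larsen--Pink argument \cite{Larsen-Pink}, recast in terms of the characters $F_{\Phi,\Gamma^{\circ}}$. Suppose toward a contradiction that $\sum_{i=1}^{s}c_{i}\mathscr{D}_{H_{i}}=0$ with every $c_{i}\neq 0$. Since $G=\U(n)$ is compact connected and each $H_{i}$ is connected, Proposition \ref{P:group-root system} applies verbatim: the relation holds if and only if, for every closed connected torus $T$ in $G$, one has $\sum_{j}c_{i_{j}}F_{\Phi_{i_{j}},\Gamma^{\circ}}=0$ on $T$, where $\{H_{i_{j}}\}$ runs over those $H_{i}$ whose maximal torus is $G$-conjugate to $T$, $\Phi_{i_{j}}=\Phi(H_{i_{j}},T)\subset\Psi_{T}$ and $\Gamma^{\circ}=N_{G}(T)/C_{G}(T)$. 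So it suffices to prove: for any closed connected torus $T$ in $\U(n)$ and any finite list of pairwise non-conjugate connected closed subgroups $H_{1},\dots,H_{t}$, each acting irreducibly on $\bbC^{n}$ and having $T$ as a maximal torus, the characters $F_{\Phi(H_{1},T),\Gamma^{\circ}},\dots,F_{\Phi(H_{t},T),\Gamma^{\circ}}\in\bbQ[\Lambda_{T}]^{\Gamma^{\circ}}$ are linearly independent.

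For $t=1$ this is clear, since $F_{\Phi,\Gamma^{\circ}}$ has constant term $1$ (Proposition \ref{P:characters}). For $t\geq 2$ the decisive input is Lemma \ref{L:torus sharing}: since more than one conjugacy class of connected irreducible subgroup of $\U(n)$ has $T$ as a maximal torus, the representation $(H_{i},\bbC^{n})$ is, uniformly in $i$, a tensor product of factors drawn from the explicit list $(1)$--$(5)$, and in each ambiguous factor $\Gamma^{\circ}$ contains the Weyl group of the largest group occurring there (Remark \ref{R:torus sharing}); furthermore $\Lambda_{T}$ and the short-root system $\Phi^{\circ}$ depend only on $T$, so the remaining freedom is precisely the one displayed in the list. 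The heart of the matter is then the claim: \emph{for $H_{i}\not\sim H_{j}$ sharing the maximal torus $T$, the sub-root systems $\Phi(H_{i},T),\Phi(H_{j},T)\subset\Psi_{T}$ are distinct, and the dominant representatives of $2\delta_{\Phi(H_{i},T)}$ and $2\delta_{\Phi(H_{j},T)}$ with respect to a fixed positive system of $\Psi_{T}$ are not $\Gamma^{\circ}$-conjugate.} I would verify this case by case along Lemma \ref{L:torus sharing}: in case $(2)$ compare $2\delta_{\C_{m}}=(2m,2m-2,\dots,2)$ with $2\delta_{\D_{m}}=(2m-2,\dots,2,0)$, which are inequivalent under $\{\pm 1\}^{m}\rtimes S_{m}$; in case $(3)$ use $e(\G_{2})=28\neq e(A_{2}\subset\G_{2})$, the latter value being read off Section \ref{S:leading terms}; in case $(4)$ the numbers $e(\F_{4})=156$, $e(\C_{4})=60$ and $e(\D_{4})\leq 56$ are distinct; in case $(1)$ a partition $m=m_{1}+\dots+m_{s}$ makes $2\delta$ equal, after $\{\pm 1\}^{m}\rtimes S_{m}$-conjugation, to the concatenation of the blocks $(2m_{k}-1,2m_{k}-3,\dots,1)$, which recovers the partition; and since $\Gamma^{\circ}$ respects the canonical tensor decomposition induced by the $\Gamma^{\circ}$-isotypic splitting of $\Lambda_{T}\otimes\bbR$ -- even when it permutes isomorphic factors -- non-conjugacy of the pair already appears in some factor. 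This is the bookkeeping carried out in \cite{Larsen-Pink}.

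Granting the claim, linear independence follows by a triangularity argument. By Proposition \ref{P:characters} each $F_{\Phi(H_{i_{j}},T),\Gamma^{\circ}}$ is an integer combination of the basis $\{\chi^{\ast}_{\lambda,\Gamma^{\circ}}\}$ of $\bbQ[\Lambda_{T}]^{\Gamma^{\circ}}$, having a unique longest term $\pm\chi^{\ast}_{2\delta_{\Phi(H_{i_{j}},T)},\Gamma^{\circ}}$. Given a nontrivial relation $\sum_{j}c_{i_{j}}F_{\Phi(H_{i_{j}},T),\Gamma^{\circ}}=0$, let $M$ be the largest value of $|2\delta_{\Phi(H_{i_{j}},T)}|$ among the $j$ with $c_{i_{j}}\neq 0$; the terms of length $M$ occurring on the left are exactly the $\pm c_{i_{j}}\chi^{\ast}_{2\delta_{\Phi(H_{i_{j}},T)},\Gamma^{\circ}}$ over those $j$, and by the claim these are nonzero multiples of pairwise distinct basis vectors, hence cannot cancel. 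This contradiction forces all $c_{i}=0$, proving the theorem.

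The step I expect to be the main obstacle is the case-by-case verification of the claim -- that within each family of Lemma \ref{L:torus sharing} non-conjugate irreducible subgroups sharing a maximal torus have distinct root systems whose $2\delta$'s are non-$\Gamma^{\circ}$-conjugate, and that this survives the tensor-product bookkeeping when $\Gamma^{\circ}$ permutes isomorphic factors. Everything else is formal; in particular the reduction via Proposition \ref{P:group-root system} and the leading-term argument via Proposition \ref{P:characters} use nothing about $G$ being semisimple, so the passage from $\SU(n)$ to $\U(n)$ costs nothing.
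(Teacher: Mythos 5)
Your proposal is correct and follows essentially the same route as the paper: reduce via Proposition \ref{P:group-root system} to subgroups sharing a maximal torus, invoke Lemma \ref{L:torus sharing} (and the observation that $\Gamma^{\circ}$ contains the Weyl group of the largest group in each ambiguous factor) to check case by case that the weights $2\delta_{\Phi_{i}}$ are pairwise non-$\Gamma^{\circ}$-conjugate, and conclude by the leading-term argument of Proposition \ref{P:characters}. Your case-by-case verification is in fact more explicit than the paper's, which leaves it at ``by calculation one can show.''
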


\begin{proof}
By Proposition \ref{P:group-root system}, we may assume that $H_1$, $H_2$,..., $H_{s}$ have a common maximal
torus $T$. Applying Lemma \ref{L:torus sharing}, we get a canonical tensor decompostion of $\rho_{T}$ (the
character of the representation of $T$ on $\bbC^{n}$), which in turn gives a decompostion of each $H_{i}$.
Observe that if each item $(1)$-$(4)$ occurs, the group $\Gamma^{\circ}$ contains a large finite group, i.e.,
the Weyl group of the largest connected compact Lie group appearing in the ambiguity. By calculation one can
show that the dominant weights $2\delta_{\Phi}$ are non-conjugate to each other under this Weyl group, where
$\{\Phi\}$ are the root systems of groups appearing in each item $(1)$-$(4)$. Therefore, the dominant weights
$2\delta_{\Phi_i}$ are non-conjugate to each other under $\Gamma^{\circ}$, where $\Phi_{i}$ is root system of
$H_{i}$. Hence, the dimension data of $H_1,H_2,\dots,H_{s}$ are linearly independent.
\end{proof}




\begin{thebibliography}{999}
\bibitem[AYY]{An-Yu-Yu} J.~An; J.-K.~Yu; J.~Yu, \emph{On the dimension datum of a subgroup and
its application to isospectral manifolds.} Journal of Differential Geometry. To appear.










\bibitem[Bo]{Borel} A. Borel, \emph{Automorphic $L$-functions,} Automorphic forms, representations
and $L$-functions, Part 2, pp. 27-61, Proc.\ Sympos.\ Pure Math.~{\bf 33}, Amer.\ Math.\ Soc.,
Providence, R.I.\ (1979).


\bibitem[Bou]{Bourbaki} N.~Bourbaki, \emph{Lie groups and Lie algebras.} Chapters 4–6.
Translated from the 1968 French original by Andrew Pressley. Elements of Mathematics (Berlin).
Springer-Verlag, Berlin, 2002.




\bibitem[Dy]{Dynkin} E.-B. Dynkin , \emph{Semisimple subalgebras of semisimple Lie
algebras}, AMS translation, (2) \textbf{6} (1957) 111-254.








\bibitem[G]{Gordon} C. Gordon, \textit{Survey of isospectral manifolds,} Handbook of differential
geometry, Vol. I, 747-778, North-Holland, Amsterdam, 2000.







\bibitem[He]{Helgason}S.~Helgason, \emph{Differential geometry, Lie groups, and
symmetric spaces}. Pure and Applied Mathematics, 80. Academic Press, Inc.
[Harcourt Brace Jovanovich, Publishers], New York-London, 1978.







\bibitem[Hum]{Humphreys} J.E.~Humphreys, \emph{Linear algebraic groups.} Graduate Texts in Mathematics,
No. \textbf{21}. Springer-Verlag, New York-Heidelberg, 1975.


\bibitem[HY]{Huang-Yu} J.S.~Huang; J.~Yu, \emph{Klein four subgroups of Lie algebra automorphisms.}
Pacific Journal of Mathematics. To appear.


\bibitem[K]{Katz} N.~Katz, \emph{Larsen's alternative, moments, and the monodromy of Lefschetz
pencils.} Contributions to automorphic forms, geometry, and number theory, 521-560,
Johns Hopkins Univ.~Press (2004).

\bibitem[Kn]{Knapp} A. W.~Knapp, \emph{Lie groups beyond an introduction.} Second edition.
Progressin Mathematics, 140. Birkh\"auser Boston, Inc., Boston, MA, 2002.








\bibitem[La]{Langlands} R.~Langlands, \textit{Beyond endoscopy},
Contributions to automorphic forms, geometry, and number theory,
611-697, Johns Hopkins Univ. Press, Baltimore, MD (2004), MR2058622, Zbl 1078.11033.


\bibitem[Lar]{Larsen2} M.~Larsen, \emph{On the conjugacy of element-conjugate homomorphisms}.
Israel J. Math. \textbf{88} (1994), no. 1-3, 253-277.

\bibitem[Lar2]{Larsen3} M.~Larsen, \emph{On the conjugacy of element-conjugate homomorphisms}. II.
Quart. J. Math. Oxford Ser. (2) \textbf{47} (1996), no. 185, 73-85.


\bibitem[LP]{Larsen-Pink} M.~Larsen; R.~Pink, \emph{Determining representations from invariant
dimensions.} Invent. Math. \textbf{102} (1990), no. 2, 377-398.






\bibitem[M]{Minchenko}A.N.~Minchenko, \emph{Semisimple subalgebras of exceptional Lie algebras.}
(Russian) Tr. Mosk. Mat. Obs. \textbf{67} (2006), 256-293; translation in Trans. Moscow Math. Soc.
2006, ISBN: 5-484-00146-X 225-259.





\bibitem[Os]{Oshima} T.~Oshima, \emph{A classification of subsystems of a root system.}
arXiv: math.RT/0611904.












\bibitem[Sp]{Springer} T.A.~Springer, \emph{Reductive groups.} Automorphic forms, representations and
$L$-functions (Proc. Sympos. Pure Math., Oregon State Univ., Corvallis, Ore., 1977), Part 1, pp. 3-27,
Proc. Sympos. Pure Math., XXXIII, Amer. Math. Soc., Providence, R.I., 1979.

\bibitem[Sp2]{Springer2} T.A.~Springer, \emph{Linear algebraic groups.} Second edition. Progress in
Mathematics, \textbf{9}. Birkh\"auser Boston, Inc., Boston, MA, 1998.



\bibitem[Sun]{Sunada} T. Sunada, {\it Riemannian coverings and isospectral manifolds,}
Ann.\ of Math.\ (2) {\bf 121} (1985), no.\ 1, 169-186, MR0782558, Zbl 0585.58047.

\bibitem[Sut]{Sutton} C.~Sutton, \emph{Isospectral simply-connected homogeneous spaces
and the spectral rigidity of group actions,} Comment. Math. Helv. \textbf{77} (2002),
701-717.




\bibitem[VD]{Vein-Dale} R.~Vein; P. Dale, \emph{Determinants and their
applications in mathematical physics,} Springer-Verlag, New York, 1999.


\bibitem[W]{Wang} S.~Wang, \emph{Dimension data, local conjugacy and global conjugacy
in reductive groups.} Preprint (2007).








\end{thebibliography}
\end{document}